\let\oldtocsection=\tocsection
\let\oldtocsubsection=\tocsubsection
\let\oldtocsubsubsection=\tocsubsubsection
\renewcommand{\tocsection}[2]{\hspace{0em}\oldtocsection{#1}{#2}}
\renewcommand{\tocsubsection}[2]{\hspace{2em}\oldtocsubsection{#1}{#2}}
\renewcommand{\tocsubsubsection}[2]{\hspace{4em}\oldtocsubsubsection{#1}{#2}}
\newcommand\ul{\underline}
\def\k{\kappa}
\def\g{\gamma}
\def\s{\sigma}
\def\t{\tau}
\def\i{\iota}
\def\G{\Gamma}
\def\Si{\Sigma}
\def\cB{{\mathcal B}}
\def\cE{{\mathcal E}}
\def\cM{{\mathcal M}}
\def\cO{{\mathcal O}}
\def\cX{{\mathcal X}}
\newtheorem{dfn}{Definition}[section]
\newtheorem{lem}[dfn]{Lemma}
\newtheorem{prp}[dfn]{Proposition}
\newtheorem{thm}[dfn]{Theorem}
\newtheorem{rmk}[dfn]{Remark}
\newtheorem{cor}[dfn]{Corollary}
\newtheorem{fac}[dfn]{Fact}
\newcounter{qcounter}
\newenvironment{enumilistfiber}
   { \begin{list} {\rm (\Roman{qcounter})\;}{\usecounter{qcounter}
     \setlength{\itemsep}{.5ex} \setlength{\leftmargin}{1.2ex} } }
   { \end{list} }
  \newenvironment{enumilistalph}
   { \begin{list} {\rm (\alph{qcounter})\;}{\usecounter{qcounter}
     \setlength{\itemsep}{.5ex} \setlength{\leftmargin}{1.2ex} } }
   { \end{list} }
\newenvironment{itemlist}
   { \begin{list} {$\bullet$}
         {  \setlength{\itemsep}{.5ex} \setlength{\leftmargin}{2.5ex} } }
   { \end{list} }
\author{Benjamin Filippenko}
\title{Polyfold regularization of constrained moduli spaces}
\def\blfootnote{\xdef\@thefnmark{}\@footnotetext}
\begin{document}

\begin{abstract}
We introduce tame sc-Fredholm sections and slices of sc-Fredholm sections. A slice is a notion of subpolyfold that is compatible with the sc-Fredholm section and has finite locally constant codimension. We prove that the subspace of a tame polyfold that satisfies a transverse sc-smooth constraint in a finite dimensional smooth manifold is a slice of any tame sc-Fredholm section compatible with the constraint. Moreover, we prove that a sc-Fredholm section restricted to a slice is a tame sc-Fredholm section with a drop in Fredholm index given by the codimension of the slice. As a corollary, we obtain fiber products of tame sc-Fredholm sections. We describe applications to Gromov-Witten invariants, constructing the Piunikhin-Salamon-Schwarz maps for general closed symplectic manifolds, and avoiding sphere bubbles in moduli spaces of expected dimension $0$ and $1$.
\end{abstract}

\vspace*{-1cm}
\maketitle

\vspace*{-1cm}
\tableofcontents

\blfootnote{This material is based upon work supported by the National Science Foundation under Award No.\ 1708916 and No.\ 1903023.}

\vspace*{-1cm}
\section{Introduction} \label{sec:constrainedmodulispacesintroduction}

Polyfold theory, developed by Hofer-Wysocki-Zehnder \cite{MR2341834} \cite{MR2507223} \cite{MR2515707} \cite{MR2558891} \cite{MR2644764} \cite{MR3683060} \cite{HWZbook}, is an analog of classical nonlinear Fredholm theory designed to realize compact moduli spaces, e.g.\ Gromov-Witten moduli spaces and Floer trajectory spaces, as zero sets of sc-Fredholm sections of polyfold bundles. See the polyfold survey \cite{MR3576532} for an overview and a discussion of applications. The abstract polyfold machinery provides perturbations such that the perturbed sc-Fredholm section is transverse to zero, and hence the perturbed solution space has smooth structure by a polyfold implicit function theorem. Crucially, the perturbations can be chosen so that the perturbed solution space remains compact. This process, beginning with the description of the compact moduli space as the zero set of a sc-Fredholm section and ending with the smooth compact perturbed solution space, is colloquially referred to as ``polyfold regularization'' of a moduli space.

Often in symplectic topology we wish to constrain moduli spaces of pseudoholomorphic curves to consist of those curves satisfying intersection conditions with submanifolds. For example, Gromov-Witten invariants can be defined as counts of curves whose marked points evaluate to submanifolds. Any fiber product of moduli spaces over evaluation maps is another example of such a constraint. The evaluation maps are usually not transverse on the moduli space, however they extend to the ambient polyfold and here they are submersive. Using this transversality, we construct in this paper the constrained polyfold and the constrained sc-Fredholm section. This provides an abstract tool to regularize constrained moduli spaces, whenever the original moduli space is given as the zero set of a sc-Fredholm section.

We state our theorems and outline the structure of the paper in Section~\ref{subsec:results}. Then in Section~\ref{subsec:applications} we explain applications to Gromov-Witten invariants (Section~\ref{subsub:gromov}), constructing the Piunikhin-Salamon-Schwarz maps (Section~\ref{subsub:pss}) to prove the weak Arnold conjecture for general closed symplectic manifolds (see \cite{AFFW} for details), and avoiding sphere bubbles in perturbed moduli spaces of expected dimension $0$ and $1$ (Section~\ref{subsub:avoiding}).

\subsection{Results and outline} \label{subsec:results}

The main goal of this paper is to prove the $M$-polyfold and ep-groupoid (with boundary and corners) versions of the following classical Facts~\ref{fac:restrictbanach}, \ref{fac:globalimplicitbanachfredholm} from non-linear Fredholm theory over Banach manifolds.

\begin{fac} \label{fac:restrictbanach} {\bf (Restrictions of Fredholm sections to sub-Banach manifolds)}
Consider a Banach manifold $B$, a smooth Banach bundle $p : E \rightarrow B$, and a Fredholm section $s : B \rightarrow E$ with Fredholm index $ind_x(s)$ for $x \in B$. If $\tilde{B} \subset B$ is a codimension-$n$ sub-Banach manifold, then the restriction $\tilde{p} : \tilde{E} := p^{-1}(\tilde{B}) \rightarrow \tilde{B}$ is a smooth Banach bundle and the restricted section $\tilde{s} := s|_{\tilde{B}} : \tilde{B} \rightarrow \tilde{E}$ is a Fredholm section with Fredholm index satisfying $ind_x(\tilde{s}) = ind_x(s) - n$ for $x \in \tilde{s}^{-1}(0) \subset \tilde{B}$.
\end{fac}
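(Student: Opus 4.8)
The plan is to work locally and then globalize, since both the bundle structure and the Fredholm property of a section are local conditions. First I would fix a point $x \in \tilde{B}$ and choose a submanifold chart for $\tilde{B} \subset B$, i.e.\ an open set $U \subset B$ and a chart $\varphi : U \to V \subset X$ onto an open subset of a Banach space $X$ with a splitting $X = X_0 \oplus \R^n$ (or $X_0 \oplus Y$ with $Y$ an $n$-dimensional space) such that $\varphi(U \cap \tilde{B}) = V \cap (X_0 \times \{0\})$. Shrinking $U$ if necessary, I would simultaneously trivialize the bundle $E$ over $U$ as $E|_U \cong U \times F$ for a Banach space $F$, so that $s$ becomes a smooth map $f : V \to F$. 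The restricted bundle $\tilde{E} = p^{-1}(\tilde{B})$ then visibly equals $(U \cap \tilde{B}) \times F$ in this trivialization, which shows $\tilde{p} : \tilde{E} \to \tilde{B}$ is a smooth Banach bundle (the transition maps of $E$ restrict to transition maps of $\tilde{E}$, and they are still smooth). The restricted section $\tilde{s}$ corresponds to $f|_{V \cap (X_0 \times \{0\})}$, which I identify with a map $\tilde{f} : \tilde{V} \to F$ where $\tilde{V} \subset X_0$ is open.

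The core of the argument is the linear-algebraic comparison of the Fredholm properties of $D := df(\varphi(x)) : X \to F$ and $\tilde{D} := d\tilde{f}(\varphi(x)) = D|_{X_0} : X_0 \to F$. Since $X = X_0 \oplus \R^n$ with $X_0$ closed of finite codimension $n$, I would argue: (a) $\ker \tilde{D} = \ker D \cap X_0$, which has finite codimension at most $n$ inside $\ker D$ (the inclusion-induced map $\ker D / \ker\tilde{D} \hookrightarrow X/X_0 \cong \R^n$ is injective), hence $\dim\ker\tilde{D} < \infty$; (b) $\operatorname{im} D = \tilde{D}(X_0) + D(\R^n)$, so $\operatorname{im}\tilde{D}$ has finite codimension at most $n$ more than $\operatorname{im} D$ in $F$, and in particular it is closed (a finite-codimension extension of a closed subspace by finitely many vectors is still closed, and conversely removing finitely many dimensions from a closed finite-codimension subspace keeps it closed). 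Thus $\tilde{D}$ is Fredholm. For the index count I would use the standard exact-sequence bookkeeping: the inclusion $X_0 \hookrightarrow X$ yields an exact sequence
\begin{equation*}
0 \to \ker\tilde{D} \to \ker D \to \R^n \to \operatorname{coker}\tilde{D} \to \operatorname{coker}D \to 0,
\end{equation*}
obtained from the snake lemma applied to the map of short exact sequences $0 \to X_0 \to X \to \R^n \to 0$ and $0 \to F \xrightarrow{\operatorname{id}} F \to 0 \to 0$ with vertical maps $\tilde{D}, D, 0$. Taking the alternating sum of dimensions gives $\dim\ker\tilde{D} - \dim\ker D + n - \dim\operatorname{coker}\tilde{D} + \dim\operatorname{coker}D = 0$, i.e.\ $\operatorname{ind}\tilde{D} = \operatorname{ind} D - n$, which is exactly $\operatorname{ind}_x(\tilde{s}) = \operatorname{ind}_x(s) - n$.

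Finally I would note that the notion of Fredholm section is independent of the choices of chart and trivialization (conjugating $D$ by the derivatives of transition maps, which are linear isomorphisms on the relevant spaces, preserves Fredholmness and index), so the local computation patches together: every point of $\tilde{B}$ has a neighborhood over which $\tilde{p}$ is a trivial Banach bundle and $\tilde{s}$ is Fredholm with index $\operatorname{ind}_x(s) - n$, hence $\tilde{s}$ is a Fredholm section on all of $\tilde{B}$ with the asserted index. The main obstacle is really only a matter of care rather than difficulty: making sure the snake-lemma sequence is set up correctly and that the closedness of $\operatorname{im}\tilde{D}$ is justified cleanly; everything else (smoothness of the restricted bundle, chart independence) is routine. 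This is precisely the template that the $M$-polyfold and ep-groupoid versions will have to reproduce, with the added subtlety that "submanifold" must be replaced by the sc-smooth notion of slice and that sc-Fredholm germs, boundary, and corners must be handled — but over honest Banach manifolds the proof is just the linear algebra above glued over charts.
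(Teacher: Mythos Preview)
The paper does not actually prove this fact: immediately after stating it, the text says only that ``the proof of Fact~1.1 is an exercise in differential topology and functional analysis'' and moves on. Your proposal is a correct and standard way to carry out that exercise, and is considerably more detailed than anything the paper supplies.

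One small remark on the point you yourself flag. The closedness of $\operatorname{im}\tilde D$ is not automatic from the sentence ``removing finitely many dimensions from a closed finite-codimension subspace keeps it closed'': a finite-codimension linear subspace of a Banach space need not be closed (e.g.\ the kernel of a discontinuous functional). Two clean ways to close this gap, each of which also delivers the index formula without the snake lemma: (i) observe that the inclusion $\iota : X_0 \hookrightarrow X$ is itself Fredholm of index $-n$, so $\tilde D = D \circ \iota$ is a composition of Fredholm operators and hence Fredholm of index $\operatorname{ind} D - n$; or (ii) let $P : X \to X_0$ be the projection along $\mathbb{R}^n$ and note that $\tilde D \circ P$ differs from $D$ by the finite-rank operator $D\circ(\mathrm{id}-P)$, hence is Fredholm with the same index as $D$ minus the rank correction, and has image exactly $\operatorname{im}\tilde D$. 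Your snake-lemma bookkeeping is perfectly valid once closedness is established by either device.
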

\begin{proof}
Let $x \in \tilde{B}$ and consider an open neighborhood $U \subset B$ of $x$ and local trivialization $U \times F \rightarrow U$ of $p$. Then $\tilde{U} := U \cap \tilde{B}$ is an open neighborhood of $x$ in $\tilde{B}$ and $\tilde{U} \times F \rightarrow \tilde{U}$ is a local trivialization of $\tilde{p}$. Smooth compatibility of local trivializations of $\tilde{p}$ constructed in this way follows from smooth compatibility of the local trivializations of $p$. Hence $\tilde{p}$ is a smooth Banach bundle. Suppose $s(x) = 0$. The differential of the section $s$ at $x$ projected to the fiber is a bounded linear map $D_x s : T_xU \rightarrow F$ satisfying $\dim \ker(D_x s) - \dim \text{coker}(D_x s) = ind_x(s)$ by definition of Fredholm index. Since $\i : T_x\tilde{U} \rightarrow T_xU$ is a linear codimension-$n$ embedding and $D_x\tilde{s} = D_xs \circ \i : T_x \tilde{U} \rightarrow F$, an exercise in linear algebra shows that $ind_x(\tilde{s}) = ind_x(s) - n.$
\end{proof}

Fact~\ref{fac:globalimplicitbanachfredholm} below follows from Fact~\ref{fac:restrictbanach} together with the codimension-$n$ Banach manifold charts provided by the normal form of a $C^1$ local submersion to $\mathbb{R}^n$, for which we provide a proof (in the context of boundary and corners) in Lemma~\ref{lem:normalformbanachspacecase} for later use.

\begin{fac} \label{fac:globalimplicitbanachfredholm} {\bf (Transverse preimages are sub-Banach manifolds)}
Consider a Banach manifold $B$, a finite dimensional smooth manifold $Y$ together with a codimension-$n$ submanifold $N \subset Y$, and a smooth map $f : B \rightarrow Y$. Assume that $f$ is transverse to $N$.

Then, $\tilde{B} := f^{-1}(N)$ is a codimension-$n$ sub-Banach manifold of $B$. In particular, if $s : B \rightarrow E$ is a Fredholm section of a smooth Banach bundle $p : E \rightarrow B$, then the restriction $\tilde{p} : \tilde{E} := p^{-1}(\tilde{B}) \rightarrow \tilde{B}$ is a smooth Banach bundle and the restricted section $\tilde{s} := s|_{\tilde{B}} : \tilde{B} \rightarrow \tilde{E}$ is a Fredholm section with Fredholm index satisfying $ind_x(\tilde{s}) = ind_x(s) - n$ for $x \in \tilde{s}^{-1}(0) \subset \tilde{B}$.
\hfill$\square$
\end{fac}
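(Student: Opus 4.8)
The plan is to reduce the statement to Fact~\ref{fac:restrictbanach} by showing that $\tilde B = f^{-1}(N)$ is locally cut out as the preimage of a submersion onto $\R^n$, which then provides the required sub-Banach manifold charts. First I would fix a point $x_0 \in \tilde B$, so $f(x_0) \in N$, and choose a submersion chart for the pair $(Y,N)$ near $f(x_0)$: a smooth map $\psi \co V \to \R^n$ defined on an open neighborhood $V \subset Y$ of $f(x_0)$ such that $\psi$ is a submersion at $f(x_0)$ (hence near it, after shrinking $V$) and $N \cap V = \psi^{-1}(0)$. Setting $g := \psi \comp f \co f^{-1}(V) \to \R^n$, the transversality hypothesis $f \pitchfork N$ at $x_0$ translates precisely into $g$ being a submersion at $x_0$: indeed $Df(x_0)T_{x_0}B + T_{f(x_0)}N = T_{f(x_0)}Y$ together with $\ker D\psi(f(x_0)) = T_{f(x_0)}N$ gives $D\psi(f(x_0)) Df(x_0) T_{x_0}B = \R^n$, i.e. $Dg(x_0)$ is surjective. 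By continuity of the derivative, $g$ is a submersion on an open neighborhood $U \subset B$ of $x_0$, and on $U$ we have $\tilde B \cap U = g^{-1}(0)$.

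Next I would invoke the normal form of a $C^1$ (indeed smooth) local submersion to $\R^n$ — this is exactly Lemma~\ref{lem:normalformbanachspacecase} cited in the excerpt — to produce, after further shrinking $U$, a chart on $B$ in which $g$ becomes the restriction of a bounded linear projection onto $\R^n$. In such a chart $\tilde B \cap U$ is identified with the intersection of the chart domain with a closed codimension-$n$ linear subspace (the kernel of that projection, which is complemented since $\R^n$ is finite dimensional and hence the projection splits). This exhibits $\tilde B$ as a codimension-$n$ sub-Banach manifold of $B$; the transition maps between two such charts are automatically smooth because they are restrictions of the ambient transition maps of $B$. Since $x_0 \in \tilde B$ was arbitrary, $\tilde B$ is a codimension-$n$ sub-Banach submanifold of $B$ globally.

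Finally, the ``in particular'' clause is immediate: having established that $\tilde B$ is a codimension-$n$ sub-Banach manifold of $B$, Fact~\ref{fac:restrictbanach} applied verbatim to $\tilde B \subset B$ gives that $\tilde p \co \tilde E = p^{-1}(\tilde B) \to \tilde B$ is a smooth Banach bundle and $\tilde s = s|_{\tilde B}$ is Fredholm with $ind_x(\tilde s) = ind_x(s) - n$ for all $x \in \tilde B$. I expect the main obstacle to be purely bookkeeping rather than conceptual: one must be careful that the two reductions — the submersion chart for $(Y,N)$ downstairs and the normal-form chart for $g$ upstairs — can be carried out on a common neighborhood, and that the resulting linear subspace is genuinely complemented (which it is, being finite codimension). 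In the later $M$-polyfold and ep-groupoid versions this last point, and the sc-smooth analog of Lemma~\ref{lem:normalformbanachspacecase}, is where the real work will lie, but in the Banach setting it is standard.
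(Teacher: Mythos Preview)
Your proposal is correct and matches the paper's own approach exactly: the paper states that Fact~\ref{fac:globalimplicitbanachfredholm} follows from Fact~\ref{fac:restrictbanach} together with the codimension-$n$ sub-Banach manifold charts provided by the local submersion normal form (Lemma~\ref{lem:normalformbanachspacecase}), and your argument spells out precisely this reduction --- choosing a submersion chart for $(Y,N)$, composing to get a local submersion $g$ to $\mathbb{R}^n$, applying the normal form lemma, and then invoking Fact~\ref{fac:restrictbanach}.
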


The $M$-polyfold versions (with boundary and corners) of the above facts are our main theorems, Theorem~\ref{thm:slicestructure} and Theorem~\ref{thm:globalimplicit}, proved in Section~\ref{sec:globalslicing}. The generalizations of these theorems to the ep-groupoid case, which are required in applications to handle nontrivial isotropy groups, are Corollary~\ref{cor:slicestructureepgroupoid} and Corollary~\ref{cor:globalimplicitepgroupoid}, proved in Section~\ref{sec:ep-groupoids}. In Section~\ref{sec:fiberproducts}, we obtain fiber products of tame sc-Fredholm sections as Corollary~\ref{cor:ep-fiberFredholm}. See the end of this section for a further description of the ep-groupoid situation and how to use it to perform these constructions on polyfolds.

The sections preceding Sections~5-7 are concerned with the local $M$-polyfold constructions required to prove the theorems. These are the technical heart of the paper. In Section~\ref{sec:sccalc}, the main result (Lemma~\ref{lem:sclocalsubmersionnormalform}) is a normal form of a local sc-smooth submersion from a sc-Banach space to $\mathbb{R}^n$, obtained by a change of coordinates in the domain. This can be viewed as an implicit function theorem in sc-calculus in the case of a finite dimensional target. Note that implicit function theorems of this type do not hold in general in sc-calculus; see \cite{FZW}. In Section~\ref{sec:slices}, we introduce sliced sc-retracts, sliced bundle retracts, and sliced sc-Fredholm germs, and establish their properties. Most importantly, these objects induce codimension-$n$ sub-objects that are the local models for the constrained polyfolds and sc-Fredholm sections constructed in this paper; see Lemmas~\ref{lem:inducedtameretraction}, \ref{lem:inducedtamebundleretraction}, \ref{lem:inducedlocalscFredholmgerm}. We also introduce tame sc-Fredholm germs, which are a special class of sc-Fredholm germ in which the change of coordinates to basic germ form is linear. It is necessary for our constructions that all sc-Fredholm sections are locally modeled on tame sc-Fredholm germs, which holds in applications; see Section~\ref{sec:evaluationmaps} for a discussion of examples. In Section~\ref{sec:slicecoordinates}, we prove the technical Lemma~\ref{lem:slicecoordinates}, which states that the sliced objects introduced in Section~\ref{sec:slices} are obtained from tame sc-retracts, tame bundle retracts, and tame sc-Fredholm germs, via the change of coordinates in the domain obtained from a submersion to $\mathbb{R}^n$ described in Section~\ref{sec:sccalc}.

We proceed to describe the global objects used in our constructions, introduce notation, and then state our main theorems. 

Throughout, we denote $M$-polyfolds $\cB$, strong $M$-polyfold bundles $\rho : \cE \rightarrow \cB$, and sc-Fredholm sections $\s : \cB \rightarrow \cE$. The central objects developed in this paper are tame sc-Fredholm sections $\s : \cB \rightarrow \cE$ (Definition~\ref{dfn:tamescfredholmsection}) and slices $\tilde{\cB} \subset \cB$ (Definition~\ref{dfn:globalslice}) of sc-Fredholm sections. They are locally modeled on tame sc-Fredholm germs (Definition~\ref{dfn:tamescFredgerm}) and sliced sc-Fredholm germs (Definition~\ref{dfn:slicescFredholmgerm}), respectively. A slice is a new notion of a finite codimension $M$-polyfold $\tilde{\cB}$ embedded in $\cB$ that is compatible with $\s$. These notions are related by our main Theorems~\ref{thm:slicestructure}, \ref{thm:globalimplicit}. Roughly, the theorems are as follows, with precise statements given below. Given a tame sc-Fredholm section $\s : \cB \rightarrow \cE$ and a sc-smooth map $f : \cB \rightarrow Y$ to a finite dimensional manifold $Y$ that is $\s$-compatibly transverse (Definition~\ref{dfn:transverse}) to a submanifold $N \subset Y$, then $f^{-1}(N)$ is a slice of $\s$. Moreover, given a slice $\tilde{\cB} \subset \cB$ of a sc-Fredholm section $\s : \cB \rightarrow \cE$, the restriction $\s|_{\tilde{\cB}}$ is tame sc-Fredholm. In Section~\ref{sec:evaluationmaps}, we explain why the Cauchy-Riemann section $\overline{\partial}_J : \cB \rightarrow \cE$ is tame sc-Fredholm and why evaluation maps $ev : \cB \rightarrow Y$ at marked points are $\overline{\partial}_J$-compatibly transverse to every submanifold $N \subset Y$.

Before we state the theorems, we briefly recall some more polyfold notation. See Section~\ref{sec:globalslicing} for more detail. Given an $M$-polyfold $\cB$, there is a filtration ${\cB = \cB_0 \supset \cB_1 \supset \cdots}$ induced by the sc-structures in local charts. Each filtration level $\cB_m$ has its own topology which is not the subspace topology, but the inclusions are continuous and dense. The smooth points of $\cB$ are the subset $\cB_{\infty} := \cap_{m \geq 0} \cB_m$, which is also dense in $\cB$. For $x \in \cB$, the degeneracy index $d_{\cB}(x)$ is the number of distinct local boundary faces of $\cB$ intersecting at $x$.

The following result is the $M$-polyfold analog of Fact~\ref{fac:restrictbanach}. See Corollary~\ref{cor:slicestructureepgroupoid} for the generalization to ep-groupoids.

\newtheorem*{thm:slicestructure}{Theorem \ref{thm:slicestructure}}
\begin{thm:slicestructure} {\bf (Restrictions of sc-Fredholm sections to slices)}
\begin{enumilistfiber}
\item Consider a tame $M$-polyfold $\cB$ and a slice $\tilde{\cB} \subset \cB$ (Definition~\ref{dfn:globalslice}). Then, $\tilde{\cB}$ is a tame $M$-polyfold with atlas induced by the sliced charts with respect to $\tilde{\cB} \subset \cB$. For $x \in \tilde{\cB}_1$, the codimension $codim_x(\tilde{\cB} \subset \cB)$ is well-defined and locally constant in $\tilde{\cB}$, i.e.\ it equals $codim_{x'}(\tilde{\cB} \subset \cB)$ for every $x'$ in an open neighborhood of $x$ in $\tilde{\cB}$. For $x \in \tilde{\cB}_{\infty}$, the degeneracy index satisfies $d_{\tilde{\cB}}(x) = d_{\cB}(x)$.\\

\item Consider, in addition, a tame strong bundle $\rho : \cE \rightarrow \cB$. If $\tilde{\cB} \subset \cB$ is a slice of $\rho$, then the restriction $\tilde{\rho} := \rho|_{\tilde{\cE}} : \tilde{\cE} :=  \rho^{-1}(\tilde{\cB}) \rightarrow \tilde{\cB}$ is a tame strong bundle with atlas induced by the sliced bundle charts for $\rho$ with respect to $\tilde{\cB} \subset \cB$.\\

\item Consider, in addition, a sc-Fredholm section $\s : \cB \rightarrow \cE$. If $\tilde{\cB} \subset \cB$ is a slice of $\s$, then the restriction $\tilde{\s} = \s|_{\tilde{\cB}} : \tilde{\cB} \rightarrow \tilde{\cE}$ is a tame sc-Fredholm section (Definition~\ref{dfn:tamescfredholmsection}) of $\tilde{\rho}$ with tame sc-Fredholm charts induced by the sliced sc-Fredholm charts for $\s$ with respect to $\tilde{\cB} \subset \cB$. For $x \in \tilde{\cB}_{\infty}$, the index satisfies $ind_x(\tilde{\s}) = ind_x(\s) - codim_x(\tilde{\cB} \subset \cB)$. If $\s^{-1}(0)$ is compact and $\tilde{\cB}_{\infty} \subset \cB_{\infty}$ is closed, then $\tilde{\s}^{-1}(0)$ is compact.
\end{enumilistfiber}
\end{thm:slicestructure}

\begin{rmk}
There are three notions of a slice $\tilde{\cB} \subset \cB$ (Definition~\ref{dfn:globalslice}) appearing in Theorem~\ref{thm:slicestructure}: (I) a slice $\tilde{\cB} \subset \cB$ of a tame $M$-polyfold, (II) a slice $\tilde{\cB} \subset \cB$ of a tame strong bundle $\rho : \cE \rightarrow \cB$, and (III) a slice $\tilde{\cB} \subset \cB$ of a sc-Fredholm section $\s : \cB \rightarrow \cE$. Each successive notion requires further compatibility of the subset $\tilde{\cB} \subset \cB$ with the additional structure. This is in contrast to the Banach manifold situation in Fact~\ref{fac:restrictbanach} where bundles and Fredholm sections automatically restrict to any finite codimension sub-Banach manifold $\tilde{B} \subset B$.

The required compatibilities are roughly as follows. (I) There are charts on $\cB$ to $\mathbb{R}^n$-sliced sc-retracts $\cO$ (Definition~\ref{dfn:sliceretraction}) that locally identify $\tilde{\cB} \subset \cB$ with the induced tame sc-retract $\tilde{\cO} \subset \cO$ from Lemma~\ref{lem:inducedtameretraction}. (II) There are bundle charts on $\rho$ to $\mathbb{R}^n$-sliced bundle retracts $K$ (Definition~\ref{dfn:slicebundleretraction}) covering $\mathbb{R}^n$-sliced sc-retracts $\cO$. In this case, $\rho^{-1}(\tilde{\cB})$ is locally identified with the induced tame bundle retract $\tilde{K} \subset K$ from Lemma~\ref{lem:inducedtamebundleretraction}, and the restriction $\rho^{-1}(\tilde{\cB}) \rightarrow \tilde{\cB}$ is locally identified with the induced tame local bundle model $\tilde{K} \rightarrow \tilde{\cO}$. (III) There are sc-Fredholm charts for $\s$ at every $x \in \tilde{\cB}_{\infty}$ to $\mathbb{R}^n$-sliced sc-Fredholm germs $\cO \rightarrow K$ (Definition~\ref{dfn:slicescFredholmgerm}). In this case, the restriction $\s|_{\tilde{\cB}} : \tilde{\cB} \rightarrow \tilde{\cE}$ is locally identified with the induced tame sc-Fredholm germ $\tilde{\cO} \rightarrow \tilde{K}$ from Lemma~\ref{lem:inducedlocalscFredholmgerm}.

The reason for the further requirements in the $M$-polyfold setting is the non-trivial sc-retractions and sc-Fredholm fillings: compatibility of $\tilde{\cB}$ with the sc-retractions on $\cB$ does not imply compatibility of $\tilde{\cB}$ with the bundle retractions on $\cE$ or with the local fillings of $\s$.
\end{rmk}

We now state our main theorem, which is the $M$-polyfold analog of the classical Fact~\ref{fac:globalimplicitbanachfredholm}. See Corollary~\ref{cor:globalimplicitepgroupoid} for the generalization to ep-groupoids. See the following Remark~\ref{rmk:technicalitiesintro} for a discussion of the technicalities in the statement. Given an $M$-polyfold $\cB$, there is an $m$-shifted $M$-polyfold $\cB^m$ for each $m \geq 1$ which is obtained by forgetting about the filtration levels $\cB_0,\ldots,\cB_{m-1}$ of $\cB$ discussed above.

\newtheorem*{thm:globalimplicit}{Theorem \ref{thm:globalimplicit}}
\begin{thm:globalimplicit} {\bf (Transverse preimages are slices of sc-Fredholm sections)}
\begin{enumilistfiber}
\item Consider a tame $M$-polyfold $\cB$, a smooth manifold $Y$ together with a codimension-$n$ submanifold $N \subset Y$, and a sc-smooth map $$f : \cB \rightarrow Y.$$ Assume that $f$ is transverse to $N$ (Definition~\ref{dfn:transverse}).

Then, there exists an open neighborhood
$$\tilde{\cB} \subset f^{-1}(N) \cap \cB_1$$
of ${f^{-1}(N) \cap \cB_{\infty}}$ such that $\tilde{\cB}$ is a slice of $\cB^1$ with $codim_x(\tilde{\cB} \subset \cB^1) = n$ for every $x \in \tilde{\cB}_1 = \tilde{\cB} \cap \cB_2$. In particular, $\tilde{\cB}$ is a tame $M$-polyfold with degeneracy index satisfying $d_{\tilde{\cB}}(x) = d_{\cB}(x)$ for all $x \in \tilde{\cB}_{\infty}$.\\

\item Consider, in addition, a tame strong bundle $\rho : \cE \rightarrow \cB$. Then, there exists a possibly smaller neighborhood $\tilde{\cB}$ in (I) that is a slice of the bundle $\rho|_{\cE^1} : \cE^1 \rightarrow \cB^1$. In particular, the restriction
$$\tilde{\rho} := \rho|_{\tilde{\cE}} : \tilde{\cE} := (\rho|_{\cE^1})^{-1}(\tilde{\cB}) \rightarrow \tilde{\cB}$$
is a tame strong bundle.\\

\item Consider, in addition, a tame sc-Fredholm section $\s : \cB \rightarrow \cE$ (Definition~\ref{dfn:tamescfredholmsection}) of $\rho$. Assume that $f$ is $\s$-compatibly transverse to $N$ (Definition~\ref{dfn:transverse}). Then, there exists a possibly smaller neighborhood $\tilde{\cB}$ in (II) that is a slice of the tame sc-Fredholm section $\s|_{\cB^1} : \cB^1 \rightarrow \cE^1$. In particular, the restriction
$$\tilde{\s} := \s|_{\tilde{\cB}} : \tilde{\cB} \rightarrow \tilde{\cE}$$
is a tame sc-Fredholm section of $\tilde{\rho}$ with index satisfying $$ind_x(\tilde{\s}) = ind_x(\s) - n$$ for all $x \in \tilde{\cB}_{\infty}$. If $N$ is closed as a subset of $Y$ and $\s^{-1}(0)$ is compact, then $\tilde{\s}^{-1}(0)$ is compact.
\end{enumilistfiber}
\end{thm:globalimplicit}

\begin{rmk} \label{rmk:technicalitiesintro} \text{}
\begin{enumerate}
\item The notion of $\s$-compatibly transverse (Definition~\ref{dfn:transverse}) requires compatibility between the tangent map $D_xf$ at $x \in f^{-1}(N) \cap \cB_{\infty}$ with the change of coordinates on the base of the local sc-Fredholm filling of $\s$ at $x$ that brings the filling into basic germ form. See Section~\ref{sec:evaluationmaps} for an explanation why evaluation maps $f = ev$ at marked points are compatible with the Cauchy-Riemann section $\s = \overline{\partial}_J$ in applications.

\item The reason Theorem~\ref{thm:globalimplicit} holds only in some neighborhood $\tilde{\cB}$ of the smooth points of the preimage $f^{-1}(N) \cap \cB_{\infty}$ is as follows. The tame sc-retracts modeling $\tilde{\cB}$ are built from the subspace of the tangent space $T_x\cB$ at $x \in f^{-1}(N)$ that is mapped by the tangent map $D_xf : T_x\cB \rightarrow T_{f(x)}Y$ onto $T_{f(x)}N$, and this tangent space $T_x\cB$ has the structure of a sc-Banach space only at smooth points $x \in \cB_{\infty}$. So we can only hope to construct a sc-retract modeling a neighborhood of $x$ in $f^{-1}(N)$ around smooth points $x$.

\item The neighborhood $\tilde{\cB}$ is open only in the $1$-level of the preimage $f^{-1}(N) \cap \cB_1$ because, in the proof of the local submersion normal form in sc-calculus (Lemma~\ref{lem:sclocalsubmersionnormalform}), we must $1$-shift the sc-Banach space to obtain $C^1$ regularity in order to use the classical $C^1$ local submersion normal form (Lemma~\ref{lem:normalformbanachspacecase}).
\end{enumerate}
\end{rmk}

\begin{rmk} \label{rmk:conventionsformanifolds}
All manifolds $Y$ and submanifolds $N \subset Y$ in this paper are smooth, finite dimensional, and without boundary:
$$\partial N = \emptyset \,\, \text{ and } \,\, \partial Y = \emptyset.$$
This suffices for our initial intended applications to evaluation maps with target a closed symplectic manifold $Y$.

It is possible to generalize our theorems to the case where both $N$ and $Y$ are smooth finite dimensional orbifolds with boundary and corners. This generalization will be useful in applications. For example, polyfolds $\cB$ constructed for regularization of moduli spaces of pseudoholomorphic curves in symplectic topology come with an everywhere submersive sc-smooth forgetful map $\cB \rightarrow Y$ to the Deligne-Mumford space $Y$ consisting of all domains of curves in $\cB$. The Deligne-Mumford space $Y$ usually has an orbifold structure with non-trivial isotropy, and when the domains have boundary, $Y$ has boundary and corners.
\end{rmk}

In Section~\ref{sec:ep-groupoids}, we address the finite isotropy groups arising in applications to pseudoholomorphic curves by extending our theorems from $M$-polyfolds to ep-groupoids. An ep-groupoid is the ``orbifold version'' of an $M$-polyfold. The theory of sc-Fredholm sections over ep-groupoids is the generalization of sc-Fredholm sections over $M$-polyfolds that incorporates finite isotropy; see \cite{HWZbook} for a detailed treatment. We review the necessary facts in Section~\ref{sec:ep-groupoids}.

All of the ep-groupoid results in Section~\ref{sec:ep-groupoids} follow from our $M$-polyfold results plus additional bookkeeping. In fact, this is a general polyfold philosophy: the $M$-polyfold situation is where all of the analytic data (e.g.\ the sc-structures on base and bundle and the sc-Fredholm properties) is stored. So, if an application of polyfold theory works assuming that everything in sight is an $M$-polyfold, we expect that there is a suitable upgrade to ep-groupoids obtained by keeping track of the isotropy using the ep-groupoid machinery in \cite{HWZbook}. This is the case in this paper.

This polyfold philosophy goes further. When performing any polyfold construction, one thinks through the details first assuming that everything is a finite dimensional smooth manifold with boundary and corners. In particular, sc-Fredholm sections in the finite dimensional setting are the same as ordinary smooth sections. We expect that any construction \emph{that is motivated by regularization of some moduli space of pseudoholomorphic curves}, and that works assuming everything is finite dimensional, will go through for $M$-polyfolds and ep-groupoids.

To perform these constructions on polyfolds, it suffices to take representative ep-groupoids and then perform the constructions to obtain a representative ep-groupoid of a new polyfold. A polyfold \cite[Def.~16.3]{HWZbook} is a topological space $Z$ together with an equivalence class of polyfold structures \cite[Def.~16.1]{HWZbook}. A polyfold structure for $Z$ is an ep-groupoid together with a homeomorphism of its orbit space with $Z$. Polyfold structures are equivalent \cite[Def.~16.2]{HWZbook} if they are related by a generalized isomorphism \cite[Def.~10.8]{HWZbook} compatible with the homeomorphisms with $Z$. In particular, every ep-groupoid defines a canonical polyfold structure on its orbit space.

\subsection{Applications} \label{subsec:applications}

We discuss applications of our polyfold results to pseudoholomorphic curves in symplectic manifolds.

First, we describe an expected alternative interpretation of Gromov-Witten invariants, defined using polyfold theory in \cite{MR3683060} as integrals of differential forms over a perturbed moduli space, as counts of points in a $0$-dimensional constrained moduli space, where the constraints are evaluation maps at marked points that are required to evaluate to submanifolds. Then we describe the construction of the Hamiltonian Piunikhin-Salamon-Schwarz maps for general closed symplectic manifolds, which is carried out in detail in \cite{AFFW} using our theorems to construct the fiber product of Morse moduli spaces and Symplectic Field Theory polyfolds \cite{HWZsymplecticfieldtheory}\cite{PolyfoldConstructions}\cite{FHLecturesOnSFT}\cite{SFTPrimerI}\cite{SFTPrimerII}, providing a proof of the weak Arnold conjecture. Last, we describe a method for perturbing expected dimension $0$ and $1$ moduli spaces so that the perturbed moduli space does not contain any curves with a sphere bubble.

The applicable polyfold results in this paper are the ep-groupoid generalizations of the theorems presented above, because in applications there will be nontrivial isotropy groups. The ep-groupoid results are in Sections~\ref{sec:ep-groupoids},\ref{sec:fiberproducts}.

Throughout, let $(Y,\omega)$ be a closed symplectic manifold of dimension $2n$.

\subsubsection{Gromov-Witten invariants} \label{subsub:gromov} The Gromov-Witten invariants are defined in \cite{MR3683060} for general closed symplectic manifolds as integrals of differential forms over the solution set of a perturbed sc-Fredholm section. The results in this paper provide an expected alternative interpretation as counts of $0$-dimensional perturbed moduli spaces of curves satisfying intersection conditions with submanifolds. These definitions can be shown to agree by results in \cite{WolfPullbackPerturbation}.

For a given homology class $A \in H_2(Y)$ and integers $g,m \geq 0$ satisfying $2g + m \geq 3$, the Gromov-Witten invariant with respect to the fundamental class $[\overline{\cM}_{g,m}]$ of the Deligne-Mumford space $\overline{\cM}_{g,m}$ of closed genus $g$ curves with $m$ marked points is a multilinear map
$$\Psi_{A,g,m} : H^*(Y;\mathbb{R})^{\otimes m} \rightarrow \mathbb{R}.$$
This map is defined for $\alpha_1,\ldots,\alpha_m \in H^*(Y;\mathbb{R})$ with degrees satisfying $|\alpha_1| + \cdots + |\alpha_m| = 2c_1(A) + (2n-6)(1-g) + 2m$ as follows, and for other choices of $\alpha_i$ it is defined to be $0$. For an $\omega$-compatible almost complex structure $J$ on $Y$, a sc-Fredholm section
$$\overline{\partial}_J : X_{A,g,m}  \rightarrow E$$
of a polyfold bundle $E \rightarrow X_{A,g,m}$ with Fredholm index $$ind(\overline{\partial}_J) = 2c_1(A) + (2n-6)(1-g) + 2m$$
is constructed in \cite{MR3683060} such that the solution set
$$\overline{\partial}_J^{-1}(0) = \overline{\cM}_{g,m}(Y,A,J)$$
is the Gromov compactified moduli space of $J$-holomorphic curves in $Y$ of genus $g$ with $m$ marked points that represent the class $A$. Then the abstract perturbation theory in \cite{HWZbook} provides a sc$^+$-multisection $\Lambda$ of the bundle so that the perturbed solution space $S(\overline{\partial}_J,\Lambda)$ is a smooth compact oriented weighted branched orbifold of dimension $$\dim S(\overline{\partial}_J,\Lambda) = ind(\overline{\partial}_J)$$ over which we can integrate differential forms using the integration theory from \cite{MR2558891}. For $|\alpha_1| + \cdots + |\alpha_m| = ind(\overline{\partial}_J)$, the Gromov-Witten invariant is defined by
$$\Psi_{A,g,m}(\alpha_1 \otimes \cdots \otimes \alpha_m) := \int_{S(\overline{\partial}_J,\Lambda)} ev_1^*(\alpha_1) \wedge \cdots \wedge ev_m^*(\alpha_m),$$
where
$$ev_k : X_{A,g,m} \rightarrow Y$$
is evaluation at the $k$-th marked point.

We now explain how to use the results in this paper to construct the Gromov-Witten invariant as a count of points in a $0$-dimensional moduli space of curves evaluating to submanifolds of $Y$. For $k = 1,\ldots,m$ let $L_k \subset Y$ be an oriented submanifold such that $$[L_k] = PD(\alpha_k) \in H_*(Y;\mathbb{R}),$$ where $PD$ denotes the Poincar\'e dual. Then the codimension of $L_k$ in $Y$ is equal to the degree $|\alpha_k|$, and so $L_1 \times \cdots \times L_m$ is a codimension-$ind(\overline{\partial}_J)$ submanifold of $Y^m$. The total evaluation map
\begin{align*}
ev_1 \times \cdots \times ev_m : X_{A,g,m} &\rightarrow Y^m\\
u &\mapsto (ev_1(u),\ldots,ev_m(u))
\end{align*}
records the positions of all the marked points. Consider the subspace
$$X_{A,g,m}^{L_1,\ldots,L_m} := (ev_1 \times \cdots \times ev_m)^{-1}(L_1 \times \cdots \times L_m)$$
of $X_{A,g,m}$, which consists of those curves whose $k$-th marked point evaluates to $L_k$ for every $k = 1,\ldots,m$. Then
$$\overline{\cM}^{L_1,\ldots,L_m}_{g,m}(Y,A,J) = \overline{\partial}_J^{-1}(0) \cap X_{A,g,m}^{L_1,\ldots,L_m}$$
is the compactified moduli space of $J$-holomorphic curves $u$ in the moduli space $\overline{\cM}_{g,m}(Y,A,J)$ satisfying the point constraints $ev_k(u) \in L_k$ for all $k = 1,\ldots,m$.

To perturb this constrained moduli space $\overline{\cM}^{L_1,\ldots,L_m}_{g,m}(Y,A,J)$ so that it is cut out transversely, we first apply Corollary~\ref{cor:globalimplicitepgroupoid} to obtain a description of $\overline{\cM}^{L_1,\ldots,L_m}_{g,m}(Y,A,J)$ as the zero set of a sc-Fredholm section of a polyfold bundle, as follows. The hypotheses of Corollary~\ref{cor:globalimplicitepgroupoid} are satisfied because the total evaluation map $$ev_1 \times \cdots \times ev_m : X_{A,g,m} \rightarrow Y^m$$ is submersive, and moreover it is $\overline{\partial}_J$-compatibly transverse to the submanifold $L_1 \times \cdots \times L_m \subset Y^m$ as explained in Section~\ref{sec:evaluationmaps}. Hence, applying the corollary, there exists an open neighborhood $\tilde{X} \subset X_{A,g,m}^{L_1,\ldots,L_m}$ of the constrained moduli space $\overline{\cM}^{L_1,\ldots,L_m}_{g,m}(Y,A,J)$ such that the restricted section $\overline{\partial}_J|_{\tilde{X}} : \tilde{X} \rightarrow E|_{\tilde{X}}$ is sc-Fredholm and has Fredholm index $0$. Moreover, the solution space
$$\overline{\partial}_J|_{\tilde{X}}^{-1}(0) = \overline{\cM}^{L_1,\ldots,L_m}_{g,m}(Y,A,J)$$
is the constrained moduli space. The perturbation theory in \cite{HWZbook} then provides a sc$^+$-multisection $\tilde{\Lambda}$ of the restricted bundle $E|_{\tilde{X}} \rightarrow \tilde{X}$ so that the perturbed solution space $S(\overline{\partial}_J|_{\tilde{X}}^{-1}(0), \tilde{\Lambda})$ is a smooth compact oriented weighted branched orbifold of dimension $0$.

Notice that, even after perturbation by $\tilde{\Lambda}$, all curves $$u \in S(\overline{\partial}_J|_{\tilde{X}}^{-1}(0), \tilde{\Lambda})$$ are guaranteed to satisfy the constraints $$ev_k(u) \in L_k$$ for $k = 1,\ldots,m$ since $S(\overline{\partial}_J|_{\tilde{X}}^{-1}(0), \tilde{\Lambda})$ is contained in $X_{A,g,m}^{L_1,\ldots,L_m}$.

Morally, the weighted count $\# S(\overline{\partial}_J|_{\tilde{X}}^{-1}(0), \tilde{\Lambda})$ is the Gromov-Witten invariant. To prove the equality
$$\# S(\overline{\partial}_J|_{\tilde{X}}^{-1}(0), \tilde{\Lambda}) = \Psi_{A,g,m}(\alpha_1 \otimes \cdots \otimes \alpha_m),$$
one needs perturbations satisfying $\tilde{\Lambda} = \Lambda|_{\tilde{X}}$. It is possible to construct such perturbations by the result \cite[Thm.~1.7,~4.5]{WolfPullbackPerturbation}, which provides a more general construction of transverse perturbations that are compatible with respect to pullbacks satisfying suitable properties.

\subsubsection{The Piunikhin-Salamon-Schwarz morphism} \label{subsub:pss} Let $$H : S^1 \times Y \rightarrow \mathbb{R}$$ be a nondegenerate Hamiltonian and $(f,g)$ a Morse-Smale pair with Morse function $$f : Y \rightarrow \mathbb{R}$$ and Riemannian metric $g$. The original application that motivates this project is the construction of the Piunikhin-Salamon-Schwarz (PSS) morphism
$$PSS : H_*^{Morse}(Y;f,g) \rightarrow H_*^{Floer}(Y;H),$$
where $H_*^{Morse}(Y;f,g)$ is the Morse homology of $(f,g)$ and $H_*^{Floer}(Y;H)$ is the Floer homology of $H$. This was originally done under the assumption that $(Y,\omega)$ is semi-positive in \cite{MR1432464}, where it is proved that this map is an isomorphism, proving the weak Arnold conjecture; see also \cite{MR2954391} for a proof. In \cite{AFFW}, we carry out a version of this construction, joint with Katrin Wehrheim, for general closed symplectic manifolds, proving the Arnold conjecture in full generality. In this section, we describe the general method of \cite{AFFW} and how it uses the results of this paper.

The moduli spaces from which the PSS morphism is constructed are as follows. Consider a critical point $p$ of $f$, a contractible $1$-periodic orbit $\g$ of the Hamiltonian vector field associated to $H$, and a singular homology class $A \in H_2(Y)$. Let $\overline{\cM}(p,Y)$ denote the compactified moduli space of half-infinite gradient flow lines $\t : (-\infty,0] \rightarrow Y$ that limit to $p$ on their infinite end and evaluate to $ev(\t) := \t(0)$ in the unstable manifold of $p$. The compactification includes broken flow lines that start at $p$, break at finitely many other critical points, and end in a half-infinite flow line originating from the final critical point at which breaking occurs and evaluating to its unstable manifold. Then $\overline{\cM}(p,Y)$ can be given the structure of a smooth compact manifold with boundary and corners (see for example \cite{MR3084244}) equipped with a smooth evaluation map
$$ev_p : \overline{\cM}(p,Y) \rightarrow Y.$$

Fix smooth capping discs on each periodic orbit of $H$ and an $\omega$-compatible almost complex structure $J$ on $Y$. Then let $\cM(\g,A)$ denote the moduli space appearing in Symplectic Field Theory \cite{MR1826267} consisting of smooth maps $\mathbb{C} \rightarrow X$ satisfying the Cauchy-Riemann equation $\overline{\partial}_J$ near $0$ and the Floer equation near $\infty$ (with a fixed interpolation in between given by a cutoff function that turns off the Hamiltonian term in Floer's equation near $0$), and such that the map glued to the capping disc on $\g$ represents the homology class $A$. The compactified moduli space $\overline{\cM}(\g,A)$ also includes configurations with broken Floer trajectories and sphere bubble trees. There is an evaluation map
$$ev_{\g} : \overline{\cM}(\g,A) \rightarrow Y$$
given by evaluating at $0 \in \mathbb{C}$. The PSS moduli spaces are then the fiber products
\begin{equation*} \label{eq:pssfiberproductmoduli}
\overline{\cM}(p,\g,A) := \overline{\cM}(p,Y) \tensor[_{ev_p}]{\times}{_{ev_\g}} \overline{\cM}(\g,A).
\end{equation*}
See Figure~\ref{fig:pss} for a diagram of an element of $\overline{\cM}(p,\g,A)$.

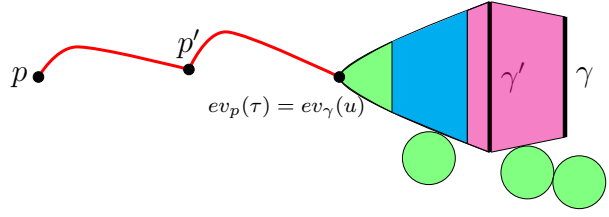
\begin{figure}
\centering
\begin{tikzpicture}
\draw [line width = .4mm, red] plot [smooth] coordinates {(-4,0) (-3.5,.4) (-2,.1)};
\filldraw [line width = .1mm, fill opacity = 1, fill = black] (-4,0) circle (2 pt);
\draw (-4,0) node[left] {$p$};
\draw [line width = .4mm, red] plot [smooth] coordinates {(-2,.1) (-1.5,.6) (0,0)};
\filldraw [line width = .1mm, fill opacity = 1, fill = black] (-2,.1) circle (2 pt);
\draw (-2,.1) node[above] {$p'$};

\draw [black] plot [smooth] coordinates {(2,1) (0,0) (2,-1)};

\filldraw [fill opacity=0.5, fill=green] plot [smooth] coordinates {(.7,.47) (0,0) (.7,-.47)};
\filldraw [line width = 0mm, fill opacity=0.5, fill=magenta] plot coordinates {(1.99,1) (1.7,.885) (1.7,-.885) (1.99,-1)};
\draw (2,0) node[right] {$\g'$};

\filldraw [line width = 0mm, fill opacity=1, fill=cyan] plot coordinates {(.7,.47) (.7,-.47) (1.7,-.885) (1.7,.885)};

\draw [line width=.6mm, black] plot coordinates {(2,1) (2,-1)};
\filldraw [line width = .1mm, fill opacity=0.5, fill=magenta] plot coordinates {(2.02,1) (3,.8) (3,-.8) (2.02,-1)};
\draw (3,0) node[right] {$\g$};

\draw [line width=.6mm, black] plot coordinates {(3,.8) (3,-.8)};

\filldraw [line width = .1mm, fill opacity = 0.5, fill = green] (2.5,-1.27) circle (10 pt);
\filldraw [line width = .1mm, fill opacity = 0.5, fill = green] (3.19,-1.4) circle (10 pt);

\filldraw [line width = .1mm, fill opacity = 0.5, fill = green] (1.19,-1.08) circle (10 pt);

\filldraw [line width = .1mm, fill opacity = 1, fill = black] (0,0) circle (2 pt);
\draw (-.7,-.4) node {\tiny $ev_p(\t) = ev_{\g}(u)$};
\end{tikzpicture}
\caption{An element $(\t,u)$ in the moduli space $\overline{\cM}(p,\g,A)$. The red lines represent $\t \in \overline{\cM}(p,Y)$ consisting of a Morse trajectory from $p$ to $p'$ and a half-infinite Morse trajectory starting at $p'$ and evaluating to $ev_p(\t)$ in the unstable manifold of $p'$. The green near $ev_{\g}(u)$ represents the neighborhood of $0 \in \mathbb{C}$ on which the map $\mathbb{C} \rightarrow Y$ satisfies the $J$-holomorphic curve equation. As the map limits to the Hamiltonian orbit $\g'$, the $J$-curve equation interpolates in the blue region to Floer's equation represented in magenta. A Floer trajectory from $\g'$ to $\g$ has broken off. The green circles represent bubbled off $J$-holomorphic spheres. The evaluations $ev_p(\t) = ev_{\g}(u)$ agree since $\overline{\cM}(p,\g,A)$ is a fiber product.}
\label{fig:pss}
\end{figure}

If all choices can be made so that $\overline{\cM}(p,\g,A)$ is smooth (and of the expected dimension), then the coefficient of $PSS(p)$ on the generator $(\g,A)$ is defined to be the count $\# \overline{\cM}(p,\g,A)$ if the expected dimension is $0$, and the coefficient is $0$ otherwise. Now, for general $Y$, the compact moduli space $\overline{\cM}(\g,A)$ will not be cut out transversely for any $J$, and hence has no reason to be smooth. Moreover, even if $\overline{\cM}(\g,A)$ is cut out transversely, there is no reason to expect that the fiber product with $\overline{\cM}(p,Y)$ is transverse.

In \cite{AFFW}, these transversality issues are overcome using the results in this paper as follows. The Symplectic Field Theory polyfolds in \cite{HWZsymplecticfieldtheory}\cite{PolyfoldConstructions}\cite{FHLecturesOnSFT}\cite{SFTPrimerI}\cite{SFTPrimerII} include polyfold bundles
$$E(\g,A) \rightarrow X(\g,A)$$
and sc-Fredholm sections
$$\s(\g,A) : X(\g,A) \rightarrow E(\g,A)$$
with solution set the SFT moduli space
$$\s(\g,A)^{-1}(0) = \overline{\cM}(\g,A).$$
Here $X(\g,A)$ is the polyfold of broken and nodal maps of the same form as those in $\overline{\cM}(\g,A)$ but not necessarily satisfying any equation, and the section $\s(\g,A)$ is the equation that maps in $\overline{\cM}(\g,A)$ are required to satisfy. Moreover there is a sc-smooth evaluation map
$$ev_{\g} : X(\g,A) \rightarrow Y$$
which evaluates at $0 \in \mathbb{C}$. This evaluation map is a submersion on the ambient space $X(\g,A)$, and it restricts to the evaluation map on the moduli space $ev_{\g} : \overline{\cM}(\g,A) \rightarrow Y$.

Applying the fiber product result Corollary~\ref{cor:ep-fiberFredholm} to the zero section of the rank-$0$ bundle over the Morse moduli space $\overline{\cM}(p,Y)$ and the sc-Fredholm section $\s(\g,A)$, we obtain an open neighborhood $X(p,\g,A)$ of the zero set of the fiber product section
\begin{align*}
\overline{\cM}(p,Y) \tensor[_{ev_p}]{\times}{_{ev_{\g}}} X(\g,A) &\rightarrow E(\g,A)\\
(\t,u) &\mapsto \s(\g,A)(u)
\end{align*}
such that the restricted section
$$\s(p,\g,A) : X(p,\g,A) \rightarrow E(p,\g,A) := E(\g,A)|_{X(p,\g,A)}$$
is sc-Fredholm with index $$ind(\s(p,\g,A)) = \dim \overline{\cM}(p,Y) + ind(\s(\g,A)) - 2n.$$ Its zero set is compact and equal to the PSS moduli space
$$\s(p,\g,A)^{-1}(0) = \overline{\cM}(p,\g,A).$$
The perturbation theory in \cite{HWZbook} then provides a sc$^+$-multisection $\Lambda$ of the polyfold bundle $E(p,\g,A) \rightarrow X(p,\g,A)$ so that the perturbed solution space $S(\s(p,\g,A),\Lambda)$ is a smooth compact weighted branched orbifold; see Figure~\ref{fig:pssperturbed} for a diagram of an element in $S(\s(p,\g,A),\Lambda)$. The weighted count of points in this perturbed moduli space provides the definition of the PSS map. That is, the coefficient of $PSS(p)$ on $(\g,A)$ is given by $\langle PSS(p), (\g,A) \rangle := \# S(\s(p,\g,A),\Lambda).$

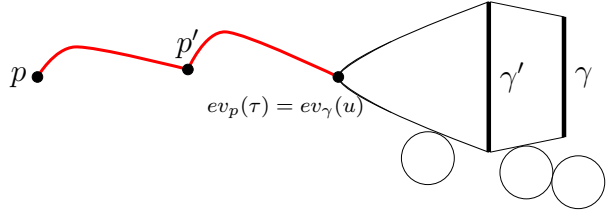
\begin{figure}[H]
\centering
\begin{tikzpicture}
\draw [line width = .4mm, red] plot [smooth] coordinates {(-4,0) (-3.5,.4) (-2,.1)};
\filldraw [line width = .1mm, fill opacity = 1, fill = black] (-4,0) circle (2 pt);
\draw (-4,0) node[left] {$p$};
\draw [line width = .4mm, red] plot [smooth] coordinates {(-2,.1) (-1.5,.6) (0,0)};
\filldraw [line width = .1mm, fill opacity = 1, fill = black] (-2,.1) circle (2 pt);
\draw (-2,.1) node[above] {$p'$};

\draw [black] plot [smooth] coordinates {(2,1) (0,0) (2,-1)};

\filldraw [fill opacity=0.5, fill=white] plot [smooth] coordinates {(.7,.47) (0,0) (.7,-.47)};
\draw (2,0) node[right] {$\g'$};

\draw [line width=.6mm, black] plot coordinates {(2,1) (2,-1)};
\filldraw [line width = .1mm, fill opacity=0.0, fill=white] plot coordinates {(2.02,1) (3,.8) (3,-.8) (2.02,-1)};
\draw (3,0) node[right] {$\g$};

\draw [line width=.6mm, black] plot coordinates {(3,.8) (3,-.8)};

\filldraw [line width = .1mm, fill opacity = 0.5, fill = white] (2.5,-1.27) circle (10 pt);
\filldraw [line width = .1mm, fill opacity = 0.5, fill = white] (3.19,-1.4) circle (10 pt);

\filldraw [line width = .1mm, fill opacity = 0.5, fill = white] (1.19,-1.08) circle (10 pt);

\filldraw [line width = .1mm, fill opacity = 1, fill = black] (0,0) circle (2 pt);
\draw (-.7,-.4) node {\tiny $ev_p(\t) = ev_{\g}(u)$};
\end{tikzpicture}
\caption{An element $(\t,u)$ in the perturbed moduli space $S(\s(p,\g,A),\Lambda) \subset \overline{\cM}(p,Y) \tensor[_{ev_p}]{\times}{_{ev_{\g}}} X(\g,A)$. The red lines represent the broken Morse trajectory $\t \in \overline{\cM}(p,Y)$ with finite end evaluating to $ev_p(\t)$. The region from $ev_{\g}(u)$ to $\g'$ is a map $\mathbb{C} \rightarrow Y$ limiting to the Hamiltonian orbit $\g'$ at $\infty$. The region in between $\g'$ and $\g$ represents a cylinder limiting to these orbits on its two ends, and the circles represent attached sphere bubbles. Together the white regions represent the element $u \in X(\g,A)$. They are colored white, in contrast to Figure~\ref{fig:pss}, to indicate that they do not necessarily satisfy any equation due to the perturbation $\Lambda$. The evaluations $ev_p(\t) = ev_{\g}(u)$ still agree since $S(\s(p,\g,A),\Lambda)$ is contained in the fiber product.}
\label{fig:pssperturbed}
\end{figure}

\subsubsection{\texorpdfstring{Avoiding sphere bubbles in expected dimension $0$ and $1$}{Avoiding sphere bubbles in expected dimension 0 and 1}} \label{subsub:avoiding} \text{}

A common mantra in symplectic topology is that ``sphere bubbling is a codimension-$2$ phenomenon'' and hence sphere bubbles do not appear in regularized moduli spaces of dimension $0$ and $1$. The notions of a sliced sc-retract (Definition~\ref{dfn:sliceretraction}) and a sliced sc-Fredholm germ (Definition~\ref{dfn:slicescFredholmgerm}) introduced in this paper provide a method for making this precise in the context of polyfold theory.

For simplicity, we consider the case of a curve with $1$ interior node, for example a curve with a single sphere bubble. This curve naturally sits inside a $\mathbb{R}^{2}$-sliced sc-retract $(\cO, \mathbb{R}^{2} \times C, \mathbb{R}^{2} \times \mathbb{E})$ which locally models a neighborhood of the curve in an ambient polyfold. The $\mathbb{R}^{2}$-sliced sc-retraction $r : U \rightarrow U$ with image the sc-retract $r(U) = \cO$ is (roughly) the splicing \cite[Def.~2.18]{HWZbook} obtained from pregluing at the node. In particular, $\cO$ is homeomorphic to the image of the pregluing map, so conceptually we identify them. Since pregluing with gluing parameter $0 \in \mathbb{R}^{2}$ preserves the node, the induced tame sc-retract defined in Lemma~\ref{lem:inducedtameretraction} by $\tilde{\cO} = \cO \cap (\{0\} \times C)$ consists of the curves in $\cO$ that have $1$ interior node. This formalizes the notion that a curve with $1$ interior node sits inside a codimension-$2$ stratum consisting of nearby nodal curves.

Moreover, the Cauchy-Riemann section $\overline{\partial}_J : \cO \rightarrow K$ of the local bundle model $K \rightarrow \cO$ is a $\mathbb{R}^{2}$-sliced sc-Fredholm germ, and hence by Lemma~\ref{lem:inducedlocalscFredholmgerm} its restriction to $\tilde{\cO}$ is sc-Fredholm with index satisfying $ind(\overline{\partial}_J|_{\tilde{\cO}}) = ind(\overline{\partial}_J) - 2$. If the original section has index satisfying $ind(\overline{\partial}_J) \leq 1$, i.e.\ the expected dimension of the moduli space is $\leq 1$, then its restriction to the nodal curves in $\tilde{\cO}$ satisfies $ind(\overline{\partial}_J|_{\tilde{\cO}}) < 0$. So, after perturbing the restricted section, the transversely cut out zero set must be empty as a smooth object with negative dimension.  Extending this perturbation over all of $\cO$, the perturbed zero set will not intersect $\tilde{\cO}$, meaning that the perturbed zero set will not contain any nodal curves.

This perturbation extension result will be part of a future work, which to be applied must include an inductive procedure that perturbs and extends starting with the highest codimension strata corresponding to solutions with the most nodes. Indeed, a curve with $k \geq 1$ nodes sits inside the intersection of $k$ distinct codimension-$2$ strata, and this intersection is a codimension-$2k$ stratum. One must perform the local perturbations and extensions coherently with respect to the intersections of these nodal strata.\\

{\bf Acknowledgments:}
I am deeply grateful to my PhD advisor, Katrin Wehrheim, for warmly inviting me into this area of research, leading me to fruitful areas to explore, keeping my vision clear as I navigate, and teaching me how to write. I'd like to thank Helmut Hofer and Zhengyi Zhou for useful comments, conversations, and feedback that resulted in improvements to this paper. Sophia Skowronski's partnership was integral to my ability to perform this research, and she will always be in my heart.

\section{\texorpdfstring{Sc-calculus: the normal form of a local sc-smooth submersion to $\mathbb{R}^n$}{Sc-calculus: the normal form of a local sc-smooth submersion to R\textasciicircum n}} \label{sec:sccalc}

The purpose of this section is to establish the normal form of a sc-smooth local submersion $f : [0,\infty)^s \times \mathbb{E} \rightarrow \mathbb{R}^n$, where $\mathbb{E}$ is a sc-Banach space; see Lemma~\ref{lem:sclocalsubmersionnormalform}. This is a sc-calculus analog of the classical local submersion normal form (Lemma~\ref{lem:normalformbanachspacecase}) in the case where $\mathbb{E}$ is an ordinary Banach space $E$. In the Banach case, the normal form follows from the inverse function theorem for $C^1$ maps between open subsets of quadrants $[0,\infty)^s \times E$. The inverse function theorem does not hold in sc-calculus \cite{FZW}, however, using relationships between classical differentiability and sc-differentiability, we leverage the classical normal form to prove the normal form in sc-calculus. The key ingredient is that the target of these submersions is finite dimensional $\mathbb{R}^n$, on which all sc-structures are trivial (i.e.\ all levels are isomorphic to the infinity level).

For completeness, we now provide a proof of the normal form in the classical Banach case. We view a quadrant $[0,\infty)^s \times E$ as a Banach space with boundary and corners. It suffices to consider neighborhoods in $[0,\infty)^s \times E$ of a point $x$ that sits in the maximally degenerate corner $x \in \{0\} \times E$.

\begin{lem} {\bf (Normal form of a $C^1$ local submersion to $\mathbb{R}^n$)} \label{lem:normalformbanachspacecase} Consider a Banach space $E$, an open subset $U \subset [0,\infty)^s \times E$ for some $s \in \mathbb{Z}_{\geq 0}$, and a $C^1$ map $f : U \rightarrow \mathbb{R}^n$. Suppose that, for some point $x \in U \cap (\{0\}^s \times E)$, the tangent map $d_x(f|_{\{0\}^s \times E}) : E \rightarrow \mathbb{R}^n$ of the restriction to the corner $f|_{U \cap (\{0\}^s \times E)}$ is surjective.

Then, for any complement $L$ of $K := \ker (d_xf)|_{\{0\}^s \times E}$ in $E$, there exist open neighborhoods $x \in \hat{U} \subset U$ and $U' \subset \mathbb{R}^n \times [0,\infty)^s \times K$ such that, writing $v \in [0,\infty)^s$ and $e \in E$, the map
\begin{align*}
g : \hat{U} &\rightarrow U'\\
(v, e) &\mapsto (f(v,e), v, pr(e))
\end{align*}
is a $C^1$-diffeomorphism, where $pr : E = K \oplus L \rightarrow K$ is the projection along $L$.
\end{lem}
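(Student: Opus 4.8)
The strategy is to reduce the statement to the classical $C^1$ inverse function theorem on an open subset of a Banach space: extend $f$ across the faces of the quadrant to an ambient $C^1$ map, apply the ordinary inverse function theorem there, and then restrict the resulting ambient diffeomorphism back to $[0,\infty)^s \times E$. Begin with the linear algebra. Write $T := (d_xf)|_{\{0\}^s\times E} : E \to \mathbb{R}^n$. Since $T$ is a bounded surjection, $K = \ker T$ is closed of finite codimension $n$; hence any complement $L$ is closed with $\dim L = n$, the projection $pr : E = K\oplus L \to K$ is bounded, and $T|_L : L \to \mathbb{R}^n$ is a Banach space isomorphism.

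\textbf{Extension and the ambient inverse function theorem.} Using the reflection formula $\bar f(v_1,\dots) := 2f(0,\dots) - f(-v_1,\dots)$ across the face $\{v_1 = 0\}$ and iterating over the $s$ coordinates — a standard $C^1$-extension argument — I obtain, after shrinking, an open set $\bar U \subset \mathbb{R}^s\times E$ with $\bar U\cap([0,\infty)^s\times E) = U$ and a $C^1$ map $\bar f:\bar U\to\mathbb{R}^n$ with $\bar f|_U = f$. In particular $\bar f|_{\{0\}^s\times E}$ agrees with $f|_{U\cap(\{0\}^s\times E)}$ near $x$, so $d_x\bar f$ restricts on $\{0\}^s\times E$ to $T$. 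Define the $C^1$ map
$$\bar g : \bar U \to \mathbb{R}^n\times\mathbb{R}^s\times K, \qquad \bar g(v,e) = \bigl(\bar f(v,e),\, v,\, pr(e)\bigr).$$
A block computation of $d_x\bar g(\hat v,\hat e) = (d_x\bar f(\hat v,\hat e),\hat v, pr(\hat e))$ shows it is a bounded linear bijection onto $\mathbb{R}^n\times\mathbb{R}^s\times K$: injectivity forces $\hat v = 0$ and then $\hat e\in L\cap K = \{0\}$; surjectivity of the equation $(d_x\bar f(\hat v,\hat e),\hat v,pr(\hat e)) = (w,\hat v,k)$ is solved by writing $\hat e = k+\ell$ with $\ell\in L$ and setting $\ell := (T|_L)^{-1}\bigl(w - d_x\bar f(\hat v,k)\bigr)$. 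By the open mapping theorem $d_x\bar g$ is then a Banach space isomorphism, so the classical $C^1$ inverse function theorem gives open neighborhoods $x\in\bar U_0\subset\bar U$ and $V\subset\mathbb{R}^n\times\mathbb{R}^s\times K$ with $\bar g|_{\bar U_0}:\bar U_0\to V$ a $C^1$-diffeomorphism.

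\textbf{Restriction to the quadrant.} Set $\hat U := \bar U_0\cap([0,\infty)^s\times E)$ and $U' := V\cap(\mathbb{R}^n\times[0,\infty)^s\times K)$, which are open in their respective quadrants and satisfy $x\in\hat U\subset U$. The crucial observation is that $\bar g$ reproduces the $[0,\infty)^s$-slot verbatim: the middle component of $\bar g(v,e)$ is exactly $v$. Hence $\bar g$ maps $\hat U$ into $U'$ and $\bar g^{-1}$ maps $U'$ back into $[0,\infty)^s\times E$, so $\bar g^{-1}(U') = \hat U$ and $g := \bar g|_{\hat U}:\hat U\to U'$ is a bijection; since $g$ and $g^{-1}$ are restrictions of the $C^1$ maps $\bar g$ and $\bar g^{-1}$, $g$ is a $C^1$-diffeomorphism, and on $\hat U\subset U$ it is given by $(v,e)\mapsto(f(v,e),v,pr(e))$ because $\bar f|_U = f$. (Alternatively, one could skip the extension and invoke a corners-aware inverse function theorem on quadrants directly; the hypotheses of such a theorem hold precisely because $g$ preserves the degeneracy index.)

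\textbf{Main obstacle.} The linear algebra and the appeal to the classical inverse function theorem are routine. The point demanding care is the boundary-and-corners bookkeeping in the last step: one must check that the ambient diffeomorphism $\bar g$ restricts to a diffeomorphism \emph{onto an open subset of the target quadrant} $\mathbb{R}^n\times[0,\infty)^s\times K$, which is exactly why it is essential that $g$ preserves the $[0,\infty)^s$-coordinate identically, and it is also the reason the surjectivity hypothesis is imposed on $d_xf$ restricted to the maximal corner $\{0\}^s\times E$ rather than on $d_xf$ itself. One must also track the mild shrinking of neighborhoods needed so that the $C^1$ extension $\bar f$ exists and restricts to $f$.
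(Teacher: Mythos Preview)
Your proof is correct and follows the same core strategy as the paper: verify that $d_xg$ is a linear isomorphism (the injectivity and surjectivity checks are identical to the paper's) and then invoke an inverse function theorem. The only difference is in how the corners are handled. The paper directly appeals to an inverse function theorem for $C^1$ maps between open subsets of quadrants of Banach spaces (citing Margalef-Roig--Outerelo Dom\'inguez), which immediately yields the result. You instead reduce to the classical IFT on open subsets of Banach spaces by $C^1$-extending $f$ across the faces via reflection, applying the IFT ambiently, and then restricting back using the fact that $g$ reproduces the $[0,\infty)^s$-coordinate verbatim. Both routes are standard; your approach is more self-contained at the cost of the extra extension step, while the paper's is shorter but relies on the corners-aware IFT as a black box. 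You even note the paper's route as an alternative in your parenthetical remark.
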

\begin{proof}
Denote the restriction of $f$ to the corner by
$$\ul{f} := f|_{U \cap (\{0\}^s \times E)} : U \cap (\{0\}^s \times E) \rightarrow \mathbb{R}^n.$$
Then $d_x\ul{f} : E \rightarrow \mathbb{R}^n$ is surjective by hypothesis. Let $L$ be any complement of $K = \ker d_x\ul{f}$ in $E$. In particular, note that this means the restriction
\begin{equation} \label{eq:isooncomplement}
d_x\ul{f}|_L : L \rightarrow \mathbb{R}^n
\end{equation}
is an isomorphism. Writing $v \in [0,\infty)^s$ and $e \in E$, define the map
\begin{align*}
g : U &\rightarrow \mathbb{R}^n \times [0,\infty)^s \times K\\
(v,e) &\mapsto (f(v,e),v,pr(e)),
\end{align*}
where $pr : E = K \oplus L \rightarrow K$ is the projection along $L$. Note that $g$ is $C^1$ since $f$ is $C^1$ and $pr$ is $C^{\infty}$.

We claim that the tangent map
\begin{align*}
d_xg : \mathbb{R}^s \times E &\rightarrow \mathbb{R}^n \times \mathbb{R}^s \times K\\
(v,e) &\mapsto (d_xf(v,e),v,pr(e))
\end{align*}
is an isomorphism. To verify injectivity, suppose
$$(0,0,0) = (d_xf(v,e),v,pr(e)).$$
Then $e \in \ker(pr) = L$, and moreover $0 = d_xf(0,e) = d_x\ul{f}(e)$ means ${e \in K}$, hence $e = 0$. To verify surjectivity, let $(p,v,k) \in \mathbb{R}^n \times \mathbb{R}^s \times K$. Since the map \eqref{eq:isooncomplement} is an isomorphism, there exists $l \in L$ such that $d_x\ul{f}(l) = p - d_xf(v,k)$. It follows that $$d_xf(v,k+l) = d_xf(v,k) + d_x\ul{f}(l) = p$$ and hence $d_xg(v,k+l) = (d_xf(v,k+l),v,pr(k+l)) = (p,v,k)$. So $d_xg$ is an isomorphism, as claimed.

Since $g$ is a $C^1$ map whose tangent map $d_xg$ at $x$ is an isomorphism, the inverse function theorem\footnote{See, for example, \cite[Thm.~2.2.4]{MR1173211}.} for $C^1$ maps between quadrants of Banach spaces applies: There exists an open neighborhood $\hat{U} \subset U$ of $x$ and an open set $U' \subset \mathbb{R}^n \times [0,\infty)^s \times K$ such that the restriction $g|_{\hat{U}} : \hat{U} \rightarrow U'$ is a $C^1$-diffeomorphism, as claimed.
\end{proof}

We briefly review basics about sc-calculus on sc-Banach spaces from \cite[Sec.~1.1]{HWZbook} to prepare for Lemma~\ref{lem:sclocalsubmersionnormalform}. A \emph{sc-Banach space} \cite[Def.~1.1]{HWZbook} is a sequence of Banach spaces and continuous linear injections
$$\mathbb{E} := \big ( E_0 \hookleftarrow E_1 \hookleftarrow \cdots \big )$$
such that the map $E_{m+1} \hookrightarrow E_m$ is a compact operator for every $m \geq 0$ and the intersection 
$$E_{\infty} := \cap_{m \geq 0} E_m$$
is dense in every $E_m$. We call $E_{\infty}$ the \emph{smooth points} of $\mathbb{E}$. By a \emph{subset $S$ of a sc-Banach space $\mathbb{E}$} we mean a subset $S_0$ of $E_0$, which then induces subsets
\begin{align*}
S_m &:= S \cap E_m \subset E_m \text{ for all } m \geq 0,\\
S_{\infty} &:= S \cap E_{\infty} \subset E_{\infty}.
\end{align*}
In particular, if $U$ is open in $E_0$ then $U_m \subset E_m$ is open for all $m \geq 0$, so  we say that $U$ is open in the sc-Banach space $\mathbb{E}$.

A \emph{sc-subspace} $\mathbb{F} \subset \mathbb{E}$ \cite[Def.~1.4]{HWZbook} is a closed subspace $F \subset E_0$ such that the induced subsets $$F_m := F \cap E_m$$ define a sc-Banach space $$\mathbb{F} = (F_0 \hookleftarrow F_1 \hookleftarrow \cdots ).$$ Given sc-subspaces $\mathbb{F}, \mathbb{F}' \subset \mathbb{E}$ such that, for every $m \geq 0$, the Banach space $E_m$ splits as a direct sum $E_m = F_m \oplus F'_m$, we say that there is a \emph{sc-splitting} $\mathbb{E} = \mathbb{F} \oplus \mathbb{F}'$ and that $\mathbb{F}, \mathbb{F}'$ are \emph{sc-complements} in $\mathbb{E}$. Given a sc-subspace $\mathbb{F} \subset \mathbb{E}$, the quotient space $\mathbb{E}/\mathbb{F}$ has the structure of a sc-Banach space with $m$-level $E_m/F_m$; see \cite[Prop.~1.2]{HWZbook}. The following fact is established in the proof of \cite[Prop.~1.4]{HWZbook}. We reproduce the proof here for completeness.

\begin{lem} \label{lem:complementfinitecodimension}
Consider a sc-Banach space $\mathbb{E}$ and a sc-subspace $\mathbb{F} \subset \mathbb{E}$ such that the quotient $\mathbb{E}/\mathbb{F}$ is finite dimensional. Then, there exists a sc-complement $\mathbb{F} \oplus L = \mathbb{E}$ and moreover $L \subset E_{\infty}$.
\end{lem}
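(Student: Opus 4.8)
The plan is to produce $L$ as the real-linear span of finitely many smooth points of $\mathbb{E}$ chosen to project isomorphically onto the quotient $\mathbb{E}/\mathbb{F}$.

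First I would pin down the structure of the quotient levels. Since $\mathbb{F}$ is a sc-subspace, each $F_m = F \cap E_m$ is closed in $E_m$, so every quotient $E_m/F_m$ is a normed space, and it is finite dimensional because $E_0/F_0$ is. The smooth points $E_\infty$ are dense in each $E_m$ by the definition of a sc-Banach space; pushing this density forward under the continuous surjection $E_m \to E_m/F_m$, and noting that $E_\infty \cap F_m = E_\infty \cap F = F_\infty$, we get that $E_\infty/F_\infty$ is a dense linear subspace of the finite dimensional space $E_m/F_m$, hence equal to it. (Equivalently, one may invoke that $\mathbb{E}/\mathbb{F}$ is itself a sc-Banach space by \cite[Prop.~1.2]{HWZbook} and use density of its smooth points.) In particular all the quotients $E_m/F_m$ coincide with $E_\infty/F_\infty$; write $k$ for this common dimension.

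Next I would choose $e_1,\dots,e_k \in E_\infty$ whose images form a basis of $E_0/F_0 = E_\infty/F_\infty$, and set $L := \mathrm{span}_{\mathbb{R}}\{e_1,\dots,e_k\}$. Since $L \subset E_\infty$ we have $L \cap E_m = L$ for every $m$, so $L$ is a finite dimensional --- hence closed --- subspace of $E_0$ carrying the trivial sc-structure with all levels equal to $L$, i.e.\ a sc-subspace of $\mathbb{E}$, and by construction $L \subset E_\infty$. It then remains to verify that $E_m = F_m \oplus L$ for every $m \geq 0$, which is elementary linear algebra done uniformly in $m$: if $\sum_i c_i e_i \in F_m \subseteq F_0$ then all $c_i$ vanish by linear independence of the classes of the $e_i$ in $E_0/F_0$, giving $F_m \cap L = \{0\}$; and for any $x \in E_m$, using $E_m/F_m = E_\infty/F_\infty$ we may write the class of $x$ as $\sum_i c_i \bar e_i$, so that $x - \sum_i c_i e_i \in F_m$ and hence $x \in F_m + L$. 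This exhibits the desired sc-splitting $\mathbb{E} = \mathbb{F} \oplus L$ with $L \subset E_\infty$.

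I do not expect any serious obstacle. The only point worth flagging is the first step, where the density of smooth points forces the finite dimensional quotient levels $E_m/F_m$ to stabilize to $E_\infty/F_\infty$; once that is in hand, the complement is built by hand and the sc-splitting conditions are checked level by level.
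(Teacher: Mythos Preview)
Your proposal is correct and follows essentially the same approach as the paper: use density of the smooth points to lift a basis of the finite dimensional quotient to $E_\infty$, then verify the level-wise splitting $E_m = F_m \oplus L$. The paper organizes the argument slightly more economically by working only with the single quotient $E_0/F_0$ and the restrictions $p_m := p|_{E_m} : E_m \to E_0/F_0$ (using density of $E_m$ in $E_0$ for surjectivity, with kernel $F_m$), rather than introducing the separate quotients $E_m/F_m$, but the content is the same.
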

\begin{proof}
Consider the sc-continuous quotient map $p : \mathbb{E} \rightarrow \mathbb{E}/\mathbb{F}$. Since $E_{\infty} \subset E_0$ is a dense linear subspace and $p : E_0 \rightarrow E_0/F_0$ is surjective, it follows that $p(E_{\infty})$ is a dense linear subspace of the finite dimensional space $E_0/F_0$, and so we have $p(E_{\infty}) = E_0/F_0$. Hence, choosing any basis of $E_0/F_0$, there are preimages of the basis elements in $E_{\infty}$, and these preimages span a subspace $L \subset E_{\infty}$. We claim that $L$ is sc-complementary to $\mathbb{F}$ in $\mathbb{E}$. Indeed, for every $m \geq 0$ the subspace $E_m \subset E_0$ is dense and so $p_m : E_m \rightarrow E_0/F_0$ is surjective. Moreover, we have $F_m = \ker(p_m)$ and the restriction $p_m : L \rightarrow E_0/F_0$ is an isomorphism, so $F_m \oplus L = E_m$ holds.
\end{proof}

For every $l \geq 0$ there is a \emph{$l$-shifted} sc-Banach space defined by
$$\mathbb{E}^l := (E_l \hookleftarrow E_{l+1} \hookleftarrow \cdots),$$
that is,
$$(\mathbb{E}^l)_m = E_{m+l} \text{ for all } m \geq 0.$$
Conceptually, we are forgetting about finitely many levels. Note that $l$-shifting does not change the $\infty$-level. Analogously, for any subset ${S \subset \mathbb{E}}$, we define $S^l \subset \mathbb{E}^l$ by $(S^l)_m := S_{m+l}$. In particular, if $U \subset \mathbb{E}$ is open, then $U^l \subset \mathbb{E}^l$ is open for all $l \geq 0$.

A Cartesian product $\mathbb{E} \times \mathbb{F}$ of sc-Banach spaces has a natural sc-structure with $m$-level given by $(\mathbb{E} \times \mathbb{F})_m := E_m \times F_m$ equipped with any standard Banach norm on a Cartesian product. In this paper, we use the convention that all norms on Cartesian products are the sum norm $||(\cdot,\cdot)||_{E_m \times F_m} = ||\cdot||_{E_m} + ||\cdot||_{F_m}$, which is equivalent to all standard choices.

The finite dimensional space $\mathbb{E} = \mathbb{R}^n$ has a canonical sc-structure given by $E_m = \mathbb{R}^n$ equipped with the standard Euclidean norm for every $m \geq 0$, and where every inclusion $E_{m+1} \rightarrow E_m$ is the identity map.

The \emph{tangent space} \cite[Def.~1.8]{HWZbook} of a sc-Banach space $\mathbb{E}$ is the sc-Banach space
$$T\mathbb{E} := \mathbb{E}^1 \times \mathbb{E},$$
with sc-structure given by $(T\mathbb{E})_m = E_{m+1} \times E_m$ for $m \geq 0$. Given an open subset $U \subset [0,\infty)^s \times \mathbb{E}$ for some $s \geq 0$, its tangent space is
$$TU := U^1 \times (\mathbb{R}^s \times \mathbb{E}).$$

Consider sc-Banach spaces $\mathbb{E}, \mathbb{F}$, and open subsets $U \subset [0,\infty)^s \times \mathbb{E}$ and $V \subset [0,\infty)^{s'} \times \mathbb{F}$. Then a map
$$f : U \rightarrow V$$
is called \emph{sc$^0$} or \emph{sc-continuous} \cite[Def.~1.7]{HWZbook} if, for all $m \geq 0$, we have $f(U_m) \subset V_m$ and the map $f : U_m \rightarrow V_m$ is continuous. A $sc^0$ map $f : U \rightarrow V$ is called \emph{sc$^1$ with tangent map} \cite[Def.~ 1.9]{HWZbook}
\begin{equation} \label{eq:tangentmapdef}
Tf : TU \rightarrow TV
\end{equation}
defined by
\begin{align*}
Tf : U^1 \times (\mathbb{R}^s \times \mathbb{E}) &\rightarrow V^1 \times (\mathbb{R}^{s'} \times \mathbb{F})\\
(x,\xi) &\mapsto (f(x),D_xf(\xi))
\end{align*}
if, for every $x \in U^1$, there exists a bounded linear operator 
$$D_xf : \mathbb{R}^s \times E_0 \rightarrow \mathbb{R}^{s'} \times F_0$$
such that, for $\xi \in E_1$ satisfying $x+\xi \in U_1$, 
$$\lim_{|\xi|_1 \rightarrow 0} \frac{|f(x+\xi)-f(x)-D_xf(\xi)|_0}{|\xi|_1} = 0,$$
holds, and moreover such that $Tf$ is $sc^0$. Iterating the definition of sc$^1$ yields the notions of sc$^k$ for $k \geq 0$ and sc-smooth (denoted sc$^{\infty}$); see the discussion after \cite[Def.~1.9]{HWZbook}.

An important note is that, for a sc$^1$ map $f : U \rightarrow V$ and $x \in U^1$, the bounded linear operator $D_xf : \mathbb{R}^s \times E_0 \rightarrow \mathbb{R}^{s'} \times F_0$ is not necessarily sc-continuous when considered as a map between sc-Banach spaces $D_xf : \mathbb{R}^s \times \mathbb{E} \rightarrow \mathbb{R}^{s'} \times \mathbb{F}$; that is, continuity on levels higher than $0$ can fail. However, if $x \in U_{\infty}$ is a smooth point, then by \cite[Prop.~1.5]{HWZbook} the map $D_xf$ is indeed a \emph{sc-operator} \cite[Def.~1.2]{HWZbook}, i.e.\ a sc-continuous linear map. For this reason, we consider only smooth points $x$ in the following lemma so that the kernel of $D_xf$ is a sc-Banach space as the kernel of a sc-operator.

\begin{lem} {\bf (Normal form of a sc-smooth local submersion to $\mathbb{R}^n$)} \label{lem:sclocalsubmersionnormalform} Consider a sc-Banach space $\mathbb{E}$, an open subset $U \subset [0,\infty)^s \times \mathbb{E}$ for some $s \geq 0$, and a sc-smooth map $f : U \rightarrow \mathbb{R}^n$. Suppose that, for some smooth point $x \in U_{\infty} \cap (\{0\} \times \mathbb{E})$, the tangent map $(D_xf)|_{\{0\} \times \mathbb{E}} : \mathbb{E} \rightarrow \mathbb{R}^n$ of the restriction to the corner $f|_{U \cap (\{0\} \times \mathbb{E})}$ is surjective.

Then, for any sc-complement\footnote{A sc-complement $L$ of $\mathbb{K}$ exists by Lemma~\ref{lem:complementfinitecodimension}, since the surjection $(D_xf)|_{\{0\} \times \mathbb{E}} : \mathbb{E} \rightarrow \mathbb{R}^n$ induces an isomorphism $\mathbb{E}/\mathbb{K} \cong \mathbb{R}^n$.} $L$ of $\mathbb{K} := \ker (D_xf)|_{\{0\} \times \mathbb{E}}$ in $\mathbb{E}$, there exist open neighborhoods $x \in \hat{U} \subset U^1$ and $U' \subset \mathbb{R}^n \times [0,\infty)^s \times \mathbb{K}^1$ such that, writing $v \in [0,\infty)^s$ and $e \in \mathbb{E}^1$, the map
\begin{align*}
g : \hat{U} &\rightarrow U'\\
(v,e) &\mapsto (f(v,e),v,pr(e))
\end{align*}
is a sc-diffeomorphism, where $pr : \mathbb{E} = \mathbb{K} \oplus L \rightarrow \mathbb{K}$ is the projection along $L$. Moreover, for all $m \geq 0$, the map $g|_{\hat{U}_m} : \hat{U}_m \rightarrow U'_m$ is a $C^{m+1}$-diffeomorphism.

In particular, the following statements hold:
\begin{itemlist}
\item The composition $f \circ g^{-1} : U' \rightarrow \mathbb{R}^n$ is projection onto the $\mathbb{R}^n$-coordinate.

\item $g$ preserves the $[0,\infty)^s$-coordinate.
\end{itemlist}
\end{lem}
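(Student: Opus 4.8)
The plan is to reduce the sc-smooth case to the classical $C^1$ case (Lemma~\ref{lem:normalformbanachspacecase}) via a $1$-shift, then bootstrap regularity on all higher levels. First I would observe that, since $x \in U_\infty$ is a smooth point, by \cite[Prop.~1.5]{HWZbook} the tangent map $D_xf : \mathbb{R}^s \times \mathbb{E} \to \mathbb{R}^n$ is a sc-operator, so $\mathbb{K} := \ker (D_xf)|_{\{0\}\times\mathbb{E}}$ is a sc-subspace. Because $(D_xf)|_{\{0\}\times\mathbb{E}}$ is surjective onto $\mathbb{R}^n$ it induces an isomorphism $\mathbb{E}/\mathbb{K} \cong \mathbb{R}^n$, so $\mathbb{E}/\mathbb{K}$ is finite dimensional and Lemma~\ref{lem:complementfinitecodimension} provides a sc-complement $L$ of $\mathbb{K}$ in $\mathbb{E}$ with $L \subset E_\infty$; any such $L$ is the one in the statement, and $D_x\underline{f}|_L : L \to \mathbb{R}^n$ is an isomorphism (it is the composite of the quotient isomorphism with $L \cong \mathbb{E}/\mathbb{K}$). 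Note $L$ being finite dimensional means it carries the trivial sc-structure $L = L^1$, so $\mathbb{E}^1 = \mathbb{K}^1 \oplus L$ and the projection $pr : \mathbb{E} \to \mathbb{K}$ restricts to a sc-operator $\mathbb{E}^1 \to \mathbb{K}^1$; in particular $g$ is sc-smooth as a map on $U^1$ since $f$ is sc-smooth and $pr$ is sc-smooth (linear).

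The key step: apply the classical lemma on each level $m \geq 0$ to the shifted data. For fixed $m$, the sc-smooth map $f : U \to \mathbb{R}^n$ restricts to a map $f : U_{m+1} \to \mathbb{R}^n$ which, by the relationship between sc-differentiability and classical differentiability (iterating the definition of $\mathrm{sc}^k$ and using that $\mathbb{R}^n$ has trivial sc-structure), is $C^{m+1}$ as a map between open subsets of the quadrant $[0,\infty)^s \times E_{m+1}$; moreover its classical differential at $x$ agrees with $D_xf$ restricted to level $m+1$. Now I would check that $d_x\underline{f}|_{\{0\}\times E_{m+1}} : E_{m+1} \to \mathbb{R}^n$ is still surjective with kernel $K_{m+1} := \mathbb{K}\cap E_{m+1}$ and complement $L$ inside $E_{m+1}$ (here I use $L\subset E_\infty \subset E_{m+1}$ and that $E_{m+1} = K_{m+1}\oplus L$ because $L$ is a sc-complement of $\mathbb{K}$). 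Thus Lemma~\ref{lem:normalformbanachspacecase} applies on level $m+1$: there are open neighborhoods of $x$ on which the level-$(m+1)$ restriction of $g$ is a $C^{m+1}$-diffeomorphism onto an open subset of $\mathbb{R}^n \times [0,\infty)^s \times K_{m+1}$.

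The remaining work is to assemble these level-wise statements into a single sc-diffeomorphism. The main obstacle is that the neighborhoods produced by Lemma~\ref{lem:normalformbanachspacecase} on different levels $m$ a priori differ, so one must choose them compatibly to get a genuine open set in the sc-Banach space $\mathbb{E}^1$ (i.e.\ a single $\hat U$ open on level $0$ of $\mathbb{E}^1$ whose induced higher-level subsets are the domains of the diffeomorphisms). To handle this I would first run the argument on level $0$ of $\mathbb{E}^1$ (that is, level $1$ of $\mathbb{E}$) to get $\hat U \subset U^1$ open and $U' \subset \mathbb{R}^n \times [0,\infty)^s \times \mathbb{K}^1$ open with $g|_{\hat U} : \hat U \to U'$ a $C^1$-diffeomorphism; then observe that $g$ is sc-smooth, $g^{-1}$ is sc$^0$ (since on each higher level $g^{-1}$ agrees with the classical $C^{m+1}$-inverse of the level-$m$ restriction, which maps $U'_m$ into $\hat U_m$ by uniqueness of inverses), and $g^{-1}$ is sc$^1$ with $T(g^{-1}) = (Tg)^{-1}$ by the sc-chain rule applied to $g\circ g^{-1} = \mathrm{id}$ and $g^{-1}\circ g = \mathrm{id}$, using that $D_y g$ is invertible for $y$ near $x$. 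Iterating gives sc-smoothness of $g^{-1}$. The two bulleted consequences are then immediate: $f\circ g^{-1}$ is the $\mathbb{R}^n$-projection because the first component of $g$ is $f$ itself, and $g$ preserves the $[0,\infty)^s$-coordinate because the second component of $g$ is $(v,e)\mapsto v$. Shrinking $\hat U$ once more if necessary ensures $g(\hat U)$ is of the stated product form. The $C^{m+1}$-diffeomorphism claim on each level is exactly what the level-$(m+1)$ application of Lemma~\ref{lem:normalformbanachspacecase} gives, together with the compatibility of the restrictions.
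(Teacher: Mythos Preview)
Your overall strategy---reduce to the classical $C^1$ normal form via a $1$-shift, then bootstrap regularity level by level---is exactly the paper's strategy, and your setup (sc-subspace $\mathbb{K}$, sc-complement $L \subset E_\infty$, sc-smoothness of $g$) is correct. However, the argument for sc-smoothness of $g^{-1}$ has a genuine gap.

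You write that $g^{-1}$ is $\mathrm{sc}^1$ ``by the sc-chain rule applied to $g\circ g^{-1} = \mathrm{id}$ and $g^{-1}\circ g = \mathrm{id}$.'' This is circular: the sc-chain rule requires \emph{both} factors to be $\mathrm{sc}^1$ before it says anything, so it cannot be used to deduce that $g^{-1}$ is $\mathrm{sc}^1$. More fundamentally, the inverse function theorem fails in sc-calculus (the paper cites \cite{FZW} for this), so knowing that $D_yg$ is invertible does not give sc-regularity of the inverse. The special feature here---that the target $\mathbb{R}^n$ has trivial sc-structure---must be exploited directly, not via an IFT-style argument.

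The paper's fix is to invoke \cite[Prop.~1.8]{HWZbook}, which gives a sufficient criterion for sc-smoothness: $g^{-1}$ is sc-smooth provided that for all $m,l \geq 0$ the restriction $g^{-1}|_{U'_{m+l}} : U'_{m+l} \to \hat U_m$ is $C^{l+1}$. This reduces to showing that $g|_{\hat U_m} : \hat U_m \to U'_m$ is a $C^{m+1}$-diffeomorphism for every $m$, with the \emph{fixed} domain $\hat U$ coming from the single application of Lemma~\ref{lem:normalformbanachspacecase} at level $1$. The $C^{m+1}$-regularity of $g|_{\hat U_m}$ follows from \cite[Prop.~1.7]{HWZbook}; injectivity is automatic as a restriction of a bijection; the derivative is invertible by the same argument as in Lemma~\ref{lem:normalformbanachspacecase}. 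The substantive point you glossed over as ``uniqueness of inverses'' is \emph{surjectivity} of $g|_{\hat U_m}$ onto $U'_m$: given $g(v,e) \in U'_m$, one has $pr(e) \in K_{m+1}$, and since $e - pr(e) \in L \subset E_\infty \subset E_{m+1}$, it follows that $e \in E_{m+1}$, hence $(v,e) \in \hat U_m$. This is where $L \subset E_\infty$ is used essentially, and it replaces your proposed separate applications of Lemma~\ref{lem:normalformbanachspacecase} on each level (which, as you noted, would give incompatible neighborhoods).
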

\begin{proof}
We claim that the Banach space $E_1$ (the $1$-level of the sc-Banach space $\mathbb{E}$), the open subset $U_1 \subset [0,\infty)^s \times E_1$, and the $C^1$-map
$$f_1 := f|_{U_1} : U_1 \rightarrow \mathbb{R}^n$$
satisfy the hypotheses of Lemma~\ref{lem:normalformbanachspacecase} (the local submersion normal form in the classical Banach case) at the given point $x$. First of all, the map $f_1$ is indeed $C^1$ by \cite[Prop.~1.7]{HWZbook}. By \cite[Prop.~1.5]{HWZbook}, we have $d_xf_1 = (D_xf)|_{\mathbb{R}^s \times E_1}.$

We now deduce the surjectivity of the tangent map
\begin{equation} \label{eq:theclassicaltangent}
(d_xf_1)|_{\{0\} \times E_1} = (D_xf)|_{\{0\} \times E_1} : E_1 \rightarrow \mathbb{R}^n.
\end{equation}
By hypothesis, the map $(D_xf)|_{\{0\} \times E_0}$ is surjective. Then since $E_1 \subset E_0$ is dense, it follows that $(D_xf)|_{\{0\} \times E_0}(E_1)$ is a dense linear subspace of $\mathbb{R}^n$ and hence is equal to $\mathbb{R}^n$, proving the claimed surjectivity of the map \eqref{eq:theclassicaltangent}.

Let $L \subset E_{\infty}$ be any sc-complement of $\mathbb{K} = \ker (D_xf)|_{\{0\} \times \mathbb{E}}$ in $\mathbb{E}$, which exists by Lemma~\ref{lem:complementfinitecodimension} since the surjection $(D_xf)|_{\{0\} \times \mathbb{E}} : \mathbb{E} \rightarrow \mathbb{R}^n$ induces an isomorphism $\mathbb{E}/\mathbb{K} \cong \mathbb{R}^n$. In particular, on the $1$-level, we have $K_1 \oplus L = E_1$. Notice that $K_1$ is the kernel of the map \eqref{eq:theclassicaltangent}.

We have shown that the map $f_1$ satisfies the hypotheses of the classical local submersion normal form (Lemma~\ref{lem:normalformbanachspacecase}), yielding an open neighbhorhood $\hat{U} \subset U_1$ of $x$ and an open subset $U' \subset \mathbb{R}^n \times [0,\infty)^s \times K_1$ such that, writing $v \in [0,\infty)^s$ and $e \in E_1$, the map
\begin{align*}
g : \hat{U} &\rightarrow U'\\
(v,e) &\mapsto (f_1(v,e),v,pr(e))
\end{align*}
is a $C^1$-diffeomorphism, where $pr : E_1 = K_1 \oplus L \rightarrow K_1$ is the projection along $L$.

We may view $\hat{U}$ and $U'$ as open neighborhoods in the sc-calculus sense, i.e.
$$\hat{U} \subset [0,\infty)^s \times \mathbb{E}^1 \, \text{ and } \, U' \subset \mathbb{R}^n \times [0,\infty)^s \times \mathbb{K}^1.$$
We claim that $g$ is a sc-diffeomorphism. First of all, it is sc-smooth since $f_1$ is sc-smooth by hypothesis and since the projection $$pr : \mathbb{E}^1 = \mathbb{K}^1 \oplus L \rightarrow \mathbb{K}^1$$ is sc-smooth.

To show that $g^{-1}$ is sc-smooth, we show that it satisfies the conditions of \cite[Prop.~1.8]{HWZbook}. Let $m,l \geq 0$. We must show that $g^{-1}$ induces a map $g^{-1}|_{U'_{m+l}} : U'_{m+l} \rightarrow \hat{U}_{m}$ that is $C^{l+1}$.

It suffices to show that, for all $m \geq 0$, $g^{-1}$ restricts to a $C^{m+1}$-map $g^{-1}|_{U'_m} : U'_m \rightarrow \hat{U}_m$, because then given $m,l \geq 0$ the composition $U'_{m+l} \xrightarrow{g^{-1}} \hat{U}_{m+l} \hookrightarrow \hat{U}_{m}$ is $C^{m+l+1}$ since the inclusion $\hat{U}_{m+l} \hookrightarrow \hat{U}_{m}$ is continuous and linear hence $C^{\infty}$. So, to complete the proof of the lemma, it suffices to show that
$$g : \hat{U}_m \rightarrow U'_m$$
is a $C^{m+1}$-diffeomorphism for all $m \geq 0$.

By \cite[Prop.~ 1.7]{HWZbook}, the restriction
$$f|_{\hat{U}_{m}} : \hat{U}_{m} = \hat{U} \cap (\mathbb{R}^s \times E_{m+1}) \rightarrow \mathbb{R}^n$$
is $C^{m+1}$. It follows that the restriction
$$g|_{\hat{U}_{m}} : \hat{U}_{m} \rightarrow U'_{m}$$
is $C^{m+1}$.

To see that $g|_{\hat{U}_{m}} : \hat{U}_m \rightarrow U'_m$ is a bijection, note first that injectivity holds since it is a restriction of the bijection $g$. To see surjectivity, note first that, since $g$ is surjective onto all of $U'$, it suffices to show that $g(v,e) \in U'_{m} \implies (v,e) \in \hat{U}_{m}.$ Note that $U'_{m} \subset \mathbb{R}^n \times [0,\infty)^s \times K_{m+1}$. So, from the definition of $g$, we have the implication $$g(v,e) \in U'_{m} \implies pr(e) \in K_{m+1} \subset E_{m+1}.$$ Since $e - pr(e) \in L \subset E_{\infty},$ we conclude that $e \in E_{m+1}.$ Hence indeed $(v,e) \in \hat{U}_{m} = \hat{U} \cap (\mathbb{R}^s \times E_{m+1})$ holds, proving surjectivity of $g|_{\hat{U}_{m}}$ onto $U'_{m}$. The same reasoning shows that the classical tangent map
$$d(g|_{\hat{U}_{m}}) : \hat{U}_{m} \times (\mathbb{R}^s \times E_{m+1}) \rightarrow U'_{m} \times (\mathbb{R}^n \times \mathbb{R}^s \times K_{m+1})$$
is bijective.

The inverse
$$(g^{-1})|_{U'_{m}} = (g|_{\hat{U}_{m}})^{-1} : U'_{m} \rightarrow \hat{U}_{m}$$
is $C^{m+1}$ because it is the inverse of a $C^{m+1}$ map with invertible derivative. We have shown that $g|_{\hat{U}_{m}} : \hat{U}_m \rightarrow U'_m$ is a $C^{m+1}$-diffeomorphism, completing the proof of the lemma.
\end{proof}

We briefly review general partial quadrants, which up to a linear change of coordinates are the same as the standard quadrants \eqref{eq:partialquadrantstandardform}, i.e.\ of the form $[0,\infty)^s \times \mathbb{E}$ for some sc-Banach space $\mathbb{E}$. This level of generality makes constructions more convenient and is equivalent to working with standard partial quadrants only.

A \emph{partial quadrant} \cite[Def.~1.6]{HWZbook} in a sc-Banach space $\mathbb{E}$ is a closed convex subset $C \subset \mathbb{E}$ such that there exists another sc-Banach space $\mathbb{E}'$ and a linear sc-isomorphism
\begin{equation} \label{eq:linearisotostandardquadrantdef}
\Psi : \mathbb{E} \rightarrow \mathbb{R}^s \times \mathbb{E}' \text{ satisfying }
\Psi(C) = [0,\infty)^s \times \mathbb{E}' \text{ for some } s \geq 0.
\end{equation}
That is, all partial quadrants come from applying a linear change of coordinates to a partial quadrant in the \emph{standard form}:
\begin{equation}\label{eq:partialquadrantstandardform}
C = [0,\infty)^s \times \mathbb{E}' \subset \mathbb{R}^s \times \mathbb{E}' = \mathbb{E}.
\end{equation}
The \emph{degeneracy index} \cite[Def.~1.10]{HWZbook}  $d_{C} : C \rightarrow \mathbb{N}_0$ is defined for $x \in C$ by
$$d_{C}(x) := \# \{ i \in \{1,\ldots,s\} \, | \, \text{the } i\text{-th coordinate of } \Psi(x) \text{ is } 0 \},$$
which is independent of the choice of $\Psi$ by \cite[Lem.~ 1.1]{HWZbook}. Conceptually, the degeneracy index of a point in a partial quadrant is the local notion of boundary and corner index in an $M$-polyfold.

Later, we need the following properties of the degeneracy index. Let $C$ be a partial quadrant of a sc-Banach space $\mathbb{E}$ and $n \geq 0$. Then $\mathbb{R}^n \times C$ is a partial quadrant of $\mathbb{R}^n \times \mathbb{E}$ and
\begin{equation} \label{eq:degslice}
d_{\mathbb{R}^n \times C}(p,x) = d_C(x) \text{ for all } (p,x) \in \mathbb{R}^n \times C.
\end{equation}
Let $C_i$ be a partial quadrant of $\mathbb{E}_i$ for $i = 1,2.$ Then $C_1 \times C_2$ is a partial quadrant of $\mathbb{E}_1 \times \mathbb{E}_2$ and
\begin{equation} \label{eq:degcartesian}
d_{C_1 \times C_2}(x_1,x_2) = d_{C_1}(x_1) + d_{C_2}(x_2) \text{ for all } (x_1,x_2) \in C_1 \times C_2.
\end{equation}

We recall from \cite[Def.~2.16]{HWZbook} the following linear sc-subspace $\mathbb{E}_x \subset \mathbb{E}$ associated to a point $x$ in a partial quadrant $C \subset \mathbb{E}$. First assume that $C$ is in the standard form \eqref{eq:partialquadrantstandardform} and write $x = (x_1,\ldots,x_s,e_x) \in C.$ Then, the sc-subspace
\begin{equation} \label{eq:Exstandard}
\mathbb{E}_x := \{ (v_1,\ldots,v_s,e) \in \mathbb{R}^s \times \mathbb{E}' \,\, | \,\, v_i = 0 \text{ if } x_i = 0\} \subset \mathbb{E}
\end{equation}
conceptually is the tangent space of the intersection of all of the faces of $C$ that contain $x$. For a general partial quadrant $C \subset \mathbb{E}$ and $x \in C$, the subspace $\mathbb{E}_x \subset \mathbb{E}$ is given by
\begin{equation} \label{eq:Ex}
\mathbb{E}_x := \Psi^{-1}((\mathbb{R}^s \times \mathbb{E}')_{\Psi(x)}),
\end{equation}
where $\Psi$ is any linear isomorphism of the form \eqref{eq:linearisotostandardquadrantdef}.

\section{Slices: the local picture} \label{sec:slices}

\subsection{Sliced sc-retracts} \label{subsec:sliceretracts}
In this section, we introduce the new notion of $\mathbb{R}^n$-sliced sc-retracts (Definition~\ref{dfn:sliceretraction}), which we use later as the local models in our definition of a slice $\tilde{\cB} \subset \cB$ (Definition~\ref{dfn:globalslice}) of a tame $M$-polyfold $\cB$ (Definition~\ref{dfn:tameMpolyfold}). We prove in Lemma~\ref{lem:inducedtameretraction} that a $\mathbb{R}^n$-sliced sc-retract $\cO$ induces a tame sc-retract $\tilde{\cO} \subset \cO$ which has codimension-$n$ tangent spaces $T_x\tilde{\cO} \subset T_x \cO$ at every $x \in \tilde{\cO}_1$. The global definition of a slice $\tilde{\cB} \subset \cB$ is then a subspace such that around every point $x \in \tilde{\cB}$ there is an $M$-polyfold chart to a $\mathbb{R}^{n_x}$-sliced sc-retract $\cO$ that locally identifies $\tilde{\cB}$ with the induced tame sc-retract $\tilde{\cO}$.

We first recall the local structure of tame $M$-polyfolds: tame sc-retracts (Definition~\ref{dfn:tameretraction}). Consider a relatively open subset $U$ of a partial quadrant $C$ in a sc-Banach space $\mathbb{E}$. A sc-smooth map $$r : U \rightarrow U$$ satisfying $$r \circ r = r$$ is called a \emph{sc-smooth retraction (or sc-retraction)} \cite[Def.~2.1]{HWZbook} on $U$, and the image $\cO := r(U)$ of such a map is called a \emph{sc-smooth retract (or sc-retract)}. The triple $(\cO, C, \mathbb{E})$ is also called a \emph{sc-retract} \cite[Def.~2.2]{HWZbook}.

We note that the notion of a smooth retract makes sense in the classical Banach space setting, i.e.\ given an ordinary Banach space $E$, we can define a smooth retract $\mathfrak{O}$ to be any image $\mathfrak{O} = r(U)$ of a smooth map $r : U \rightarrow U$ that satisfies $r \circ r = r$, where $U \subset [0,\infty)^s \times E$ is open. However, modeling spaces on these smooth retracts reproduces the definition of a Banach manifold because, by \cite[Prop.~2.1]{HWZbook}, a smooth retract $\mathfrak{O}$ is a $C^{\infty}$-sub-Banach manifold of $E$. The sc-retracts can have much more complicated structure, including locally varying dimension. This is a key difference between classical differentiability and sc-differentiability which allows $M$-polyfolds to have local dimension jumps and other non-manifold-like structure. Polyfolds arising in applications have these local dimension jumps near broken and nodal curves.

A map $\varphi : \cO \rightarrow \cO'$ between sc-retracts $(\cO,C,\mathbb{E})$ and $(\cO',C',\mathbb{E}')$ is called \emph{sc-smooth} \cite[Def.~2.4]{HWZbook} if the composition $\varphi \circ r : U \rightarrow \cO' \subset \mathbb{E}'$ is sc-smooth as a map $U \rightarrow \mathbb{E}'$, where $U \subset C$ is open and $r : U \rightarrow U$ is any sc-retraction onto $r(U) = \cO$. This definition is independent of the choice of open set $U$ and sc-retraction $r$ by \cite[Prop.~2.3]{HWZbook}. The chain rule holds for sc-smooth maps between sc-retracts; see \cite[Thm.~2.1]{HWZbook}.

The \emph{tangent space \cite[Def.~2.3]{HWZbook} of a sc-retract} $(\cO,C,\mathbb{E})$ is the image
\begin{equation} \label{eq:tangentspacedef}
T\cO := Tr(TU),
\end{equation}
where $r : U \rightarrow U$ is any sc-retraction on some open subset $U \subset C$ with image $r(U) = \cO$ and $Tr : TU \rightarrow TU$ is the tangent map (see \eqref{eq:tangentmapdef}) of $r$. The tangent space $T\cO$ is well-defined, i.e.\ independent of $U$ and $r$, by \cite[Prop.~2.2]{HWZbook}. The tangent space at $x \in \cO_1$ is defined to be
\begin{equation} \label{eq:tangentspaceatpoint}
T_x\cO := D_xr(T_xU).
\end{equation}
For a smooth point $x \in \cO_{\infty}$, the tangent space $T_x\cO$ is a sc-Banach space; see \cite[Def.~2.10]{HWZbook}. The \emph{reduced tangent space} \cite[Def.~2.15]{HWZbook} is the subspace of $T_x\cO$ defined by
\begin{equation} \label{eq:reducedtangentspacedef}
T_x^R\cO := T_x\cO \cap \mathbb{E}_x,
\end{equation}
where $\mathbb{E}_x \subset \mathbb{E}$ is the subspace from \eqref{eq:Ex}. Conceptually, $T_x^R\cO$ consists of those tangent vectors that point in directions that preserve the degeneracy index, i.e.\ they lie along the intersection of all of the local faces that contain $x$. Note that in \cite[Def.~2.15]{HWZbook} the reduced tangent space is only defined at smooth points $x \in \cO_{\infty}$. This is because $T_x^R\cO$ can be proven to be invariant under sc-diffeomorphisms $\varphi : \cO \rightarrow \cO'$, i.e.\ $D_x\varphi(T_x^R\cO) = T_{\varphi(x)}^R\cO'$ holds, only for smooth points $x$; see \cite[Prop.~2.8]{HWZbook}. This invariance proves that the reduced tangent space at a smooth point in an $M$-polyfold is well-defined, i.e.\ independent of the chart. The invariance is proven using the characterization \cite[Lem.~2.4]{HWZbook} of the reduced tangent space $T_x^R\cO$ at smooth points $x \in \cO_{\infty}$ as the closure of the space of derivatives of sc-smooth paths $\g : (-\epsilon,\epsilon) \rightarrow \cO$ satisfying $\g(0) = x$. This only works for smooth points $x \in \cO_{\infty}$ since the image of any sc-continuous map $(-\epsilon, \epsilon) \rightarrow \cO$ is contained in $\cO_{\infty}$ because $(-\epsilon,\epsilon) \subset \mathbb{R}$ has the trivial sc-structure where all levels are equal.

As in \cite{HWZbook}, we must require sc-retractions to be well-behaved with respect to the boundary faces of the partial quadrant $C$ in the following way in order for the full polyfold machinery to work as required in applications.

\begin{dfn} \label{dfn:tameretraction} \cite[Def.~2.17]{HWZbook}
Consider an open subset $U$ of a partial quadrant $C$ of a sc-Banach space $\mathbb{E}$. A sc-retraction $r : U \rightarrow U$ is called a {\bf tame sc-retraction} if it satisfies the following conditions:

(1) $d_C(r(x)) = d_C(x)$ for all $x \in U$.

(2) At every smooth point $x \in \cO_{\infty} = \cO \cap E_{\infty}$, there exists a sc-subspace $A \subset \mathbb{E}$ such that $\mathbb{E} = T_x\cO \oplus A$ and $A \subset \mathbb{E}_x$ (see \eqref{eq:Ex} for $\mathbb{E}_x$).

If so, then the sc-retract $\cO = r(U)$ is called a {\bf tame sc-retract} (and so is the triple $(\cO, C, \mathbb{E})$).
\end{dfn}

We introduce the following new notions of $\mathbb{R}^n$-sliced sc-retractions and $\mathbb{R}^n$-sliced sc-retracts.

\begin{dfn} \label{dfn:sliceretraction}
Consider a partial quadrant $C$ of a sc-Banach space $\mathbb{E}$ and an open subset $U \subset \mathbb{R}^n \times C$ for some $n \geq 0$. A tame sc-retraction $r : U \rightarrow U$ is called a {\bf $\mathbold{\mathbb{R}^n}$-sliced sc-retraction} if it satisfies
\begin{equation} \label{eq:sliceretraction}
\pi_{\mathbb{R}^n}  \circ r = \pi_{\mathbb{R}^n} \text{ on } U,
\end{equation}
i.e.\ $r$ preserves the $\mathbb{R}^n$-coordinate.

If so, then the tame sc-retract $\cO = r(U)$ and the triple $$(\cO,\mathbb{R}^n \times C, \mathbb{R}^n \times \mathbb{E})$$ is called a {\bf $\mathbold{\mathbb{R}^n}$-sliced sc-retract}.
\end{dfn}

In the following lemma, we show that for any $\mathbb{R}^n$-sliced sc-retract $\cO$ in $\mathbb{R}^n \times C$, the set $\tilde{\cO} := \cO \cap (\{0\} \times C)$ is a tame sc-retract. Later, we use the inclusion $\tilde{\cO} \subset \cO$ to define the local models for a slice $\tilde{\cB} \subset \cB$ (Definition~\ref{dfn:globalslice}), which is our new notion of an $M$-polyfold $\tilde{\cB}$ embedded with finite codimension in an ambient $M$-polyfold $\cB$.

\begin{lem} \label{lem:inducedtameretraction}
Consider a partial quadrant $C$ of a sc-Banach space $\mathbb{E}$ and a $\mathbb{R}^n$-sliced sc-retract $(\cO, \mathbb{R}^n \times C, \mathbb{R}^n \times \mathbb{E})$.

Then, for any open subset $U \subset \mathbb{R}^n \times C$ and $\mathbb{R}^n$-sliced sc-retraction $r : U \rightarrow U$ such that $r(U) = \cO$, the set $\tilde{U} := U \cap (\{0\} \times C)$ is open in $C$ and the restriction
$$\tilde{r} := r|_{\tilde{U}} : \tilde{U} \rightarrow \tilde{U}$$
is a tame sc-retraction onto $\tilde{\cO} := \tilde{r}(\tilde{U}).$  We call $\tilde{r}$ the {\bf tame sc-retraction induced by $r$}.

Moreover,
\begin{equation} \label{eq:inducedscretract}
\tilde{\cO} = \cO \cap (\{0\} \times C)
\end{equation}
holds, so in particular $\tilde{\cO}$ does not depend\footnote{The independence of $\tilde{\cO}$ on the choice of $U$ and $r$ frees us from keeping track of these choices when performing future constructions. That is, later, we are free to assume that $\tilde{\cO}$ is induced by any such $U$ and $r$.} on the choices of $U$ and $r$. We may view $\tilde{\cO}$ as a subset of $C$, and we call $(\tilde{\cO},C,\mathbb{E})$ the {\bf tame sc-retract induced by the $\mathbb{R}^n$-sliced sc-retract} $(\cO, \mathbb{R}^n \times C, \mathbb{R}^n \times \mathbb{E})$.

At every $x \in \tilde{\cO}_1$, the inclusion $\tilde{\cO} \subset \cO$ induces an inclusion of tangent spaces $T_x \tilde{\cO} \subset T_x \cO$ satisfying
\begin{align}
T_x \tilde{\cO} &= T_x \cO \cap (\{0\} \times \mathbb{E}) \label{eq:inducedtangentspacerelationship},\\
T_x^R\tilde{\cO} &= T_x^R \cO \cap (\{0\} \times \mathbb{E}) \label{eq:inducedreducedtangentspacerelationship},
\end{align}
and
\begin{equation} \label{eq:codimentangentspacesslicedretract}
T_x \cO / T_x\tilde{\cO} \cong \mathbb{R}^n.
\end{equation}
We say that $\tilde{\cO}$ is {\bf codimension-$\mathbold{n}$} in $\cO$.

If $x \in \tilde{\cO}_{\infty}$ is a smooth point, then the inclusion $T_x\tilde{\cO} \hookrightarrow T_x\cO$ induces a linear isomorphism
\begin{equation} \label{eq:inducedreducatedtangentspacequotientrelationship}
T_x\tilde{\cO}/T^R_x\tilde{\cO} \cong T_x\cO/T_x^R\cO.
\end{equation}
\end{lem}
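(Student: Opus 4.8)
The plan is to set up the local picture concretely and reduce every claim to a computation with the sc-retraction $r$ and its tangent map $D_x r$. I would first fix an open $U \subset \mathbb{R}^n \times C$ and an $\mathbb{R}^n$-sliced sc-retraction $r : U \to U$ with $r(U) = \cO$, and write points of $\mathbb{R}^n \times \mathbb{E}$ as pairs $(p, e)$ with $p \in \mathbb{R}^n$, $e \in \mathbb{E}$. The slicing condition \eqref{eq:sliceretraction} says $\pi_{\mathbb{R}^n} \circ r = \pi_{\mathbb{R}^n}$, so $r$ maps $\{0\} \times C$ into itself; since $\{0\} \times C$ is closed and convex in $\{0\} \times \mathbb{E} \cong \mathbb{E}$ and is itself a partial quadrant there, $\tilde U := U \cap (\{0\} \times C)$ is relatively open in $C$ and $\tilde r := r|_{\tilde U}$ is a well-defined sc-smooth map $\tilde U \to \tilde U$ with $\tilde r \circ \tilde r = \tilde r$, hence a sc-retraction; \eqref{eq:inducedscretract} is then immediate from $r(\{0\}\times C) \subset \{0\}\times C$ and idempotency. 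Tameness of $\tilde r$: condition (1) $d_C(\tilde r(x)) = d_C(x)$ follows from tameness of $r$ together with \eqref{eq:degslice}, which says $d_{\mathbb{R}^n \times C}(0, x) = d_C(x)$; condition (2) is the normal-splitting condition, which I address after establishing the tangent space formulas, since it is cleanest to deduce it there.

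Next I would compute tangent spaces. Differentiating \eqref{eq:sliceretraction} gives $\pi_{\mathbb{R}^n} \circ D_x r = \pi_{\mathbb{R}^n}$ on $T_x U = \mathbb{R}^n \times \mathbb{R}^s \times \mathbb{E}'$ (abbreviating the quadrant factor), i.e.\ $D_x r$ is the identity on the $\mathbb{R}^n$-component. Hence the image $T_x \cO = D_x r(T_x U)$ surjects onto $\mathbb{R}^n$ via $\pi_{\mathbb{R}^n}$, and the fiber over $0$ is exactly $T_x \cO \cap (\{0\} \times \mathbb{E})$. For a point $x \in \tilde\cO_1 \subset \{0\} \times C$, the chain rule / the independence of the tangent space of a retract from the choice of retraction identifies $T_x \tilde\cO = D_x\tilde r(T_x \tilde U) = D_x r(\{0\}\times \mathbb{R}^s \times \mathbb{E}')$, which is precisely the subset of $T_x\cO$ with vanishing $\mathbb{R}^n$-component (here I use that $D_x r$ fixes the $\mathbb{R}^n$-coordinate, so $D_x r(p, w) = (p, \ldots)$ and the image of $\{0\}\times \cdots$ is the kernel of $\pi_{\mathbb{R}^n}|_{T_x\cO}$ — this needs a short argument that $D_x r(\{p\}\times \mathbb{R}^s\times\mathbb{E}') = \{p\} \times (\text{something independent of } p)$ using linearity of $D_x r$). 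That gives \eqref{eq:inducedtangentspacerelationship}, and \eqref{eq:codimentangentspacesslicedretract} follows since $\pi_{\mathbb{R}^n} : T_x\cO \to \mathbb{R}^n$ is surjective with kernel $T_x\tilde\cO$. For \eqref{eq:inducedreducedtangentspacerelationship}, recall $T_x^R\cO = T_x\cO \cap (\mathbb{R}^n\times\mathbb{E})_x$ and, on the standard-form partial quadrant $\mathbb{R}^n \times C$ with $C = \mathbb{R}^s\times\mathbb{E}'$ in standard form, \eqref{eq:Exstandard} gives $(\mathbb{R}^n\times\mathbb{E})_{(0,x)} = \mathbb{R}^n \times \mathbb{E}_x$ (the $\mathbb{R}^n$-directions are never constrained); intersecting with $\{0\}\times\mathbb{E}$ and combining with \eqref{eq:inducedtangentspacerelationship} yields $T_x^R\tilde\cO = T_x\tilde\cO \cap \mathbb{E}_x = (T_x\cO \cap \{0\}\times\mathbb{E}) \cap (\{0\}\times\mathbb{E}_x) = T_x^R\cO \cap (\{0\}\times\mathbb{E})$.

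Then I would return to tameness condition (2) for $\tilde r$ at a smooth point $x \in \tilde\cO_\infty$. By tameness of $r$ there is a sc-subspace $A \subset \mathbb{R}^n \times \mathbb{E}$ with $(\mathbb{R}^n\times\mathbb{E}) = T_x\cO \oplus A$ and $A \subset (\mathbb{R}^n\times\mathbb{E})_x = \mathbb{R}^n\times\mathbb{E}_x$. Since $\pi_{\mathbb{R}^n}|_{T_x\cO}$ is surjective, a dimension count shows $A \subset \{0\}\times\mathbb{E}$, so in fact $A \subset \{0\}\times\mathbb{E}_x$; intersecting the splitting with $\{0\}\times\mathbb{E}$ gives $\{0\}\times\mathbb{E} = (T_x\cO \cap \{0\}\times\mathbb{E}) \oplus A = T_x\tilde\cO \oplus A$ with $A \subset \mathbb{E}_x$, which is exactly condition (2) for $\tilde r$ (after identifying $\{0\}\times\mathbb{E} \cong \mathbb{E}$). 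Finally, for \eqref{eq:inducedreducatedtangentspacequotientrelationship} at a smooth point $x$: the inclusion $T_x\tilde\cO \hookrightarrow T_x\cO$ descends to a map $T_x\tilde\cO / T_x^R\tilde\cO \to T_x\cO / T_x^R\cO$ by \eqref{eq:inducedreducedtangentspacerelationship}, which makes it injective. For surjectivity, given $w \in T_x\cO$ write $w = (p, w')$ with $p = \pi_{\mathbb{R}^n}(w)$; since $\mathbb{R}^n \subset (\mathbb{R}^n\times\mathbb{E})_x$, the element $(p, 0)$ lies in $(\mathbb{R}^n\times\mathbb{E})_x$, and $(p,0) \in T_x\cO$ (it equals $D_x r(p,0)$, as $D_x r$ fixes the $\mathbb{R}^n$-coordinate and $(p,0) = D_x r(p,0)$ requires checking the $\mathbb{E}$-component vanishes — more robustly, one picks any preimage and subtracts something in $T_x\tilde\cO$). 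Hence $w - (p,0) \in T_x\cO \cap (\{0\}\times\mathbb{E}) = T_x\tilde\cO$, and $w \equiv (p,0) \pmod{T_x\tilde\cO}$... which is not quite what I want; the right statement is that $w$ and $w - (p,0)$ differ by a reduced-tangent element. The correct surjectivity argument: modulo $T_x^R\cO$, which contains $(p,0)$ since $(p,0)\in T_x\cO\cap(\mathbb{R}^n\times\mathbb{E})_x = T_x^R\cO$, we have $w \equiv w - (p,0) \in T_x\tilde\cO$, so the class of $w$ is hit. The main obstacle, I expect, is bookkeeping the identification $\{0\}\times\mathbb{E}\cong\mathbb{E}$ cleanly and verifying the independence-of-$r$ claims for $\tilde r$ and $\tilde\cO$ — these rely on the corresponding independence statements in \cite{HWZbook} applied to the restricted retraction, together with \eqref{eq:inducedscretract}; the tangent-space algebra itself is routine once the splitting $D_x r = \mathrm{id}_{\mathbb{R}^n} \oplus (\text{rest})$ is recorded.
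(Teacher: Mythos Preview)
Your overall structure matches the paper's, but there are two real gaps. First, in verifying tameness condition (2) for $\tilde r$, you take an \emph{arbitrary} sc-complement $A$ of $T_x\cO$ provided by tameness of $r$ and claim ``a dimension count shows $A \subset \{0\}\times\mathbb{E}$.'' This is false in general: for instance in $\mathbb{R}\times\mathbb{R}$ with $T_x\cO = \mathbb{R}\times\{0\}$ (which surjects onto the first factor), the diagonal $A=\{(t,t)\}$ is a valid complement not contained in $\{0\}\times\mathbb{R}$. The paper instead invokes \cite[Prop.~2.9]{HWZbook} to take the \emph{specific} complement $A = (\mathrm{id}-D_xr)(\mathbb{R}^n\times\mathbb{E})$; since $D_xr$ fixes the $\mathbb{R}^n$-coordinate, this $A$ automatically lies in $\{0\}\times\mathbb{E}$, and then intersecting the splitting with $\{0\}\times\mathbb{E}$ works as you describe.

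Second, for surjectivity in \eqref{eq:inducedreducatedtangentspacequotientrelationship} you assert $(p,0)\in T_x\cO$ (hence in $T_x^R\cO$), but $D_xr(p,0)=(p,\xi)$ with $\xi$ a priori nonzero, so $(p,0)$ need not lie in the image of $D_xr$; your parenthetical acknowledges this without resolving it. What is actually needed is that $\pi_{\mathbb{R}^n}|_{T_x^R\cO}$ is surjective onto $\mathbb{R}^n$, and this is exactly where the smoothness of $x$ and the degeneracy-preserving property (tameness condition (1)) of $r$ enter. The paper reduces to the standard quadrant with $x=0$ and shows, via sc-smooth paths in the corner stratum $\mathbb{R}^n\times\{0\}^s\times\mathbb{E}'$ and density of smooth points, that $D_xr$ maps this stratum into itself; by idempotency this gives $T_x^R\cO = D_xr(\mathbb{R}^n\times\{0\}^s\times\mathbb{E}')$, which visibly surjects onto $\mathbb{R}^n$. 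A short-exact-sequence comparison then yields the quotient isomorphism. Your ``more robust'' alternative gestures in this direction but does not supply this ingredient.
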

\begin{proof}
The defining property \eqref{eq:sliceretraction} of the $\mathbb{R}^n$-sliced sc-retraction $r$ implies $r(\tilde{U}) \subset \tilde{U}$, so indeed the map $\tilde{r} := r|_{\tilde{U}} : \tilde{U} \rightarrow \tilde{U}$ takes values in $\tilde{U}$. Moreover, $\tilde{r}$ inherits sc-smoothness and the retraction property $\tilde{r} \circ \tilde{r} = \tilde{r}$ from the corresponding properties of $r$. So $\tilde{r}$ is a sc-retraction onto the sc-retract $\tilde{\cO}$. We prove the other statements in the lemma before showing that $\tilde{r}$ is tame.

We now verify that \eqref{eq:inducedscretract} holds. The forwards inclusion is immediate from the definitions of the sets involved. To prove the reverse inclusion, let $x \in \cO \cap (\{0\} \times C).$ Then since $\cO \subset U$ we have $x \in \tilde{U}$ and so $\tilde{r}(x) \in \tilde{\cO}$. We claim that $x = \tilde{r}(x)$, proving \eqref{eq:inducedscretract}. Indeed, since $x \in \cO$ and $r$ is a retraction with image $\cO$, it follows that $x = r(x) = \tilde{r}(x).$

We now verify \eqref{eq:inducedtangentspacerelationship} and \eqref{eq:codimentangentspacesslicedretract}. Let $x \in \tilde{\cO}_1$. By definition of tangent space \eqref{eq:tangentspaceatpoint} of a sc-retract, we have
$$T_x\cO = D_xr(\mathbb{R}^n \times \mathbb{E}) \subset \mathbb{R}^n \times \mathbb{E}.$$
Since $r$ preserves the $\mathbb{R}^n$-coordinate by the sliced retraction property \eqref{eq:sliceretraction}, the same is true for $D_xr,$ from which it follows that $$D_xr(\mathbb{R}^n \times \mathbb{E}) \cap (\{0\} \times \mathbb{E}) = D_xr(\{0\} \times \mathbb{E}).$$ Hence we have
\begin{equation} \label{eq:tangentspaceintersectinduced}
T_x\cO \cap (\{0\} \times \mathbb{E}) = D_x\tilde{r}(\{0\} \times \mathbb{E}) = T_x {\tilde{\cO}},
\end{equation}
proving \eqref{eq:inducedtangentspacerelationship}. Moreover, the projection $\pi : T_x\cO \rightarrow \mathbb{R}^n$ to the first factor of $\mathbb{R}^n \times \mathbb{E}$ is a surjection. Since $\ker(\pi) = T_x\cO \cap (\{0\} \times \mathbb{E})$, we conclude that $\pi$ induces an isomorphism $T_x\cO / T_x\tilde{\cO} \cong \mathbb{R}^n,$ proving \eqref{eq:codimentangentspacesslicedretract}.

To verify \eqref{eq:inducedreducedtangentspacerelationship}, first note that by \eqref{eq:Exstandard},\eqref{eq:Ex}, we have
\begin{equation} \label{eq:inducedreducedtangentproof}
(\mathbb{R}^n \times \mathbb{E})_x \cap (\{0\} \times \mathbb{E}) = (\{0\} \times \mathbb{E})_x,
\end{equation}
and hence we have
\begin{align*}
T_x^R\tilde{\cO} &= T_x\tilde{\cO} \cap (\{0\} \times \mathbb{E})_x\\
&\stackrel{\eqref{eq:inducedreducedtangentproof}}{=} T_x\tilde{\cO} \cap (\mathbb{R}^n \times \mathbb{E})_x \cap (\{0\} \times \mathbb{E})\\
&\stackrel{\eqref{eq:tangentspaceintersectinduced}}{=} T_x\cO \cap (\mathbb{R}^n \times \mathbb{E})_x \cap (\{0\} \times \mathbb{E})\\
&= T_x^R\cO \cap (\{0\} \times \mathbb{E}),
\end{align*}
as required.

To verify \eqref{eq:inducedreducatedtangentspacequotientrelationship}, it suffices to consider the case $$x = 0$$ and $$C = [0,\infty)^s \times \mathbb{E}' \subset \mathbb{R}^s \times \mathbb{E}' = \mathbb{E}$$ is in standard form \eqref{eq:partialquadrantstandardform}, since $x$ is assumed to be a smooth point. By \eqref{eq:Exstandard}, we have $(\mathbb{R}^n \times \mathbb{R}^s \times \mathbb{E}')_x = \mathbb{R}^n \times \{0\}^s \times \mathbb{E}'$. Since $r$ preserves the degeneracy index by Definition~\ref{dfn:tameretraction}, we claim it follows that
\begin{equation}\label{eq:tangentmapreservescorner}
D_xr(\mathbb{R}^n \times \{0\}^s \times \mathbb{E}') \subset \mathbb{R}^n \times \{0\}^s \times \mathbb{E}'.
\end{equation}
Indeed, to prove \eqref{eq:tangentmapreservescorner}, consider first a smooth point $\xi \in \mathbb{R}^n \times \{0\}^s \times E'_{\infty}$.  There exists a sc-smooth path $\alpha : (-\epsilon,\epsilon) \rightarrow U \cap (\mathbb{R}^n \times \{0\}^s \times \mathbb{E}')$ satisfying $\alpha(0) = x$ and $\alpha'(0) = \xi$, for example take $\alpha(t) = x + t\xi$. Since $r$ preserves degeneracy index we have $r \circ \alpha(t) \in \mathbb{R}^n \times \{0\}^s \times \mathbb{E}'$ for all $t$, and hence we have $D_xr(\xi) = D_x(r \circ \alpha)(0) \in \mathbb{R}^n \times \{0\}^s \times \mathbb{E}'$. This proves \eqref{eq:tangentmapreservescorner} for smooth points $\xi$. For an arbitrary point $\xi \in \mathbb{R}^n \times \{0\}^s \times \mathbb{E}'$, the result follows by considering a sequence $\{\xi_k\}_{k \geq 0}$ of smooth points that converges to $\xi$, which exists by density of the inclusion $E'_{\infty} \subset E'_0$, and applying continuity of $D_xr$.

We now finish verifying \eqref{eq:inducedreducatedtangentspacequotientrelationship}. Since $D_xr$ preserves the $\mathbb{R}^n$-coordinate, the projection $\pi : T_x\cO \rightarrow \mathbb{R}^n$ is surjective, and its kernel is $T_x\tilde{\cO}$ by \eqref{eq:inducedtangentspacerelationship}. By definition \eqref{eq:reducedtangentspacedef} we have $T_x^R\cO = D_xr(\mathbb{R}^n \times \mathbb{R}^s \times \mathbb{E'}) \cap (\mathbb{R}^n \times \{0\}^s \times \mathbb{E}')$, and so it follows from \eqref{eq:tangentmapreservescorner} that $\pi$ restricts to a surjection $T_x^R\cO \rightarrow \mathbb{R}^n$. The kernel of this surjection is $T_x^R\tilde{\cO}$ by \eqref{eq:inducedreducedtangentspacerelationship}. So we have a short exact sequence of sc-Banach spaces $0 \rightarrow T_x^R\tilde{\cO} \rightarrow T_x^R\cO \xrightarrow{\pi} \mathbb{R}^n \rightarrow 0$ that includes into the short exact sequence $0 \rightarrow T_x\tilde{\cO} \rightarrow T_x\cO \xrightarrow{\pi} \mathbb{R}^n.$ This implies \eqref{eq:inducedreducatedtangentspacequotientrelationship}.

To prove the lemma, it remains to show that $\tilde{r}$ is tame. The $\mathbb{R}^n$-sliced sc-retraction $r$ is tame by definition. Hence, for all $x \in \tilde{U}$, we compute, using \eqref{eq:degslice} and Definition~\ref{dfn:tameretraction}(1),
$$d_{\{0\} \times C}(\tilde{r}(x)) = d_{\mathbb{R}^n \times C}(\tilde{r}(x)) = d_{\mathbb{R}^n \times C}(r(x)) = d_{\mathbb{R}^n \times C}(x) = d_{\{0\} \times C}(x),$$
verifying that Definition~\ref{dfn:tameretraction}(1) holds for $\tilde{r}$.

To verify that $\tilde{r}$ satisfies Definition~\ref{dfn:tameretraction}(2), let $x \in \tilde{\cO}_{\infty}.$ Then $x \in \cO_{\infty}$, and so by the corresponding property of $r$ and by \cite[Prop.~2.9]{HWZbook}, the sc-subspace $A := (id_{\mathbb{R}^n \times \mathbb{E}} - D_xr)(\mathbb{R}^n \times \mathbb{E})$ of $\mathbb{R}^n \times \mathbb{E}$ satisfies
\begin{equation} \label{eq:tameproofbaseEx}
\mathbb{R}^n \times \mathbb{E} = T_x\cO \oplus A
\end{equation}
and $A \subset (\mathbb{R}^n \times \mathbb{E})_x.$ By the sliced retraction property \eqref{eq:sliceretraction} of $r$ and the definition of $A$, we conclude that $A \subset \{0\} \times \mathbb{E}$ holds. Then we have $A \subset (\mathbb{R}^n \times \mathbb{E})_x \cap (\{0\} \times \mathbb{E}) = (\{0\} \times \mathbb{E})_x$ by \eqref{eq:inducedreducedtangentproof}. We claim that $\{0\} \times \mathbb{E} = T_x\tilde{\cO} \oplus A$ holds, completing the proof that $\tilde{r}$ is tame. Indeed, it follows from \eqref{eq:inducedtangentspacerelationship}, \eqref{eq:tameproofbaseEx}, and $A \subset \{0\} \times \mathbb{E}$ that we have
$$\{0\} \times \mathbb{E} = (T_x \cO \cap (\{0\} \times \mathbb{E})) \oplus A = T_x\tilde{\cO} \oplus A.$$
\end{proof}

\subsection{Sliced bundle retracts} \label{subsec:slicebundleretracts}

In this section, we introduce the new notion of $\mathbb{R}^n$-sliced bundle retracts (Definition~\ref{dfn:slicebundleretraction}). In Lemma~\ref{lem:inducedtamebundleretraction}, we prove that a $\mathbb{R}^n$-sliced bundle retract $K$ covering a $\mathbb{R}^n$-sliced sc-retract $\cO$ induces a tame bundle retract $\tilde{K} \subset K$ covering the induced tame sc-retract $\tilde{\cO} \subset \cO$ from Lemma~\ref{lem:inducedtameretraction}. The global definition of a slice $\tilde{\cB} \subset \cB$ of a bundle $\rho : \cE \rightarrow \cB$ (Definition~\ref{dfn:globalslice}) is then a subspace such that around every point $x \in \tilde{\cB}$ there is a bundle chart for $\rho$ to a $\mathbb{R}^{n_x}$-sliced bundle retract $K$ that locally identifies $\rho^{-1}(\tilde{\cB})$ with the induced tame bundle retract $\tilde{K}$.

We first recall the local structure of tame strong bundles: tame bundle retracts. Consider a relatively open subset $U$ of a partial quadrant $C$ of a sc-Banach space $\mathbb{E}$, and another sc-Banach space $\mathbb{F}$. Then the trivial bundle
\begin{equation} \label{eq:trivialbundleoverquadrant}
U \lhd \mathbb{F} \rightarrow U
\end{equation}
has total space $U \lhd \mathbb{F} = U \times \mathbb{F}$ as a set, and the map is projection onto $U$. The triangle $\lhd$ signifies the extra structure of a double filtration on the set $U \times \mathbb{F}$. That is, for $0 \leq k \leq m+1$, we have
$$(U \lhd \mathbb{F})_{m,k} := U_m \oplus F_k.$$
Then, for $i = 0,1$, we define the sc-structure $(U \lhd \mathbb{F})[i]$ by
\begin{equation} \label{eq:doublefiltrationlocal}
((U \lhd \mathbb{F})[i])_m := U_m \oplus F_{m+i}, \,\, m \geq 0.
\end{equation}
The purpose of defining these two filtrations is that they correspond to the two notions of smoothness of a section of a bundle that are important for polyfold theory. Precisely, a section $s : U \rightarrow U \lhd \mathbb{F}$ is called \emph{sc-smooth} if it is sc-smooth as a map to $(U \lhd \mathbb{F})[0]$. If, moreover, we have $s(U) \subset (U\lhd\mathbb{F})[1]$ and the map $s : U \rightarrow (U\lhd\mathbb{F})[1]$ is sc-smooth, then $s$ is called a \emph{sc$^+$-section}. See \cite[Def.~2.24]{HWZbook} for a detailed discussion. We remark that sc$^+$-sections are the sc-analogue of compact perturbations from classical Fredholm theory; in particular, the sc-Fredholm property is stable under sc$^+$-perturbation \cite[Thm.~3.2]{HWZbook}.

A \emph{strong bundle map} \cite[Def.~2.22]{HWZbook} $$\Phi : U \lhd \mathbb{F} \rightarrow U' \lhd \mathbb{F}'$$ is a map which preserves the double filtration and is of the form $$\Phi(x,\xi) = (\varphi(x),\Gamma(x,\xi)),$$ where the map $\Gamma : U \lhd \mathbb{F} \rightarrow \mathbb{F}'$ is linear in $\xi$. In addition, for $i = 0,1,$ we require that the maps
$\Phi : (U \lhd \mathbb{F})[i] \rightarrow (U' \lhd \mathbb{F}')[i]$
are sc-smooth. A \emph{strong bundle isomorphism} is an invertible strong bundle map whose inverse is also a strong bundle map.

To extend \eqref{eq:trivialbundleoverquadrant} to a notion of a local bundle model over a sc-retract, we employ the following notion of a retraction in the fibers. A \emph{strong bundle retraction} is a strong bundle map $R : U \lhd \mathbb{F} \rightarrow U \lhd \mathbb{F}$ satisfying $R \circ R = R$. As a consequence, the map $R$ has the form
\begin{equation} \label{eq:bundleretractionwrittentout}
R(x,\xi) = (r(x),\Gamma(x,\xi)),
\end{equation}
where $r : U \rightarrow U$ is a sc-smooth retraction and $\G(x,\cdot) : \mathbb{F} \rightarrow \mathbb{F}$ is a linear projection for every $x \in U$. If $r$ is tame, then $R$ is called a \emph{tame strong bundle retraction}. The image $K := R(U \lhd \mathbb{F})$ of $R$ is called a \emph{strong bundle retract} \cite[Def.~2.23]{HWZbook}, as is the triple $(K,C \lhd \mathbb{F}, \mathbb{E} \lhd \mathbb{F}).$ We say that $K$ \emph{covers the sc-retract} $\cO = r(U)$. If $R$ is tame, then $K$ is called a \emph{tame strong bundle retract}. The projection $U \lhd \mathbb{F} \rightarrow U$ induces a mapping $K \rightarrow \cO$, which we call a \emph{strong local bundle model}.

We now introduce the new notion of a $\mathbb{R}^n$-sliced bundle retract.

\begin{dfn} \label{dfn:slicebundleretraction}
Consider a partial quadrant $C$ of a sc-Banach space $\mathbb{E}$, an open subset $U \subset \mathbb{R}^n \times C$ for some $n \geq 0$, and another sc-Banach space $\mathbb{F}$.

A tame bundle retraction $R : U \lhd \mathbb{F} \rightarrow U \lhd \mathbb{F}$ is called a {\bf $\mathbold{\mathbb{R}^n}$-sliced bundle retraction} if the tame sc-retraction $r : U \rightarrow U$ covered by $R$ (see \eqref{eq:bundleretractionwrittentout}) is a $\mathbb{R}^n$-sliced sc-retraction (Definition~\ref{dfn:sliceretraction}).

If so, then the tame bundle retract $K = R(U \lhd \mathbb{F})$ (and the triple $(K,\mathbb{R}^n \times C \lhd \mathbb{F}, \mathbb{R}^n \times \mathbb{E} \lhd \mathbb{F})$) is called a {\bf $\mathbold{\mathbb{R}^n}$-sliced bundle retract} and the tame local bundle model $K \rightarrow \cO := r(U)$ is called a {\bf $\mathbold{\mathbb{R}^n}$-sliced local bundle model}.
\end{dfn}

In the following lemma, we show that for any $\mathbb{R}^n$-sliced bundle retract $K$ in $\mathbb{R}^n \times C \lhd \mathbb{F}$, the set $\tilde{K} := K \cap (\{0\} \times C \lhd \mathbb{F})$ is a tame bundle retract. Later, we use the inclusion $\tilde{K} \subset K$ to define the local models for the restriction of a bundle to a slice (Definition~\ref{dfn:globalslice}).

\begin{lem} \label{lem:inducedtamebundleretraction}
Consider a partial quadrant $C$ of a sc-Banach space $\mathbb{E}$, another sc-Banach space $\mathbb{F}$, and a $\mathbb{R}^n$-sliced bundle retract $$(K,\mathbb{R}^n \times C \lhd \mathbb{F}, \mathbb{R}^n \times \mathbb{E} \lhd \mathbb{F})$$ covering a $\mathbb{R}^n$-sliced sc-retract $(\cO, \mathbb{R}^n \times C, \mathbb{R}^n \times \mathbb{E})$. Let $\pi : K \rightarrow \cO$ denote the local bundle model given by restriction of the projection along the fiber $\mathbb{R}^n \times C \lhd \mathbb{F} \rightarrow \mathbb{R}^n \times C$.

Then, for any open subset $U \subset \mathbb{R}^n \times C$ and $\mathbb{R}^n$-sliced bundle retraction $R : U \lhd \mathbb{F} \rightarrow U \lhd \mathbb{F}$ covering a $\mathbb{R}^n$-sliced sc-retraction $r : U \rightarrow U$ such that $r(U) = \cO$ and $R(U \lhd \mathbb{F}) = K$, the set $\tilde{U} := U \cap (\{0\} \times C)$ is open in $C$ and the restriction
$$\tilde{R} := R|_{\tilde{U}  \lhd \mathbb{F}} : \tilde{U} \lhd \mathbb{F} \rightarrow \tilde{U} \lhd \mathbb{F}$$
is a tame bundle retraction onto $\tilde{K} := \tilde{R}(\tilde{U} \lhd \mathbb{F})$ covering the induced tame sc-retraction $\tilde{r} : \tilde{U} \rightarrow \tilde{U}$ onto the induced tame sc-retract $\tilde{\cO}$ defined in Lemma~\ref{lem:inducedtameretraction}. We call $\tilde{R}$ the {\bf tame bundle retraction induced by $R$}.

Moreover,
\begin{equation} \label{eq:inducedscretractbundle}
\tilde{K} = K \cap (\{0\} \times C \lhd \mathbb{F}) = \pi^{-1}(\tilde{\cO})
\end{equation}
holds, so in particular $\tilde{K}$ does not depend on the choices of $U$ and $R$. We may view $\tilde{K}$ as a subset of $C \lhd \mathbb{F}$, and we call $$(\tilde{K},C \lhd \mathbb{F},\mathbb{E} \lhd \mathbb{F})$$ the {\bf tame bundle retract induced by the $\mathbb{R}^n$-sliced bundle retract} $(K, \mathbb{R}^n \times C \lhd \mathbb{F}, \mathbb{R}^n \times \mathbb{E} \lhd \mathbb{F})$.

In particular, the $\mathbb{R}^n$-sliced local bundle model $\pi : K \rightarrow \cO$ restricted to $\tilde{K}$ is a tame local bundle model
\begin{equation} \label{eq:inducedbundlemodel}
\tilde{\pi} := \pi|_{\tilde{K}} : \tilde{K} \rightarrow \tilde{\cO},
\end{equation}
which we call the {\bf induced tame local bundle model}.
\end{lem}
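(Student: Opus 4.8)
The plan is to reduce everything to Lemma~\ref{lem:inducedtameretraction}, which already supplies the base-level statements, and then handle the extra fiber structure by a direct computation with the strong bundle retraction $R$. First I would observe that $\tilde U = U \cap (\{0\} \times C)$ is open in $C$ and that $\tilde r = r|_{\tilde U}$ is the tame sc-retraction from Lemma~\ref{lem:inducedtameretraction}, onto $\tilde{\cO} = \cO \cap (\{0\}\times C)$. Writing $R(x,\xi) = (r(x),\Gamma(x,\xi))$ as in \eqref{eq:bundleretractionwrittentout}, the $\mathbb{R}^n$-sliced property of $r$ gives $r(\tilde U) \subset \tilde U$, hence $R(\tilde U \lhd \mathbb{F}) \subset \tilde U \lhd \mathbb{F}$, so the restriction $\tilde R := R|_{\tilde U \lhd \mathbb{F}}$ is well-defined as a map into $\tilde U \lhd \mathbb{F}$. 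It inherits $\tilde R \circ \tilde R = \tilde R$ from $R$, it preserves the double filtration since $R$ does, and it has the strong-bundle-map form $\tilde R(x,\xi) = (\tilde r(x), \Gamma(x,\xi))$ with $\Gamma(x,\cdot)$ linear. That $\tilde R$ is sc-smooth as a map $(\tilde U \lhd \mathbb{F})[i] \to (\tilde U \lhd \mathbb{F})[i]$ for $i = 0,1$ follows because $(\tilde U \lhd \mathbb{F})[i] = (U \lhd \mathbb{F})[i] \cap (\{0\}\times C \lhd \mathbb{F})$ is the restriction of a sc-smooth map to a sc-subspace slice, exactly parallel to the argument in Lemma~\ref{lem:inducedtameretraction} that $\tilde r$ is sc-smooth. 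Since $\tilde r$ is tame by Lemma~\ref{lem:inducedtameretraction}, $\tilde R$ is by definition a tame bundle retraction, covering $\tilde r$.

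Next I would verify the identities \eqref{eq:inducedscretractbundle}. The inclusion $\tilde K \subset K \cap (\{0\}\times C \lhd \mathbb{F})$ is immediate from $\tilde U \lhd \mathbb{F} \subset U \lhd \mathbb{F}$ and $(\{0\}\times C)$-valuedness. For the reverse inclusion, take $(x,\zeta) \in K \cap (\{0\}\times C \lhd \mathbb{F})$; then $x \in \tilde U$, so $(x,\zeta) \in \tilde U \lhd \mathbb{F}$, and since $R$ is a retraction with image $K$ we get $(x,\zeta) = R(x,\zeta) = \tilde R(x,\zeta) \in \tilde K$. This mirrors verbatim the proof of \eqref{eq:inducedscretract}. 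The equality $K \cap (\{0\}\times C \lhd \mathbb{F}) = \pi^{-1}(\tilde{\cO})$ then follows from \eqref{eq:inducedscretract}: a point $(x,\zeta) \in K$ lies in $\pi^{-1}(\tilde{\cO})$ iff $x = \pi(x,\zeta) \in \tilde{\cO} = \cO \cap (\{0\}\times C)$ iff $x \in \{0\}\times C$ (using $x \in \cO$ already), which is exactly the condition $(x,\zeta) \in \{0\}\times C \lhd \mathbb{F}$. Independence of $\tilde K$ from the choice of $U$ and $R$ is then clear since the middle expression $K \cap (\{0\}\times C \lhd \mathbb{F})$ depends only on $K$. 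Viewing $\tilde K$ inside $C \lhd \mathbb{F}$ via the obvious identification $\{0\}\times C \lhd \mathbb{F} \cong C \lhd \mathbb{F}$, and noting $\tilde R$ is built from this data, gives the triple $(\tilde K, C \lhd \mathbb{F}, \mathbb{E} \lhd \mathbb{F})$ the structure of a tame bundle retract. Finally, $\tilde\pi = \pi|_{\tilde K}$ is the restriction of the projection $C \lhd \mathbb{F} \to C$, it surjects onto $\tilde{\cO}$ since $\pi$ surjects onto $\cO$ and $\tilde K = \pi^{-1}(\tilde{\cO})$, and it covers $\tilde r$; so it is a tame local bundle model, which is the induced one.

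The main obstacle, such as it is, is the sc-smoothness of $\tilde R$ on the shifted filtration $(\tilde U \lhd \mathbb{F})[1]$, i.e.\ checking that restricting a strong bundle retraction to the $\{0\}\times C$ slice does not disturb the $+1$-shift in the fiber. The resolution is that the double-filtration levels of the sliced object are literally intersections of the ambient double-filtration levels with the closed sc-subspace $\{0\}\times\mathbb{E} \lhd \mathbb{F}$, so sc-smoothness is inherited by the same mechanism ($D_x\tilde R$ being a restriction of $D_xR$, which preserves the $\mathbb{R}^n$-coordinate by the sliced property) used for $\tilde r$ in Lemma~\ref{lem:inducedtameretraction}; no genuinely new analytic input is needed. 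Everything else is bookkeeping that parallels the already-proved sc-retract case.
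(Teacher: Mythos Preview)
Your proposal is correct and follows essentially the same approach as the paper: reduce the base to Lemma~\ref{lem:inducedtameretraction}, observe that $\tilde R$ inherits the strong bundle retraction properties from $R$, note tameness comes from $\tilde r$ being tame, and verify \eqref{eq:inducedscretractbundle} by the same forward/reverse inclusion argument used for \eqref{eq:inducedscretract}. The paper's proof is considerably terser than yours---it dispatches the first part in two sentences (``by the corresponding properties of $R$'' and ``$\tilde R$ is tame because it covers $\tilde r$'')---whereas you spell out the double-filtration and $[1]$-shift concerns explicitly; this is extra care rather than a different route.
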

\begin{proof} The map $\tilde{R}$ is a strong bundle map and satisfies $\tilde{R} \circ \tilde{R} = \tilde{R}$, by the corresponding properties of $R$, so $\tilde{R}$ is a strong bundle retraction. Moreover, $\tilde{R}$ is tame because it covers the tame sc-retraction $\tilde{r}$.

We now verify that the first equality in \eqref{eq:inducedscretractbundle} holds. The forwards inclusion is immediate from the definitions. To see the reverse inclusion, let $(x \lhd \xi) \in K \cap (\{0\} \times C \lhd \mathbb{F}).$ Then $x = \pi(x \lhd \xi) \in \cO \cap (\{0\} \times C) = \tilde{\cO}$, where $\pi : K \rightarrow \cO$ is the local bundle model given by restriction of the projection along the fiber $\pi : \mathbb{R}^n \times C \lhd \mathbb{F} \rightarrow \mathbb{R}^n \times C$. In particular, $x \lhd \xi$ is in the domain of $\tilde{R} = R|_{\tilde{U} \lhd \mathbb{F}}$. Since $R$ is a retraction onto $K$, it follows that $(x \lhd \xi) = R(x \lhd \xi) = \tilde{R}(x \lhd \xi) \in im(\tilde{R}) = \tilde{K}$, as required. The second equality in \eqref{eq:inducedscretractbundle} holds from the definitions and $\tilde{\cO} = \cO \cap (\{0\} \times C)$.
\end{proof}

\subsection{Sliced sc-Fredholm germs} \label{subsec:slicescfredholmgerms}

In this section, we review the standard notion of a local sc-Fredholm germ (Definition~\ref{dfn:localscFredgerm}) and we introduce the new notions of a tame sc-Fredholm germ (Definition~\ref{dfn:tamescFredgerm}) and a $\mathbb{R}^n$-sliced sc-Fredholm germ (Definition~\ref{dfn:slicescFredholmgerm}). We later use tame sc-Fredholm germs as the local models for our new notion of a tame sc-Fredholm section $\s : \cB \rightarrow \cE$ (Definition~\ref{dfn:tamescfredholmsection}) of a tame strong bundle $\rho : \cE \rightarrow \cB$, and we use $\mathbb{R}^n$-sliced sc-Fredholm germs as the local models in our definition of a slice $\tilde{\cB} \subset \cB$ (Definition~\ref{dfn:globalslice}) of a sc-Fredholm section $\s$.

We prove in Lemma~\ref{lem:inducedlocalscFredholmgerm} that a $\mathbb{R}^n$-sliced sc-Fredholm germ $$\s : \cO \rightarrow K$$ of a $\mathbb{R}^n$-sliced local bundle model $K \rightarrow \cO$ induces a tame sc-Fredholm germ $$\tilde{\s} := \s|_{\tilde\cO}: \tilde{\cO} \rightarrow \tilde{K}$$ of the induced tame local bundle model $\tilde{K} \rightarrow \tilde{\cO}$ from \eqref{eq:inducedbundlemodel}. The sc-Fredholm index \eqref{eq:fredholmindexdef} satisfies $$ind(\tilde{\s}) = ind(\s) - n.$$

We now briefly review sc-germ language. Let $C$ be a partial quadrant of a sc-Banach space $\mathbb{E}$. Then a \emph{sc-germ of neighborhoods around $0 \in C$} \cite[Def.~3.1]{HWZbook}, denoted
$$U(C,0),$$
is a sequence
$$U = U(0) \supset U(1) \supset U(2) \supset \cdots$$
where $U(m)$ is a relatively open neighborhood of $0$ in $C \cap E_m$. We often write $U = U(C,0)$ for brevity. A \emph{sc$^0$-germ} $h : U(C,0) \rightarrow \mathbb{F}$ \cite[Def.~3.2]{HWZbook} into the sc-Banach space $\mathbb{F}$ is a continuous map $h : U(0) \rightarrow \mathbb{F}$ such that $h(U(m)) \subset F_m$ and $h : U(m) \rightarrow F_m$ is continuous for all $m \geq 0$. A \emph{sc$^1$-germ} $h : U(C,0) \rightarrow \mathbb{F}$ is a sc$^0$-germ which of is class sc$^1$ in the same sense as for a usual map except that the sc-differential $D_xh$ is required to exist only for $x \in U(1)$ (where $U(1)$ can be smaller than $U(0) \cap E_1$ in the germ case); see \cite[Def.~3.2]{HWZbook} for a precise definition. We can in turn define a tangent map on the tangent of a germ and then iterate the notion of sc$^1$ to define a \emph{sc-smooth germ}.

It is convenient to denote any section of a trivial bundle by $$h(x) = (x,{\bf h}(x)) : U \rightarrow U \lhd \mathbb{F}.$$ We refer to ${\bf h} : U \rightarrow \mathbb{F}$ as the \emph{principal part} of $h$.

Recall the standard notion of a local sc-Fredholm germ.

\begin{dfn} \label{dfn:localscFredgerm} \cite[Defs.~3.5, 3.6, 3.7]{HWZbook}
Consider a tame strong bundle retract $(K,C \lhd \mathbb{F},\mathbb{E} \lhd \mathbb{F})$ covering the tame sc-retract $(\cO,C,\mathbb{E})$ and a sc-smooth section $\s : \cO \rightarrow K$ of the local bundle model $K \rightarrow \cO$. Assume $0 \in \cO$.

Then $\s$ is called a {\bf local sc-Fredholm germ} if the following conditions hold:
\begin{enumilistalph}
\item There exists a sc-germ of neighborhoods $U(C,0)$ around $0 \in C$ and a tame sc-retraction $r : U \rightarrow U$ onto $r(U) = \cO$ covered by a tame bundle retraction $R : U \lhd \mathbb{F} \rightarrow U \lhd \mathbb{F}$ onto $R(U \lhd \mathbb{F}) = K$.

\item The principal part $\mathbold{\s} : \cO \rightarrow \mathbb{F}$ of $\s$ has the property that the composition
$$\mathbold{\s} \circ r : U(C,0) \rightarrow \mathbb{F}$$
possesses a {\bf filling}
$$h : U(C,0) \rightarrow U(C,0) \lhd \mathbb{F},$$
which is a section of the trivial bundle $U(C,0) \lhd \mathbb{F}  \rightarrow U(C,0)$ whose principal part $\mathbold{h} : U(C,0) \rightarrow \mathbb{F}$ is a sc-smooth germ and such that the following conditions $(i)-(iii)$ hold. Recall that $R$ is of the form $R(x,\xi) = (r(x),\G(x,\xi))$ where $\G(x,-) : \mathbb{F} \rightarrow \mathbb{F}$ is a linear projection.\\

\begin{enumerate}
\item $\s(x) = h(x)$ for $x \in \cO$.

\item If $x \in U$ and ${\bf h}(x) = \Gamma(r(x),{\bf h}(x))$, then $x \in \cO$.

\item The linearization of the map $x \mapsto (id_{\mathbb{F}} - \Gamma(r(x),\cdot)) \circ {\bf h}(x)$ at the point $0$, restricted to $\ker D_0r$, defines a linear sc-isomorphism $\ker D_0 r \rightarrow \ker \Gamma(0,\cdot)$.
\end{enumerate}

\item There exists a sc$^+$-section $\mathfrak{s} : U \rightarrow U \lhd \mathbb{F}$ satisfying $\mathfrak{s}(0) = h(0)$, a sc-Banach space $\mathbb{W}$, a sc-germ of neighborhoods
$$U' = U'(C',0) \subset C' = [0,\infty)^s \times \mathbb{R}^{k-s} \times \mathbb{W}$$
centered around $0 \in C'$ for some $k \geq s \geq 0$, and a strong bundle isomorphism
$$\Psi : U \lhd \mathbb{F} \rightarrow U' \lhd \mathbb{R}^{k'} \times \mathbb{W}$$
(for some $k' \geq 0$) covering a sc-diffeomorphism
$$\psi : U \rightarrow U'$$
satisfying $\psi(0) = 0$ and such that the principal part of the section
$$b := \Psi \circ (h - \mathfrak{s}) \circ \psi^{-1} : U' \rightarrow U' \lhd \mathbb{R}^{k'} \times \mathbb{W}$$
is a {\bf basic germ}. This means that the principal part
$${\bf b} :  U' \rightarrow \mathbb{R}^{k'} \times \mathbb{W}$$
of $b$ is a sc-smooth germ satisfying ${\bf b}(0) = 0$ and having the following property: Letting $P : \mathbb{R}^{k'} \times \mathbb{W} \rightarrow \mathbb{W}$ denote projection onto $\mathbb{W}$, the germ $P \circ {\bf b} : U' \rightarrow \mathbb{W}$ is of the form
$$P \circ {\bf b}(a,w) = w - B(a,w),$$
for $(a,w) \in ([0,\infty)^s \times \mathbb{R}^{k-s}) \times \mathbb{W}$, where $B$ is a sc-smooth germ and $B(0) = 0$. Also, $B$ is required to satisfy a contraction property: For every $\epsilon > 0$ and integer $m \geq 0$ there exists $\delta > 0$ such that
\begin{equation} \label{eq:basicgerm}
||(a,w)||_m, \,\, ||(a,w')||_m < \delta \implies ||B(a,w) - B(a,w')||_m \leq \epsilon ||w - w'||_m.
\end{equation}
The notation $|| \cdot ||_m$ means the Banach norm in the $m$-level of the sc-structure. We use the convention\footnote{This convention is equivalent to using any norm on $\mathbb{R}^k$ and any standard norm on a Cartesian product that is equivalent to the sum norm.} that the $m$-norm $||\cdot||_m$ on ${(\mathbb{R}^k \times \mathbb{W})_m = \mathbb{R}^k \times W_m}$ is the sum of the standard Euclidean norm on $\mathbb{R}^k$ plus the $W_m$-norm.
\end{enumilistalph}

The {\bf index of the local sc-Fredholm germ} $\s$ is the integer
\begin{equation} \label{eq:fredholmindexdef}
ind(\s) := k - k',
\end{equation}
where $k$ and $k'$ are the dimensions of the finite dimensional spaces split off in the domain $U' \subset ([0,\infty)^s \times \mathbb{R}^{k-s}) \times \mathbb{W}$ and codomain $\mathbb{R}^{k'} \times \mathbb{W}$, respectively, of the basic germ ${\bf b}$.
\end{dfn}

We now introduce a new class of sc-Fredholm germs, called tame sc-Fredholm. All currently known sc-Fredholm germs arising in applications to symplectic topology are tame sc-Fredholm: The required linearity of $\psi$ holds in applications because $\psi$ is essentially given by splitting off the kernel of a linear map (see Section~\ref{sec:evaluationmaps}). This linearity is required in the construction of slice coordinates in Section~\ref{sec:slicecoordinates}.

\begin{dfn} \label{dfn:tamescFredgerm}
A local sc-Fredholm germ $\s : \cO \rightarrow K$ is called a {\bf tame sc-Fredholm germ} if the structures that exist by Definition~\ref{dfn:localscFredgerm} of local sc-Fredholm germ can be chosen such that the partial quadrant $C$ is in the standard form \eqref{eq:partialquadrantstandardform}, i.e.\ $C = [0,\infty)^s \times \mathbb{E}' \subset  \mathbb{R}^s \times \mathbb{E}' = \mathbb{E}$ for some sc-Banach space $\mathbb{E}'$ and integer $s \geq 0$, and such that the required sc-diffeomorphism $\psi : U \rightarrow U'$ in Definition~\ref{dfn:localscFredgerm}(c) is the restriction of a linear sc-isomorphism of the form
\begin{align*}
\psi = id_{[0,\infty)^s} \times \overline{\psi} : [0,\infty)^s \times \mathbb{E}' &\rightarrow [0,\infty)^s \times \mathbb{R}^{k-s} \times \mathbb{W},\\
(v,e) &\mapsto (v, \overline{\psi}(e))
\end{align*}
for some linear sc-isomorphism 
$$\overline{\psi} : \mathbb{E}' \rightarrow \mathbb{R}^{k-s} \times \mathbb{W}.$$
\end{dfn}

The following new class of sc-Fredholm germs, called $\mathbb{R}^n$-sliced sc-Fredholm, is defined for sections of sliced local bundle models $K \rightarrow \cO$ (Definition~\ref{dfn:slicebundleretraction}). We prove in Lemma~\ref{lem:inducedlocalscFredholmgerm} that a $\mathbb{R}^n$-sliced sc-Fredholm germ $\s : \cO \rightarrow K$ induces a tame sc-Fredholm germ $\tilde{\s} := \s|_{\tilde\cO}: \tilde{\cO} \rightarrow \tilde{K}$ of the induced tame local bundle model $\tilde{K} \rightarrow \tilde{\cO}$ defined in \eqref{eq:inducedbundlemodel}.

\begin{dfn} \label{dfn:slicescFredholmgerm}
Consider a $\mathbb{R}^n$-sliced bundle retract (Definition~\ref{dfn:slicebundleretraction})
$$(K, \mathbb{R}^n \times C \lhd \mathbb{F},\mathbb{R}^n \times \mathbb{E} \lhd \mathbb{F})$$ 
covering the $\mathbb{R}^n$-sliced sc-retract
$$(\cO,\mathbb{R}^n \times C,\mathbb{R}^n \times \mathbb{E}).$$ Assume $0 \in \cO$ and that the partial quadrant is in the standard form \eqref{eq:partialquadrantstandardform}, i.e.\ $C = [0,\infty)^s \times \mathbb{E}' \subset \mathbb{R}^s \times \mathbb{E}' = \mathbb{E}$ for some $s \geq 0$ and sc-Banach space $\mathbb{E}'$.

Then a local sc-Fredholm germ
$$\s : \cO \rightarrow K$$
of the $\mathbb{R}^n$-sliced local bundle model $K \rightarrow \cO$ is called a {\bf $\mathbold{\mathbb{R}^n}$-sliced sc-Fredholm germ} if the structures that exist by Definition~\ref{dfn:localscFredgerm} of local sc-Fredholm germ can be chosen such that the sc-Banach space $\mathbb{W}$ and sc-diffeomorphism $\psi : U \rightarrow U'$ from Definition~\ref{dfn:localscFredgerm}(c) have the following form: First, we have
$$\mathbb{W} = \tilde{\mathbb{W}} \times \mathbb{R}^n$$
for some other sc-Banach space $\tilde{\mathbb{W}}$. Moreover, the sc-diffeomorphism $\psi : U \rightarrow U'$ is of the form
\begin{align}
\psi : \mathbb{R}^n \times [0,\infty)^s \times \mathbb{E}' \supset U &\rightarrow U' \subset [0,\infty)^s \times \mathbb{R}^{k-s} \times \tilde{\mathbb{W}} \times \mathbb{R}^n \label{eq:slicedgermform}\\
(p,v,e) &\mapsto (v, \overline{\psi}(e), \lambda(p,v,e)) \nonumber
\end{align}
for some linear sc-isomorphism
$$\overline{\psi} : \mathbb{E}' \rightarrow \mathbb{R}^{k-s} \times \tilde{\mathbb{W}}$$
and such that the map
$$(p,v,e) \mapsto \lambda(p,v,e) \in \mathbb{R}^n$$
is $C^1$ on all levels $\mathbb{R}^n \times [0,\infty)^s \times E'_m$ for $m \geq 0$.
\end{dfn}

\begin{rmk} \label{rmk:levelshift}
Given a local sc-Fredholm germ $\s : \cO \rightarrow K$ of a local bundle model $K \rightarrow \cO$, all essential properties of the setup are preserved after restricting to the $m$-level of the sc-structure for any $m \geq 0$. Precisely, the $m$-shifted map $K^m \rightarrow \cO^m$ is a local bundle model and $\s|_{\cO^m} : \cO^m \rightarrow K^m$ is a local sc-Fredholm germ \cite[Cor.~5.1]{HWZbook} with the same index $ind(\s|_{\cO^m}) = ind(\s)$.

In particular, if a local sc-Fredholm germ $\s : \cO \rightarrow K$ satisfies the properties required of a $\mathbb{R}^n$-sliced sc-Fredholm germ except for the $C^1$ regularity of the map $(p,v,e) \mapsto \lambda(p,v,e)$, then the $1$-shifted section $\s|_{\cO^1} : \cO^1 \rightarrow K^1$ is $\mathbb{R}^n$-sliced sc-Fredholm since the map $\lambda$ has the required $C^1$ regularity on all levels $m \geq 1$ by \cite[Prop.~1.7]{HWZbook}.
\end{rmk}

In the following lemma, we prove that a $\mathbb{R}^n$-sliced sc-Fredholm germ of a $\mathbb{R}^n$-sliced local bundle model $K \rightarrow \cO$ (Definition~\ref{dfn:slicebundleretraction}) restricts to a tame sc-Fredholm germ of the induced tame local bundle model $\tilde{K} \rightarrow \tilde{\cO}$ from \eqref{eq:inducedbundlemodel}.

\begin{lem} \label{lem:inducedlocalscFredholmgerm}
Consider a $\mathbb{R}^n$-sliced bundle retract $$(K, \mathbb{R}^n \times C \lhd \mathbb{F},\mathbb{R}^n \times \mathbb{E} \lhd \mathbb{F})$$ covering a $\mathbb{R}^n$-sliced sc-retract $(\cO,\mathbb{R}^n \times C,\mathbb{R}^n \times \mathbb{E})$. Recall the induced tame local bundle model $\tilde{K} \rightarrow \tilde{\cO}$ from \eqref{eq:inducedbundlemodel}.

Let $\s : \cO \rightarrow K$ be a $\mathbb{R}^n$-sliced sc-Fredholm germ. Then, the restriction $\tilde{\s} := \s|_{\tilde{\cO}} : \tilde{\cO} \rightarrow \tilde{K}$ is a tame sc-Fredholm germ with index satisfying $ind(\tilde{\s}) = ind(\s) - n$.

We call $\tilde{\s}$ the {\bf tame sc-Fredholm germ induced by the $\mathbb{R}^n$-sliced sc-Fredholm germ} $\s$.
\end{lem}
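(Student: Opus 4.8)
The strategy is to start from the defining data of the $\mathbb{R}^n$-sliced sc-Fredholm germ $\s : \cO \rightarrow K$ (Definition~\ref{dfn:slicescFredholmgerm}), restrict every piece to the $\{0\}$-slice, and check that the restricted data verifies Definition~\ref{dfn:tamescFredgerm}. By Definition~\ref{dfn:slicescFredholmgerm} we have a standard-form partial quadrant $C = [0,\infty)^s \times \mathbb{E}'$, a $\mathbb{R}^n$-sliced sc-retraction $r : U \rightarrow U$ covered by an $\mathbb{R}^n$-sliced bundle retraction $R$, a filling $h$ of $\boldsymbol{\s} \circ r$, a sc$^+$-section $\mathfrak{s}$, and a strong bundle isomorphism $\Psi$ covering a sc-diffeomorphism $\psi$ of the special form~\eqref{eq:slicedgermform}, with $\mathbb{W} = \tilde{\mathbb{W}} \times \mathbb{R}^n$ and the last component $\lambda$ being $C^1$ on all levels. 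The plan is: (1) invoke Lemma~\ref{lem:inducedtameretraction} and Lemma~\ref{lem:inducedtamebundleretraction} to get the induced tame sc-retraction $\tilde r$ on $\tilde U = U \cap (\{0\}\times C)$ and induced tame bundle retraction $\tilde R$ onto $\tilde K$, so that Definition~\ref{dfn:localscFredgerm}(a) holds for $\tilde\s$; (2) restrict the filling $h$ to $\tilde U$ to obtain a filling $\tilde h$ of $\widetilde{\boldsymbol{\s}} \circ \tilde r$, verifying properties (i)--(iii); (3) restrict $\mathfrak{s}$, $\Psi$, $\psi$ appropriately and identify the resulting basic germ, checking its index is $ind(\s) - n$; (4) check the extra tameness requirement of Definition~\ref{dfn:tamescFredgerm}, namely that the induced sc-diffeomorphism is the restriction of a linear sc-isomorphism of the split form $id_{[0,\infty)^s}\times\overline\psi$.

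The key computation is step (3)--(4). Applying $\psi$ to $\tilde U = U \cap (\{0\}\times C)$, formula~\eqref{eq:slicedgermform} shows that $\psi(\{p=0\}) = \{(v,\overline\psi(e),\lambda(0,v,e))\}$, which is not obviously a coordinate slice of $U' \subset [0,\infty)^s \times \mathbb{R}^{k-s}\times\tilde{\mathbb{W}}\times\mathbb{R}^n$ because $\lambda(0,v,e)$ need not vanish. So I would first apply a further linear-plus-shear change of coordinates on the target, of the form $(v,\overline w, q) \mapsto (v,\overline w, q - \lambda(0,\overline\psi^{-1}(\overline w)))$ — but since $\lambda$ is only $C^1$, not sc-smooth, this is the place where the $1$-shift discussed in Remark~\ref{rmk:levelshift} becomes essential: on $\cO^1$ the relevant maps have enough regularity. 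Actually, the cleaner route is to observe directly that $\psi$ restricted to $\tilde U$ composed with the projection $\mathbb{R}^{k-s}\times\tilde{\mathbb{W}}\times\mathbb{R}^n \rightarrow \mathbb{R}^{k-s}\times\tilde{\mathbb{W}}$ equals $id_{[0,\infty)^s}\times\overline\psi$, and that this is a linear sc-isomorphism onto its image (an open subset of $[0,\infty)^s \times \mathbb{R}^{k-s}\times\tilde{\mathbb{W}}$), so the image $\tilde U'$ of $\tilde U$ under $id_{[0,\infty)^s}\times\overline\psi$ is a legitimate sc-germ of neighborhoods. One then restricts $\Psi$ (and the basic germ $b = \Psi\circ(h-\mathfrak s)\circ\psi^{-1}$) along the slice. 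Because $\mathbb{W} = \tilde{\mathbb{W}}\times\mathbb{R}^n$, the $\mathbb{W}$-component of the basic germ splits: writing ${\bf b} = ({\bf b}_{fin},{\bf b}_{\tilde W}, {\bf b}_{\mathbb{R}^n})$, the contraction-germ form $P\circ{\bf b}(a,w) = w - B(a,w)$ decomposes into a $\tilde{\mathbb{W}}$-contraction part and an $\mathbb{R}^n$-part; on the slice $\{p=0\}$ the $\mathbb{R}^n$-coordinate of the domain is removed and the corresponding $\mathbb{R}^n$-summand of $\mathbb{W}$ moves from the "contraction" block into the finite-dimensional cokernel block $\mathbb{R}^{k'}$. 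This is exactly where the index drops: $ind(\tilde\s) = (k-n) - (k' + \text{(shift of the } \mathbb{R}^n))$... I would set this up carefully so that the bookkeeping yields $ind(\tilde\s) = (k) - (k'+n) = ind(\s) - n$, matching the stated formula and the index drop in the slice tangent spaces~\eqref{eq:codimentangentspacesslicedretract}.

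For step (2), the filling restricts cleanly: $\tilde h := h|_{\tilde U}$ has principal part $\tilde{\boldsymbol h} = {\boldsymbol h}|_{\tilde U}$, which is a sc-smooth germ on $\tilde U(C,0)$ since sc-smoothness is inherited by restriction to a sc-subspace slice (the inclusion $C \hookrightarrow \mathbb{R}^n\times C$ is linear sc); property (i) $\tilde\s(x) = \tilde h(x)$ for $x \in \tilde\cO$ is immediate from $\s = h$ on $\cO$; property (ii) follows because if $x \in \tilde U$ with $\tilde{\boldsymbol h}(x) = \tilde\Gamma(\tilde r(x),\tilde{\boldsymbol h}(x))$, then reading this inside $\mathbb{R}^n\times C$ gives ${\boldsymbol h}(x) = \Gamma(r(x),{\boldsymbol h}(x))$, whence $x \in \cO$, hence $x \in \cO\cap(\{0\}\times C) = \tilde\cO$ by~\eqref{eq:inducedscretract}; property (iii), the linear sc-isomorphism $\ker D_0\tilde r \rightarrow \ker\tilde\Gamma(0,\cdot)$, follows by restricting the corresponding isomorphism for $r$ along the slice, using~\eqref{eq:inducedtangentspacerelationship} which identifies $T_0\tilde\cO = T_0\cO \cap (\{0\}\times\mathbb{E})$ and the analogous identity for $\ker D_0 r$ versus $\ker D_0\tilde r$, together with the fact that the fiber projection $\Gamma(0,\cdot)$ is unchanged by slicing (the fiber $\mathbb{F}$ is not sliced).

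**Main obstacle.** The crux — and the only genuinely delicate point — is establishing that the induced sc-diffeomorphism on the slice is the restriction of a \emph{linear} sc-isomorphism of the split product form demanded by Definition~\ref{dfn:tamescFredgerm}, rather than merely some sc-diffeomorphism. This requires exploiting the very specific form~\eqref{eq:slicedgermform} of $\psi$ (that its first two components $(v,\overline\psi(e))$ are linear in $e$ and independent of $p$, with $\overline\psi$ a fixed linear sc-isomorphism $\mathbb{E}' \to \mathbb{R}^{k-s}\times\tilde{\mathbb{W}}$) and carefully re-packaging the $\mathbb{R}^n$-factor of $\mathbb{W}$: on the domain the $\mathbb{R}^n$ (coming from the ambient slicing direction $p$) is killed by restriction to $\{p=0\}$, and on the codomain the matching $\mathbb{R}^n$-summand of $\mathbb{W} = \tilde{\mathbb{W}}\times\mathbb{R}^n$ must be absorbed into the finite-dimensional cokernel block, which is legitimate precisely because the basic-germ structure treats $\mathbb{R}^{k'}\times\mathbb{W}$ with $\mathbb{W}$ appearing only through the contraction term $w - B(a,w)$ — and the $\mathbb{R}^n$-part of that term can be moved out of the contraction without spoiling~\eqref{eq:basicgerm}. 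I expect the non-sc-smoothness of $\lambda$ (only $C^1$ on levels) to force the statement, as written, to be understood after the implicit $1$-shift of Remark~\ref{rmk:levelshift}, or else to require that the cited ambient construction already supplies $\lambda$ with enough regularity; I would flag this and handle it via Remark~\ref{rmk:levelshift} exactly as the paper does elsewhere.
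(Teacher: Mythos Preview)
Your plan is essentially the paper's proof: restrict the retractions via Lemmas~\ref{lem:inducedtameretraction}--\ref{lem:inducedtamebundleretraction}, restrict the filling $h$ and verify (i)--(iii) (using in particular $\ker D_0\tilde r = \ker D_0 r$, which follows since $D_0r$ preserves the $\mathbb{R}^n$-coordinate), take $\tilde\psi := id_{[0,\infty)^s}\times\overline\psi$ as the linear sc-isomorphism required by Definition~\ref{dfn:tamescFredgerm}, and reorder $\mathbb{R}^{k'}\times(\tilde{\mathbb{W}}\times\mathbb{R}^n)\to(\mathbb{R}^{k'}\times\mathbb{R}^n)\times\tilde{\mathbb{W}}$ so the new finite cokernel has dimension $k'+n$ and the index drops by $n$.

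However, you have mislocated where the $C^1$ hypothesis on $\lambda$ enters, and your suggestion that a $1$-shift may be needed is incorrect. No shift is required: the $C^1$ regularity on all levels is already part of Definition~\ref{dfn:slicescFredholmgerm}, and it is used not for sc-smoothness of a change of coordinates but precisely to verify the contraction property~\eqref{eq:basicgerm} for the restricted germ. Concretely, $\tilde b = \tilde\Psi\circ(\tilde h - \tilde{\mathfrak s})\circ\tilde\psi^{-1}$ is related to $b$ through the map
\[
\tau:\tilde U'\to U',\qquad \tau(a,w)=\psi\bigl(0,\tilde\psi^{-1}(a,w)\bigr)=\bigl(a,\,w,\,\lambda(0,\tilde\psi^{-1}(a,w))\bigr),
\]
which parametrizes $\psi(\tilde U)\subset U'$ as a graph over $\tilde U'$ (not the coordinate slice $\{q=0\}$). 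The paper computes $\tilde P\circ\tilde{\bf b}(a,w)=w-\tilde B(a,w)$ with $\tilde B=\pi_{\tilde{\mathbb{W}}}\circ B\circ\tau$, and to deduce $\|\tilde B(a,w)-\tilde B(a,w')\|_m\le\epsilon\|w-w'\|_m$ from the contraction estimate for $B$ one needs $\|\tau(a,w)-\tau(a,w')\|_m\le(C_m+1)\|w-w'\|_m$. This Lipschitz bound on the $\lambda$-component is exactly what $C^1$ on each level delivers. Your ``cleaner route'' does not bypass this step; it is the heart of the argument, and the paper carries it out explicitly. (Your first idea, the nonlinear shear straightening the graph, would in any case destroy the linearity of $\tilde\psi$ demanded by tameness, so it was right to abandon it.)
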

\begin{proof}
Since $\s$ is a $\mathbb{R}^n$-sliced sc-Fredholm germ, the partial quadrant is in the standard form ${C = [0,\infty)^s \times \mathbb{E}' \subset \mathbb{R}^s \times \mathbb{E}' = \mathbb{E}}$ and there exists an open $U \subset \mathbb{R}^n \times C$ and a $\mathbb{R}^n$-sliced sc-retraction $r : U \rightarrow U$ onto $r(U) = \cO$ that satisfies the conditions of Definition~\ref{dfn:sliceretraction} and Definition~\ref{dfn:slicescFredholmgerm}, which we recall as we use them.

Since $r$ satisfies the properties in Definition~\ref{dfn:localscFredgerm} of local sc-Fredholm germ, we can assume that there exists a sc-germ of neighborhoods $U(\mathbb{R}^n \times C, 0)$ around $0$ satisfying $U(0) = U$ (i.e.\ the $0$-level open set in the germ is the open set $U$ from above) such that $r$ is covered by a tame bundle retraction $R : U \lhd \mathbb{F} \rightarrow U \lhd \mathbb{F}$ such that $\mathbold{\s} \circ r : U(\mathbb{R}^n \times C,0) \rightarrow \mathbb{F}$ posesses a filling
$$h : U(\mathbb{R}^n \times C,0) \rightarrow U(\mathbb{R}^n \times C,0) \lhd \mathbb{F}.$$

Since $r$ is a $\mathbb{R}^n$-sliced sc-retraction covered by $R$, it follows by Definition~\ref{dfn:slicebundleretraction} that $R$ is a $\mathbb{R}^n$-sliced bundle retraction. As in Lemma~\ref{lem:inducedtamebundleretraction}, set
$$\tilde{U} := U \cap (\{0\}^n \times C)$$
and denote the induced tame sc-retraction and tame bundle retraction by
\begin{align*}
\tilde{r} &: \tilde{U} \rightarrow \tilde{U},\\
\tilde{R} &: \tilde{U} \lhd \mathbb{F} \rightarrow \tilde{U} \lhd \mathbb{F},
\end{align*}
and the induced tame sc-retract and tame bundle retract by
\begin{align*}
\tilde{\cO} &= \tilde{r}(\tilde{U}) = \cO \cap (\{0\}^n \times C),\\
\tilde{K} &= \tilde{R}(\tilde{U} \lhd \mathbb{F}) = K \cap (\{0\}^n \times C \lhd \mathbb{F}).
\end{align*}
Also, let $\tilde{U}(C,0)$ be the sc-germ of neighborhoods given on level-$m$ by $\tilde{U}(m) := U(m) \cap (\{0\}^n \times C),$ or more concisely
$$\tilde{U}(C,0) := U(\mathbb{R}^n \times C,0) \cap (\{0\}^n \times C).$$

We claim that the restriction
$$\tilde{h} := h|_{\tilde{U}(C,0)} : \tilde{U}(C,0) \rightarrow \tilde{U}(C,0) \lhd \mathbb{F}$$
is a filling for the composition $\mathbold{\tilde{\s}} \circ \tilde{r} : \tilde{U}(C,0) \rightarrow \mathbb{F}$. It is a section of the trivial bundle $\tilde{U}(C,0) \lhd \mathbb{F} \rightarrow \tilde{U}(C,0)$ and its principal part ${\bf \tilde{h}}$ is a sc-smooth germ, because we have
$${\bf \tilde{h}} = {\bf h}|_{\tilde{U}(C,0)}$$
and the corresponding properties hold for ${\bf h}$. It remains to verify that the filler properties in Definition~\ref{dfn:localscFredgerm}(b).(i)-(iii) hold for $\tilde{\s}$ and $\tilde{h}$. These follow from the corresponding properties of $\s$ and $h$, as we now describe. Property (i) is immediate since $\tilde{\s}$ and $\tilde{h}$ are restrictions of $\s$ and $h$, respectively. Write $R(x,\xi) = (r(x), \G(x,\xi))$, as in \eqref{eq:bundleretractionwrittentout}. Consider the restriction $\tilde{\G} := \G|_{\tilde{U} \lhd \mathbb{F}}.$ Then we have $\tilde{R}(x,\xi) = (\tilde{r}(x),\tilde{\G}(x,\xi)).$ To verify $(ii)$, let $x \in \tilde{U}$ and assume ${\bf \tilde{h}}(x) = \tilde{\G}(\tilde{r}(x), {\bf \tilde{h}}(x)).$ It follows that ${\bf h}(x) = \Gamma(r(x),{\bf h}(x))$, which implies $x \in \cO$ by property (ii) for ${\bf h}$. Hence $x \in \tilde{U} \cap \cO = \tilde{\cO}$, as required. It remains to verify (iii) for ${\bf \tilde{h}}$. We claim that
\begin{equation} \label{eq:kernelrelationrestrictionproof}
\ker D_0{\tilde{r}} = \ker D_0r.
\end{equation}
The forwards inclusion follows from $\tilde{r}$ being the restriction of $r$. To see the reverse inclusion, let $\xi \in \ker D_0r.$ By the defining property \eqref{eq:sliceretraction} of a $\mathbb{R}^n$-sliced sc-retraction, we see that $D_0r$ preserves the $\mathbb{R}^n$-coordinate of $\xi$ and hence $\xi \in \{0\}^n \times \mathbb{R}^s \times \mathbb{E}'.$ In particular, $\xi$ is in the domain of $D_0\tilde{r}$, and moreover $D_0\tilde{r}(\xi) = D_0r(\xi) = 0$. Hence \eqref{eq:kernelrelationrestrictionproof} holds. We now verify that Definition~\ref{dfn:localscFredgerm}(iii) holds for ${\bf \tilde{h}}$. From the corresponding property of ${\bf h}$, the linearization $D_0L$ at $0$ of the map
\begin{align*}
L : U &\rightarrow \mathbb{F}\\
x &\mapsto (id_{\mathbb{F}} - \Gamma(r(x),\cdot)) \circ {\bf h}(x)
\end{align*}
restricts to a linear sc-isomorphism $\ker D_0r \rightarrow \ker \Gamma(0,\cdot)$. We must show that the linearization at $0$ of the map
\begin{align*}
\tilde{L} : \tilde{U} &\rightarrow \mathbb{F}\\
x &\mapsto (id_{\mathbb{F}} - \tilde{\G}(\tilde{r}(x),\cdot)) \circ \tilde{{\bf h}}(x)
\end{align*}
restricts to a linear sc-isomorphism $\ker D_0\tilde{r} \rightarrow \ker \tilde{\G}(0,\cdot) = \ker \G(0,\cdot)$. This follows from \eqref{eq:kernelrelationrestrictionproof} and since $\tilde{L}$ and $\tilde{r}$ are the restrictions of $L$ and $r$, respectively, to $\tilde{U}$. This completes the proof that $\tilde{h}$ is a filling for $\mathbold{\tilde{\s}} \circ \tilde{r},$ as claimed.

To show that $\tilde{\s}$ is a local sc-Fredholm germ, it remains to verify the properties in Definition~\ref{dfn:localscFredgerm}(c). Since $\s$ is a $\mathbb{R}^n$-sliced sc-Fredholm germ, the corresponding properties in Definition~\ref{dfn:localscFredgerm}(c) hold for $\s$, and in addition the stronger properties in the definition of $\mathbb{R}^n$-sliced sc-Fredholm germ (Definition~\ref{dfn:slicescFredholmgerm}) hold: There exists a sc$^+$-section $$\mathfrak{s} : U \rightarrow U \lhd \mathbb{F}$$ satisfying $\mathfrak{s}(0) = h(0)$, a sc-Banach space of the form $$\mathbb{W} = \tilde{\mathbb{W}} \times \mathbb{R}^n$$ for some other sc-Banach space $\tilde{\mathbb{W}}$, a sc-germ of neighborhoods
$$U' \text{ around } 0 \in ([0,\infty)^s \times \mathbb{R}^{k-s}) \times (\tilde{\mathbb{W}} \times \mathbb{R}^n),$$
and a strong bundle isomorphism
$$\Psi : U \lhd \mathbb{F} \rightarrow U' \lhd \mathbb{R}^{k'} \times (\tilde{\mathbb{W}} \times \mathbb{R}^n)$$
covering a sc-diffeomorphism $\psi : U \rightarrow U'$ satisfying $\psi(0) = 0$ and of the form
\begin{align*}
\psi : \mathbb{R}^n \times [0,\infty)^s \times \mathbb{E}' \supset U &\rightarrow U' \subset [0,\infty)^s \times \mathbb{R}^{k-s} \times \tilde{\mathbb{W}} \times \mathbb{R}^n\\
(p,v,e) &\mapsto (v, \overline{\psi}(e), \lambda(p,v,e))
\end{align*}
for some linear sc-isomorphism
$$\overline{\psi} : \mathbb{E}' \rightarrow \mathbb{R}^{k-s} \times \tilde{\mathbb{W}}$$
such that, on all levels $\mathbb{R}^n \times [0,\infty)^s \times E'_m$ for $m \geq 0$, the map
\begin{equation} \label{eq:theC1mapinproof}
(p,v,e) \mapsto \lambda(p,v,e) \in \mathbb{R}^n
\end{equation}
is $C^1$. Moreover, the principal part of the section
\begin{equation} \label{eq:basicgermrestrictionproof}
b := \Psi \circ (h - \mathfrak{s}) \circ \psi^{-1} : U' \rightarrow U' \lhd \mathbb{R}^{k'} \times (\tilde{\mathbb{W}} \times \mathbb{R}^n)
\end{equation}
is a basic germ, which means that, for all $a \in [0,\infty)^s \times \mathbb{R}^{k-s}$ and $z \in \tilde{\mathbb{W}} \times \mathbb{R}^n$ such that $(a,z) \in U'$, we have
\begin{equation} \label{eq:projpropertyinducedscfredgerm}
P \circ {\bf b}(a,z) = z - B(a,z),
\end{equation}
where $P : \mathbb{R}^{k'} \times (\tilde{\mathbb{W}} \times \mathbb{R}^n) \rightarrow (\tilde{\mathbb{W}} \times \mathbb{R}^n) $ is projection onto $\tilde{\mathbb{W}} \times \mathbb{R}^n$ and $B : U' \rightarrow \tilde{\mathbb{W}} \times \mathbb{R}^n$ is sc-smooth, satisfies $B(0) = 0$, and satisfies the contraction property \eqref{eq:basicgerm}.

Now, to verify that $\tilde{\s} = \s|_{\tilde{\cO}}$ inherits the local sc-Fredholm germ property, consider the restricted sc$^+$ section
\begin{align*}
\tilde{\mathfrak{s}} &:= \mathfrak{s}|_{\tilde{U}} : \tilde{U} \rightarrow \tilde{U} \lhd \mathbb{F},
\end{align*}
the linear sc-isomorphism given by
\begin{align*}
\tilde{\psi} : [0,\infty)^s \times \mathbb{E}' &\rightarrow [0,\infty)^s \times \mathbb{R}^{k-s} \times \tilde{\mathbb{W}}\\
(v,e) &\mapsto (v, \overline{\psi}(e)),
\end{align*}
the open set
\begin{align*}
\tilde{U'} &:= \tilde{\psi}(\tilde{U}),
\end{align*}
and the strong bundle isomorphism given by
\begin{align*}
\tilde{\Psi} : \tilde{U} \lhd \mathbb{F} &\rightarrow \tilde{U'} \lhd (\mathbb{R}^{k'} \times \mathbb{R}^n) \times \tilde{\mathbb{W}}\\
((v,e) \lhd \xi) &\mapsto (\tilde{\psi}(v,e) \lhd \eta(\Psi((v,e) \lhd \xi)),
\end{align*}
where $\eta : \mathbb{R}^{k'} \times (\tilde{\mathbb{W}} \times \mathbb{R}^n) \rightarrow (\mathbb{R}^{k'} \times \mathbb{R}^n) \times \tilde{\mathbb{W}}$ is the reordering of factors map.

We claim that the principal part of the section
\begin{equation} \label{eq:basicgermtwiddle}
\tilde{b} := \tilde{\Psi} \circ (\tilde{h} - \tilde{\mathfrak{s}}) \circ \tilde{\psi}^{-1} : \tilde{U'} \rightarrow \tilde{U'} \lhd (\mathbb{R}^{k'} \times \mathbb{R}^n) \times \tilde{\mathbb{W}}
\end{equation}
is a basic germ, where now $\tilde{\mathbb{W}}$ plays the role of the sc-Banach space $\mathbb{W}$ in Definition~\ref{dfn:localscFredgerm}(c). Let the maps
\begin{align*}
\tilde{P} &: (\mathbb{R}^{k'} \times \mathbb{R}^n) \times \tilde{\mathbb{W}} \rightarrow \tilde{\mathbb{W}},\\
\pi_{\tilde{\mathbb{W}}} &: \mathbb{R}^n \times \tilde{\mathbb{W}} \rightarrow \tilde{\mathbb{W}},
\end{align*}
be the projections onto the $\tilde{\mathbb{W}}$ factor in their respective domains. Notice that we have $\tilde{P} = \pi_{\tilde{\mathbb{W}}} \circ P \circ \eta^{-1}.$

We write $a \in [0,\infty)^s \times \mathbb{R}^{k-s}$ and $w \in \tilde{\mathbb{W}}$. Define the map
\begin{align*}
\t : \tilde{U}' &\rightarrow U'\\
(a,w) &\mapsto \psi(0,\tilde{\psi}^{-1}(a,w)),
\end{align*}
and from the definitions observe
\begin{equation} \label{eq:raw}
\t(a,w) = (a,w,\lambda(0,\tilde{\psi}^{-1}(a,w))).
\end{equation}
From the definitions and \eqref{eq:projpropertyinducedscfredgerm},\eqref{eq:raw}, we compute
\begin{align*}
\tilde{P} \circ \tilde{\bf b}(a,w) &= \pi_{\tilde{\mathbb{W}}} \circ \tilde{P} \circ  \pi_{\mathbb{R}^{k'} \times \mathbb{R}^n \times \tilde{\mathbb{W}}} \circ \tilde{\Psi} \circ (\tilde{h} - \tilde{\mathfrak{s}}) \circ \tilde{\psi}^{-1}(a,w)\\
&=  \pi_{\tilde{\mathbb{W}}} \circ  P \circ  \pi_{\mathbb{R}^{k'} \times \tilde{\mathbb{W}} \times \mathbb{R}^n} \circ \Psi \circ (h - \mathfrak{s})(0, \tilde{\psi}^{-1}(a,w))\\
&=  \pi_{\tilde{\mathbb{W}}} \circ  P \circ  \pi_{\mathbb{R}^{k'} \times \tilde{\mathbb{W}} \times \mathbb{R}^n} \circ \Psi \circ (h - \mathfrak{s})\\
&\,\,\,\,\,\,\,\,\,\, \circ (\psi^{-1} \circ \psi) \big (0, \tilde{\psi}^{-1}(a,w) \big )\\
&=  \pi_{\tilde{\mathbb{W}}} \circ  P \circ  {\bf b} \circ \psi(0,\tilde{\psi}^{-1} (a,w))\\
&=  \pi_{\tilde{\mathbb{W}}} \circ  P \circ  {\bf b} \circ \t (a,w)\\
&= \pi_{\tilde{\mathbb{W}}}((w,\lambda(0,\tilde{\psi}^{-1}(a,w))) - B \circ \t(a,w))\\
&= w - \pi_{\tilde{\mathbb{W}}} \circ B \circ \t(a,w).
\end{align*}
So, setting 
\begin{align*}
\tilde{B} := \pi_{\tilde{\mathbb{W}}} \circ B \circ \t : \tilde{U'} &\rightarrow \tilde{\mathbb{W}},
\end{align*}
we have $\tilde{P} \circ \tilde{\bf b}(a,w) = w - \tilde{B}(a,w)$. Hence to complete the proof that $\tilde{\bf b}$ is a basic germ it remains to show that  $\tilde{B}$ satisfies the contraction property \eqref{eq:basicgerm}.

Recall from \eqref{eq:raw} the map
\begin{align*}
\tilde{\lambda} : \tilde{U}' &\rightarrow \mathbb{R}^n\\
(a,w) &\mapsto \lambda(0,\tilde{\psi}^{-1}(a,w)).
\end{align*}
For all $m \geq 0$, this map restricted to the $m$-level $\tilde{U}'_m \rightarrow \mathbb{R}^n$ is $C^1$ since the map $\lambda$ from \eqref{eq:theC1mapinproof} is $C^1$ on every level by definition of $\mathbb{R}^n$-sliced sc-Fredholm germ, and since $\tilde{\psi}$ is a linear sc-isomorphism and hence $C^{\infty}$ on every level. So, there exist constants $\delta'_m > 0$ and $C_m > 0$ such that, for all $||(a,w)||_m,||(a,w')||_m < \delta'_m$, we have the $C^1$-estimate
\begin{equation} \label{eq:c1estimate}
||\tilde{\lambda}(a,w) - \tilde{\lambda}(a,w')||_{\mathbb{R}^n} \leq C_m \cdot ||(a,w) - (a,w')||_m = C_m \cdot ||w - w'||_m.
\end{equation}
Then by \eqref{eq:raw} and our convention that the norm on a Cartesian product is the sum norm (which is equivalent to any standard Banach norm on the product), we have the estimate
\begin{align}
||\t(a,w) - \t(a,w')||_m &=  ||(a,w,\tilde{\lambda}(a,w)) - (a,w',\tilde{\lambda}(a,w'))||_m \nonumber\\
&= ||\tilde{\lambda}(a,w) - \tilde{\lambda}(a,w')||_{\mathbb{R}^n} + ||w -w'||_m, \nonumber\\
&\leq (C_m + 1) \cdot ||w - w'||_m. \label{eq:basicproof}
\end{align}

We now verify the contraction property \eqref{eq:basicgerm} for $\tilde{B}$. Let $\epsilon > 0$ and $m \geq 0$. By the contraction property which is satisfied for $B$, there exists $\delta'_m > 0$ such that \eqref{eq:basicgerm} holds for $B$ and
$$\epsilon'_m := \epsilon / (C_m + 1).$$
Shrink $\delta'_m > 0$ to be smaller than the $\delta'_m$ for which \eqref{eq:basicproof} holds. We claim that
$$\delta_m := \delta'_m / (C_m + 1)$$
satisfies \eqref{eq:basicgerm} for $\tilde{B}$ and $\epsilon$. Indeed, consider some $$||(a,w)||_m, ||(a,w')||_m < \delta_m.$$ Then by $\t(0) = \psi(0,\tilde{\psi}^{-1}(0)) = 0$ and \eqref{eq:basicproof}, we have
$$||\t(a,w)||_m, ||\t(a,w')||_m < (C_m + 1) \cdot \delta_m = \delta'_m.$$
We compute, using the contraction property \eqref{eq:basicgerm} for $B$ and the estimate \eqref{eq:basicproof},
\begin{align*}
||\tilde{B}(a,w) - \tilde{B}(a,w')||_m &= ||\pi_{\tilde{\mathbb{W}}} \circ B \circ \t(a,w) - \pi_{\tilde{\mathbb{W}}} \circ B \circ \t(a,w')||_m\\
&\leq ||B \circ \t(a,w) - B \circ \t(a,w')||_m\\
&= ||B(a,w,\tilde{\lambda}(a,w)) - B(a,w',\tilde{\lambda}(a,w'))||_m\\
& \leq \epsilon'_m \cdot ||(w,\tilde{\lambda}(a,w)) - (w',\tilde{\lambda}(a,w'))||_m\\
& = \epsilon'_m \cdot ||(a,w,\tilde{\lambda}(a,w)) - (a,w',\tilde{\lambda}(a,w'))||_m\\
& = \epsilon'_m \cdot ||\t(a,w) - \t(a,w')||_m\\
&\leq (C_m + 1) \cdot \epsilon'_m \cdot ||w - w'||_m\\
&= \epsilon \cdot ||w - w'||_m.
\end{align*}
This completes the proof that \eqref{eq:basicgerm} holds for $\tilde{B}$, and hence that $\tilde{\bf b}$ is a basic germ.

We have shown that $\tilde{\s}$ is a local sc-Fredholm germ. Moreover, $\tilde{\s}$ is a tame sc-Fredholm germ because we have $\tilde{\psi} = id_{[0,\infty)^s} \times \overline{\psi}$ by definition of $\tilde{\psi}$, where $\overline{\psi} : \mathbb{E}' \rightarrow \mathbb{R}^{k-s} \times \tilde{\mathbb{W}}$ is the linear sc-isomorphism given by the $\mathbb{R}^n$-sliced sc-Fredholm germ property of $\s$.

The claimed index formula holds because, by definition of the sc-Fredholm index \eqref{eq:fredholmindexdef} and the form of the basic germs ${\bf b}$ \eqref{eq:basicgermrestrictionproof} and $\tilde{\bf b}$ \eqref{eq:basicgermtwiddle}, we have $ind(\s) = k - k'$ and $ind(\tilde{\s}) = k - (k' + n)$. 
\end{proof}

\section{\texorpdfstring{Slice coordinates for local submersions to $\mathbb{R}^n$}{Slice coordinates for local submersions to R\textasciicircum n}} \label{sec:slicecoordinates}

The purpose of this section is to prove Lemma~\ref{lem:slicecoordinates}, which generalizes the local submersion normal form (Lemma~\ref{lem:sclocalsubmersionnormalform}) for sc-smooth maps $f : U \rightarrow \mathbb{R}^n$ where the domain $U \subset [0,\infty)^s \times \mathbb{E}$ is open to maps $$f : \cO \rightarrow \mathbb{R}^n$$ where the domain is a tame sc-retract $(\cO,[0,\infty)^s \times \mathbb{E}, \mathbb{R}^s \times \mathbb{E})$. This means that the set $\cO = r(U)$ is the image of a tame sc-retraction $r : U \rightarrow U$ (see Definition~\ref{dfn:tameretraction}), which can have much more complicated local structure than the open set $U$, for example locally varying dimension. In this case, the local submersion normal form is obtained by a change of coordinates around any smooth point $x \in \cO_{\infty}$ at which $f$ is submersive and satisfies $f(x) = 0$ such that the sc-retract in the new coordinates is a $\mathbb{R}^n$-sliced sc-retract (Definition~\ref{dfn:sliceretraction}) with induced tame sc-retract (see Lemma~\ref{lem:inducedtameretraction}) identified with a neighborhood of $x$ in $f^{-1}(0) \cap \cO^1$. For this reason, we call the sc-diffeomorphism with this $\mathbb{R}^n$-sliced sc-retract \emph{slice coordinates around $x$}.

Moreover, given a tame local bundle model $K \rightarrow \cO$ and a tame sc-Fredholm germ $\s : \cO \rightarrow K$ (Definition~\ref{dfn:tamescFredgerm}), and assuming that $f$ is compatible with $\s$ as explained in Lemma~\ref{lem:slicecoordinates}(III), we prove that $\s$ in the new slice coordinates around $x$ is a $\mathbb{R}^n$-sliced sc-Fredholm germ (Definition~\ref{dfn:slicescFredholmgerm}) and its induced tame sc-Fredholm germ is identified with the restriction of $\s$ to $f^{-1}(0) \cap \cO^1$. See Section~\ref{sec:evaluationmaps} for a discussion of why evaluation maps $f = ev$ at marked points are compatible with the Cauchy-Riemann section $\s = \overline{\partial}_J$.

For simplicity, in this section we take all partial quadrants $C$ to be in standard form $C = [0,\infty)^s \times \mathbb{E} \subset \mathbb{R}^s \times \mathbb{E}$. There is no loss of generality because all partial quadrants are linearly sc-isomorphic to a standard partial quadrant by definition (see \eqref{eq:linearisotostandardquadrantdef}) and hence all sc-retracts are sc-diffeomorphic to sc-retracts in standard partial quadrants.

The notion of submersion that we use in Lemma~\ref{lem:slicecoordinates}, as in the local submersion normal form in sc-calculus (Lemma~\ref{lem:sclocalsubmersionnormalform}), requires surjectivity of the tangent map relative to the boundary: For a point $x \in \cO_{\infty} \cap (\{0\} \times \mathbb{E})$, we require that $D_xf(T_x\cO \cap (\{0\} \times \mathbb{E})) = \mathbb{R}^n$. Recall that $T^R_x\cO = T_x\cO \cap (\{0\} \times \mathbb{E})$ is the reduced tangent space at $x$ (see \eqref{eq:reducedtangentspacedef}). In the following lemma we interpret $T_x^R\cO$ as the tangent space at $x$ along the corner $\cO \cap (\{0\} \times \mathbb{E})$.
\begin{lem} \label{lem:associatedboundaryretract}
Consider a standard partial quadrant $[0,\infty)^s \times \mathbb{E}$ and a tame sc-retract $(\cO,[0,\infty)^s \times \mathbb{E}, \mathbb{R}^s \times \mathbb{E})$. Then, for any open subset $U \subset [0,\infty)^s \times \mathbb{E}$ and tame sc-retraction $r : U \rightarrow U$ onto $r(U) = \cO$, the restriction of $r$ to $U_{\partial} := U \cap (\{0\} \times \mathbb{E})$ is a tame sc-retraction
\begin{align*}
r_{\partial} &:= r|_{U_{\partial}} : U_{\partial} \rightarrow U_{\partial}
\end{align*}
onto
$$\cO_{\partial} := \cO \cap (\{0\} \times \mathbb{E}).$$
In particular, the tuple
$$(\cO_{\partial},\mathbb{E},\mathbb{E})$$
is a tame sc-retract, which we call the {\bf boundary sc-retract associated to the tame sc-retract $\cO$}.

Moreover, for all $x \in \cO_{\partial}^1$, we have
\begin{equation} \label{eq:boundaryretracttangentspace}
T_x\cO_{\partial} = T_x\cO \cap (\{0\} \times \mathbb{E}) = T_x^R\cO.
\end{equation}
\end{lem}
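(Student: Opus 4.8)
The plan is to verify directly that $r_\partial := r|_{U_\partial}$ is a well-defined tame sc-retraction with the stated image, and then to compute the tangent space. First I would observe that $r$ preserves the degeneracy index $d_{[0,\infty)^s\times\mathbb{E}}$ by tameness (Definition~\ref{dfn:tameretraction}(1)), and that a point $x$ lies in $U_\partial = U\cap(\{0\}\times\mathbb{E})$ precisely when $d_{[0,\infty)^s\times\mathbb{E}}(x) = s$ (all $s$ boundary coordinates vanish). Hence $r(U_\partial)\subset U_\partial$, so $r_\partial$ is genuinely a self-map of $U_\partial$; it inherits sc-smoothness as a restriction to the sc-subspace $\{0\}\times\mathbb{E}$, and $r_\partial\circ r_\partial = r_\partial$ from $r\circ r = r$. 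Thus $r_\partial$ is a sc-retraction onto $\cO_\partial := r_\partial(U_\partial)$. To see that this image equals $\cO\cap(\{0\}\times\mathbb{E})$ one runs the same two-line argument as in the proof of Lemma~\ref{lem:inducedtameretraction}: the inclusion $\cO_\partial\subseteq \cO\cap(\{0\}\times\mathbb{E})$ is immediate, and conversely if $x\in\cO\cap(\{0\}\times\mathbb{E})$ then $x\in U_\partial$ and $r_\partial(x) = r(x) = x$ since $r$ is a retraction onto $\cO$, so $x\in\cO_\partial$.

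Next I would check tameness of $r_\partial$ as a sc-retraction on the partial quadrant $(\mathbb{E},\mathbb{E})$ (standard form with $s=0$, so the degeneracy index there is identically $0$). Condition (1) is vacuous since $d_\mathbb{E}\equiv 0$ on $\mathbb{E}$. For condition (2), fix a smooth point $x\in(\cO_\partial)_\infty = \cO_\partial\cap\mathbb{E}_\infty$; this is also a smooth point of $\cO$. By \cite[Prop.~2.9]{HWZbook} applied to $r$, the sc-subspace $A := (\mathrm{id}-D_xr)(\mathbb{R}^s\times\mathbb{E})$ satisfies $\mathbb{R}^s\times\mathbb{E} = T_x\cO\oplus A$ with $A\subset(\mathbb{R}^s\times\mathbb{E})_x$. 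Since $x\in\{0\}\times\mathbb{E}$, formula \eqref{eq:Exstandard} gives $(\mathbb{R}^s\times\mathbb{E})_x = \{0\}^s\times\mathbb{E}$, so in fact $A\subset\{0\}^s\times\mathbb{E}$, i.e. $A$ is a sc-subspace of $\mathbb{E}$. Intersecting the splitting $\mathbb{R}^s\times\mathbb{E} = T_x\cO\oplus A$ with $\{0\}\times\mathbb{E}$ and using $A\subset\{0\}\times\mathbb{E}$ yields $\{0\}\times\mathbb{E} = (T_x\cO\cap(\{0\}\times\mathbb{E}))\oplus A$. Once the tangent-space identity \eqref{eq:boundaryretracttangentspace} is established, this reads $\mathbb{E} = T_x\cO_\partial\oplus A$ with $A\subset\mathbb{E} = \mathbb{E}_x$ (the ambient $\mathbb{E}_x$ for the trivial quadrant $\mathbb{E}$), verifying condition (2). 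So $(\cO_\partial,\mathbb{E},\mathbb{E})$ is a tame sc-retract.

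Finally, for \eqref{eq:boundaryretracttangentspace}: at $x\in(\cO_\partial)^1$ the tangent space is $T_x\cO_\partial = D_x r_\partial(T_x U_\partial)$ by \eqref{eq:tangentspaceatpoint}, and $T_x U_\partial = \{0\}^s\times\mathbb{E}$ while $D_xr_\partial = D_xr|_{\{0\}^s\times\mathbb{E}}$. Because $r$ maps $U_\partial$ into $U_\partial$, its differential $D_xr$ maps $\{0\}^s\times\mathbb{E}$ into $\{0\}^s\times\mathbb{E}$, and conversely any vector of $T_x\cO = D_xr(\mathbb{R}^s\times\mathbb{E})$ lying in $\{0\}^s\times\mathbb{E}$ is the image of something in $\{0\}^s\times\mathbb{E}$ (project the preimage onto $\{0\}^s\times\mathbb{E}$ and use that $D_xr$ preserves this subspace together with $D_xr\circ D_xr = D_xr$) — this is the same $\{0\}$-coordinate-tracking argument used in Lemma~\ref{lem:inducedtameretraction} for \eqref{eq:inducedtangentspacerelationship}, with the $\mathbb{R}^n$-factor there replaced by the $\mathbb{R}^s$-factor here. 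Hence $T_x\cO_\partial = T_x\cO\cap(\{0\}\times\mathbb{E})$, which by \eqref{eq:reducedtangentspacedef} together with $\mathbb{E}_x = \{0\}^s\times\mathbb{E}$ (again \eqref{eq:Exstandard}) equals $T_x^R\cO$. The main obstacle is a minor bookkeeping point rather than a conceptual one: being careful that at non-smooth points $x\in(\cO_\partial)^1\setminus(\cO_\partial)_\infty$ the differential $D_xr$ need not be a sc-operator, so the tangent-space identity \eqref{eq:boundaryretracttangentspace} must be proved purely algebraically (as above) rather than via splitting arguments, while the tameness verification legitimately restricts to smooth points where \cite[Prop.~2.9]{HWZbook} applies.
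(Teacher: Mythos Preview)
Your proposal is correct and follows essentially the same approach as the paper's proof. The only notable difference is that the paper dispatches tameness of $r_\partial$ in one line by observing that $U_\partial$ lies in the cornerless quadrant $\mathbb{E}$ (so both conditions of Definition~\ref{dfn:tameretraction} are automatic), whereas you work harder via \cite[Prop.~2.9]{HWZbook}; and for \eqref{eq:boundaryretracttangentspace} the paper uses the fixed-point characterization $\xi\in T_x\cO \Leftrightarrow D_xr(\xi)=\xi$ directly rather than your image-plus-idempotency route, but these are equivalent.
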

\begin{proof}
Since $r$ is tame, for $x \in U_{\partial}$ we have $$d_{[0,\infty)^s \times \mathbb{E}}(r(x)) = d_{[0,\infty)^s \times \mathbb{E}}(x) = s,$$ and hence $r(x) \in U_{\partial}$. Thus $r_{\partial}(U_{\partial}) \subset U_{\partial}$ holds. It is then immediate that $r_{\partial}$ is a sc-smooth retraction, as the restriction of the sc-smooth retraction $r$. Moreover, the domain $U_{\partial}$ of $r_{\partial}$ is an open subset of a cornerless partial quadrant, i.e.\ $U_{\partial} \subset \mathbb{E}$, and hence $r$ trivially satisfies the tameness hypotheses.

We now prove that the image of $r_{\partial}$ is $\cO_{\partial}$. Indeed, if $x \in im(r_{\partial})$ then $x \in \cO$ and $x \in U_{\partial} \subset \{0\} \times \mathbb{E}$, hence $x \in \cO_{\partial}$. For the reverse inclusion, if $x \in \cO \cap (\{0\} \times \mathbb{E})$ then $x \in U_{\partial}$ and hence $x = r(x) = r_{\partial}(x)$.

It remains to prove that \eqref{eq:boundaryretracttangentspace} holds. Let $\xi \in T_x\cO_{\partial}.$ Then $\xi \in \{0\} \times \mathbb{E}$ and $D_xr(\xi) = D_xr_{\partial}(\xi) = \xi,$ so $\xi \in T_x\cO.$ To prove the reverse inclusion, let $\xi \in T_x\cO \cap (\{0\} \times \mathbb{E}).$ Then we have $D_xr_{\partial}(\xi) = D_xr(\xi) = \xi$, which implies $\xi \in T_x\cO_{\partial}$, as required.
\end{proof}

In the following lemma, we construct the slice coordinates around a smooth point $x$ in a tame sc-retract $\cO$ at which a sc-smooth map $f : \cO \rightarrow \mathbb{R}^n$ is submersive on the tangent space $T_x\cO_{\partial}$ at $x$ to the associated boundary retract $\cO_{\partial}$ from Lemma~\ref{lem:associatedboundaryretract}, i.e.\ $D_x f(T_x\cO_{\partial}) = \mathbb{R}^n$.

The statement of the lemma is in three parts: (I) provides slice coordinates for $f^{-1}(0)$, (II) provides slice bundle coordinates for the restriction of a bundle retract to $f^{-1}(0)$, and (III) provides slice sc-Fredholm coordinates for the restriction of a sc-Fredholm section.

\begin{lem} \label{lem:slicecoordinates}
\text{}
\begin{enumilistfiber}
\item Consider a tame sc-retract $(\cO,[0,\infty)^s \times \mathbb{E},\mathbb{R}^s \times \mathbb{E})$ and a sc-smooth map $f : \cO \rightarrow \mathbb{R}^n$. Let $\cO_{\partial} = \cO \cap (\{0\}^s \times \mathbb{E})$ denote the boundary sc-retract associated to $\cO$ (see Lemma~\ref{lem:associatedboundaryretract}), and $f_{\partial} := f|_{\cO_{\partial}} : \cO_{\partial} \rightarrow \mathbb{R}^n$ the restriction of $f$. Suppose that, at some $x \in (\cO_{\partial})_{\infty}$ satisfying $f(x) = 0$, the tangent map $D_xf_{\partial} : T_x\cO_{\partial} \rightarrow \mathbb{R}^n$ is surjective.

Then, there exists an open subset
$$\hat{\cO} \subset \cO^1,$$
a sc-Banach space $\mathbb{K}$, a $\mathbb{R}^n$-sliced sc-retract
\begin{equation} \label{eq:normalformsliceretract}
(\cO',\mathbb{R}^n \times [0,\infty)^s \times \mathbb{K}^1, \mathbb{R}^n \times \mathbb{R}^s \times \mathbb{K}^1),
\end{equation}
and a sc-smooth diffeomorphism
$$g : \hat{\cO} \rightarrow \cO'$$
satisfying
\begin{equation} \label{eq:slicecoordinatesbasestatement}
g(f^{-1}(0) \cap \hat{\cO}) = \cO' \cap (\{0\}^n \times [0,\infty)^s \times \mathbb{K}^1) =: \tilde{\cO}'.
\end{equation}
Here, $\tilde{\cO}'$ is the tame sc-retract induced by $\cO'$ (see Lemma~\ref{lem:inducedtameretraction}).

We view $g$ as providing {\bf slice coordinates $\tilde{\cO}' \subset \cO'$ around $x$ with respect to $(f^{-1}(0) \cap \cO^1) \subset \cO^1$}.\\

\item Consider, in addition, a tame strong bundle retract $$(K,[0,\infty)^s \times \mathbb{E} \lhd \mathbb{F}, \mathbb{R}^s \times \mathbb{E} \lhd \mathbb{F})$$ covering the tame sc-retract $\cO$. Let $\hat{K} := \pi^{-1}(\hat{\cO})$ where $\pi : K^1 \rightarrow \cO^1$ is the $1$-shifted local bundle model. Set
$$K' := (g \lhd id_{\mathbb{F}^1})(\hat{K}).$$

Then, the tuple
\begin{equation} \label{eq:normalformslicebundleretract}
(K',\mathbb{R}^n \times [0,\infty)^s \times \mathbb{K}^1 \lhd \mathbb{F}^1,\mathbb{R}^n \times \mathbb{R}^s \times \mathbb{K}^1 \lhd \mathbb{F}^1)
\end{equation}
is a $\mathbb{R}^n$-sliced bundle retract covering the $\mathbb{R}^n$-sliced sc-retract \eqref{eq:normalformsliceretract}. In particular, the induced tame strong bundle retract $\tilde{K}'$ (see Lemma~\ref{lem:inducedtamebundleretraction}) covers the induced tame sc-retract $\tilde{\cO}'$.

Moreover,
\begin{equation} \label{eq:slicecoordinatesbundlestatement}
(g \lhd id_{\mathbb{F}^1})(\pi^{-1}(f^{-1}(0) \cap \hat{\cO})) = \tilde{K}'
\end{equation}
holds, so we view $(g \lhd id_{\mathbb{F}^1})$ as providing {\bf slice bundle coordinates $\tilde{K}' \subset K'$ for $\pi$ around $x$ with respect to $(f^{-1}(0) \cap \cO^1) \subset \cO^1$}.\\

\item Consider, in addition, a tame sc-Fredholm germ $\s : \cO \rightarrow K$ (Definition~\ref{dfn:tamescFredgerm}). Assume that $x = 0$ and that $\s$ has the following property: There exists a choice of sc-Banach space $\mathbb{W}$ and linear sc-isomorphism $\overline{\psi} : \mathbb{E} \rightarrow \mathbb{R}^{k-s} \times \mathbb{W}$ satisfying the conditions in Definition~\ref{dfn:tamescFredgerm} of tame sc-Fredholm germ for $\s$ such that, in addition, there exists a sc-complement $L$ of $\ker D_xf_{\partial}$ in $T_x\cO_{\partial}$ satisfying
\begin{equation} \label{eq:thespecialproperty}
\overline{\psi}(L) \subset (\{0\}^{k-s} \times \mathbb{W}).
\end{equation}

Then, the section
$$\s' := (g \lhd id_{\mathbb{F}^1}) \circ \s \circ g^{-1}: \cO' \rightarrow K'$$
is a $\mathbb{R}^n$-sliced sc-Fredholm germ. In particular, the restriction to the induced tame local bundle model
$$\tilde{\s}' := \s'|_{\tilde{\cO}'} : \tilde{\cO}' \rightarrow \tilde{K}'$$
is a tame sc-Fredholm germ with index satisfying $ind(\tilde{\s}') = ind(\s) - n$.

We view $\tilde{\s}'$ as being in {\bf slice sc-Fredholm coordinates around $x$ with respect to $(f^{-1}(0) \cap \cO^1) \subset \cO^1$}.
\end{enumilistfiber}
\end{lem}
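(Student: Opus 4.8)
The plan is to reduce all three parts to the sc-calculus local submersion normal form, Lemma~\ref{lem:sclocalsubmersionnormalform}, applied not to $f$ directly but to $F := f \circ r$, where $r : U \to U$ is a tame sc-retraction onto $\cO$ with $U \subset [0,\infty)^s \times \mathbb{E}$ open --- in part (III), the retraction belonging to the tame sc-Fredholm germ data of $\s$. First I would set up part (I). Since $f$ is sc-smooth on $\cO$, the composite $F = f \circ r$ is sc-smooth, and by Lemma~\ref{lem:associatedboundaryretract} we have $D_x r(\{0\}^s \times \mathbb{E}) = T_x \cO_{\partial}$, so the hypothesis $D_x f_{\partial}(T_x \cO_{\partial}) = \mathbb{R}^n$ translates into surjectivity of $(D_x F)|_{\{0\}^s \times \mathbb{E}}$. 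Applying Lemma~\ref{lem:sclocalsubmersionnormalform} to $F$ at $x$ yields $\mathbb{K} := \ker (D_x F)|_{\{0\}^s \times \mathbb{E}}$, a chosen sc-complement $L$ of $\mathbb{K}$ in $\mathbb{E}$, open sets $\hat U \subset U^1$ and $U' \subset \mathbb{R}^n \times [0,\infty)^s \times \mathbb{K}^1$, and a sc-diffeomorphism $g : \hat U \to U'$, $(v,e) \mapsto (F(v,e), v, pr(e))$. After shrinking $\hat U$ to $\hat U \cap r^{-1}(\hat U)$, so that $r$ maps it into itself with image $\cO^1 \cap \hat U$, set $\hat\cO := \cO^1 \cap \hat U$, $\cO' := g(\hat\cO)$, and define the conjugated retraction $r' := g \circ r \circ g^{-1}$. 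Then $r'$ is tame, because $r$ is tame while $g$ preserves the $[0,\infty)^s$-coordinate and the degeneracy index and carries smooth points to smooth points; and $r'$ preserves the $\mathbb{R}^n$-coordinate because $\pi_{\mathbb{R}^n} \circ r' = F \circ r \circ g^{-1} = F \circ g^{-1} = \pi_{\mathbb{R}^n}$ on $\cO'$, using $F \circ r = F$. Hence $\cO'$ is a $\mathbb{R}^n$-sliced sc-retract \eqref{eq:normalformsliceretract}, and \eqref{eq:slicecoordinatesbasestatement} is immediate: on $\hat\cO \subset \cO$ we have $f = F$, and $g$ sends $\{F = 0\}$ precisely to the $\{0\}^n$-slice of $U'$.

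Part (II) is bookkeeping on top of (I). Writing the tame bundle retraction as $R(x,\xi) = (r(x), \Gamma(x,\xi))$, conjugate by $g \lhd id_{\mathbb{F}^1}$ to obtain $R' := (g \lhd id_{\mathbb{F}^1}) \circ R^1 \circ (g \lhd id_{\mathbb{F}^1})^{-1}$, which has the form $R'(y,\xi) = (r'(y), \Gamma(g^{-1}(y),\xi))$. Since $R'$ is a strong bundle retraction covering the $\mathbb{R}^n$-sliced sc-retraction $r'$, Definition~\ref{dfn:slicebundleretraction} gives that its image $K'$ is a $\mathbb{R}^n$-sliced bundle retract \eqref{eq:normalformslicebundleretract}, and tameness of $R'$ follows from tameness of $r'$. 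The identity \eqref{eq:slicecoordinatesbundlestatement} follows from \eqref{eq:slicecoordinatesbasestatement} and \eqref{eq:inducedscretractbundle}, since $g \lhd id_{\mathbb{F}^1}$ covers $g$ and hence sends the restriction of $K$ over $f^{-1}(0) \cap \hat\cO$ to the restriction of $K'$ over $\tilde\cO'$, which is exactly $\tilde K'$.

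For part (III) the compatibility hypothesis enters crucially, and everything is now at $x = 0$. Fix a choice of $\mathbb{W}$, $\overline\psi : \mathbb{E} \to \mathbb{R}^{k-s} \times \mathbb{W}$ and accompanying data $r, R, h, \mathfrak{s}, \Psi, b$ realizing $\s$ as a tame sc-Fredholm germ, together with the sc-complement $L$ of $\ker D_0 f_{\partial}$ in $T_0\cO_{\partial}$ with $\overline\psi(L) \subset \{0\}^{k-s} \times \mathbb{W}$; use this $r, R$ in parts (I)--(II), and take the sc-complement of $\mathbb{K}$ in Lemma~\ref{lem:sclocalsubmersionnormalform} to be exactly this $L$ --- legitimate because $D_0 F|_L = D_0 f_{\partial}|_L$ is injective (so $L \cap \mathbb{K} = 0$), $\dim L = n = \mathrm{codim}\, \mathbb{K}$, and $L \subset E_{\infty}$. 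Writing $\overline\psi = (\overline\psi_1, \overline\psi_2)$ with $\overline\psi_1 : \mathbb{E} \to \mathbb{R}^{k-s}$ and $\overline\psi_2 : \mathbb{E} \to \mathbb{W}$, the hypothesis gives $\overline\psi_1(L) = 0$; choosing a sc-projection onto the finite-dimensional $V := \overline\psi_2(L) \subset W_{\infty}$ splits $\mathbb{W} = \tilde{\mathbb{W}} \times V \cong \tilde{\mathbb{W}} \times \mathbb{R}^n$ so that $\overline\psi(L) = \{0\}^{k-s} \times \{0\}_{\tilde{\mathbb{W}}} \times \mathbb{R}^n$. The heart of the argument is that $\psi' := \psi \circ g^{-1}$, defined on the domain of $r'$, has exactly the sliced form \eqref{eq:slicedgermform}: writing $g^{-1}(p,v,\kappa) = (v,e)$ with $e = \kappa + l$, $l \in L$, one gets $\psi'(p,v,\kappa) = (v,\, \overline\psi'(\kappa),\, \lambda'(p,v,\kappa))$, where $\overline\psi'(\kappa)$ is the pair consisting of $\overline\psi_1(e)$ and the $\tilde{\mathbb{W}}$-component of $\overline\psi_2(e)$ --- these depend only on $\kappa$, because $\overline\psi_1(l) = 0$ and the $\tilde{\mathbb{W}}$-component of $\overline\psi_2(l)$ vanishes --- and $\overline\psi' : \mathbb{K}^1 \to \mathbb{R}^{k-s} \times \tilde{\mathbb{W}}^1$ is a linear sc-isomorphism (since $\mathbb{K} \cap L = 0$ and $\overline\psi(\mathbb{K}) + \overline\psi(L)$ is everything), while $\lambda'(p,v,\kappa)$ is the $\mathbb{R}^n$-component of $\overline\psi_2(e)$, which is $C^1$ on every sc-level because $g^{-1}$ is $C^{m+1}$ on the $m$-level by Lemma~\ref{lem:sclocalsubmersionnormalform}. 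Conjugating the remaining data ($h' := (g \lhd id_{\mathbb{F}^1}) \circ h \circ g^{-1}$, likewise $\mathfrak{s}'$, and $\Psi' := \Psi \circ (g \lhd id_{\mathbb{F}^1})^{-1}$) and using that the local sc-Fredholm germ notion is invariant under strong bundle isomorphisms covering sc-diffeomorphisms, the associated basic germ of $\s'$ is $b' = \Psi' \circ (h' - \mathfrak{s}') \circ \psi'^{-1} = b^1$, the $1$-shift of $b$ (the $g$'s cancel), which is a basic germ of precisely the form required of a $\mathbb{R}^n$-sliced sc-Fredholm germ with $\mathbb{W}$ replaced by $\tilde{\mathbb{W}}^1 \times \mathbb{R}^n$ and the same integers $k, k'$. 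Thus $\s'$ is a $\mathbb{R}^n$-sliced sc-Fredholm germ of index $k - k' = ind(\s)$, and Lemma~\ref{lem:inducedlocalscFredholmgerm} gives that $\tilde\s' = \s'|_{\tilde\cO'}$ is a tame sc-Fredholm germ with $ind(\tilde\s') = ind(\s') - n = ind(\s) - n$; that $\tilde\s'$ is the restriction of $\s$ to $f^{-1}(0) \cap \cO^1$ under $g$ is exactly \eqref{eq:slicecoordinatesbasestatement}--\eqref{eq:slicecoordinatesbundlestatement}.

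The step I expect to be the main obstacle is the bookkeeping in part (III): coordinating the complement $L$ between Lemma~\ref{lem:sclocalsubmersionnormalform} and the compatibility hypothesis, arranging the splitting $\mathbb{W} = \tilde{\mathbb{W}} \times \mathbb{R}^n$ so that $\overline\psi(L)$ is the $\mathbb{R}^n$-factor, and verifying that $\psi'$ genuinely has the sliced normal form. The two delicate points are that the $\tilde{\mathbb{W}}$-component of $\overline\psi_2(e)$ depends only on $\kappa$ (so that $\overline\psi'$ is well defined and all of the $p$-dependence is pushed into the $\mathbb{R}^n$-valued remainder $\lambda'$), and that $\lambda'$ has $C^1$ regularity on every sc-level, for which the refined $C^{m+1}$-regularity clause of Lemma~\ref{lem:sclocalsubmersionnormalform} is exactly what is needed. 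The invariance of fillings and basic germs under the coordinate change $g \lhd id_{\mathbb{F}^1}$ is routine and parallels the restriction argument in the proof of Lemma~\ref{lem:inducedlocalscFredholmgerm}, but should be spelled out carefully.
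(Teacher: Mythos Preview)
Your proposal is correct and follows essentially the same approach as the paper: apply the sc-submersion normal form (Lemma~\ref{lem:sclocalsubmersionnormalform}) to $F=f\circ r$, conjugate $r$, $R$, and the Fredholm data by the resulting $g$, and verify that the conjugated $\psi'$ has the sliced form~\eqref{eq:slicedgermform} by exploiting $\overline\psi(L)\subset\{0\}^{k-s}\times\mathbb{W}$ together with the $C^{m+1}$-regularity clause of Lemma~\ref{lem:sclocalsubmersionnormalform}.

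Two small remarks. First, in part~(I) you write $\pi_{\mathbb{R}^n}\circ r'=\pi_{\mathbb{R}^n}$ ``on $\cO'$'', but the $\mathbb{R}^n$-sliced condition requires this on all of $U'$; your computation in fact gives it on $U'$, so this is only a slip of phrasing. Also, the tameness of $r'$ needs the explicit transport $A':=D_yg(A)$ of the complementary subspace from Definition~\ref{dfn:tameretraction}(2), which works because $D_yg$ is the identity on the $\mathbb{R}^s$-factor; your phrase ``carries smooth points to smooth points'' is necessary but not sufficient here. Second, in part~(III) your device of identifying $\mathbb{W}\cong\tilde{\mathbb{W}}\times\mathbb{R}^n$ up front so that $b'=b^1$ with no residual conjugation is slightly cleaner than the paper's version, which keeps the reordering isomorphism $\tau$ explicit and then re-verifies the contraction estimate~\eqref{eq:basicgerm} for the conjugated $B'$; both arrive at the same conclusion.
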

\begin{proof}
\ul{Proof of (I)}: Consider any open subset $U \subset [0,\infty)^s \times \mathbb{E}$, tame sc-retraction $r : U \rightarrow U$ onto $r(U) = \cO$, and the associated retraction on the boundary (see Lemma~\ref{lem:associatedboundaryretract}) denoted by $r_{\partial} : U_{\partial} \rightarrow U_{\partial}$ where $U_{\partial} = U \cap (\{0\}^s \times \mathbb{E}).$

Set
$$\mathbb{K} := \ker D_x(f_{\partial} \circ r_{\partial}) \subset T_xU_{\partial} = \mathbb{E}.$$
Let $L \subset E_{\infty}$ be a sc-complement of $\ker D_xf_{\partial}$ in $T_x\cO_{\partial}$, which exists by Lemma~\ref{lem:complementfinitecodimension} since the surjection $D_xf_{\partial} : T_x\cO_{\partial} \rightarrow \mathbb{R}^n$ induces an isomorphism $T_x\cO_{\partial}/\ker D_xf_{\partial} \cong \mathbb{R}^n$. Note that the restriction
\begin{equation} \label{eq:isoonL}
D_xf_{\partial}|_L : L \rightarrow \mathbb{R}^n
\end{equation}
is an isomorphism.

We claim that $L$ is a sc-complement of $\mathbb{K}$ in $\mathbb{E}$, i.e.
\begin{equation} \label{eq:complementpullsthroughretraction}
\mathbb{E} = \mathbb{K} \oplus L.
\end{equation}
First, note that the sc-splitting $\mathbb{E} = T_x\cO_{\partial} \oplus \ker D_xr_{\partial}$ holds since the map $D_xr_{\partial} : \mathbb{E} \rightarrow \mathbb{E}$ is a linear sc-retraction with image $T_x\cO_{\partial}$. Hence,
$$\mathbb{E} = (L \oplus \ker D_xf_{\partial}) \oplus \ker D_xr_{\partial}.$$
From this description of $\mathbb{E}$ together with the definition of $\mathbb{K}$, we claim that $\ker D_xf_{\partial} \oplus \ker D_xr_{\partial} = \mathbb{K}$ follows, which then implies \eqref{eq:complementpullsthroughretraction}. Indeed, write $\xi \in \mathbb{E}$ as $\xi = l + \eta + \nu$ for $l \in L, \eta \in \ker D_xf_{\partial},$ and $\nu \in \ker D_x r_{\partial}$. Note that $D_xr_{\partial}(\eta) = \eta$ and $D_xr_{\partial}(l) = l$ since $\eta,l \in T_x\cO_{\partial}$. Then if $\xi \in \mathbb{K}$ we have $0 = D_x(f_{\partial} \circ r_{\partial})(\xi) = D_xf_{\partial} (l) + D_xf_{\partial}(\eta)  = D_xf_{\partial} (l)$, which implies $l = 0$ by \eqref{eq:isoonL}, and hence $\xi = \eta + \nu \in \ker D_xf_{\partial} \oplus \ker D_xr_{\partial}$. To prove the reverse inclusion, observe that if $\xi = \eta + \nu$ then we have $D_x(f_{\partial} \circ r_{\partial})(\xi) = D_xf_{\partial}(\eta) = 0$ and so $\xi \in \mathbb{K}$. Hence \eqref{eq:complementpullsthroughretraction} holds.

We now prepare to apply the normal form of a sc-smooth local submersion (Lemma~\ref{lem:sclocalsubmersionnormalform}) to the map
$$f \circ r : U \rightarrow \mathbb{R}^n.$$
The restriction of $f \circ r$ to $U_{\partial} = U \cap (\{0\}^s \times \mathbb{E})$ is equal to $f_{\partial} \circ r_{\partial}$. So, by the hypothesized surjectivity of the map $D_xf_{\partial} : T_x\cO_{\partial} \rightarrow \mathbb{R}^n$ and the definition of tangent space $T_x\cO_{\partial} = D_xr_{\partial}(T_xU_{\partial})$, it follows that the map
$$F := D_x(f \circ r|_{U \cap \{0\}^s \times \mathbb{E}}) : \mathbb{E} \rightarrow \mathbb{R}^n$$
is surjective with kernel $\mathbb{K} = \ker F$. Hence Lemma~\ref{lem:sclocalsubmersionnormalform} applies to the map $f \circ r$ and the sc-complement $L$ of $\mathbb{K}$ in $\mathbb{E}$. The lemma provides an open neighborhood
$$\hat{\ul{U}} \subset U^1 \subset [0,\infty)^s \times \mathbb{E}^1$$
of $x$ in $U^1$, an open set
$$\ul{U}' \subset \mathbb{R}^n \times [0,\infty)^s \times \mathbb{K}^1,$$
and a sc-smooth diffeomorphism of the form
\begin{align*}
g : \hat{\ul{U}} &\rightarrow \ul{U}'\\
(v,e) &\mapsto (f \circ r(v,e),v,pr(e)),
\end{align*}
where $pr : \mathbb{E} = L \oplus \mathbb{K} \rightarrow \mathbb{K}$ is the projection along $L$, such that on every level $m \geq 0$ the map $g : \hat{\ul{U}}_m \rightarrow \ul{U}'_m$ is a $C^{m+1}$-diffeomorphism. In particular,
\begin{equation} \label{eq:isprojectionontorn}
f \circ r \circ g^{-1} : \ul{U}' \rightarrow \mathbb{R}^n \text{ is projection onto the } \mathbb{R}^n\text{-coordinate}
\end{equation}
and also $g$ preserves the $[0,\infty)^s$-coordinate.

We shrink the open sets $\hat{\ul{U}}$ and $\ul{U}'$ in the following way, so that the smaller open set $\hat{U} \subset \hat{\ul{U}}$ has the property $r(\hat{U}) \subset \hat{U}$, making $r|_{\hat{U}} : \hat{U} \rightarrow \hat{U}$ into a sc-retraction: Set
\begin{align*}
\hat{U} &:= r^{-1}(\hat{\ul{U}} \cap \cO) \cap \hat{\ul{U}},\\
U' &:= g(\hat{U}).
\end{align*}
The set $\hat{U}$ is open in $U^1$ because $\hat{\ul{U}} \cap \cO$ is open in $\cO^1$ and $r : U \rightarrow \cO$ is sc-continuous. We claim that $r(\hat{U}) \subset \hat{U}$, so $r$ restricts to a tame sc-retraction $r : \hat{U} \rightarrow \hat{U}$ onto the tame sc-retract
$$\hat{\cO} := r(\hat{U}).$$
Indeed, if $y \in \hat{U}$ then $r(r(y)) = r(y) \in \hat{\ul{U}} \cap \cO$, and so it follows that $r(y) \in \hat{U}$. Note that
$$U' \subset \mathbb{R}^n \times [0,\infty)^s \times \mathbb{K}^1$$
is open since $g$ is a sc-diffeomorphism, and moreover the restricted map
$$g : \hat{U} \rightarrow U'$$
is a sc-diffeomorphism.

The map
$$r' := g \circ r \circ g^{-1} : U' \rightarrow U'$$
is a sc-smooth retraction since $g$ is a sc-diffeomorphism and $r$ is a sc-smooth retraction. Indeed, $r \circ r = r$ implies that $r' \circ r' = r'.$ Set
$$\cO' := r'(U').$$
Notice that
$$\hat{\cO} = r(\hat{U}) = r \circ g^{-1}(U') = g^{-1} \circ r'(U') = g^{-1}(\cO')$$
holds, so $g$ restricts to a bijection
$$g : \hat{\cO} \rightarrow \cO'.$$
This restriction is a sc-smooth diffeomorphism, as required, since $$g \circ r : \hat{U} \rightarrow U'$$ is sc-smooth and $g^{-1} \circ r' : U' \rightarrow \hat{U}$ is sc-smooth. Indeed, the definition of a map between sc-retracts being sc-smooth is that these compositions with the sc-retractions are sc-smooth, where the domain and codomain are now open subsets of sc-Banach spaces and sc-smoothness is defined as usual.

We now prove that $r'$ is a $\mathbb{R}^n$-sliced sc-retraction, and so $\cO'$ is the required $\mathbb{R}^n$-sliced sc-retract \eqref{eq:normalformsliceretract}. We must show that $r$ is tame and satisfies the defining property \eqref{eq:sliceretraction} of a $\mathbb{R}^n$-sliced retraction, i.e.\ that $r$ preserves the $\mathbb{R}^n$-coordinate. To prove \eqref{eq:sliceretraction}, notice from the definitions and \eqref{eq:isprojectionontorn} that we have
$$(f \circ g^{-1}) \circ r' = f \circ r \circ g^{-1} = \pi_{\mathbb{R}^n}.$$
Since $r' \circ r' = r'$, we have
$$\pi_{\mathbb{R}^n} \circ r' = (f \circ g^{-1}) \circ r' \circ r' = (f \circ g^{-1}) \circ r' = \pi_{\mathbb{R}^n},$$
completing the proof of \eqref{eq:sliceretraction}.

We now verify that $r'$ is a tame sc-retraction. Definition~\ref{dfn:tameretraction}(1) for $r' = g \circ r \circ g^{-1}$ holds because it holds for $r$ and since $g$ preserves the $[0,\infty)^s$-coordinate. To verify Definition~\ref{dfn:tameretraction}(2) for $r'$, let $y' \in \cO'_{\infty}$ and set $y := g^{-1}(y') \in \hat{\cO}_{\infty}$. Since $r$ is a tame sc-retraction, there exists a sc-subspace $A \subset \mathbb{R}^s \times \mathbb{E}^1$ such that $\mathbb{R}^s \times \mathbb{E}^1 = T_y\hat{\cO} \oplus A$ and $A \subset (\mathbb{R}^s \times \mathbb{E}^1)_y$. We claim that $A' := D_yg(A)$ is the required sc-subspace of $\mathbb{R}^n \times \mathbb{R}^s \times \mathbb{K}^1$. First, since $g$ is the identity on $[0,\infty)^s$, the tangent map $D_yg$ is the identity on $\mathbb{R}^s$, and thus $A' \subset (\mathbb{R}^n \times \mathbb{R}^s \times \mathbb{K}^1)_{y'}$ by \eqref{eq:Exstandard}. Moreover, by definition of $r'$ and the tangent space of a retract we have $T_{y'}\cO' = D_{y'}r'(T_{y'}U') = D_yg(D_yr(T_y\hat{U})) = D_yg(T_y\hat{\cO})$. Hence, since $D_yg$ is a linear sc-isomorphism, we have $$\mathbb{R}^n \times \mathbb{R}^s \times \mathbb{K}^1 = D_yg(\mathbb{R}^s \times \mathbb{E}^1) = D_yg(T_y\hat{\cO} \oplus A) = T_{y'}\cO' \oplus A',$$ as required. This completes the verification that $r'$ is a $\mathbb{R}^n$-sliced sc-retraction and hence \eqref{eq:normalformsliceretract} is indeed a $\mathbb{R}^n$-sliced sc-retract.

To complete the proof of (I), it remains to show that \eqref{eq:slicecoordinatesbasestatement} holds. First, recall from above that $g(\hat{\cO}) = \cO'$.  Let $y \in g(f^{-1}(0) \cap \hat{\cO}).$ Since $g^{-1}(y) \in \hat{\cO}$ is in the image of the retraction $r$, it follows that $0 = f \circ g^{-1}(y) = f \circ r \circ g^{-1}(y)$, and hence \eqref{eq:isprojectionontorn} implies $$y \in \{0\}^n \times [0,\infty)^s \times \mathbb{K}^1,$$ proving the forwards inclusion in \eqref{eq:slicecoordinatesbasestatement}. For the reverse inclusion, let $y \in \cO' \cap (\{0\}^n \times [0,\infty)^s \times \mathbb{K}^1)$. Then, again by \eqref{eq:isprojectionontorn}, we have $$f \circ g^{-1}(y) = f \circ r \circ g^{-1}(y) = \pi_{\mathbb{R}^n}(y) = 0.$$ This proves the reverse inclusion and hence the equality \eqref{eq:slicecoordinatesbasestatement}.
\\

\ul{Proof of (II)}: After shrinking the original open set $U \subset [0,\infty)^s \times \mathbb{E}$, we can assume that there exists a tame bundle retraction
\begin{align*}
R : U \lhd \mathbb{F} &\rightarrow U \lhd \mathbb{F}\\
 (y,\xi) &\mapsto (r(y),\Gamma(y,\xi))
\end{align*}
onto $R(U \lhd \mathbb{F}) = K$ covering $r$. Since $r$ restricts to a retraction $$r : \hat{U} \rightarrow \hat{U}$$ onto $$r(\hat{U}) = \hat{\cO}$$ as discussed above, it follows that $R$ restricts to a tame bundle retraction $$R : \hat{U} \lhd \mathbb{F}^1 \rightarrow \hat{U} \lhd \mathbb{F}^1$$ with image the tame bundle retract
$$\hat{K} = R(\hat{U} \lhd \mathbb{F}^1) = \pi^{-1}(\hat{\cO}),$$
where $\pi : K^1 \rightarrow \cO^1$ is the $1$-shifted tame local bundle model.

Set
\begin{align*}
\G' : U' \lhd \mathbb{F}^1 &\rightarrow \mathbb{F}^1\\
(y',\xi) &\mapsto \Gamma(g^{-1}(y'),\xi)
\end{align*}
We claim that the map
\begin{align*}
R' := U' \lhd \mathbb{F}^1 &\rightarrow U' \lhd \mathbb{F}^1\\
(y',\xi) &\mapsto (r'(y'),\G'(y',\xi))
\end{align*}
is a $\mathbb{R}^n$-sliced bundle retraction onto
$$K' := (g \lhd id_{\mathbb{F}^1})(\hat{K}),$$
proving that \eqref{eq:normalformslicebundleretract} is a $\mathbb{R}^n$-sliced bundle retract covering the $\mathbb{R}^n$-sliced sc-retract \eqref{eq:normalformsliceretract}. It is then immediate from Lemma~\ref{lem:inducedtamebundleretraction} that the induced bundle retract $\tilde{K}'$ covers the induced sc-retract $\tilde{\cO}'$, as claimed. Since $r' = g \circ r \circ g^{-1}$, we can write $R'$ as a composition of strong bundle maps
$$R' = (g \lhd id_{\mathbb{F}^1}) \circ R \circ (g^{-1} \lhd id_{\mathbb{F}^1}),$$
and hence $R'$ is a strong bundle map. In addition, $R' \circ R' = R'$ follows from $R \circ R = R$. Thus $R'$ is a bundle retraction covering the $\mathbb{R}^n$-sliced sc-retraction $r'$, which implies that $R'$ is a $\mathbb{R}^n$-sliced bundle retraction, as desired. Moreover, by definition of $K'$ we have $$R'(U' \lhd \mathbb{F}^1) = (g \lhd id_{\mathbb{F}^1}) \circ R(\hat{U} \lhd \mathbb{F}^1) = (g \lhd id_{\mathbb{F}^1})(\hat{K}) = K'.$$ Thus $K'$ is indeed the desired $\mathbb{R}^n$-sliced bundle retract \eqref{eq:normalformslicebundleretract}.

We now prove \eqref{eq:slicecoordinatesbundlestatement}. Let $\pi' : K' \rightarrow \cO'$ denote the $\mathbb{R}^n$-sliced local bundle model, which restricts to the induced tame local bundle model $\tilde{\pi} = \pi'|_{\tilde{K}'} : \tilde{K}' \rightarrow \tilde{\cO'}$ (see \eqref{eq:inducedbundlemodel}). Here $\tilde{K}' = \pi'^{-1}(\tilde{\cO}')$ is the tame bundle retract induced by the $\mathbb{R}^n$-sliced bundle retract $K'$ (see \eqref{eq:inducedscretractbundle}). To prove \eqref{eq:slicecoordinatesbundlestatement}, let $y \in f^{-1}(0) \cap \hat{\cO}$ and let $\xi \in \pi^{-1}(y).$ Then, by \eqref{eq:slicecoordinatesbasestatement}, we have $g(y) \in \tilde{\cO}'$, and hence $(g \lhd id_{\mathbb{F}^1})(\xi) \in \pi'^{-1}(\tilde{\cO}') = \tilde{K}'$, proving the forwards inclusion. To see the reverse inclusion, let $\xi \in \tilde{K}'.$ Set $y = \pi'(\xi) \in \tilde{\cO}'$. Then, again by \eqref{eq:slicecoordinatesbasestatement}, we have $g^{-1}(y) \in f^{-1}(0) \cap \hat{\cO}$ and hence $(g^{-1} \lhd id_{\mathbb{F}^1})(\xi) \in \pi^{-1}(f^{-1}(0) \cap \hat{\cO}).$ This proves \eqref{eq:slicecoordinatesbundlestatement}, completing the proof of the statements in (II).
\\

\ul{Proof of (III)}: To prove that the section
$$\s' : = (g \lhd id_{\mathbb{F}^1}) \circ \s \circ g^{-1} : \cO' \rightarrow K'$$
is a $\mathbb{R}^n$-sliced sc-Fredholm germ (Definition~\ref{dfn:slicescFredholmgerm}), we must first show that it is a local sc-Fredholm germ (Definition~\ref{dfn:localscFredgerm}). Since the given section $\s$ is a tame sc-Fredholm germ (Definition~\ref{dfn:tamescFredgerm}) by hypothesis, by shrinking the original open set $U \subset [0,\infty)^s \times \mathbb{E}$ we can assume that the tame sc-retraction $r : U \rightarrow U$ satisfies the properties guaranteed by the definition of a tame sc-Fredholm germ. Moreover, the $1$-shifted section $\s|_{\cO^1} : \cO^1 \rightarrow K^1$ is also a tame sc-Fredholm germ (see Remark~\ref{rmk:levelshift}), and since $\hat{U} \subset U^1$ is an open subset of the $1$-shift of $U$, the retraction $r : \hat{U} \rightarrow \hat{U}$ onto $r(\hat{U}) = \hat{\cO}$ and the tame bundle retraction $R$ covering $r$ also satisfy the properties required in the definition of tame sc-Fredholm germ for $\s|_{\hat{\cO}} : \hat{\cO} \rightarrow \hat{K}$. We explain these properties as we use them.

There exists a sc-germ of neighborhoods  $\hat{U}([0,\infty)^s \times \mathbb{E}^1,0)$ around $0$ such that the $0$-level $\hat{U}(0)$ in the germ is equal to $\hat{U}$ and such that the composition $\mathbold{\s} \circ r : \hat{U}([0,\infty)^s \times \mathbb{E}^1,0) \rightarrow \mathbb{F}^1$ possesses a filling (see Definition~\ref{dfn:localscFredgerm}(b))
$$h : \hat{U}([0,\infty)^s \times \mathbb{E}^1,0) \rightarrow \hat{U}([0,\infty)^s \times \mathbb{E}^1,0) \lhd \mathbb{F}^1,$$
where $\mathbold{\s}$ means the principal part in $\mathbb{F}^1$ of $\s$. By hypothesis and the construction of $\hat{\cO}$, we have $x = 0 \in f^{-1}(0) \cap \hat{\cO}$, and so it follows from the form of $g$ above that
\begin{equation} \label{eq:g0equals0}
g(0) = 0 \in \cO'.
\end{equation}
The sc-germ of neighborhoods around $0$ given by
$$U'(\mathbb{R}^n \times [0,\infty)^s \times \mathbb{K}^1,0) := g \big (\hat{U}([0,\infty)^s \times \mathbb{E}^1,0) \big )$$
has $0$-level $U'(0)$ equal to the set open set $U' = g(\hat{U})$ above. From now on, we abbreviate the sc-germs of neighborhoods simply by their $0$-level, i.e.\ by $\hat{U}$ and $U'$.

We claim that the map
$$h' := (g \lhd id_{\mathbb{F}^1}) \circ h \circ g^{-1} : U' \rightarrow U' \lhd \mathbb{F}^1.$$
is a filling of $\mathbold{\s}' \circ r' : U' \rightarrow \mathbb{F}^1$. The required properties Definition~\ref{dfn:localscFredgerm}(b).(i)-(iii) of a filling hold for $h'$ because they hold for $h$, as we explain now. To verify $(i)$ for $h'$, let $y' \in \cO'$. Then $y := g^{-1}(y') \in \hat{\cO}$, so by $(i)$ for $h$ we have $\s(y) = h(y)$, which implies $\s'(y') = h'(y')$ by definition of $\s'$ and $h'$, as required. To verify (ii), let $y' \in U'$ and assume ${\bf h}'(y') = \Gamma'(r'(y'),{\bf h}'(y'))$. Then, setting $y := g^{-1}(y') \in \hat{U}$, we have ${\bf h}(y) = {\bf h}'(y') = \Gamma(r(y),{\bf h}(y))$. Hence $y \in \hat{\cO}$ by property $(ii)$ for $h$, so $y' \in g(\hat{\cO}) = \cO'$, as required. We now verify property (iii) for $h'$. Property $(iii)$ for $h$ says that the linearization $D_0A$ at $0$ of the map
\begin{align*}
A : \hat{U} &\rightarrow \mathbb{F}^1\\
y &\mapsto (id_{\mathbb{F}^1} - \Gamma(r(y),\cdot)) \circ {\bf h}(y)
\end{align*}
restricts to a linear sc-isomorphism $\ker D_0r \rightarrow \ker \Gamma(0,\cdot).$ We must show that the map
\begin{align*}
A' : U' &\rightarrow \mathbb{F}^1\\
y' &\mapsto (id_{\mathbb{F}^1} - \Gamma'(r'(y'),\cdot)) \circ {\bf h}'(y')
\end{align*}
restricts to a linear sc-isomorphism $\ker D_0r' \rightarrow \ker \G'(0,\cdot).$ Note that, by definition of $\G'$ and by \eqref{eq:g0equals0}, we have $$\ker \G'(0,\cdot) = \ker \G(g^{-1}(0),\cdot) = \ker \G(0,\cdot).$$
Moreover, from the definition of $h'$ we have
$${\bf h}'(y') = {\bf h} \circ g^{-1}(y'),$$
and so by definition of $r'$ we have
$$A' = A \circ g^{-1} : U' \rightarrow \mathbb{F}^1.$$
Hence $D_0A'$ restricts to the desired isomorphism since $D_0g^{-1}$ restricts to an isomorphism $\ker D_0r' \rightarrow \ker D_0r$ by definition of $r'$ and \eqref{eq:g0equals0}. Hence $h'$ is indeed a filling of $\mathbold{\s}' \circ r'$, as claimed.

To verify that $\s'$ is a local sc-Fredholm germ, it remains to check the properties in Definition~\ref{dfn:localscFredgerm}(c). Since $$\s : \hat{\cO} \rightarrow \hat{K}$$ is a tame sc-Fredholm germ, we have the following: There exists a sc$^+$-section $$\mathfrak{s} : \hat{U} \rightarrow \hat{U} \lhd \mathbb{F}^1$$ satisfying $\mathfrak{s}(0) = h(0)$, a sc-Banach space $\mathbb{W}$, a sc-germ of neighborhoods
$$\hat{U}' \text{ around } 0 \text{ in } [0,\infty)^s \times \mathbb{R}^{k-s} \times \mathbb{W}$$
for some $k \geq s$, and a strong bundle isomorphism
$$\Psi : \hat{U} \lhd \mathbb{F}^1 \rightarrow \hat{U}' \lhd \mathbb{R}^{k'} \times \mathbb{W},$$
for some $k' \geq 0$, covering a linear sc-isomorphism 
$$\psi : \hat{U} \rightarrow \hat{U}'$$
of the form
$$\psi = id_{[0,\infty)^s} \times \overline{\psi} : [0,\infty)^s \times \mathbb{E}^1 \rightarrow [0,\infty)^s \times \mathbb{R}^{k-s} \times \mathbb{W}$$
for some linear sc-isomorphism
$$\overline{\psi} : \mathbb{E}^1 \rightarrow \mathbb{R}^{k-s} \times \mathbb{W},$$
such that the principal part of the section
\begin{equation} \label{eq:isbasicgermconstraintproof}
b := \Psi \circ (h - \mathfrak{s}) \circ \psi^{-1} : \hat{U}' \rightarrow \hat{U}' \lhd \mathbb{R}^{k'} \times \mathbb{W}
\end{equation}
is a basic germ.

By the hypothesis \eqref{eq:thespecialproperty}, we can assume that the subspace $L \subset E_{\infty}$ satisfies $\overline{\psi}(L) \subset \{0\}^{k-s} \times \mathbb{W}$.  We consider $\overline{\psi}(L)$ as a sc-subspace of $\mathbb{W}$ and let $\tilde{\mathbb{W}}$ be any sc-complement $\tilde{\mathbb{W}} \oplus \overline{\psi}(L) = \mathbb{W}$ which exists by \cite[Prop.~1.1]{HWZbook} since $L$ is finite dimensional. After fixing any linear isomorphism $\overline{\psi}(L) \rightarrow \mathbb{R}^n$, we obtain a linear sc-isomorphism
$$\t : \mathbb{W} = \tilde{\mathbb{W}} \oplus \overline{\psi}(L) \rightarrow \tilde{\mathbb{W}} \times \mathbb{R}^n.$$
Define
\begin{align*}
\psi' &:= (id_{[0,\infty)^s \times \mathbb{R}^{k-s}} \times \tau) \circ \psi  \circ g^{-1} : U' \rightarrow [0,\infty)^s \times \mathbb{R}^{k-s} \times \tilde{\mathbb{W}} \times \mathbb{R}^n,\\
\mathfrak{s}' &:= (g \lhd id_{\mathbb{F}^1}) \circ \mathfrak{s} \circ g^{-1} : U' \rightarrow U' \lhd \mathbb{F}^1,\\
\hat{U}'' &:= \psi'(U') = (id_{[0,\infty)^s \times \mathbb{R}^{k-s}} \times \tau)(\hat{U}'),\\
\Psi' &:= ((id_{[0,\infty)^s \times \mathbb{R}^{k-s}} \times \tau) \lhd (id_{\mathbb{R}^{k'}} \times \tau))\\
&\,\,\,\, \circ \Psi \circ (g^{-1} \lhd id_{\mathbb{F}^1}) : U' \lhd \mathbb{F}^1 \rightarrow \hat{U}'' \lhd \mathbb{R}^{k'} \times \tilde{\mathbb{W}} \times \mathbb{R}^n.
\end{align*}

We claim that the principal part of the section
\begin{equation} \label{eq:rnslicedgermproof}
b' := \Psi' \circ (h' - \mathfrak{s}') \circ \psi'^{-1} : \hat{U}'' \rightarrow \hat{U}'' \lhd \mathbb{R}^{k'} \times (\tilde{\mathbb{W}} \times \mathbb{R}^n)
\end{equation}
is a basic germ, where here $\tilde{\mathbb{W}} \times \mathbb{R}^n$ plays the role of the space $\mathbb{W}$ in Definition~\ref{dfn:localscFredgerm}(c). By hypothesis, the principal part ${\bf b}$ of the map \eqref{eq:isbasicgermconstraintproof} is a basic germ, which means that, letting
$$P : \mathbb{R}^{k'} \times \mathbb{W} \rightarrow \mathbb{W}$$
denote projection onto $\mathbb{W}$, we have, for $a \in [0,\infty)^s \times \mathbb{R}^{k-s}$ and $w \in \mathbb{W}$,
$$P \circ {\bf b}(a,w) = w - B(a,w),$$
where $B : \hat{U}' \rightarrow \mathbb{W}$ is a sc-smooth germ satisfying $B(0) = 0$ and the contraction property \eqref{eq:basicgerm}. Let
$$P' : \mathbb{R}^{k'} \times (\tilde{\mathbb{W}} \times \mathbb{R}^n) \rightarrow \tilde{\mathbb{W}} \times \mathbb{R}^n$$
be projection onto $\tilde{\mathbb{W}} \times \mathbb{R}^n$. Define
\begin{align*}
B' := \t \circ B \circ (id_{[0,\infty)^s \times \mathbb{R}^{k-s}} \times \tau)^{-1} : \hat{U}'' &\rightarrow \tilde{\mathbb{W}} \times \mathbb{R}^n.
\end{align*}
Notice from the definitions that we have
$$P' = \t \circ P \times (id_{\mathbb{R}^{k'}} \times \tau)^{-1}.$$
Then, for $a \in [0,\infty)^s \times \mathbb{R}^{k-s}$ and $z \in \tilde{\mathbb{W}} \times \mathbb{R}^n$ such that $(a,z) \in \hat{U}''$, we compute
\begin{align*}
P' \circ {\bf b}'(a,z) &= \tau \circ P \circ (id_{\mathbb{R}^{k'}} \times \tau)^{-1} \circ \pi_{\mathbb{R}^{k'} \times \tilde{\mathbb{W}} \times \mathbb{R}^n}\\
&\,\,\,\,\,\,\,\,\,\, \circ \Psi' \circ (h'-\mathfrak{s}') \circ \psi'^{-1}(a,z)\\
&= \tau \circ P \circ \pi_{\mathbb{R}^{k'} \times \mathbb{W}} \circ \Psi \circ (h-\mathfrak{s}) \circ \psi^{-1}(a,\t^{-1}(z))\\
&=\tau \circ P \circ {\bf b}(a,\t^{-1}(z))\\
&= \tau(\t^{-1}(z) - B(a,\t^{-1}(z)))\\
&= z - B'(a,z),
\end{align*}
so to prove that ${\bf b}'$ is a basic germ, it remains to verify that $B'$ satisfies the contraction property \eqref{eq:basicgerm}. Since $\t$ is a linear sc-isomorphism, for every integer $m \geq 0$ there exist constants $c_m, C_m > 0$ such that, for all $w \in \mathbb{W}$, we have
$$||\t(w)||_m \leq C_m \cdot ||w||_m,$$
and for all $(a,z) \in ([0,\infty)^s \times \mathbb{R}^{k-s}) \times (\tilde{\mathbb{W}} \times \mathbb{R}^n)$ we have
$$||(id_{\mathbb{R}^k} \times \t)^{-1}(a,z)||_m \leq c_m \cdot ||(a,z)||_m.$$
Fix $\epsilon > 0$ and $m \geq 0$. Then since $B$ satisfies the contraction property \eqref{eq:basicgerm}, there exists $\delta_m' > 0$ that suffices for $B$ and $\epsilon_m' := \epsilon / (C_m \cdot c_m)$. We claim that $\delta_m := \delta_m' / c_m$ suffices for $B'$ and $\epsilon$. Indeed, for any elements $a \in [0,\infty)^s \times \mathbb{R}^{k-s}$ and $z,z' \in \tilde{\mathbb{W}} \times \mathbb{R}^n$ satisfying $$||(a,z)||_m,||(a,z')||_m < \delta_m,$$ we have $||(a,\t^{-1}(z))||_m,||(a,\t^{-1}(z'))||_m < c_m \cdot \delta_m = \delta'_m.$ So, we compute
\begin{align*}
||B'(a,z) - B'(a,z')||_m &= ||\t(B(a,\t^{-1}(z)) - B(a,\t^{-1}(z')))||_m\\
&\leq C_m \cdot ||B(a,\t^{-1}(z)) - B(a,\t^{-1}(z'))||_m\\
&\leq C_m \cdot \epsilon'_m \cdot ||\t^{-1}(z) - \t^{-1}(z')||_m\\
&= \epsilon / c_m \cdot ||\t^{-1}(z - z')||_m\\
&\leq \epsilon \cdot ||z - z'||_m,
\end{align*}
as required. This completes the proof that ${\bf b}'$ is a basic germ, and hence that $\s'$ is a local sc-Fredholm germ.

We claim that the additional conditions required for $\s'$ to be a $\mathbb{R}^n$-sliced sc-Fredholm germ are satisfied by construction. We must check that $\psi' : U' \rightarrow \hat{U}''$ is of the required form \eqref{eq:slicedgermform}. Consider the projection
$$T : \mathbb{W} = \tilde{\mathbb{W}} \oplus \overline{\psi}(L) \rightarrow \tilde{\mathbb{W}}$$
along $\overline{\psi}(L) \subset \{0\}^{k-s} \times \mathbb{W}$. We have $L \oplus \mathbb{K}^1 = \mathbb{E}^1$ by \eqref{eq:complementpullsthroughretraction} and since $L \subset E_{\infty}$. Hence the map
$$\overline{\psi}' := (id_{\mathbb{R}^{k-s}} \times T) \circ \overline{\psi}|_{\{0\} \oplus \mathbb{K}^1} : \mathbb{K}^1 \rightarrow \mathbb{R}^{k-s} \times \tilde{\mathbb{W}}$$
is a linear sc-isomorphism. Let $(p,v,\k) \in U' \subset \mathbb{R}^n \times [0,\infty)^s \times \mathbb{K}^1$. By the form of the map $g$ above, we can write
$$g^{-1}(p,v,\k) =: (v,e_{p,v,\k}) \in [0,\infty)^s \times \mathbb{E}^1$$
for some $e_{p,v,\k} \in \mathbb{E}^1$. Moreover, we have $pr(e_{p,v,\k}) = \k$, where $pr$ is the projection $pr : \mathbb{E}^1 = L \oplus \mathbb{K}^1 \rightarrow \mathbb{K}^1$ along $L$. In particular, we have $e_{p,v,\k} - \k \in L$, from which we obtain a map to $\{0\} \times \mathbb{R}^n \subset \tilde{\mathbb{W}} \times \mathbb{R}^n$ defined by
\begin{align*}
\lambda : U' &\rightarrow \mathbb{R}^n\\
(p,v,\k) &\mapsto \t(\overline{\psi}(e_{p,v,\k} - \k)).
\end{align*}
This map is $C^1$ on every level $(U')_m \rightarrow \mathbb{R}^n$ for $m \geq 0$ because $g$ is a $C^1$-diffeomorphism on every level and the linear sc-isomorphisms $\t$ and $\overline{\psi}$ are $C^{\infty}$ on every level. 
Then we compute
\begin{align*}
\psi'(p,v,\k) &= (id_{[0,\infty)^s \times \mathbb{R}^{k-s}} \times \t) \circ \psi \circ (v,e_{p,v,\k})\\
&= (id_{[0,\infty)^s \times \mathbb{R}^{k-s}} \times \t) \circ (id_{[0,\infty)^s} \times \overline{\psi}) \circ (v,e_{p,v,\k})\\
&= (v,(id_{\mathbb{R}^{k-s}} \times \t) \circ \overline{\psi}(e_{p,v,\k}))\\
&= (v,(id_{\mathbb{R}^{k-s}} \times \t) (\overline{\psi}(\k) + \overline{\psi}(e_{p,v,\k} - \k)))\\
&= (v,(id_{\mathbb{R}^{k-s}} \times T) (\overline{\psi}(\k)), \t(\overline{\psi}(e_{p,v,\k} - \k)))\\
&= (v, \overline{\psi}'(\k),\lambda(p,v,\k)).
\end{align*}
This proves that $\s'$ is a $\mathbb{R}^n$-sliced sc-Fredholm germ. Hence it follows from Lemma~\ref{lem:inducedlocalscFredholmgerm} that the restriction $\tilde{\s}' = \s'|_{\tilde{\cO}'} : \tilde{\cO}' \rightarrow \tilde{K}'$ is a tame sc-Fredholm germ with sc-Fredholm index $ind(\tilde{\s}') = ind(\s') - n$. Moreover, by definition \eqref{eq:fredholmindexdef} of index and by the forms of the basic germs \eqref{eq:isbasicgermconstraintproof} and \eqref{eq:rnslicedgermproof}, we have $ind(\s') = ind(\s)$. Hence $ind(\tilde{\s}') = ind(\s) - n$. This completes the proof of the lemma.
\end{proof}

\begin{rmk}
We note some additional properties of the objects constructed in the proof of Lemma~\ref{lem:slicecoordinates}. These properties are required for the construction of quotients of polyfolds by group actions in \cite{ZQuotient}.

The sc-Banach space $\mathbb{K}$ is given by
$$\mathbb{K} = \ker D_x(f_{\partial} \circ r_{\partial}) \subset \mathbb{E},$$
where $r_{\partial} : U \cap (\{0\} \times \mathbb{E}) \rightarrow U \cap (\{0\} \times \mathbb{E})$ is the boundary sc-retraction associated to a tame sc-retraction $r: U \rightarrow U$ onto $r(U) = \cO$ from Lemma~\ref{lem:associatedboundaryretract}. There is a sc-complement $L$ of $\ker D_xf_{\partial}$ in $T_x\cO_{\partial}$, and moreover $L$ is a sc-complement of $\mathbb{K}$ in $\mathbb{E}$. The change of coordinates $g : [0,\infty)^s \times \mathbb{E}^1 \rightarrow \mathbb{R}^n \times [0,\infty)^s \times \mathbb{K}^1$, which is defined in a neighborhood of $x$, is of the form
$$g(v,e) = (f \circ r(v,e),v,pr(e))$$
where $pr : \mathbb{E} = L \oplus \mathbb{K} \rightarrow \mathbb{K}$ is projection along $L$. In particular, in the case $x = 0$, the sc-differential of $g$ satisfies $$D_0g(\{0\} \times L) = \mathbb{R}^n \times \{0\} \times \{0\}$$
and $$D_0g(\{0\} \times \mathbb{K}^1) = \{0\} \times \{0\} \times \mathbb{K}^1.$$
\end{rmk}

\section{Slicing tame sc-Fredholm sections with transverse constraints} \label{sec:globalslicing}

The theory of sc-Fredholm sections $\s : \cB \rightarrow \cE$ (Definition~\ref{dfn:scfredholmsection}) of tame strong bundles $\rho : \cE \rightarrow \cB$ (Definition~\ref{dfn:tamebundle}) over tame $M$-polyfolds $\cB$ (Definition~\ref{dfn:tameMpolyfold}) is developed in \cite{HWZbook}. In this section, we introduce the new stronger notion of a tame sc-Fredholm section (Definition~\ref{dfn:tamescfredholmsection}), which is the same as a standard sc-Fredholm section with the stronger condition that the local sc-Fredholm germs (Definition~\ref{dfn:localscFredgerm}) can be chosen to be the tame sc-Fredholm germs introduced in Definition~\ref{dfn:tamescFredgerm}: roughly, we require that the change of coordinates that brings the local fillers into basic germ form can be chosen to be a \emph{linear sc-isomorphism} on the base. We also introduce the new notion of slices $\tilde{\cB} \subset \cB$ (Definition~\ref{dfn:globalslice}) of sc-Fredholm sections $\s$, which are subspaces of $\cB$ in which the local sc-Fredholm germs are $\mathbb{R}^n$-sliced (Definition~\ref{dfn:slicescFredholmgerm}): roughly, the dependence of the change of coordinates to basic germ form on the normal directions to $\tilde{\cB}$ in $\cB$ splits off into a $\mathbb{R}^n$-factor in the codomain.

Tame sc-Fredholm sections and slices are the main objects of interest in this paper. In this section, we prove our main Theorems~\ref{thm:slicestructure}, \ref{thm:globalimplicit}. Roughly, given a tame sc-Fredholm section $\s : \cB \rightarrow \cE$ and a sc-smooth map $f : \cB \rightarrow Y$ to a finite dimensional manifold $Y$ that is $\s$-compatibly transverse (Definition~\ref{dfn:transverse}) to a submanifold $N \subset Y$, then $f^{-1}(N)$ is a slice of $\s$. Moreover, given a slice $\tilde{\cB} \subset \cB$ of a sc-Fredholm section $\s : \cB \rightarrow \cE$, the restriction $\s|_{\tilde{\cB}}$ is tame sc-Fredholm. The generalizations of the concepts and theorems in this section to the ep-groupoid case, which are required to handle isotropy in applications, are developed in Section~\ref{sec:ep-groupoids}.

In Section~\ref{sec:evaluationmaps}, we explain why the Cauchy-Riemann section $$\overline{\partial}_J : \cB \rightarrow \cE$$ is tame sc-Fredholm and why evaluation maps $$ev : \cB \rightarrow Y$$ at marked points are $\overline{\partial}_J$-compatibly transverse to every submanifold $N \subset Y$.

We begin with a review of standard $M$-polyfold notions.

\begin{dfn} \label{dfn:tameMpolyfold} \cite[Defs.~2.6, 2.19]{HWZbook}\text{}
\begin{itemlist}
\item A {\bf tame $\mathbold{M}$-polyfold chart} on a topological space $\cB$ is a tuple
$$
(V,\varphi,(\cO,C,\mathbb{E})),
$$
where $V \subset \cB$ is an open set,  $\varphi : V \rightarrow \cO$ is a homeomorphism, and $(\cO,C,\mathbb{E})$ is a tame sc-retract (Definition~\ref{dfn:tameretraction}).
\item A {\bf tame $\mathbold{M}$-polyfold} $\cB$ is a Hausdorff paracompact topological space together with an equivalence class of tame sc-smooth atlases. A {\bf tame sc-smooth atlas} consists of a cover of $\cB$ by tame $M$-polyfold charts whose transition maps are sc-smooth. Atlases are equivalent if and only if their union is an atlas.
\end{itemlist}
\end{dfn}

A tame $M$-polyfold $\cB$ carries a filtration by a sequence of topological spaces
$$\cB = \big ( \cB_0 \hookleftarrow \cB_1 \hookleftarrow \cdots \big )$$
where the maps $\cB_m \rightarrow \cB_{m+1}$ are continuous injections with dense image. This filtration is provided locally by the sc-structure on the sc-retracts in charts, and it is independent of the choice of charts due to sc-continuity of the transition maps. The smooth points $\cB_{\infty}$ of $\cB$ are the subset
$$\cB_{\infty} := \cap_{m \geq 0} \cB_m.$$
By forgetting about the first $m \geq 0$ levels of $\cB$, we obtain the \emph{$m$-shift} $\cB^m$ of $\cB$, which is the $M$-polyfold
$$\cB^m := \big ( \cB_m \hookleftarrow \cB_{m+1} \hookleftarrow \cdots \big )$$
with charts given by restricting charts on $\cB$ to the $m$-shift $$\varphi : V_m = V \cap \cB_m \rightarrow \cO_m = \cO \cap E_m.$$
Notice that a $m$-shift does not affect the smooth points, i.e.
$$\cB_{\infty} = \cB^m_{\infty}.$$

The \emph{tangent space} $T_x\cB$ at a point $x \in \cB_1$ is defined as the set of equivalence classes of tangent vectors in all charts around $x$, with the equivalence given by the tangent map of the transition map between charts, which is a linear isomorphism on every tangent space; see \cite[Def.~2.11]{HWZbook} for a precise treatment. The result is a Banach space structure on $T_x\cB$ and, for any chart $(V,\varphi,(\cO,C,\mathbb{E}))$ containing $x \in V$, a tangent map $D_x\varphi : T_xV = T_x\cB \rightarrow T_{\varphi(x)}\cO$ which is a bounded linear isomorphism. If $x \in \cB_{\infty}$ is a smooth point then $\varphi(x) \in \cO_{\infty}$ is a smooth point and $T_x\cB$ inherits the structure of a sc-Banach space from the sc-Banach space $T_{\varphi(x)}\cO$.

The \emph{reduced tangent space} $T_x^R\cB$ \cite[Def.~2.20]{HWZbook} is defined only for smooth points $x \in \cB_{\infty}$ as the subspace of $T_x\cB$ such that 
\begin{equation} \label{eq:defglobalredcuedtangent}
D_x\varphi(T^R_x\cB) = T_{\varphi(x)}^R\cO,
\end{equation}
where $T_{\varphi(x)}^R\cO$ is the reduced tangent space to a sc-retract \eqref{eq:reducedtangentspacedef}. Note that, since the reduced tangent space to a sc-retract is invariant under sc-diffeomorphisms of sc-retracts only at smooth points, this global notion of reduced tangent space $T^R_x\cB$ is well-defined only for smooth points $x \in \cB_{\infty}$.

The \emph{degeneracy index} $d_{\cB} : \cB \rightarrow \mathbb{N}_0$ is defined for $x \in \cB$ as the minimum over all charts $(V,\varphi,(\cO,C,\mathbb{E}))$ containing $x \in V$ of the degeneracy index in the chart $d_{C}(\varphi(x))$; see \cite[Def.~2.13]{HWZbook}. For a smooth point $x \in \cB_{\infty}$, it is explained in \cite[Rmk.~2.4]{HWZbook} that there is a global interpretation of this quantity:
\begin{equation} \label{eq:degindexdef}
d_{\cB}(x) = \dim(T_x\cB/T_x^R\cB) \text{ for } x \in \cB_{\infty}.
\end{equation}

We now recall the definition of a tame strong bundle $\rho : \cE \rightarrow \cB$.

\begin{dfn} \label{dfn:tamebundle}  \cite[Defs.~2.25, 2.26]{HWZbook} \text{}
\begin{itemlist}
\item Consider a continuous surjective map $\rho : \cE \rightarrow \cB$ from a topological space $\cE$ onto a topological space $\cB$ such that, for every $x \in \cB$, the fiber $\rho^{-1}(x) = \cE_x$ has the structure of a Banach space. A {\bf tame strong bundle chart} for $\rho : \cE \rightarrow \cB$ is a tuple
$$
(\rho^{-1}(V), \Phi, (K,C\lhd \mathbb{F}, \mathbb{E} \lhd \mathbb{F}))
$$
in which $(K,C\lhd \mathbb{F}, \mathbb{E} \lhd \mathbb{F})$ is a tame strong bundle retract (Section~\ref{subsec:slicebundleretracts}) covering a tame sc-retract $(\cO,C,\mathbb{E})$, i.e.\ $K \rightarrow \cO$ is a tame strong local bundle model. In addition, $V \subset \cB$ is the domain of a tame $M$-polyfold chart $(V,\varphi,(\cO,C,\mathbb{E}))$ and
$$\Phi : \rho^{-1}(V) \rightarrow K$$
is a homeomorphism onto $K$ that covers $\varphi$, i.e.
\begin{equation} \label{eq:bundlechartcoverbasechart}
\pi_{\cO} \circ \Phi = \varphi \circ \rho
\end{equation}
holds on $\rho^{-1}(V)$. Moreover, $\Phi$ is linear on the fibers over $V$, i.e.\ for all $x \in V$ the map $\Phi : \rho^{-1}(x) \rightarrow \pi_{\cO}^{-1}(\varphi(x))$ is a bounded linear isomorphism of Banach spaces.

\item A {\bf tame strong bundle} $\rho : \cE \rightarrow \cB$ is a continuous surjective map from a paracompact Hausdorff space $\cE$ onto a tame $M$-polyfold $\cB$ such that every fiber has the structure of a Banach space, together with an equivalence class of tame bundle atlases. A {\bf tame bundle atlas} consists of a cover of $\cB$ by tame strong bundle charts whose transition maps are sc-smooth. Atlases are equivalent if and only if their union is an atlas.
\end{itemlist}
\end{dfn}

The tame strong bundle charts on a tame strong bundle $\rho : \cE \rightarrow \cB$ induce a double filtration $\cE_{m,k}$ for $m \geq 0$ and $0 \leq k \leq m + 1$ from the double filtration \eqref{eq:doublefiltrationlocal} on the bundle retracts in the charts. From this we distinguish the $M$-polyfolds $\cE[i]$ for $i = 0,1,$ by the filtrations
\begin{equation} \label{eq:doublefiltrationglobal}
\cE[i]_m = \cE_{m,m+i}, \,\, m \geq 0.
\end{equation}
Both projections
$$\rho[i] : \cE[i] \rightarrow \cB$$
are sc-smooth maps.

There are two notions of smoothness for a section of $\rho$ (see \cite[Def.~2.27]{HWZbook}), which correspond to the two projections $\rho[i]$ for $i = 0,1$. A section $s : \cB \rightarrow \cE$ is called a \emph{sc-smooth section} of $\rho$ if $s : \cB \rightarrow \cE[0]$ is a sc-smooth section of the bundle $\rho[0].$ The section $s$ is called a \emph{sc$^+$-section} of $\rho$ if $s(\cB) \subset \cE[1]$ and $s : \cB \rightarrow \cE[1]$ is a sc-smooth section of the bundle $\rho[1].$ The sc$^+$-sections become important when perturbating sc-Fredholm sections. They are analogous to compact operators in classical Fredholm theory. In particular, adding sc$^+$-sections to sc-Fredholm sections preserves the sc-Fredholm property \cite[Thm.~3.2]{HWZbook}.

We now recall the definition of a sc-Fredholm section $\s : \cB \rightarrow \cE$.

\begin{dfn} \label{dfn:scfredholmsection} \cite[Def.~3.8]{HWZbook} \text{}
\begin{itemlist}
\item Consider a section $\s : \cB \rightarrow \cE$ of a tame strong bundle $\rho : \cE \rightarrow \cB$. Let $x \in \cB_{\infty}$ be a smooth point. Then a tame strong bundle chart $(\rho^{-1}(V), \Phi, (K,C\lhd \mathbb{F}, \mathbb{E} \lhd \mathbb{F}))$  for $\rho$ covering a tame $M$-polyfold chart  $(V,\varphi,(\cO,C,\mathbb{E}))$ on $\cB$ is called a {\bf sc-Fredholm chart for $\mathbold{\s}$ at $\mathbold{x}$} if it satisfies $x \in V$ and $\varphi(x) = 0 \in \cO$, and if the section
$$\Phi \circ \s \circ \varphi^{-1} : \cO \rightarrow K$$
of the local bundle model $K \rightarrow \cO$ is a local sc-Fredholm germ (Definition~\ref{dfn:localscFredgerm}).

\item A {\bf sc-Fredholm section} $\s : \cB \rightarrow \cE$ of $\rho$ is a section that has the following properties:

(1) $\s$ is sc-smooth.

(2) $\s$ is {\bf regularizing}: $x \in \cB_m$, $\s(x) \in \cE_{m,m+1}$ implies $x \in \cB_{m+1}$.

(3) For every smooth point $x \in \cB_{\infty}$, there exists a sc-Fredholm chart for $\s$ at $x$. The {\bf index $\mathbold{ind_x(\s)}$ of $\mathbold{\s}$ at $\mathbold{x}$} is the index \eqref{eq:fredholmindexdef} of the local sc-Fredholm germ in any such chart.
\end{itemlist}
\end{dfn}

We now introduce the new stronger notion of a tame sc-Fredholm section $\s : \cB \rightarrow \cE$ of a tame $M$-polyfold bundle $\rho : \cE \rightarrow \cB$.  A tame sc-Fredholm section is a sc-Fredholm section that is locally tame in the sense of Definition~\ref{dfn:tamescFredgerm}.

\begin{dfn} \label{dfn:tamescfredholmsection} \text{}
\begin{itemlist}
\item Consider a section $\s : \cB \rightarrow \cE$ of a tame strong bundle $\rho : \cE \rightarrow \cB$. Let $x \in \cB_{\infty}$ be a smooth point. Then a {\bf tame sc-Fredholm chart for $\mathbold{\s}$ at $\mathbold{x}$} is a sc-Fredholm chart (Definition~\ref{dfn:scfredholmsection}) such that the local sc-Fredholm germ $\Phi \circ \s \circ \varphi^{-1}$ is in addition a tame sc-Fredholm germ (Definition~\ref{dfn:tamescFredgerm}).

\item A {\bf tame sc-Fredholm section} $\s : \cB \rightarrow \cE$ of $\rho$ is a sc-Fredholm section (Definition~\ref{dfn:scfredholmsection}) such that there exists a tame sc-Fredholm chart for $\s$ at every $x \in \cB_{\infty}$.
\end{itemlist}
\end{dfn}

\begin{rmk} \label{rmk:levelshiftglobal}
Given a strong bundle $\rho : \cE \rightarrow \cB$ and a sc-Fredholm section $\s : \cB \rightarrow \cE$, we obtain the $m$-shift $\rho : \cE^m \rightarrow \cB^m$ and similarly $\s : \cB^m \rightarrow \cE^m$ by restricting domains and codomains to the $m$-shifts $\cE^m$ and $\cB^m$. Performing this shift preserves all essential features of the polyfold setup. In particular, $\s : \cB^m \rightarrow \cE^m$ is a sc-Fredholm section of the strong bundle $\rho : \cE^m \rightarrow \cB^m$ and the sc-Fredholm index is preserved \cite[Cor.~5.1]{HWZbook}.

Crucially, the zero set of a sc-Fredholm section is preserved under $m$-shifts. Indeed, since $\s^{-1}(0) \subset \cB_{\infty}$ by the regularizing property (Definition~\ref{dfn:scfredholmsection}(2)) of $\s$, it follows that $$\s^{-1}(0) = \s^{-1}(0) \cap \cB^m = \s|_{\cB^m}^{-1}(0).$$ Moreover, compactness of $\s^{-1}(0)$ in the topology of $\cB_{m}$ for any fixed $m \geq 0$ is equivalent to compactness in the topology of $\cB_{\infty}$ (see \cite[Thm.~5.3,~Cor.~5.1]{HWZbook}). For this reason, we can unambiguously refer to compactness of $\s^{-1}(0)$ without reference to the levels, and $m$-shifting preserves this compactness.
\end{rmk}

We now introduce the new notions of a slice $\tilde{\cB} \subset \cB$ of a tame $M$-polyfold $\cB$, a slice of a tame strong bundle $\rho : \cE \rightarrow \cB$, and a slice of a sc-Fredholm section $\s : \cB \rightarrow \cE$ (Definition~\ref{dfn:globalslice}). Roughly, a slice is a subspace $\tilde{\cB}$ such that for every point $x \in \tilde{\cB}$ there is a tame $M$-polyfold chart that identifies a neighborhood of $x$ in $\cB$ with a $\mathbb{R}^n$-sliced sc-retract (Definition~\ref{dfn:sliceretraction}), and such that the induced tame sc-retract \eqref{eq:inducedscretract} is identified with a neighborhood of $x$ in $\tilde{\cB}$. To be a slice of the sc-Fredholm section $\s$, we additionally require that, at smooth points $x \in \tilde{\cB}_{\infty} = \tilde{\cB} \cap \cB_{\infty}$, we can choose the chart on $\cB$ such that it is covered by a bundle chart for $\rho$ in which $\s$ is a $\mathbb{R}^n$-sliced sc-Fredholm germ (Definition~\ref{dfn:slicescFredholmgerm}). In such a chart, the induced tame sc-Fredholm germ (see Lemma~\ref{lem:inducedlocalscFredholmgerm}) is identified with the restriction of $\s$ to $\tilde{\cB}$ near $x$. Globally, the result is that a slice of $\s$ is a finite codimension $M$-polyfold $\tilde{\cB}$ embedded in $\cB$ that is compatible with the local fillers for $\s$ in such a way that $\s$ restricts to a tame sc-Fredholm section over $\tilde{\cB}$ (Theorem~\ref{thm:slicestructure}(III)).

We first introduce the slice notions in charts.

\begin{dfn} \label{dfn:slicecharts}
\text{}
\begin{itemlist}
\item Consider a subspace $\tilde{\cB} \subset \cB$ of a tame $M$-polyfold $\cB$. A {\bf $\mathbold{\mathbb{R}^n}$-sliced chart with respect to $\tilde{\cB} \subset \cB$} is a tame $M$-polyfold chart on $\cB$ of the form
\begin{equation}  \label{eq:slicechart}
(V,\varphi,(\cO,\mathbb{R}^n \times C, \mathbb{R}^n \times \mathbb{E}))
\end{equation}
where $(\cO,\mathbb{R}^n \times C, \mathbb{R}^n \times \mathbb{E})$ is a $\mathbb{R}^n$-sliced sc-retract (Definition~\ref{dfn:sliceretraction}) with induced tame sc-retract $\tilde{\cO} = \cO \cap (\{0\} \times C)$ (see \eqref{eq:inducedscretract}) satisfying
\begin{equation} \label{eq:slicechartcondition}
\tilde{\cO} = \varphi(\tilde{\cB} \cap V).
\end{equation}

\item Consider a tame strong bundle $\rho : \cE \rightarrow \cB$. A {\bf $\mathbold{\mathbb{R}^n}$-sliced bundle chart for $\rho$ with respect to $\tilde{\cB} \subset \cB$} is a tame strong bundle chart for $\rho$ of the form
\begin{equation}  \label{eq:slicechartbundledef}
(\rho^{-1}(V),\Phi,(K,\mathbb{R}^n \times C \lhd \mathbb{F}, \mathbb{R}^n \times \mathbb{E} \lhd \mathbb{F}))
\end{equation}
covering a $\mathbb{R}^n$-sliced chart with respect to $\tilde{\cB} \subset \cB$ as in \eqref{eq:slicechart}. In particular, the tame bundle retract $(K,\mathbb{R}^n \times C \lhd \mathbb{F}, \mathbb{R}^n \times \mathbb{E} \lhd \mathbb{F})$ is a $\mathbb{R}^n$-sliced bundle retract (Definition~\ref{dfn:slicebundleretraction}) covering the sc-retract $\cO$ from \eqref{eq:slicechart}.

\item Consider a section $\s : \cB \rightarrow \cE$ of $\rho$ and a smooth point $$x \in \tilde{\cB}_{\infty} = \tilde{\cB} \cap \cB_{\infty}.$$ Then a {\bf $\mathbold{\mathbb{R}^n}$-sliced sc-Fredholm chart for $\s$ at $x$ with respect to $\tilde{\cB} \subset \cB$} is a $\mathbb{R}^n$-sliced bundle chart with respect to $\tilde{\cB} \subset \cB$ as in \eqref{eq:slicechartbundledef} that satisfies $x \in V$ and $\varphi(x) = 0$, and such that the section
$$\Phi \circ \s \circ \varphi^{-1} : \cO \rightarrow K$$
is a $\mathbb{R}^n$-sliced sc-Fredholm germ (Definition~\ref{dfn:slicescFredholmgerm}).
\end{itemlist}
\end{dfn}

The following global notions of slices of tame $M$-polyfolds, tame strong bundles, and sc-Fredholm sections require covers of $\tilde{\cB}$ by $\mathbb{R}^n$-sliced charts.

\begin{dfn} \label{dfn:globalslice}
\text{}
\begin{itemlist}
\item Consider a tame $M$-polyfold $\cB$. A subspace $$\tilde{\cB} \subset \cB$$ is called a {\bf slice of $\mathbold{\cB}$} if for every $x \in \tilde{\cB}$ there exists an integer $$n_x = codim_x(\tilde{\cB} \subset \cB) \geq 0$$ and a $\mathbb{R}^{n_x}$-sliced chart with respect to $\tilde{\cB} \subset \cB$ that contains $x$. For each $x \in \tilde{\cB}_1 = \tilde{\cB} \cap \cB_1$, the integer $codim_x(\tilde{\cB} \subset \cB)$ is called the {\bf codimension of the slice at $\mathbold{x}$} (and it is well-defined at $x$ and locally constant in $\tilde{\cB}$, by Theorem~\ref{thm:slicestructure}).

\item Consider a tame strong bundle $\rho : \cE \rightarrow \cB$. A slice $\tilde{\cB} \subset \cB$ of $\cB$ is called a {\bf slice of the bundle $\mathbold{\rho}$} if for every $x \in \tilde{\cB}$ there exists a $\mathbb{R}^{n_x}$-sliced bundle chart for $\rho$ with respect to $\tilde{\cB} \subset \cB$ that contains $x$.

\item Consider a sc-Fredholm section $\s : \cB \rightarrow \cE$ of $\rho$. A slice $\tilde{\cB} \subset \cB$ of the bundle $\rho$ is called a {\bf slice of the sc-Fredholm section $\mathbold{\s}$} if for every $x \in \tilde{\cB}_{\infty} = \tilde{\cB} \cap \cB_{\infty}$ there exists a $\mathbb{R}^{n_x}$-sliced sc-Fredholm chart for $\s$ at $x$ with respect to $\tilde{\cB} \subset \cB$.
\end{itemlist}
\end{dfn}

We are now equipped to prove one of the main theorems (Theorem~\ref{thm:slicestructure}), which says that slices of tame $M$-polyfolds are tame $M$-polyfolds with locally constant codimension, bundles restrict to slices, and sc-Fredholm sections restrict to slices with a drop in Fredholm index given by the codimension. The essential point in the proof is that sliced charts with respect to $\tilde{\cB} \subset \cB$ induce tame $M$-polyfold charts on $\tilde{\cB}$, sliced bundle charts induce tame bundle charts on the restricted bundle map, and sliced sc-Fredholm charts induce tame sc-Fredholm charts on the restricted section. This is the $M$-polyfold analog of the classical Fact~\ref{fac:restrictbanach} (restrictions of Fredholm sections to sub-Banach manifolds).

\begin{thm} \label{thm:slicestructure} {\bf (Restrictions of sc-Fredholm sections to slices)}
\begin{enumilistfiber}
\item Consider a tame $M$-polyfold $\cB$ and a slice $\tilde{\cB} \subset \cB$ (Definition~\ref{dfn:globalslice}). Then, $\tilde{\cB}$ is a tame $M$-polyfold with atlas induced by the sliced charts with respect to $\tilde{\cB} \subset \cB$. For $x \in \tilde{\cB}_1$, the codimension $codim_x(\tilde{\cB} \subset \cB)$ is well-defined and locally constant in $\tilde{\cB}$, i.e.\ it equals $codim_{x'}(\tilde{\cB} \subset \cB)$ for every $x'$ in an open neighborhood of $x$ in $\tilde{\cB}$. For $x \in \tilde{\cB}_{\infty}$, the degeneracy index satisfies $d_{\tilde{\cB}}(x) = d_{\cB}(x)$.\\

\item Consider, in addition, a tame strong bundle $\rho : \cE \rightarrow \cB$. If $\tilde{\cB} \subset \cB$ is a slice of $\rho$, then the restriction $\tilde{\rho} := \rho|_{\tilde{\cE}} : \tilde{\cE} :=  \rho^{-1}(\tilde{\cB}) \rightarrow \tilde{\cB}$ is a tame strong bundle with atlas induced by the sliced bundle charts for $\rho$ with respect to $\tilde{\cB} \subset \cB$.\\

\item Consider, in addition, a sc-Fredholm section $\s : \cB \rightarrow \cE$. If $\tilde{\cB} \subset \cB$ is a slice of $\s$, then the restriction $\tilde{\s} = \s|_{\tilde{\cB}} : \tilde{\cB} \rightarrow \tilde{\cE}$ is a tame sc-Fredholm section (Definition~\ref{dfn:tamescfredholmsection}) of $\tilde{\rho}$ with tame sc-Fredholm charts induced by the sliced sc-Fredholm charts for $\s$ with respect to $\tilde{\cB} \subset \cB$. For $x \in \tilde{\cB}_{\infty}$, the index satisfies $ind_x(\tilde{\s}) = ind_x(\s) - codim_x(\tilde{\cB} \subset \cB)$. If $\s^{-1}(0)$ is compact and $\tilde{\cB}_{\infty} \subset \cB_{\infty}$ is closed, then $\tilde{\s}^{-1}(0)$ is compact.
\end{enumilistfiber}
\end{thm}

\begin{proof}
\ul{Proof of (I)}: By Definition~\ref{dfn:globalslice} of a slice, for every $x \in \tilde{\cB}$ there is an integer $n_x \geq 0$ and a $\mathbb{R}^{n_x}$-sliced chart with respect to $\tilde{\cB} \subset \cB$ that contains $x$. We will show that each of these sliced charts induces a tame $M$-polyfold chart on $\tilde{\cB}$. The transition maps between the tame charts on $\tilde{\cB}$ constructed in this way are sc-smooth because they are restrictions of the sc-smooth transition maps between the sliced charts on $\cB$. Hence we have a tame sc-smooth atlas on $\tilde{\cB}$, providing the claimed tame $M$-polyfold structure.

Given a $\mathbb{R}^n$-sliced chart
\begin{equation} \label{eq:slicedchatglobalsliceproof}
(V,\varphi,(\cO,\mathbb{R}^n \times C, \mathbb{R}^n \times \mathbb{E}))
\end{equation}
with respect to $\tilde{\cB} \subset \cB$, we must construct the claimed induced tame $M$-polyfold chart on $\tilde{\cB}$. Use the notation
\begin{align*}
\tilde{V} := \tilde{\cB} \cap V \text{ and } \tilde{\varphi} := \varphi|_{\tilde{V}} : \tilde{V} \rightarrow \varphi(\tilde{V}),
\end{align*}
and recall the induced tame sc-retract $(\tilde{\cO},C,\mathbb{E})$ from Lemma~\ref{lem:inducedtameretraction}. We claim that the tuple
\begin{equation} \label{eq:Mchartslice}
(\tilde{V},\tilde{\varphi},(\tilde{\cO}, C,\mathbb{E}))
\end{equation}
is a tame $M$-polyfold chart on $\tilde{\cB}$. Indeed, the set $\tilde{V}$ is open in $\tilde{\cB}$ since $V \subset \cB$ is open and $\tilde{\cB} \subset \cB$ has the subspace topology. Since $\varphi$ is a homeomorphism, it follows that its restriction $\tilde{\varphi}$ is a homeomorphism onto its image, which is $\tilde{\cO}$ by \eqref{eq:slicechartcondition} and definition of $\tilde{V}$. This completes the construction of the tame $M$-polyfold structure on $\tilde{\cB}$.

Let $x \in \tilde{\cB}_1$. We now verify that the codimension
$$n_x = codim_x(\tilde{\cB}~\subset~\cB)$$
of the slice $\tilde{\cB} \subset \cB$ at $x$ is well-defined and is locally constant in $\tilde{\cB}$. There exists a $\mathbb{R}^{n_x}$-sliced chart as in \eqref{eq:slicedchatglobalsliceproof} with $x \in V$. It suffices to show that, for all $y \in \tilde{V}_1$, we have $T_y{\cB}/T_y\tilde{\cB} \cong \mathbb{R}^{n_x}$. Indeed, this proves that the codimension $n_x$ is independent of the sliced chart around $x$, hence is well-defined, and moreover that it is constant in the open neighborhood $\tilde{V}$ of $x$ in $\tilde{\cB}$. For every $y \in \tilde{V}_1$, the tangent map $$D_y\varphi : T_yV \rightarrow T_{\varphi(y)}\cO$$ of the chart map $\varphi$ at $y$ is a linear isomorphism. Moreover, $D_y\varphi$ restricts to a linear isomorphism $D_y\tilde{\varphi} : T_x\tilde{V} \rightarrow T_{\varphi(y)}\tilde{\cO}$. Hence $D_y\varphi$ induces an isomorphism $$T_yV/T_y\tilde{V} \cong T_{\varphi(y)}\cO/T_{\varphi(y)}\tilde{\cO}.$$ Thus $$T_y\cB / T_y\tilde{\cB} = T_yV/T_y\tilde{V} \cong \mathbb{R}^{n_x}$$ holds by \eqref{eq:codimentangentspacesslicedretract}. Hence $codim_y(\tilde{\cB} \subset \cB) = n_x$ holds for all $y \in \tilde{V}_1$.

We now prove the claimed degeneracy index formula. Let $x \in \tilde{\cB}_{\infty}$. Since $x$ is a smooth point, the reduced tangent space $T_x^R\cB$ is well-defined and $D_x\varphi$ restricts to a sc-isomorphism $T_x^R\cB \rightarrow T_{\varphi(x)}^R\cO$. Hence $D_x\varphi$ induces a sc-isomorphism $T_x\cB/T_x^R\cB \cong T_{\varphi(x)}\cO/T_{\varphi(x)}^R\cO$. Similarly, $D_x\tilde{\varphi}$ induces a sc-isomorphism $T_x\tilde{\cB}/T_x^R\tilde{\cB} \cong T_{\varphi(x)}\tilde{\cO}/T_{\varphi(x)}^R\tilde{\cO}$. Then by \eqref{eq:inducedreducatedtangentspacequotientrelationship}, the inclusion $\tilde{\cO} \subset \cO$ induces an isomorphism $$T_{\varphi(x)}\tilde{\cO}/T_{\varphi(x)}^R\tilde{\cO} \cong T_{\varphi(x)}\cO/T_{\varphi(x)}^R\cO.$$ It follows that the inclusion $\tilde{\cB} \subset \cB$ induces an isomorphism $$T_x\tilde{\cB}/T_x^R\tilde{\cB} \cong T_x\cB/T_x^R\cB.$$ So by the global description \eqref{eq:degindexdef} of degeneracy index at smooth points, we conclude $d_{\tilde{\cB}}(x) = \dim(T_x\tilde{\cB}/T_x^R\tilde{\cB}) = \dim(T_x\cB/T_x^R\cB) = d_{\cB}(x)$. This completes the proof of the statements in (I).
\\

\ul{Proof of (II)}: By definition of a slice of a bundle, there is a cover of $\tilde{\cB}$ by sliced bundle charts with respect to $\tilde{\cB} \subset \cB$. We will show that each of these sliced bundle charts induces a tame strong bundle chart for $\tilde{\rho} = \rho|_{\tilde{\cE}} : \tilde{\cE} := \rho^{-1}(\tilde{\cB}) \rightarrow \tilde{\cB}$. The transition maps between the tame charts for $\tilde{\rho}$ constructed in this way are sc-smooth because they are restrictions of the transition maps between the sliced bundle charts for $\rho$. Hence we have a bundle atlas for $\tilde{\rho}$, providing the claimed tame strong bundle structure.

Given a $\mathbb{R}^n$-sliced bundle chart
\begin{equation} \label{eq:slicedbundlechartglobalrestrictionproof}
(\rho^{-1}(V),\Phi,(K,\mathbb{R}^n \times C \lhd \mathbb{F}, \mathbb{R}^n \times \mathbb{E} \lhd \mathbb{F}))
\end{equation}
for $\rho$ with respect to $\tilde{\cB} \subset \cB$ that covers a $\mathbb{R}^n$-sliced chart with respect to $\tilde{\cB} \subset \cB$ as in \eqref{eq:slicedchatglobalsliceproof}, we must construct the claimed induced tame strong bundle chart on $\tilde{\rho}$. Let $(\tilde{K},C \lhd \mathbb{F}, \mathbb{E} \lhd \mathbb{F})$ denote the induced tame bundle retract from Lemma~\ref{lem:inducedtamebundleretraction} and use the notation
\begin{align*}
\tilde{\Phi} &:= \Phi|_{\tilde{\rho}^{-1}(\tilde{V})} : \tilde{\rho}^{-1}(\tilde{V})\rightarrow \Phi(\tilde{\rho}^{-1}(\tilde{V})).
\end{align*}
We claim that the tuple
\begin{equation} \label{eq:Mchartbundleslice}
(\tilde{\rho}^{-1}(\tilde{V}),\tilde{\Phi},(\tilde{K},C \lhd \mathbb{F}, \mathbb{E} \lhd \mathbb{F}))
\end{equation}
is a tame strong bundle chart for $\tilde{\rho}$ that covers the induced $M$-polyfold chart \eqref{eq:Mchartslice}. Since $\pi_{\cO} \circ \Phi = \varphi \circ \rho$ by \eqref{eq:bundlechartcoverbasechart} it follows by restriction that $\pi_{\tilde{\cO}} \circ \tilde{\Phi} = \tilde{\varphi} \circ \tilde{\rho}$ on $\tilde{\rho}^{-1}(\tilde{V})$. It follows that $im(\tilde{\Phi}) = \pi_{\cO}^{-1}(\tilde{\cO}) = \tilde{K}$, where the second equality holds by \eqref{eq:inducedscretractbundle}. Hence $\tilde{\Phi}$ is a homeomorphism onto its image $\tilde{K}$ because it is the restriction of the homeomorphism $\Phi$, and similarly $\tilde{\Phi}$ is linear on fibers as the restriction of $\Phi$. This completes the proof of (II).
\\

\ul{Proof of (III)}: The final statement about compactness of the zero sets holds because $\tilde{\s}^{-1}(0) = \s^{-1}(0) \cap \tilde{\cB}_{\infty}$ is the intersection of a compact set and a closed subset of $\cB_{\infty}$.

We proceed to verify that $\tilde{\s} = \s|_{\tilde{\cB}} : \tilde{\cB} \rightarrow \tilde{\cE}$ is a tame sc-Fredholm section of $\tilde{\rho}$. The section $\tilde{\s}$ is sc-smooth and regularizing because it is the restriction of the section $\s$. So to prove that $\tilde{\s}$ is a tame sc-Fredholm section it remains to show that there exists a tame sc-Fredholm chart (Definition~\ref{dfn:tamescfredholmsection}) for $\tilde{\s}$ at every smooth point $x \in \tilde{\cB}_{\infty}$. Since $\tilde{\cB}$ is a slice of the sc-Fredholm section $\s$, there exists a $\mathbb{R}^{n_x}$-sliced sc-Fredholm chart for $\s$ at $x$ with respect to $\tilde{\cB} \subset \cB$. This is a $\mathbb{R}^{n_x}$-sliced bundle chart as in \eqref{eq:slicedbundlechartglobalrestrictionproof} such that the local sc-Fredholm germ $\Phi \circ \s \circ \varphi^{-1}$ is $\mathbb{R}^{n_x}$-sliced and such that $\varphi(x) = 0$.

We claim that the tame bundle chart \eqref{eq:Mchartbundleslice} is a tame sc-Fredholm chart for $\tilde{\s}$ at $x$. This requires that the local section $\tilde{\Phi} \circ \tilde{\s} \circ \tilde{\varphi}^{-1} : \tilde{\cO} \rightarrow \tilde{K}$ is a tame sc-Fredholm germ, which follows from Lemma~\ref{lem:inducedlocalscFredholmgerm} since $$\Phi \circ \s \circ \varphi^{-1} : \cO \rightarrow K$$ is a $\mathbb{R}^{n_x}$-sliced sc-Fredholm germ. Moreover, Lemma~\ref{lem:inducedlocalscFredholmgerm} and Definition~\ref{dfn:scfredholmsection}(3) provide the sc-Fredholm index formula $$ind_x(\tilde{\s}) = ind(\tilde{\Phi} \circ \tilde{\s} \circ \tilde{\varphi}^{-1}) = ind(\Phi \circ \s \circ \varphi^{-1}) - n_x = ind_x(\s) - n_x.$$ This completes the proof that $\tilde{\s}$ is a tame sc-Fredholm section satisfying the claimed index formula.
\end{proof}

We now prepare for the proof of our main theorem (Theorem~\ref{thm:globalimplicit}), which is the $M$-polyfold analog of the classical Fact~\ref{fac:globalimplicitbanachfredholm} (transverse preimages are sub-Banach manifolds). The result is roughly as follows. Consider a tame sc-Fredholm section $\s : \cB \rightarrow \cE$ of a tame $M$-polyfold bundle $\rho : \cE \rightarrow \cB$. Given a sc-smooth map $f : \cB \rightarrow Y$ to a finite dimensional smooth manifold $Y$ that is $\s$-compatibly transverse (Definition~\ref{dfn:transverse}) to a submanifold $N \subset Y$, we prove that the restriction of $\s$ to the preimage $f^{-1}(N)$ is a tame sc-Fredholm section. The notion of $\s$-compatible transversality requires compatibility between the map $f$, the submanifold $N$, and the coordinate changes that bring the local fillers of $\s$ into basic germ form.

All necessary compatibility is satisfied in applications. In particular, evaluation maps $f = ev$ at marked points are $\overline{\partial}_J$-compatibly transverse to every submanifold, where $\overline{\partial}_J$ is the Cauchy-Riemann section; see Section~\ref{sec:evaluationmaps}. We now introduce the precise definition of $\s$-compatibly transverse.

\begin{dfn} \label{dfn:transverse}
Consider a tame $M$-polyfold $\cB$, a smooth manifold $Y$ together with a codimension-$n$ submanifold $N \subset Y$, and a sc-smooth map $f : \cB \rightarrow Y$. Then $f$ is called {\bf transverse to $\mathbold{N}$} if
\begin{equation} \label{eq:transversedefiningproperty}
D_xf(T_x^R\cB) + T_{f(x)}N = T_{f(x)}Y
\end{equation}
holds for all $x \in f^{-1}(N) \cap \cB_{\infty},$

Suppose that $f$ is transverse to $N$, and consider in addition a tame sc-Fredholm section $\s : \cB \rightarrow \cE$ (Definition~\ref{dfn:tamescfredholmsection}) of a tame strong bundle $\rho : \cE \rightarrow \cB$. Then $f$ is called {\bf $\mathbold{\s}$-compatibly transverse to $\mathbold{N}$} if, for all points $x \in f^{-1}(N) \cap \cB_{\infty}$, there exists a tame sc-Fredholm chart (Definition~\ref{dfn:tamescfredholmsection}) for $\s$ at $x$, denoted
$${(\rho^{-1}(V), \Phi, (K,[0,\infty)^s \times \mathbb{E} \lhd \mathbb{F}, \mathbb{R}^s \times \mathbb{E} \lhd \mathbb{F}))},$$ covering a tame $M$-polyfold chart ${(V,\varphi,(\cO,[0,\infty)^s \times \mathbb{E}, \mathbb{R}^s \times \mathbb{E}))}$ on $\cB$ such that the tame sc-Fredholm germ (Definition~\ref{dfn:tamescFredgerm})
$${\Phi \circ \s \circ \varphi^{-1} : \cO \rightarrow K}$$ satisfies the following property: There exists a choice of sc-Banach space $\mathbb{W}$ and linear sc-isomorphism
$$\overline{\psi} : \mathbb{E} \rightarrow \mathbb{R}^{k-s} \times \mathbb{W}$$
satisfying the conditions in Definition~\ref{dfn:tamescFredgerm} of tame sc-Fredholm germ such that, in addition, there exists a sc-complement $L$ of the subspace ${(D_xf)^{-1}(T_{f(x)}N) \cap T_x^R\cB} \subset T_x^R\cB$ satisfying
\begin{equation} \label{eq:thespecialpropertyglobalversion}
\overline{\psi} \circ D_x\varphi(L) \subset \{0\}^{k-s} \times \mathbb{W}.
\end{equation}
\end{dfn}

We are now equipped to prove the main theorem.

\begin{thm} \label{thm:globalimplicit} {\bf (Transverse preimages are slices of sc-Fredholm sections)}
\begin{enumilistfiber}
\item Consider a tame $M$-polyfold $\cB$, a smooth manifold $Y$ together with a codimension-$n$ submanifold $N \subset Y$, and a sc-smooth map $$f : \cB \rightarrow Y.$$ Assume that $f$ is transverse to $N$ (Definition~\ref{dfn:transverse}).

Then, there exists an open neighborhood
$$\tilde{\cB} \subset f^{-1}(N) \cap \cB_1$$
of ${f^{-1}(N) \cap \cB_{\infty}}$ such that $\tilde{\cB}$ is a slice of $\cB^1$ with $codim_x(\tilde{\cB} \subset \cB^1) = n$ for every $x \in \tilde{\cB}_1 = \tilde{\cB} \cap \cB_2$. In particular, $\tilde{\cB}$ is a tame $M$-polyfold with degeneracy index satisfying $d_{\tilde{\cB}}(x) = d_{\cB}(x)$ for all $x \in \tilde{\cB}_{\infty}$.\\

\item Consider, in addition, a tame strong bundle $\rho : \cE \rightarrow \cB$. Then, there exists a possibly smaller neighborhood $\tilde{\cB}$ in (I) that is a slice of the bundle $\rho|_{\cE^1} : \cE^1 \rightarrow \cB^1$. In particular, the restriction
$$\tilde{\rho} := \rho|_{\tilde{\cE}} : \tilde{\cE} := (\rho|_{\cE^1})^{-1}(\tilde{\cB}) \rightarrow \tilde{\cB}$$
is a tame strong bundle.\\

\item Consider, in addition, a tame sc-Fredholm section $\s : \cB \rightarrow \cE$ (Definition~\ref{dfn:tamescfredholmsection}) of $\rho$. Assume that $f$ is $\s$-compatibly transverse to $N$ (Definition~\ref{dfn:transverse}). Then, there exists a possibly smaller neighborhood $\tilde{\cB}$ in (II) that is a slice of the tame sc-Fredholm section $\s|_{\cB^1} : \cB^1 \rightarrow \cE^1$. In particular, the restriction
$$\tilde{\s} := \s|_{\tilde{\cB}} : \tilde{\cB} \rightarrow \tilde{\cE}$$
is a tame sc-Fredholm section of $\tilde{\rho}$ with index satisfying $$ind_x(\tilde{\s}) = ind_x(\s) - n$$ for all $x \in \tilde{\cB}_{\infty}$. If $N$ is closed as a subset of $Y$ and $\s^{-1}(0)$ is compact, then $\tilde{\s}^{-1}(0)$ is compact.
\end{enumilistfiber}
\end{thm}

\begin{proof}
\ul{Proof of (I)}: For every $x \in f^{-1}(N) \cap \cB_{\infty}$ we will construct a $\mathbb{R}^n$-sliced chart (Definition~\ref{dfn:slicecharts}) with respect to $f^{-1}(N) \cap \cB^1 \subset \cB^1$ that contains $x$. Then we define $\tilde{\cB}$ to be the union of the domains of these charts intersected with $f^{-1}(N) \cap \cB^1$.

Let $x \in f^{-1}(N) \cap \cB_{\infty}$. Let $Z \subset Y$ be the domain of a manifold chart $Z \xrightarrow{\sim} \mathbb{R}^{n+\dim N}$ on $Y$ containing $f(x) \in N$ that identifies $N \cap Z$ with $\{0\}^n \times \mathbb{R}^{\dim N}$. Let $\g : Z \rightarrow \mathbb{R}^n$ be the smooth map given in the chart by the projection $\mathbb{R}^n \times \mathbb{R}^{\dim N} \rightarrow \mathbb{R}^n$. Then $0 = \g(f(x))$ is a regular value of $\g$ and
\begin{equation} \label{eq:transverse1}
\g^{-1}(0) = N \cap Z.
\end{equation}
In particular, we have
\begin{equation} \label{eq:gammatransverse}
T_{f(x)}N = \ker D_{f(x)} \g.
\end{equation}

Then $f^{-1}(Z)$ is an open neighborhood of $x$ in $\cB$. Choose $V \subset f^{-1}(Z)$ so that, in addition, there is a tame $M$-polyfold chart
\begin{equation} \label{eq:tamechartinglobalconstraint}
(V,\varphi,(\cO,[0,\infty)^s \times \mathbb{E}, \mathbb{R}^s \times \mathbb{E}))
\end{equation}
on $\cB$ satisfying $\varphi(x) \in \{0\}^s \times \mathbb{E}$. Set
$$\ul{x} := \varphi(x).$$
By definition \eqref{eq:defglobalredcuedtangent} of the reduced tangent space $T_x^R\cB$, we have $$D_x\varphi(T_x^R\cB) = T_{\ul{x}}^R\cO.$$ So, since $f$ is transverse to $N$, we conclude from \eqref{eq:transversedefiningproperty} that we have
\begin{equation} \label{eq:transverseinchart}
D_xf \circ D_{\ul{x}}\varphi^{-1}(T_{\ul{x}}^R\cO) + T_{f(x)}N = T_{f(x)}Y.
\end{equation}

We claim that the map
$$\ul{f} := \g \circ f \circ \varphi^{-1} : \cO \rightarrow \mathbb{R}^n$$
satisfies the hypotheses of Lemma~\ref{lem:slicecoordinates}(I) at $\ul{x}$. Let $\cO_{\partial} := \cO \cap (\{0\}^s \times \mathbb{E})$ denote the boundary sc-retract associated to $\cO$ (see Lemma~\ref{lem:associatedboundaryretract}) and consider the restriction $\ul{f}_{\partial} := \ul{f}|_{\cO_{\partial}}$. Recall from \eqref{eq:boundaryretracttangentspace} that we have $T_{\ul{x}}^R\cO = T_{\ul{x}}\cO_{\partial}.$ Then the tangent map $D_{\ul{x}}\ul{f}_{\partial} : T_{\ul{x}}\cO_{\partial} \rightarrow T_0\mathbb{R}^n = \mathbb{R}^n$ is surjective by \eqref{eq:transverseinchart}, \eqref{eq:gammatransverse}, and since $0$ is a regular value of $\g$. Hence, the hypotheses of Lemma~\ref{lem:slicecoordinates}(I) are indeed satisfied, yielding slice coordinates around $\ul{x}$ with respect to $(\ul{f}^{-1}(0) \cap \cO^1) \subset \cO^1$. Precisely, this means that we obtain a sc-Banach space $\mathbb{K}$, a $\mathbb{R}^n$-sliced sc-retract (Definition~\ref{dfn:sliceretraction})
\begin{equation} \label{eq:slicescretractpreimageglobal}
(\cO',\mathbb{R}^n \times [0,\infty)^s \times \mathbb{K}^1,\mathbb{R}^n \times \mathbb{R}^s \times \mathbb{K}^1),
\end{equation}
an open set $\hat{\cO} \subset \cO^1$, and a sc-smooth diffeomorphism
$$g :  \hat{\cO} \rightarrow \cO'$$
such that the induced tame sc-retract
$$\tilde{\cO}' = \cO' \cap (\{0\}^n \times [0,\infty)^s \times \mathbb{K}^1)$$
satisfies
\begin{equation} \label{eq:preimageglobalsliceretractnormalforminduced}
g(\ul{f}^{-1}(0) \cap \hat{\cO}) = \tilde{\cO}'.
\end{equation}

Set
$$\hat{V} := \varphi^{-1}(\hat{\cO}) \text{ and } \hat{\varphi} := g \circ \varphi|_{\hat{V}} : \hat{V} \rightarrow \cO'.$$
Then
\begin{equation} \label{eq:slicechartglobal}
(\hat{V},\hat{\varphi},(\cO',\mathbb{R}^n \times [0,\infty)^s \times \mathbb{K}^1,\mathbb{R}^n \times \mathbb{R}^s \times \mathbb{K}^1))
\end{equation}
is an $M$-polyfold chart on $\cB^1$.

At this point, for every $x \in f^{-1}(N) \cap \cB_{\infty}$, we have constructed an $M$-polyfold chart on $\cB^1$ with domain $\hat{V}_x \subset \cB^1$ containing $x \in \hat{V}_x$. Define 
\begin{equation} \label{eq:sliceunion}
\tilde{\cB} := \bigcup_{x \in f^{-1}(N) \cap \cB_{\infty}} \hat{V}_x \cap f^{-1}(N).
\end{equation}
We claim that $\tilde{\cB} \subset \cB^1$ is a slice. We must show that the chart \eqref{eq:slicechartglobal} is a $\mathbb{R}^n$-sliced chart (Definition~\ref{dfn:slicecharts}) with respect to $\tilde{\cB} \subset \cB^1$. The sc-retract \eqref{eq:slicescretractpreimageglobal} is given as $\mathbb{R}^n$-sliced by Lemma~\ref{lem:slicecoordinates}(I), so it remains to show that \eqref{eq:slicechartcondition} holds, which in the notation of this proof is the statement $\tilde{\cO}' = \hat{\varphi}(\tilde{\cB} \cap \hat{V}).$ Let $p \in \tilde{\cO}'$. By \eqref{eq:preimageglobalsliceretractnormalforminduced} and the definition of $\ul{f}$, we have $\hat{\varphi}^{-1}(p) = \varphi^{-1} \circ g^{-1}(p) \in (\g \circ f)^{-1}(0) \subset f^{-1}(N).$ Hence we have $\hat{\varphi}^{-1}(p) \in \hat{V} \cap f^{-1}(N) \subset \tilde{\cB} \cap \hat{V}.$ To see the reverse inclusion, let $p \in \hat{\varphi}(\tilde{\cB} \cap \hat{V}).$ By \eqref{eq:preimageglobalsliceretractnormalforminduced}, we must show that $\ul{f}(g^{-1}(p)) = 0$. From the definitions, we compute
$$\ul{f} \circ g^{-1}(p) = \g \circ f \circ \hat{\varphi}^{-1}(p) \in \g(f(\tilde{\cB} \cap \hat{V})) \subset \g(N \cap Z) = \{0\},$$
as required. Hence \eqref{eq:slicechartglobal} is indeed a $\mathbb{R}^n$-sliced chart with respect to $\tilde{\cB} \subset \cB^1$. This proves that $\tilde{\cB} \subset \cB^1$ is a slice (Definition~\ref{dfn:globalslice}). Hence $\tilde{\cB}$ is a tame $M$-polyfold with the claimed degeneracy index by Theorem~\ref{thm:slicestructure}. Moreover, the existence of a $\mathbb{R}^n$-sliced chart around every $y \in \tilde{\cB}$ proves the claim that $codim_y(\tilde{\cB} \subset \cB^1) = n$ for every $y \in \tilde{\cB}_1$. This completes the proof of the statements (I).
\\

\ul{Proof of (II)}: To prove that $\tilde{\cB}$ is a slice of the bundle $\rho|_{\cE^1} : \cE^1 \rightarrow \cB^1$, for every $x \in \tilde{\cB}$ we must construct a sliced bundle chart for $\rho|_{\cE^1}$ with respect to $\tilde{\cB} \subset \cB^1$. We can choose the tame $M$-polyfold chart \eqref{eq:tamechartinglobalconstraint} so that it is covered by a tame strong bundle chart 
\begin{equation} \label{eq:tamebundlechartinglobalconstraint}
(\rho^{-1}(V), \Phi, (K,C\lhd \mathbb{F}, \mathbb{E} \lhd \mathbb{F}))
\end{equation}
for $\rho$. Note that this may require shrinking the open neighborhood $V \subset \cB$ of $x$, and so the resulting slice $\tilde{\cB}$ of $\cB^1$ defined by the formula \eqref{eq:sliceunion} may be smaller than in part (I).

The tame bundle retract $(K, C\lhd \mathbb{F}, \mathbb{E} \lhd \mathbb{F})$ covers the tame sc-retract $\cO$. Hence Lemma~\ref{lem:slicecoordinates}(II) yields a $\mathbb{R}^n$-sliced bundle retract
$$
(K',\mathbb{R}^n \times [0,\infty)^s \times \mathbb{K}^1 \lhd \mathbb{F}^1,\mathbb{R}^n \times \mathbb{R}^s \times \mathbb{K}^1 \lhd \mathbb{F}^1)
$$
that covers the sliced sc-retract \eqref{eq:slicescretractpreimageglobal} and satisfies
$$
(g \lhd id_{\mathbb{F}^1})(\hat{K}) = K',
$$
where $\hat{K} \subset K$ is the preimage of $\hat{\cO}$ in the local bundle model $K^1 \rightarrow \cO^1$. Set
$$\hat{\Phi} := (g \lhd id_{\mathbb{F}^1}) \circ \Phi|_{\rho|_{\cE^1}^{-1}(\hat{V})} : \rho|_{\cE^1}^{-1}(\hat{V}) \rightarrow K'.$$
Then the desired sliced bundle chart for $\rho|_{\cE^1}$ with respect to $\tilde{\cB} \subset \cB^1$ is the tuple
\begin{equation} \label{eq:slicebundlechartglobal}
(\rho|_{\cE^1}^{-1}(\hat{V}),\hat{\Phi},(K',\mathbb{R}^n \times [0,\infty)^s \times \mathbb{K}^1 \lhd \mathbb{F}^1,\mathbb{R}^n \times \mathbb{R}^s \times \mathbb{K}^1 \lhd \mathbb{F}^1)).
\end{equation}
This proves that $\tilde{\cB} \subset \cB^1$ is a slice of the bundle $\rho|_{\cE^1}$. The restriction $\tilde{\rho} : \rho|_{\cE^1}^{-1}(\tilde{\cB}) \rightarrow \tilde{\cB}$ is then a tame strong bundle by Theorem~\ref{thm:slicestructure}(II).
\\

\ul{Proof of (III)}: To prove that $\tilde{\cB}$ is a slice of the tame sc-Fredholm section $\s|_{\cB^1} : \cB^1 \rightarrow \cE^1$, for every $x \in \tilde{\cB}_{\infty}$ we must construct a sliced sc-Fredholm chart for $\s|_{\cB^1}$ at $x$ with respect to $\tilde{\cB} \subset \cB^1$. Since $f$ is $\s$-compatibly transverse to $N$, we can assume that the tame strong bundle chart \eqref{eq:tamebundlechartinglobalconstraint} is also a tame sc-Fredholm chart for $\s$ at $x$ such that we can choose $\overline{\psi}, \mathbb{W},$ and $L \subset T_x^R\cB$ satisfying \eqref{eq:thespecialpropertyglobalversion}. Note that this may require shrinking the open neighborhood $V$ of $x$, and so the resulting slice $\tilde{\cB}$ of $\rho|_{\cE^1}$ defined by the formula \eqref{eq:sliceunion} may be smaller than in parts (I) and (II).

We now show that $\mathbb{W}$, $\overline{\psi}$, and $L' := D_x\varphi(L)$ satisfy the hypotheses of Lemma~\ref{lem:slicecoordinates}(III). Indeed, by \eqref{eq:thespecialpropertyglobalversion}, $L$ is a sc-complement of the sc-subspace
$$\mathbb{A} := (D_xf)^{-1}(T_{f(x)}N) \cap T_x^R\cB$$
in $T_x^R\cB$, and so we have the sc-splitting
\begin{equation} \label{eq:LcomplementA}
D_x\varphi(L) \oplus  D_x\varphi(\mathbb{A}) = T_{\ul{x}}^R\cO.
\end{equation}
Recall from \eqref{eq:boundaryretracttangentspace} that we have $T_{\ul{x}}^R\cO = T_{\ul{x}}\cO_{\partial}.$ Then from the definitions and \eqref{eq:gammatransverse}, we compute
\begin{align*}
D_x\varphi(\mathbb{A}) &= D_x\varphi \big ( (D_xf)^{-1}(T_{f(x)}N) \cap T_x^R\cB \big )\\
&= D_x\varphi \big ( (D_xf)^{-1}(\ker D_{f(x)} \g) \big ) \cap T_{\ul{x}}^R\cO\\
&= \ker D_{\ul{x}} \ul{f} \cap T_{\ul{x}}^R\cO\\
&= \ker D_{\ul{x}}\ul{f}_{\partial},
\end{align*}
and so by \eqref{eq:LcomplementA} the space $L' = D_x\varphi(L)$ is indeed a sc-complement of $\ker D_{\ul{x}}\ul{f}_{\partial}$ in $T_{\ul{x}}\cO_{\partial}$, as required. Moreover, the required condition \eqref{eq:thespecialproperty} holds for $L'$ by \eqref{eq:thespecialpropertyglobalversion}.

Hence, Lemma~\ref{lem:slicecoordinates}(III) asserts that the section
$$\s' := (g \lhd id_{\mathbb{F}^1}) \circ \s \circ g^{-1} : \cO' \rightarrow K'$$
is a $\mathbb{R}^n$-sliced sc-Fredholm germ, which means that  \eqref{eq:slicebundlechartglobal} is a $\mathbb{R}^n$-sliced sc-Fredholm chart. This proves that $\tilde{\cB}$ is a slice of $\s|_{\cB^1}$. The restriction $\tilde{\s} : \tilde{\cB} \rightarrow \tilde{\cE}$ is then a tame sc-Fredholm section satisfying the claimed index formula by Theorem~\ref{thm:slicestructure}(III).

Since $N \subset Y$ is closed it follows that $f^{-1}(N) \subset \cB$ is closed, so $\tilde{\cB}_{\infty} = f^{-1}(N) \cap \cB_{\infty}$ is closed in $\cB_{\infty}$. Hence if $\s^{-1}(0)$ is compact then it follows from the final statement of Theorem~\ref{thm:slicestructure}(III) that $\tilde{\s}^{-1}(0)$ is compact.
\end{proof}

\subsection{Example: The Cauchy-Riemann section and evaluation maps at marked points} \label{sec:evaluationmaps}

Consider a symplectic manifold $(Y,\omega),$
a codimension-$n$ submanifold $N \subset Y,$
and an $\omega$-compatible almost complex structure $J$ on $Y$.

In applications, the $M$-polyfold $\cB$ in Theorem~\ref{thm:globalimplicit} (or, in the presence of isotropy, the ep-groupoid $X$ in Corollary~\ref{cor:globalimplicitepgroupoid}) is a space of maps $\Si \rightarrow Y$ modulo reparameterization of the domain $\Si$, where $\Si$ varies in some Deligne-Mumford space of Riemann surfaces, and
$$\s = \overline{\partial}_J : \cB \rightarrow \cE$$
is the Cauchy-Riemann section associated to $J$.

For examples of Cauchy-Riemann sections constructed in the polyfold setting, see the Gromov-Witten polyfolds in \cite{MR3683060}, the Symplectic Field Theory polyfolds in \cite{HWZsymplecticfieldtheory}\cite{PolyfoldConstructions}\cite{FHLecturesOnSFT}\cite{SFTPrimerI}\cite{SFTPrimerII}, the simplified model for Hamiltonian Floery theory polyfolds in \cite{wehrheimfredholm}, and the polyfolds for pseudoholomorphic disks with boundary on a Lagrangian in \cite{jiayong}.

We consider the evaluation map
$$f = ev : \cB \rightarrow Y$$
at a marked point\footnote{The evaluation map at $k$ marked points $\cB \rightarrow Y^k$ can be treated similarly; we consider a single marked point here for simplicity.} that varies in the domains $\Si$. The purpose of this section is to explain the following properties of this setup:
\begin{itemize}
\item $\s$ is a tame sc-Fredholm section (Definition~\ref{dfn:tamescfredholmsection}),
\item $f$ is $\s$-compatibly transverse to $N$ (Definition~\ref{dfn:transverse}).
\end{itemize}

To see that $\s$ is a tame sc-Fredholm section, we consider a smooth point $x \in \cB_{\infty}$ and explain why $\s$ is a tame sc-Fredholm germ (Definition~\ref{dfn:tamescFredgerm}) in the chart constructed around $x$ when building the polyfold $\cB$. The tame sc-retract
$$\cO \subset [0,\infty)^s \times \mathbb{R}^{t} \times D \times \mathbb{E}$$
is homeomorphic to the image of the pregluing map near $x$. Here the $[0,\infty)^s$-factor is gluing parameters near the Morse-type breakings of $x$, the $\mathbb{R}^{t}$-factor is gluing parameters near the nodal points of $x$, the space $D$ is variations of the complex structure on the domain $\Si$ of $x$ (i.e.\ tangent directions to the Deligne-Mumford space), and the sc-Banach space $\mathbb{E}$ corresponds to varying the map $x$ while preserving the matching conditions at nodes and breaking points. The tame bundle retract
$$K \subset [0,\infty)^s \times \mathbb{R}^t \times D \times \mathbb{E} \lhd \mathbb{F}$$
is homeomorphic to the image of the pregluing map in the fibers near $x$. From this local bundle model $K \rightarrow \cO$ we obtain a tame $M$-polyfold chart on $\cB$ with domain $V \subset \cB$ and chart map $\varphi : V \rightarrow \cO$ where $x \in V$ is identified with $\varphi(x) = 0 \in \cO$. It is covered by a tame strong bundle chart $\Phi : \cE|_{V} \rightarrow K$. 

The section
$$\Phi \circ \s \circ \varphi^{-1} : \cO \rightarrow K$$
is a local sc-Fredholm germ (Definition~\ref{dfn:localscFredgerm}), and moreover the natural change of coordinates of the filling to basic germ form satisfies the required conditions of a tame sc-Fredholm germ (Definition~\ref{dfn:tamescFredgerm}). Indeed, a filling
$$h : [0,\infty)^s \times \mathbb{R}^t \times D \times \mathbb{E} \rightarrow [0,\infty)^s \times \mathbb{R}^t \times D \times \mathbb{E} \lhd \mathbb{F}$$
of $\Phi \circ \s \circ \varphi^{-1}$ is constructed as in \cite[Sec.~4.5]{MR3683060}. The change of coordinates $\psi$ on the domain of the filling $h$ that brings $h$ into basic germ form (Definition~\ref{dfn:localscFredgerm}(c)) is obtained as follows; see \cite[Prop.~4.8]{MR3683060} for details. Let 
$$P := \ker (D_0 h) \cap (\{0\}^{s+t} \times \{0\} \times  \mathbb{E})$$
denote the kernel of the linearization of $h$ at $0$ in the directions $\mathbb{E}$. Note that $P$ is finite dimensional since $D_0h$ is a sc-Fredholm operator. Then, the sc-Banach space $\mathbb{W}$ and linear sc-isomorphism
$$\overline{\psi}: \mathbb{E}' := \mathbb{R}^t \times D 
\times \mathbb{E} \rightarrow \mathbb{R}^{k-s} \times \mathbb{W}$$
required in Definition~\ref{dfn:tamescFredgerm} are obtained by choosing any sc-splitting $\mathbb{E} = P \oplus \mathbb{W}$ and linear isomorphism $D \times P \cong \mathbb{R}^{k-t-s}$ to obtain the linear sc-isomorphism $\overline{\psi} : \mathbb{E}' = \mathbb{R}^t \times D \times P \oplus \mathbb{W} \rightarrow \mathbb{R}^t \times \mathbb{R}^{k-t-s} \times \mathbb{W}.$ That is, the linear sc-isomorphism $\psi = id_{[0,\infty)^s} \times \overline{\psi}$ is a suitable choice of the sc-diffeomorphism in Definition~\ref{dfn:localscFredgerm}(c) of local sc-Fredholm germ, so in particular $\Phi \circ \s \circ \varphi^{-1}$ is a tame sc-Fredholm germ. It follows that $\s = \overline{\partial}_J$ is a tame sc-Fredholm section.

\begin{prp} \label{prp:examplefiscompatibletransverse}
$f$ is $\s$-compatibly transverse to $N$.
\end{prp}
\begin{proof}
Suppose $x \in f^{-1}(N) \cap \cB_{\infty}.$ To show that $f$ is $\s$-compatibly transverse to $N$ at $x$, we now construct a sc-subspace $L \subset T_x^R\cB$ which satisfies the required conditions, in particular condition \eqref{eq:thespecialpropertyglobalversion}.

Let $Z$ be any complement of $T_{f(x)}N$ in $T_{f(x)}Y$. First we claim that it suffices to construct $L \subset T_x^R\cB$ satisfying
\begin{enumerate} \label{eq:Lrequirements}
\item $D_xf(L) = Z$ and $D_xf : L \rightarrow Z$ is an isomorphism,
\item $D_x\varphi(L) \subset \{0\}^{s+t} \times \{0\} \times \mathbb{E}$,
\item $D_x\varphi(L) \cap P = \{0\}.$
\end{enumerate}
Observe that $f$ is transverse to $N$ at $x$ by (i); indeed, the required spanning property \eqref{eq:transversedefiningproperty} holds since $D_xf(L) + T_{f(x)}N = T_{f(x)}Y$. Moreover, we claim that $L $ is a sc-complement of $\mathbb{A} := (D_xf)^{-1}(T_{f(x)}N) \cap T_x^R\cB$ in $T_x^R\cB$. Note that the linear isomorphism $D_xf|_L : L \rightarrow Z$ is automatically sc-continuous because all norms on finite dimensional spaces are equivalent. Consider the projection $\pi : T_{f(x)}Y = Z \oplus T_{f(x)}N \rightarrow Z$ and the composition $\pi \circ D_xf : T_x^R\cB \rightarrow Z$. Then we have $\mathbb{A} = \ker(\pi \circ D_xf)$, and moreover $\pi \circ D_xf$ maps $L$ isomorphically onto $Z$. Hence the sc-splitting $L \oplus \mathbb{A} = T_x^R\cB$ holds because the coordinate projections are given by the sc-operators $\Pi_L := (D_xf)|_L^{-1} \circ (\pi \circ D_xf) : T_x^R\cB \rightarrow L$ and $(id_{T_x^R\cB} - \Pi_L) : T_x^R\cB \rightarrow \mathbb{A}.$

Assuming properties (ii) and (iii) of $L$, we now explain how to choose $\mathbb{W}$ and $\overline{\psi}$ so that the required property \eqref{eq:thespecialpropertyglobalversion} for $\s$-compatible transversality holds. By (ii) and the definition of $P$, we view both $D_x\varphi(L)$ and $P$ as subspaces of $\mathbb{E}$. By (iii) and since both $D_x\varphi(L)$ and $P$ are finite dimensional sc-subspaces of $\mathbb{E}$, their span has a sc-splitting $D_x\varphi(L) \oplus P$. Let $\mathbb{W}'$ be any sc-complement of $D_x\varphi(L) \oplus P$ in $\mathbb{E}$. Then choose any linear isomorphism $D \times P \cong \mathbb{R}^{k-t-s}$ and set $\mathbb{W} = D_x\varphi(L) \oplus \mathbb{W}'$. Then, the linear sc-isomorphism $\overline{\psi} : \mathbb{E}' = \mathbb{R}^t \times D \times P \oplus \mathbb{W} \rightarrow \mathbb{R}^t \times \mathbb{R}^{k-t-s} \times \mathbb{W}$ indeed satisfies $\overline{\psi} \circ D_x\varphi(L) \subset \{0\}^{k-s} \times \mathbb{W}$, as required.

It remains to construct the sc-subspace $L \subset T_x^R\cB$ that satisfies the conditions (i)-(iii). We begin by applying Lemma~\ref{lem:avoid} as follows. Note that Lemma~\ref{lem:avoid} is deferred to after this proof. The space $\mathbb{E}$ consists of sections of the pullback tangent bundle of $Y$ along $x$ that have matching asymptotic conditions at nodes and breaking points. Since $P \subset \mathbb{E}$ is a finite dimensional subspace, by Lemma~\ref{lem:avoid} (applied to the smooth component of $\Si$ on which the marked point lies) there exists a neighborhood $U$ of the marked point in $\Si$ small enough such that the following holds: if $\xi \in P$ is a section supported in $U$, then $\xi = 0$. Moreover, the marked point is always in the complement of the nodes and breakings, so we can choose $U$ disjoint from all such special points.

Choose any basis $\{z_1,\ldots,z_n\}$ of the complement $Z$ of $T_{f(x)}N$ in $T_{f(x)}Y$. For each $i = 1,\ldots,n$, consider a sc-smooth path of the form $\g_i : (-\epsilon,\epsilon) \rightarrow \cB$ through $x = \g_i(0)$ obtained by deforming $x$ to move the image $f(x)$ of the marked point in the direction $z_i$, i.e.\ $D_xf(\g_i'(0)) = z_i$, while only changing $x$ in the neighborhood $U$. The result is that all special points are preserved along the path, i.e.\ nodes and breakings do not get glued, and moreover the complex structure on the domain of $x$ is not varying along the path. Hence we have $$D_x\varphi(\g_i'(0)) \in \{0\}^{s+t} \times \{0\} \times \mathbb{E}.$$ Define
$$L := \text{Span}(\{\g_1'(0),\ldots,\g_n'(0)\}).$$
Then it is clear from the construction that (i) and (ii) hold. Moreover (iii) holds because every $\xi \in D_x\varphi(L)$ is supported in $U$ by construction of the $\g_i$ and hence if $\xi \in D_x\varphi(L) \cap P$ then $\xi = 0$ by our choice of $U$. Note that $L \subset T_x^R\cB$ holds by (ii), since the reduced tangent space $T_x^R\cB$ is defined by $D_x\varphi(T_x^R\cB) = T_0^R\cO \cap (\{0\}^s \times \mathbb{R}^t \times D \times \mathbb{E})$. The proof is complete.
\end{proof}

The following lemma was used in the proof of Proposition~\ref{prp:examplefiscompatibletransverse}.

\begin{lem} \label{lem:avoid}
Consider a smooth manifold $\Si$ and a finite rank vector bundle $V \rightarrow \Si$. Let $\G(V)$ denote smooth sections and let $q \in \Si$. Then, if $P \subset \G(V)$ is a finite dimensional subspace, there exists an open neighborhood $U$ of $q$ in $\Si$ such that, if $\xi \in P$ is supported in $U$, then $\xi = 0$.
\end{lem}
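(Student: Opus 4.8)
\textbf{Proof proposal for Lemma~\ref{lem:avoid}.}

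The plan is to trivialize the bundle near $q$ and then exhibit, for a shrinking family of neighborhoods, a point where every nonzero element of $P$ must be nonvanishing. Choose local coordinates on an open set $W \ni q$ together with a local frame $e_1,\ldots,e_r$ of $V$ over $W$, so that a section $\xi \in \G(V)$ restricted to $W$ is identified with a function $W \to \R^r$. Pick a sequence of shrinking open neighborhoods $U_1 \supset U_2 \supset \cdots$ of $q$ with $\cap_j \overline{U_j} = \{q\}$ and each $\overline{U_j}$ a compact subset of $W$ (e.g.\ coordinate balls of radius $1/j$). I claim that for $j$ large enough, no nonzero $\xi \in P$ is supported in $U_j$.

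Suppose not; then for every $j$ there is $\xi_j \in P$, $\xi_j \neq 0$, supported in $U_j$. After rescaling we may assume $\|\xi_j\|_P = 1$ with respect to any fixed norm on the finite dimensional space $P$. By compactness of the unit sphere in $P$, pass to a subsequence so that $\xi_j \to \xi_\infty \in P$ with $\|\xi_\infty\|_P = 1$, in particular $\xi_\infty \neq 0$. Now fix any point $p \in \S \setminus \{q\}$. Since $\overline{U_j} \to \{q\}$, there exists $J$ with $p \notin \overline{U_j}$ for all $j \geq J$, hence $\xi_j(p) = 0$ for all $j \geq J$. Evaluation at $p$ is continuous on the finite dimensional space $P$ (all norms equivalent), so $\xi_\infty(p) = \lim_j \xi_j(p) = 0$. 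As $p \in \S \setminus\{q\}$ was arbitrary, $\xi_\infty$ vanishes on $\S \setminus \{q\}$, and then by continuity $\xi_\infty(q) = 0$ as well, so $\xi_\infty = 0$, contradicting $\|\xi_\infty\|_P = 1$. Therefore some $U = U_j$ has the desired property.

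The argument is entirely elementary; the only point requiring a moment of care is the passage to a convergent subsequence in $P$, which is legitimate precisely because $P$ is finite dimensional, and the observation that pointwise evaluation is continuous in the $P$-norm for the same reason. No obstacle of substance arises, so I expect the writeup to be short. One could alternatively phrase the proof without sequences: take $U$ to be a coordinate ball small enough that the evaluation map $P \to \R^r$ at a chosen boundary point (or at a nearby punctured collection of points) is injective on $P$, using that $P$ finite dimensional forces the joint evaluation at finitely many generic points of $\S\setminus\{q\}$ to be injective; but the sequential version above is cleanest and most robust.
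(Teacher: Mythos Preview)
Your proof is correct. Both your argument and the paper's proceed by contradiction, but they exploit finite-dimensionality of $P$ in different ways. The paper builds an infinite linearly independent family in $P$: assuming no suitable $U$ exists, it inductively constructs nested neighborhoods $U_0 \supset U_1 \supset \cdots$ and nonzero sections $\xi_n \in P$ with $\xi_n$ supported in $U_n$ but not in $U_{n+1}$, and then observes that the support conditions force any linear relation $\sum c_n \xi_n = 0$ to have $c_0 = 0$, then $c_1 = 0$, etc., contradicting $\dim P < \infty$. Your argument instead fixes a shrinking sequence of neighborhoods in advance and uses compactness of the unit sphere in $P$ to extract a limit section, which pointwise evaluation forces to vanish. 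The paper's route is a clean linear-algebra argument requiring no topology on $P$, while yours is a short compactness argument; both are equally elementary, and neither has a real advantage over the other here.
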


\begin{proof}
We assume that $U$ does not exist and prove that $P$ must then be infinite dimensional. We will construct a countable set of linearly independent elements of $P$.

Consider any open neighborhood $U_0$ of $q$. Then there exists some $\xi_0 \in P$ supported in $U_0$ such that $\xi_0 \neq 0$. Since $\xi_0 \neq 0$, there exists an open neighborhood $U_1$ of $q$ such that $\xi_0$ is not supported in $U_1$ (because if a smooth section is supported in every neighborhood of a point then it is identically $0$). Note that $U_0 \supset U_1$ necessarily holds.

Inductively, assume that for some $n \geq 0$ we have constructed a nested sequence of open neighborhoods $U_0 \supset \cdots \supset U_{n+1}$ and sections $\xi_0,\ldots,\xi_n$ such that, for all $0 \leq i \leq n$, the section $\xi_i$ is supported in $U_i$ but is not supported in $U_{i+1}$. Then by our assumption that the claimed open set $U$ does not exist, there must exist some $\xi_{n+1} \in P$ supported in $U_{n+1}$ such that $\xi_{n+1} \neq 0$. Then let $U_{n+2}$ be a neighborhood of $q$ on which $\xi_{n+1}$ is not supported. Hence the inductive hypothesis holds for $n +1$.

This inductive process constructs a section $\xi_n \in P$ for all $n \geq 0$ with support lying in $U_n$ but not in $U_{n+1}$. We claim that the collection $\{ \xi_n \,\, | \,\, n \geq 0 \}$ is linearly independent, proving the lemma. Indeed, suppose $\sum_{n=0}^\infty c_n \cdot \xi_n = 0$ for some $c_n \in \mathbb{R}$. Then for every $n \geq 1$ the section $\xi_n$ is supported in $U_1$, but $\xi_0$ is not supported in $U_1$, hence $c_0 = 0$. Hence $\sum_{n=1}^\infty c_n \cdot \xi_n = 0$. Inductively, we conclude that $c_n = 0$ for all $n \geq 0$.
\end{proof}

\section{Handling isotropy: the ep-groupoid case} \label{sec:ep-groupoids}

In this section, we generalize the main theorems (Theorem~\ref{thm:slicestructure} and Theorem~\ref{thm:globalimplicit}) to the case of ep-groupoids in Corollary~\ref{cor:slicestructureepgroupoid} and Corollary~\ref{cor:globalimplicitepgroupoid}. All of the results in this section follow from the results in the $M$-polyfold case in the previous sections.

We first review tame ep-groupoids (Definition~\ref{dfn:tameepgroupoid}), bundles over them (Definition~\ref{dfn:tamestrongbundleepgroupoid}), and their sc-Fredholm section functors (Definition~\ref{dfn:scfredholmepgroupoid}), before we introduce the new notion of tame sc-Fredholm section functors (Definition~\ref{dfn:tamescfredholmepgroupoid}) and prove Corollary~\ref{cor:slicestructureepgroupoid} and Corollary~\ref{cor:globalimplicitepgroupoid}.

A \emph{groupoid} $\cX = (X,{\bf X})$ is a small category with object set $X$ and morphism set ${\bf X}$ such that all morphisms are invertible. Associated to any groupoid are the following structure maps. For a detailed description, see for example \cite[Def.~7.1]{HWZbook}. The \emph{source map}
$$s : {\bf X} \rightarrow X$$
and the \emph{target map}
$$t : {\bf X} \rightarrow X$$
send a morphism to its source and target, respectively. The \emph{multiplication map}
$$m : {\bf X} \tensor[_s]{\times}{_t} {\bf X} \rightarrow {\bf X}$$
composes any pair of morphisms such that the source of the first is the target of the second, and hence $m$ is defined on the fiber product ${\bf X} \tensor[_s]{\times}{_t} {\bf X}$. The \emph{unit map} 
$$u : X \rightarrow {\bf X}$$
sends an object to the identity morphism from that object to itself, which exists and is unique since each self-morphism set is a group. The \emph{inverse map}
$$\iota : {\bf X} \rightarrow {\bf X}$$
inverts morphisms.

\begin{dfn} \cite[Defs.~7.3, 7.6]{HWZbook} \label{dfn:tameepgroupoid}
A {\bf tame ep-groupoid} $\cX = (X,{\bf X})$ is a groupoid equipped with tame $M$-polyfold structures on the object space $X$ and on the morphism space ${\bf X}$ satisfying the following properties:
\\

\begin{enumerate}
\item {\bf (\'etale)} The source $s$ and target $t$ maps are surjective local sc-diffeomorphisms,\\

\item The unit map $u$ and the inverse map $\iota$ are sc-smooth.\\

\item {\bf (proper)} Every $x \in X$ possesses an open neighborhood $${V(x) \subset X}$$ of $x$ such that the map
$$t : s^{-1}(\overline{V(x)}) \rightarrow X$$
is proper.\\

\item The multiplication map $m$ is sc-smooth, where the fiber product ${\bf X} \tensor[_s]{\times}{_t} {\bf X}$ is equipped with the $M$-polyfold structure given by \cite[Prop.~2.15]{HWZbook}. (See Remark~\ref{rmk:hwzfiberproduct} to compare with the fiber product results in this paper.)
\end{enumerate}
\end{dfn}

\begin{rmk} \label{rmk:hwzfiberproduct}
There is an essential difference between the fiber product results in this paper and the fiber product result in \cite[Prop.~2.15]{HWZbook} that is used to give ${\bf X} \tensor[_s]{\times}{_t} {\bf X}$ an $M$-polyfold structure in Definition~\ref{dfn:tameepgroupoid}(iv).

The $M$-polyfold structure constructed in \cite[Prop.~2.15]{HWZbook} requires one of the maps in the fiber product to be a local sc-diffeomorphism. In the case of the fiber product ${\bf X} \tensor[_s]{\times}{_t} {\bf X}$ from Definition~\ref{dfn:tameepgroupoid}, both the source map $s$ and the target map $t$ are local sc-diffeomorphisms.

In this paper, we construct polyfold structures on fiber products over maps to a finite dimensional smooth manifold (Corollary~\ref{cor:ep-fiberFredholm}). In applications, these maps will never be local sc-diffeomorphisms because the $M$-polyfolds have infinite dimensional tangent spaces.
\end{rmk}

For an ep-groupoid $\cX = (X, {\bf X})$, the \emph{orbit space}
\begin{equation} \label{eq:orbitspacedef}
|X| = X / \sim
\end{equation}
is the quotient of the object space $X$ by the equivalence relation defined by $x \sim y$ if and only if there exists $\phi \in {\bf X}$ such that $s(\phi) = x$ and $t(\phi) = y$. That is, to obtain the orbit space, we identify any two objects that have a morphism between them.

The degeneracy indices $d_X : X \rightarrow \mathbb{N}_0$ and $d_{\bf X} : {\bf X} \rightarrow \mathbb{N}_0$ are defined as usual on the $M$-polyfolds $X$ and ${\bf X}$. The \emph{induced degeneracy index} \cite[Def~7.5]{HWZbook}
$$d_{|X|} : |X| \rightarrow \mathbb{N}_0$$
on the orbit space $|X|$ is defined by $d_{|X|}(|x|) = d_X(x)$, and is well-defined by \cite[Prop.~2.7]{HWZbook} as discussed above the definition \cite[Def~7.5]{HWZbook}.

Now we review the notion of a strong bundle over an ep-groupoid $\cX = (X, {\bf X})$; see \cite[Sec.~8.3]{HWZbook} for more detail. Consider a strong $M$-polyfold bundle
$$P : E \rightarrow X$$
over the object space $X$ of $\cX$. Since the source map $s$ is by definition a local sc-diffeomorphism, an $M$-polyfold structure on the fiber product ${\bf X} \tensor[_s]{\times}{_P} E$ is provided by \cite[Prop.~2.15]{HWZbook}. Moreover, the projection onto the first factor
$$\pi_1 : {\bf E} := {\bf X} \tensor[_s]{\times}{_P} E \rightarrow {\bf X}$$
is a strong $M$-polyfold bundle.

\begin{dfn} \label{dfn:tamestrongbundleepgroupoid} \cite[Def.~8.4]{HWZbook} A {\bf tame strong bundle} over an ep-groupoid $\cX = (X, {\bf X})$ is a pair $(P,\mu)$ of a tame strong bundle
$$P : E \rightarrow X$$
over the object $M$-polyfold $X$ and a strong bundle map
$$\mu : {\bf X} \tensor[_s]{\times}{_P} E \rightarrow E$$
covering the target map $t : {\bf X} \rightarrow X$, i.e.
$$P \circ \mu = t \circ \pi_1,$$
and which satisfies
\begin{enumerate}
\item $\mu(1_x,e) = e$ for all $x \in X$ and $e \in E_x$,\\

\item $\mu(g \circ h,e) = \mu(g,\mu(h,e))$ for all $g, h \in {\bf X}$ and $e \in E$ satisfying $s(h) = P(e)$ and $t(h) = s(g) = P(\mu(h,e)).$
\end{enumerate}
We call $\mu$ the {\bf strong bundle structure map}.
\end{dfn}

\begin{rmk}
The standard definition \cite[Def.~8.4]{HWZbook} of a tame strong bundle requires, in addition to the conditions in Definition~\ref{dfn:tamestrongbundleepgroupoid}, that the structure map $\mu$ is a surjective local sc-diffeomorphism. However, this condition is automatically satisfied, as noted in \cite{MR3683060} on page 37.
\end{rmk}

Let $(P : E \rightarrow X,\mu)$ be a strong bundle over the ep-groupoid $(X,{\bf X})$. A sc-smooth \emph{section functor} $\s$ \cite[Def.~8.7]{HWZbook} of $(P,\mu)$ is a sc-smooth section $\s : X \rightarrow E$ of the strong bundle $P : E \rightarrow X$ over the object $M$-polyfold $X$ satisfying the following compatibility with $\mu$: For all morphisms $\phi \in {\bf X}$,
\begin{equation} \label{eq:sectionfunctorcompatibility}
\s(t(\phi)) = \mu(\phi,\s(s(\phi)))
\end{equation}
holds.

\begin{dfn} \label{dfn:scfredholmepgroupoid} \cite[Def.~8.7]{HWZbook}
A {\bf sc-Fredholm section functor} of a strong bundle $(P : E \rightarrow X,\mu)$ over an ep-groupoid $(X, {\bf X})$ is a sc-smooth section functor $\s : X \rightarrow E$ such that, when viewed as a section of the strong bundle $M$-polyfold bundle $P$, it is sc-Fredholm in the $M$-polyfold sense (Definition~\ref{dfn:scfredholmsection}).
\end{dfn}

We introduce the following new class of sc-Fredholm section functors.

\begin{dfn} \label{dfn:tamescfredholmepgroupoid}
A {\bf tame sc-Fredholm section functor} of a strong bundle $(P : E \rightarrow X,\mu)$ over an ep-groupoid $\cX = (X, {\bf X})$ is a sc-Fredholm section functor such that, when viewed as a section of the strong bundle $M$-polyfold bundle $P$, it is tame sc-Fredholm in the $M$-polyfold sense (Definition~\ref{dfn:tamescfredholmsection}).
\end{dfn}

We now generalize Theorem~\ref{thm:slicestructure} to the ep-groupoid case.

\begin{cor} \label{cor:slicestructureepgroupoid}
\text{}
\begin{enumilistfiber}
\item Consider a tame ep-groupoid $\cX = (X, {\bf X})$ and a slice $\tilde{X} \subset X$ of the object $M$-polyfold $X$, in the sense of Definition~\ref{dfn:globalslice}. Assume that $\tilde{X}$ is closed under morphisms, i.e.\ for all $\phi \in {\bf X}$ we have
\begin{equation} \label{eq:sliceclosedundermorphisms}
s(\phi) \in \tilde{X} \iff t(\phi) \in \tilde{X},
\end{equation}
or equivalently,
\begin{equation*} \label{eq:sliceclosedundermorphismsequivalent}
s^{-1}(\tilde{X}) = t^{-1}(\tilde{X}).
\end{equation*}

Then, denoting the subset of morphisms ${\bf X}$ that have source and target in $\tilde{X}$ by
\begin{equation} \label{eq:defmorphismsslice}
\tilde{\bf X} := s^{-1}(\tilde{X}) = t^{-1}(\tilde{X}),
\end{equation}
the tuple $\tilde{\cX} = (\tilde{X}, \tilde{\bf X})$ is a tame ep-groupoid with the tame $M$-polyfold atlas on $\tilde{X}$ induced by the sliced charts (Definition~\ref{dfn:slicecharts}) with respect to $\tilde{X} \subset X$. Note that $\tilde{\cX}$ is the full subcategory of $\cX$ with object space $\tilde{X}$.

For $x \in \tilde{X}_1$, the codimension $codim_x(\tilde{X} \subset X)$ (Definition~\ref{dfn:globalslice}) is well-defined and locally constant in $\tilde{X}$, i.e.\ it  equals $codim_{x'}(\tilde{X} \subset X)$ for every $x'$ in an open neighborhood of $x$ in $\tilde{X}$. For $x \in \tilde{X}_{\infty}$, the degeneracy index satisfies $d_{\tilde{X}}(x) = d_{X}(x)$.

\item Consider, in addition, a tame strong bundle  $(P: E \rightarrow X, \mu)$ over $\cX$ and suppose that $\tilde{X}$ is a slice of the bundle $P$ in the sense of Definition~\ref{dfn:globalslice}.

Then, the tuple $(\tilde{P}, \tilde{\mu})$ consisting of the restrictions
\begin{align*}
\tilde{E} &:= P^{-1}(\tilde{X})\\
\tilde{P} &:= P|_{\tilde{E}} : \tilde{E} \rightarrow \tilde{X},\\
\tilde{\mu} &:= \mu|_{\tilde{{\bf X}} \tensor[_s]{\times}{_P} \tilde{E}} : \tilde{{\bf X}} \tensor[_s]{\times}{_P} \tilde{E} \rightarrow \tilde{E},
\end{align*}
is a tame strong bundle over $\tilde{\cX}$, where the bundle atlas for $\tilde{P}$ is induced by the sliced bundle charts for $P$ with respect to $\tilde{X} \subset X$.\\

\item Consider, in addition, a sc-Fredholm section functor $\s : X \rightarrow E$ of $(P,\mu)$ and suppose that $\tilde{X}$ is a slice of $\s$ in the sense of Definition~\ref{dfn:globalslice}.

Then, the restriction $\tilde{\s} := \s|_{\tilde{X}} : \tilde{X} \rightarrow \tilde{E}$ is a tame sc-Fredholm section functor of the bundle $(\tilde{P},\tilde{\mu})$ with tame sc-Fredholm charts induced by the sliced sc-Fredholm charts for $\s$ with respect to $\tilde{X} \subset X$. For $x \in \tilde{X}_{\infty}$, the index satisfies $ind_x(\tilde{\s}) = ind_x(\s) - codim_x(\tilde{X} \subset X)$. If $|\s^{-1}(0)|$ is compact and $|\tilde{X}_{\infty}| \subset |X_{\infty}|$ is closed, then $|\tilde{\s}^{-1}(0)|$ is compact.
\end{enumilistfiber}
\end{cor}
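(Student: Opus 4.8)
\textbf{Proof proposal for Corollary~\ref{cor:slicestructureepgroupoid}.}

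The plan is to derive the entire corollary from the $M$-polyfold theorems already in hand (Theorem~\ref{thm:slicestructure}) together with the standard fact that ep-groupoid structures and section functors are governed entirely by their underlying $M$-polyfold data on the object and morphism spaces plus equivariance under the structure maps. For part (I), I would first observe that, since $s$ and $t$ are surjective local sc-diffeomorphisms and $\tilde X$ is closed under morphisms, the set $\tilde{\bf X} := s^{-1}(\tilde X) = t^{-1}(\tilde X)$ is a slice of the morphism $M$-polyfold ${\bf X}$: concretely, given a $\mathbb{R}^{n}$-sliced chart for $\tilde X\subset X$ around a point, one pulls it back along a local sc-diffeomorphism inverse to $s$ to get a $\mathbb{R}^n$-sliced chart for $\tilde{\bf X}\subset{\bf X}$, using that composition with a linear-on-the-base sc-diffeomorphism does not destroy the $\mathbb{R}^n$-sliced retract structure (here one should check the codimension matches, which follows because $s$ and $t$ are étale so $\operatorname{codim}_{\phi}(\tilde{\bf X}\subset{\bf X})=\operatorname{codim}_{s(\phi)}(\tilde X\subset X)=\operatorname{codim}_{t(\phi)}(\tilde X\subset X)$; the last equality uses local constancy of codimension along $\tilde X$ from Theorem~\ref{thm:slicestructure}(I) together with the existence of a morphism connecting the two points, since the codimension is a continuous integer-valued invariant of the slice and every morphism extends to a local sc-diffeomorphism of neighborhoods). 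Then Theorem~\ref{thm:slicestructure}(I) makes both $\tilde X$ and $\tilde{\bf X}$ into tame $M$-polyfolds with induced atlases, and one verifies the étale, proper, and sc-smooth-structure-maps axioms of Definition~\ref{dfn:tameepgroupoid} for $\tilde{\cX}$ by restriction: $s|_{\tilde{\bf X}}$ and $t|_{\tilde{\bf X}}$ are still surjective local sc-diffeomorphisms onto $\tilde X$ (surjectivity from closedness under morphisms and the unit map; local sc-diffeomorphism because restricting an étale map to a slice that is the preimage of a slice is again étale — this is where I expect the only real technical checking), the unit $u$ and inverse $\iota$ restrict and stay sc-smooth, and properness of $t:s^{-1}(\overline{V(x)})\to X$ restricts to properness of $t:s^{-1}(\overline{V(x)}\cap\tilde X)\to\tilde X$ because $\tilde X$ carries the subspace topology and $s^{-1}(\overline{V(x)})\cap\tilde{\bf X}$ is closed in $s^{-1}(\overline{V(x)})$. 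The multiplication map $m$ restricts to the fiber product $\tilde{\bf X}\,{}_s\!\times_t\tilde{\bf X}$, which one identifies with the appropriate slice/submanifold of ${\bf X}\,{}_s\!\times_t{\bf X}$; sc-smoothness of $\tilde m$ is inherited from $m$. The statements about codimension and degeneracy index for $x\in\tilde X_1$ and $x\in\tilde X_\infty$ are then immediate from the corresponding statements in Theorem~\ref{thm:slicestructure}(I) applied to $\tilde X\subset X$.

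For part (II): $\tilde E = P^{-1}(\tilde X)$ with $\tilde P = P|_{\tilde E}$ is a tame strong bundle over $\tilde X$ by Theorem~\ref{thm:slicestructure}(II), with atlas induced by the sliced bundle charts. The content of the ep-groupoid statement is that the structure map $\mu$ restricts to a strong bundle map $\tilde\mu$ over $t|_{\tilde{\bf X}}$; this holds because $\mu$ covers $t$, so $\mu$ maps $\tilde{\bf X}\,{}_s\!\times_P\tilde E = (\mu\text{-preimages over }\tilde{\bf X}) $ into $P^{-1}(t(\tilde{\bf X})) = P^{-1}(\tilde X) = \tilde E$ — the key point being that $\tilde{\bf X} = t^{-1}(\tilde X)$ guarantees the target of $\tilde\mu$ lands in $\tilde E$. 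The fiber product $\tilde{\bf X}\,{}_s\!\times_P\tilde E$ inherits its $M$-polyfold structure from \cite[Prop.~2.15]{HWZbook} since $s|_{\tilde{\bf X}}$ is still étale, and the axioms $\tilde\mu(1_x,e)=e$ and the cocycle identity are inherited verbatim from $\mu$. Thus $(\tilde P,\tilde\mu)$ is a tame strong bundle over $\tilde{\cX}$.

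For part (III), including the final compactness statement: Theorem~\ref{thm:slicestructure}(III) gives that $\tilde\s := \s|_{\tilde X}:\tilde X\to\tilde E$ is a tame sc-Fredholm section of $\tilde P$ in the $M$-polyfold sense, with tame sc-Fredholm charts induced by the sliced sc-Fredholm charts for $\s$, and with $\operatorname{ind}_x(\tilde\s)=\operatorname{ind}_x(\s)-\operatorname{codim}_x(\tilde X\subset X)$ for $x\in\tilde X_\infty$. That $\tilde\s$ is a section \emph{functor} of $(\tilde P,\tilde\mu)$ is the equivariance identity $\tilde\s(t(\phi))=\tilde\mu(\phi,\tilde\s(s(\phi)))$ for $\phi\in\tilde{\bf X}$, which is just the restriction of \eqref{eq:sectionfunctorcompatibility} for $\s$ to morphisms in $\tilde{\bf X}$. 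Hence $\tilde\s$ is a tame sc-Fredholm section functor by Definition~\ref{dfn:tamescfredholmepgroupoid}. For compactness: the zero set $\tilde\s^{-1}(0) = \s^{-1}(0)\cap\tilde X$ lies in $\tilde X_\infty = \s^{-1}(0)\cap\tilde X_\infty$ (using the regularizing property, $\s^{-1}(0)\subset X_\infty$), and passing to orbit spaces, $|\tilde\s^{-1}(0)| = |\s^{-1}(0)|\cap|\tilde X_\infty|$ inside $|X_\infty|$. Since $|\s^{-1}(0)|$ is compact by hypothesis and $|\tilde X_\infty|\subset|X_\infty|$ is closed by hypothesis, the intersection $|\tilde\s^{-1}(0)|$ is a closed subset of a compact space, hence compact. (One should note here that $|\tilde\s^{-1}(0)|$ coincides with the orbit space of $\tilde\s^{-1}(0)$ computed inside $\tilde{\cX}$, because $\tilde{\cX}$ is the full subcategory on $\tilde X$ so the equivalence relation on $\tilde X$ is the restriction of the one on $X$.) The main obstacle I anticipate is the bookkeeping in part (I) verifying that restricting the étale maps $s,t$ to $\tilde{\bf X}\subset{\bf X}$ and $\tilde X\subset X$ preserves the local-sc-diffeomorphism property, and correctly setting up the $M$-polyfold structure on the restricted fiber product $\tilde{\bf X}\,{}_s\!\times_t\tilde{\bf X}$ so that $\tilde m$ is manifestly sc-smooth; everything else is a routine restriction argument.
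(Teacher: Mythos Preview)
Your approach is essentially the paper's: reduce everything to Theorem~\ref{thm:slicestructure} on the object level, pull back sliced charts along the local sc-diffeomorphisms $s,t$ to get the $M$-polyfold structure on $\tilde{\bf X}$, and verify the ep-groupoid axioms by restriction. Parts (II) and (III) match the paper almost verbatim.

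There is one genuine gap in your properness sketch. You write that properness of $t : s^{-1}(\overline{V(x)}) \to X$ ``restricts'' to properness of $t : s^{-1}(\overline{V(x)}\cap\tilde X) \to \tilde X$ because $s^{-1}(\overline{V(x)})\cap\tilde{\bf X}$ is closed in $s^{-1}(\overline{V(x)})$. But that closedness is exactly what fails in general: $\tilde X$ need not be closed in $X$, so $\overline{V(x)}\cap\tilde X$ need not be closed in $\overline{V(x)}$. More to the point, what the ep-groupoid axiom requires is properness of $t$ over $cl_{\tilde X}(W)$ for some open $W\subset\tilde X$, and you must know that $cl_{\tilde X}(W)$ is closed in $X$ (not just in $\tilde X$) to conclude compactness of the relevant preimage from the ambient properness. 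The paper fixes this with a two-step shrinking: first shrink $V(x)$ to lie in a sliced chart domain, so that $V(x)\cap\tilde X$ is closed in $V(x)$ (because in the chart it corresponds to $\cO\cap(\{0\}\times C)$, which is closed in $\cO$); then take a smaller $V'(x)$ with $cl_X(V'(x))\subset V(x)$, which forces $cl_X(V'(x)\cap\tilde X) = cl_{\tilde X}(V'(x)\cap\tilde X)$. With that equality in hand, the preimage you need is a closed subset of the compact set $t^{-1}(K)\cap s^{-1}(cl_X(V(x)))$. You should insert this argument; everything else in your proposal goes through as written.
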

\begin{proof}
\ul{Proof of (I)}: First we claim that $\tilde{X} = (\tilde{X}, \tilde{\bf X})$ is a groupoid. Given any subset $A$ of the object set of a groupoid, say $A \subset X$, we obtain a sub-groupoid $(A, {\bf A})$ of $(X, {\bf X})$ by defining the morphism set to be ${\bf A} := s^{-1}(A) \cap t^{-1}(A)$. Hence, with the set of morphisms $\tilde{\bf X}$ as defined in \eqref{eq:defmorphismsslice}, the tuple $(\tilde{X}, \tilde{\bf X})$ is a groupoid.

Since $\tilde{X} \subset X$ is a slice of the tame $M$-polyfold $X$, Theorem~\ref{thm:slicestructure}(I) provides a tame $M$-polyfold structure on $\tilde{X}$ with the claimed degeneracy index for $x \in \tilde{X}_{\infty}$ and the claimed locally constant codimension $codim_x(\tilde{X} \subset X)$ for $x \in \tilde{X}_1$.

We now equip $\tilde{\bf X}$ with a tame $M$-polyfold structure by pulling back the tame $M$-polyfold charts on $\tilde{X}$ through the source and target maps $s, t : {\bf X} \rightarrow X$ on $\cX$. That is, since $s$ and $t$ are local sc-diffeomorphisms by the \'etale property (Definition~\ref{dfn:tameepgroupoid}(i)) of $\cX$, for every $z \in \tilde{\bf X}$ there exists a neighborhood $\mathbold{V} \subset {\bf X}$ of $z$ such that $s|_{\mathbold{V}} : \mathbold{V} \rightarrow V$ is a sc-diffeomorphism, where $V := s(\mathbold{V})$. Then $V$ is a neighborhood of $s(z)$ in $X$. Since $\tilde{X} \subset X$ is a slice and $s(z) \in \tilde{X}$, after shrinking $V$ we can assume that it is the domain of a 
sliced chart with respect to $\tilde{X} \subset X$ (Definition~\ref{dfn:slicecharts}). This induces a tame $M$-polyfold chart on $\tilde{X}$ with domain $V \cap \tilde{X}$, as in the proof of Theorem~\ref{thm:slicestructure}. Pulling this back through $s$, we obtain a sliced chart with respect to $\tilde{\bf X} \subset {\bf X}$ with domain $\mathbold{V}$, and since $s({\bf V} \cap \tilde{\bf X}) = V \cap \tilde{X}$, its induced tame $M$-polyfold chart has domain $\mathbold{V} \cap \tilde{\bf X}$. So, we have constructed a tame $M$-polyfold chart with domain an open neighborhood of $z$ in $\tilde{\bf X}$. The analogous construction using the target map $t$ instead of the source map $s$ produces another such chart. On overlaps between any sliced charts with respect to $\tilde{\bf X} \subset {\bf X}$ produced in this way, the transition maps are sc-smooth because they are compositions of the sc-smooth chart maps on $X$ with the sc-smooth source and target maps (and their local inverses). Hence the transitions between the induced tame $M$-polyfold charts on $\tilde{\bf X}$ are also sc-smooth, as restrictions of the transitions on ${\bf X}$. Hence covering $\tilde{\bf X}$ with these $M$-polyfold charts provides the claimed $M$-polyfold structure. Moreover, observe from the construction of the charts that the source $\tilde{s} = s|_{\tilde{\bf X}}$ and target $\tilde{t} = t|_{\tilde{\bf X}}$ maps on $\tilde{\cX} = (\tilde{X}, \tilde{{\bf X}})$ are local sc-diffeomorphisms, i.e.\ $\tilde{\cX}$ satisfies the \'etale property (Definition~\ref{dfn:tameepgroupoid}(i)). Similarly, the unit map, the inverse map, and the multiplication map on $\tilde{\cX}$ are sc-smooth since they are restrictions of the corresponding maps on $\cX$. This verifies properties Definition~\ref{dfn:tameepgroupoid}(ii) and Definition~\ref{dfn:tameepgroupoid}(iv) for $\tilde{\cX}$.

To verify that $\tilde{\cX}$ is a tame ep-groupoid, it remains to verify properness (Definition~\ref{dfn:tameepgroupoid}(iii)). Let $x \in \tilde{X}$. By properness of $\cX$, there exists an open neighborhood $V(x)$ of $x$ in $X$ such that the mapping $t : s^{-1}(cl_X(V(x))) \rightarrow X$ is proper. Shrink $V(x)$ so that it is the domain of a sliced chart (Definition~\ref{dfn:slicecharts}) with respect to $\tilde{X} \subset X$. Then ${V(x) \cap \tilde{X}}$ is closed in $V(x)$
because the chart map that homeomorphically identifies $V(x)$ with the sliced sc-retract $(\cO, \mathbb{R}^n \times C, \mathbb{R}^n \times \mathbb{E})$ sends $V(x) \cap \tilde{X}$ to the induced tame sc-retract $\cO \cap (\{0\} \times C)$, which is closed in $\cO$. Let $V'(x) \subset V(x)$ be a smaller open neighborhood of $x$ such that $cl_X(V'(x)) \subset V(x)$. Then we have $$cl_{X}(V'(x) \cap \tilde{X}) = cl_{V(x)}(V'(x) \cap \tilde{X}) \subset cl_{V(x)}(V(x) \cap \tilde{X}) = V(x) \cap \tilde{X}.$$ In particular, the closure of $V'(x) \cap \tilde{X}$ in $\tilde{X}$ and in $X$ agree, i.e.
\begin{equation} \label{eq:closuresagree}
cl_{X}(V'(x) \cap \tilde{X}) = cl_{\tilde{X}}(V'(x) \cap \tilde{X}).
\end{equation}
We claim that the open neighborhood $V'(x) \cap \tilde{X}$ of $x$ in $\tilde{X}$ satisfies the required condition, i.e.\ the map $t|_{\tilde{\bf X}} : s|_{\tilde{\bf X}}^{-1}(cl_{\tilde{X}}(V'(x) \cap \tilde{X})) \rightarrow \tilde{X}$ is proper. Let $K \subset \tilde{X}$ be compact. We must show that the set ${A := t|_{\tilde{\bf X}}^{-1}(K) \cap s|_{\tilde{\bf X}}^{-1}(cl_{\tilde{X}}(V'(x) \cap \tilde{X}))}$ is compact. We have
$$A = t^{-1}(K) \cap s^{-1}(cl_{X}(V'(x) \cap \tilde{X}))$$
by \eqref{eq:closuresagree} and since by definition $\tilde{\bf X} = t^{-1}(\tilde{X}) = s^{-1}(\tilde{X})$. Observe that $A$ is closed in ${\bf X}$ since $K \subset X$ is closed as a compact subset of the Hausdorff space $X$. By our choice of $V(x)$, the set $t^{-1}(K) \cap s^{-1}(cl_X(V(x)))$ is compact. Hence $A$ is compact as a closed subset of the compact set $t^{-1}(K) \cap s^{-1}(cl_X(V(x)))$. This completes the proof that $\tilde{\cX}$ is a tame ep-groupoid.
\\

\ul{Proof of (II)}: Since $\tilde{X}$ is a slice of the tame strong bundle $P : E \rightarrow X$ in the $M$-polyfold sense, Theorem~\ref{thm:slicestructure}(II) provides a tame strong bundle structure on the restriction $\tilde{P}$. To prove that the tuple $(\tilde{P}, \tilde{\mu})$ is a tame strong bundle over the ep-groupoid $\tilde{\cX}$, we must show that the map $\tilde{\mu}$ has the required properties. Recall from the lemma statement that $\tilde{\mu}$ is the restriction of $\mu$, i.e.
$$\tilde{\mu} := \mu|_{\tilde{{\bf X}} \tensor[_s]{\times}{_P} \tilde{E}} : \tilde{{\bf X}} \tensor[_s]{\times}{_P} \tilde{E} \rightarrow \tilde{E}.$$
First of all, $\tilde{\mu}$ indeed takes values in $\tilde{E}$ because, since $\mu$ covers $t$, for any $(\phi,e) \in \tilde{{\bf X}} \tensor[_s]{\times}{_P} \tilde{E}$ we have $P \circ \mu(\phi,e) = t \circ \pi_1(\phi,e) = t(\phi) \in \tilde{X}$ and hence $\mu(\phi,e) \in P^{-1}(\tilde{X}) = \tilde{E}$. Moreover, $\tilde{\mu}$ covers the target map on $\tilde{\cX}$ since $\tilde{\mu}$ is the restriction of $\mu$ which covers the target map $t$ on $\cX$. Finally, the required properties Definition~\ref{dfn:tamestrongbundleepgroupoid}(i)(ii) of $\tilde{\mu}$ follow immediately from the corresponding properties of $\mu$, since $\tilde{\mu}$ is the restriction of $\mu$. This completes the proof that $(\tilde{P},\tilde{\mu})$ is a tame strong bundle over $\tilde{\cX}$.
\\

\ul{Proof of (III)}: Since $\tilde{X}$ is a slice of the sc-Fredholm section $$\s : X \rightarrow E$$ in the $M$-polyfold sense, Theorem~\ref{thm:slicestructure}(III) provides the structure of a tame sc-Fredholm section on the restricted section $$\tilde{\s} := \s|_{\tilde{X}} : \tilde{X} \rightarrow \tilde{E}$$ with the claimed sc-Fredholm index. Moreover, the section $\tilde{\s}$ is a section functor because the required compatibility \eqref{eq:sectionfunctorcompatibility} with $\tilde{\mu}$ is immediate from the compatibility of $\s$ with $\mu$. Hence $\tilde{\s}$ is a tame sc-Fredholm section functor.

The final statement about compactness of the zero sets holds because $|\tilde{\s}^{-1}(0)| = |\s^{-1}(0)| \cap |\tilde{X}_{\infty}|$ is the intersection of a compact set and a closed subset of $|X_{\infty}|$.
\end{proof}

The generalization of Theorem~\ref{thm:globalimplicit} to the ep-groupoid case now easily follows by combining Theorem~\ref{thm:globalimplicit} with Corollary~\ref{cor:slicestructureepgroupoid}.

\begin{cor}\label{cor:globalimplicitepgroupoid}
\text{}
\begin{enumilistfiber}
\item Consider a tame ep-groupoid $\cX = (X, {\bf X})$, a smooth manifold $Y$ together with a codimension-$n$ submanifold $N \subset Y$, and a sc-smooth map $f : X \rightarrow Y$ that satisfies the compatibility with morphisms
\begin{equation} \label{eq:mapcompatibilitymorphisms}
f(s(\phi)) = f(t(\phi)) \text{ for all } \phi \in {\bf X}.
\end{equation}
Assume that $f$ is transverse to $N$ (Definition~\ref{dfn:transverse}).

Then, there exists an open neighborhood
$$\tilde{X} \subset f^{-1}(N) \cap X_1$$
of $f^{-1}(N) \cap X_{\infty}$ such that $\tilde{X}$ is a slice of $X^1$ in the $M$-polyfold sense (Definition~\ref{dfn:globalslice}) satisfying $codim_x(\tilde{X} \subset X^1) = n$ for every $x \in \tilde{X}_1 = \tilde{X} \cap X_2$. In particular, the full subcategory $\tilde{\cX} = (\tilde{X}, \tilde{\bf X})$ of $\cX^1$ with object space $\tilde{X}$ is a tame ep-groupoid with degeneracy index satisfying $d_{\tilde{X}}(x) = d_{X}(x)$ for all $x \in \tilde{X}_{\infty}$.
\\

\item Consider, in addition, a tame strong bundle $(P : E \rightarrow X, \mu)$ over $\cX$. Then, there exists a possibly smaller neighborhood $\tilde{X}$ in (I) that is a slice of the bundle $P|_{E^1} : E^1 \rightarrow X^1$ in the $M$-polyfold sense (Definition~\ref{dfn:globalslice}). In particular, the tuple $(\tilde{P}, \tilde{\mu})$ consisting of the restrictions
\begin{align*}
\tilde{E} &:= P|_{E^1}^{-1}(\tilde{X})\\
\tilde{P} &:= P|_{\tilde{E}} : \tilde{E} \rightarrow \tilde{X},\\
\tilde{\mu} &:= \mu|_{\tilde{{\bf X}} \tensor[_s]{\times}{_P} \tilde{E}} : \tilde{{\bf X}} \tensor[_s]{\times}{_P} \tilde{E} \rightarrow \tilde{E},
\end{align*}
is a tame strong bundle over $\tilde{\cX}$.
\\

\item Consider, in addition, a tame sc-Fredholm section functor $$\s : X \rightarrow E$$ of $(P, \mu)$. Assume that $f$ is $\s$-compatibly transverse to $N$ (Definition~\ref{dfn:transverse}).

Then, there exists a possibly smaller neighborhood $\tilde{X}$ in (II) that is a slice of the tame sc-Fredholm section $\s|_{X^1} : X^1 \rightarrow E^1$ in the $M$-polyfold sense (Definition~\ref{dfn:globalslice}). In particular, the restriction
$$\tilde{\s} := \s|_{\tilde{X}} : \tilde{X} \rightarrow \tilde{E}$$
is a tame sc-Fredholm section functor of the bundle $(\tilde{P},\tilde{\mu})$ with index satisfying $ind_x(\tilde{\s}) = ind_x(\s) - n$ for all $x \in \tilde{X}_{\infty}$. If $N$ is closed as a subset of $Y$ and $|\s^{-1}(0)|$ is compact, then $|\tilde{\s}^{-1}(0)|$ is compact.
\end{enumilistfiber}
\end{cor}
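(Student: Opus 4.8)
\textbf{Proof proposal for Corollary~\ref{cor:globalimplicitepgroupoid}.}
The plan is to reduce everything to the $M$-polyfold Theorem~\ref{thm:globalimplicit} applied to the object $M$-polyfold $X$, and then to upgrade the resulting slice to an ep-groupoid structure using Corollary~\ref{cor:slicestructureepgroupoid}. The only genuinely new ingredient beyond a direct combination of those two results is checking that the slice $\tilde{X} \subset X^1$ produced by Theorem~\ref{thm:globalimplicit} can be chosen to be \emph{closed under morphisms} in the sense of \eqref{eq:sliceclosedundermorphisms}, which is the hypothesis needed to invoke Corollary~\ref{cor:slicestructureepgroupoid}. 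This is where the compatibility condition \eqref{eq:mapcompatibilitymorphisms} on $f$ enters: since $f(s(\phi)) = f(t(\phi))$ for every $\phi \in {\bf X}$, the full preimage $f^{-1}(N) \subset X$ is automatically closed under morphisms, i.e.\ $s^{-1}(f^{-1}(N)) = t^{-1}(f^{-1}(N))$.

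\textbf{Proof of (I).} First I would apply Theorem~\ref{thm:globalimplicit}(I) to the sc-smooth map $f : X \to Y$ and the submanifold $N \subset Y$, viewing $X$ as a tame $M$-polyfold and ignoring for the moment the groupoid structure. This yields an open neighborhood $\tilde{X} \subset f^{-1}(N) \cap X_1$ of $f^{-1}(N) \cap X_\infty$ that is a slice of $X^1$ with $codim_x(\tilde{X} \subset X^1) = n$ for all $x \in \tilde{X}_1$. However, the $\tilde{X}$ produced by Theorem~\ref{thm:globalimplicit} is a \emph{union} of chart domains \eqref{eq:sliceunion} and need not a priori be closed under morphisms. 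The fix is to enlarge (or rather shrink appropriately) the neighborhood: since $f^{-1}(N) \cap X_\infty$ is closed under morphisms by \eqref{eq:mapcompatibilitymorphisms}, and since the source and target maps are local sc-diffeomorphisms, I would replace $\tilde{X}$ by the interior of $s^{-1}(\tilde{X}) \cap t^{-1}(\tilde{X})$ pushed down appropriately — more precisely, use that for every morphism $\phi$ with $s(\phi), t(\phi) \in f^{-1}(N) \cap X_\infty$, a $\mathbb{R}^n$-sliced chart around $s(\phi)$ pulls back through the local inverse of $t \circ (s|_{\bf V})^{-1}$ to a $\mathbb{R}^n$-sliced chart around $t(\phi)$ (because $f$, and hence the local submersion $\gamma \circ f$ used in the construction, is constant along morphisms). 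Concretely, I would take the open set $\tilde{X}' := \{ x \in \tilde{X} : \text{every } y \text{ with } y \sim x \text{ lies in } \tilde{X} \}$, observe it is open and still contains $f^{-1}(N) \cap X_\infty$ by properness of $\cX$ (only finitely many morphisms out of each smooth point's neighborhood, up to the local slice structure), and check it is a slice of $X^1$ closed under morphisms. Then by Corollary~\ref{cor:slicestructureepgroupoid}(I), with $\tilde{\bf X} := s^{-1}(\tilde{X}') = t^{-1}(\tilde{X}')$, the full subcategory $\tilde{\cX} = (\tilde{X}', \tilde{\bf X})$ is a tame ep-groupoid with $d_{\tilde{X}}(x) = d_X(x)$ for $x \in \tilde{X}_\infty$, and the codimension statement transfers directly.

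\textbf{Proof of (II) and (III).} These follow by the same two-step pattern with no additional difficulty. For (II): the structure map $\mu$ of the strong bundle over $\cX$ covers $t$, so by \eqref{eq:mapcompatibilitymorphisms} it restricts appropriately; I would apply Theorem~\ref{thm:globalimplicit}(II) to get a possibly smaller $\tilde{X}$ that is a slice of $P|_{E^1} : E^1 \to X^1$ in the $M$-polyfold sense (re-applying the shrinking of the previous paragraph to keep it closed under morphisms), then invoke Corollary~\ref{cor:slicestructureepgroupoid}(II) to conclude $(\tilde{P}, \tilde{\mu})$ is a tame strong bundle over $\tilde{\cX}$. For (III): since $\s$ is a tame sc-Fredholm section functor and $f$ is $\s$-compatibly transverse to $N$, Theorem~\ref{thm:globalimplicit}(III) gives a further shrinking of $\tilde{X}$ that is a slice of $\s|_{X^1} : X^1 \to E^1$; Corollary~\ref{cor:slicestructureepgroupoid}(III) then yields that $\tilde{\s} := \s|_{\tilde{X}}$ is a tame sc-Fredholm section functor of $(\tilde{P}, \tilde{\mu})$ with $ind_x(\tilde{\s}) = ind_x(\s) - n$. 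Finally, for the compactness claim: $N$ closed in $Y$ implies $f^{-1}(N)$ is closed in $X$, hence $f^{-1}(N) \cap X_\infty$ is closed in $X_\infty$; passing to orbit spaces (using that $|X_\infty|$ is Hausdorff and that $f^{-1}(N)$ being closed and morphism-invariant descends to a closed morphism-invariant subset), we get $|\tilde{X}_\infty|$ closed in $|X_\infty|$, and then the last sentence of Corollary~\ref{cor:slicestructureepgroupoid}(III) gives that $|\tilde{\s}^{-1}(0)|$ is compact whenever $|\s^{-1}(0)|$ is.

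\textbf{Main obstacle.} The only nontrivial point is the morphism-invariance of the neighborhood $\tilde{X}$: Theorem~\ref{thm:globalimplicit} produces $\tilde{X}$ as a union of chart domains indexed by smooth points of $f^{-1}(N)$, and one must argue that this union — or a suitable open sub-neighborhood of $f^{-1}(N) \cap X_\infty$ inside it — can be taken to satisfy $s^{-1}(\tilde{X}) = t^{-1}(\tilde{X})$. I expect this to be handled cleanly by the local structure of ep-groupoids: around a smooth point $x$, properness and the \'etale property give a natural $\mathrm{Aut}(x)$-invariant chart, and \eqref{eq:mapcompatibilitymorphisms} guarantees $f$ and the auxiliary local submersion $\gamma \circ f \circ \varphi^{-1}$ are $\mathrm{Aut}(x)$-invariant in this chart, so the $\mathbb{R}^n$-sliced structure produced by Lemma~\ref{lem:slicecoordinates} is automatically $\mathrm{Aut}(x)$-compatible; shrinking to the intersection over the finite orbit then preserves the slice property. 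Everything else is a formal restriction argument already packaged in Corollary~\ref{cor:slicestructureepgroupoid}.
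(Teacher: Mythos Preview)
Your overall approach matches the paper's exactly: apply Theorem~\ref{thm:globalimplicit} to the object $M$-polyfold $X$, then feed the resulting slice into Corollary~\ref{cor:slicestructureepgroupoid}; the compactness argument via $|f|^{-1}(N)$ closed in $|X_\infty|$ is also the same. You are right that morphism-invariance of $\tilde X$ is the one extra ingredient beyond a mechanical combination --- the paper dispatches it in a single sentence, simply asserting that \eqref{eq:mapcompatibilitymorphisms} gives $s^{-1}(\tilde X)=t^{-1}(\tilde X)$ for the $\tilde X$ produced by Theorem~\ref{thm:globalimplicit}.

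Your proposed handling of this point is correct in spirit but over-engineered. The de-saturation $\tilde X' = \{x\in\tilde X:\text{every }y\sim x\text{ lies in }\tilde X\}$ and the appeal to properness and finite isotropy are unnecessary. It is cleaner to \emph{enlarge} rather than shrink: replace $\tilde X$ by its saturation $t(s^{-1}(\tilde X))$, which is open (since $s,t$ are local sc-diffeomorphisms, hence open maps), is contained in $f^{-1}(N)\cap X_1$ by \eqref{eq:mapcompatibilitymorphisms} and level-preservation of $s,t$, is morphism-invariant by construction, and is still a slice because $\mathbb R^n$-sliced charts with respect to $f^{-1}(N)\cap X_1\subset X^1$ pull back along the local sc-diffeomorphisms $t\circ(s|_{\bf V})^{-1}$ --- precisely the pullback mechanism you already describe. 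No properness enters this step.
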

\begin{proof}
We first prove the statements in (I). Since $f$ is transverse to $N$, Theorem~\ref{thm:globalimplicit}(I) provides an open neighborhood $\tilde{X} \subset f^{-1}(N) \cap X_1$ of $f^{-1}(N) \cap X_{\infty}$ that is a slice of $X^1$ with the claimed codimension at every point in $\tilde{X}_1$. The compatibility \eqref{eq:mapcompatibilitymorphisms} of $f$ with the morphisms ${\bf X}$ implies that $s^{-1}(\tilde{X}) = t^{-1}(\tilde{X})$ holds. Hence applying Corollary~\ref{cor:slicestructureepgroupoid}(I) to the tame ep-groupoid $\cX^1 = (X^1, {\bf X}^1)$ and the slice $\tilde{X} \subset X^1$, we conclude that the full subcategory $\tilde{\cX} = (\tilde{X}, \tilde{\bf X})$ of $\cX^1$ with object space $\tilde{X}$ is a tame ep-groupoid with the claimed degeneracy index.

We now prove the statements in (II). Theorem~\ref{thm:globalimplicit}(II) provides a choice of $\tilde{X}$ in (I) that is in addition a slice of the $1$-shifted bundle $P|_{E^1} : E^1 \rightarrow X^1$. Hence applying Corollary~\ref{cor:slicestructureepgroupoid}(II), we conclude that the tuple $(\tilde{P},\tilde{\mu})$ is indeed a tame strong bundle over $\tilde{\cX}$, as claimed.

We now prove the statements in (III). Since $f$ is $\s$-compatibly transverse to $N$, Theorem~\ref{thm:globalimplicit}(III) provides a choice of $\tilde{X}$ in (II) that is in addition a slice of the tame sc-Fredholm section $\s|_{X^1} : X^1 \rightarrow E^1$. Hence applying Corollary~\ref{cor:slicestructureepgroupoid}(III), we conclude that the restriction $\tilde{\s} = \s|_{\tilde{X}}$ is indeed a tame sc-Fredholm section functor of the bundle $(\tilde{P},\tilde{\mu})$ with the claimed index.

It remains to prove the final statement about compactness of the orbit spaces of the zero sets. Assume that $|\s^{-1}(0)|$ is compact. It suffices to show that the inclusion of orbit spaces $$|\tilde{X}_{\infty}| \subset |(X^1)_{\infty}| = |X_{\infty}|$$ is closed, because then Corollary~\ref{cor:slicestructureepgroupoid}(III) implies compactness of the space $|\tilde{\s}^{-1}(0)|$, as required. Notice that, by \eqref{eq:mapcompatibilitymorphisms}, the map $f$ descends to a map on the orbit space $|f| : |X_{\infty}| \rightarrow Y$. Then $|\tilde{X}_{\infty}| = |f|^{-1}(N)$ is closed in $|X_{\infty}|$ by continuity of $|f|$, as required.
\end{proof}

\section{Fiber products of tame sc-Fredholm sections} \label{sec:fiberproducts}

The main result of this section is the construction of fiber products of tame sc-Fredholm section functors (Corollary~\ref{cor:ep-fiberFredholm}). This result is used in the application described in Section~\ref{subsub:pss}. It is a corollary of the construction of restrictions of tame sc-Fredholm section functors to transverse preimages of sc-smooth maps, which is the main result of this paper; see Theorem~\ref{thm:globalimplicit} for the $M$-polyfold case and Corollary~\ref{cor:globalimplicitepgroupoid} for the ep-groupoid generalization.

In this section, we index the $M$-polyfolds and ep-groupoids with parenthesis around the subscript, i.e.\ $\cB_{(i)}$ for $i = 1,2,$ to avoid confusion with the standard notation $\cB_m$ for the $m$-level of an $M$-polyfold $\cB$.

We first describe the Cartesian product of tame sc-Fredholm sections over $M$-polyfolds.

\begin{lem} \label{lem:M-CartesianPolyfold} \text{}
\begin{enumilistfiber}
\item Consider tame $M$-polyfolds $\cB_{(i)}$ for $i = 1,2$. Then, the Cartesian product $$\cB_{(1)} \times \cB_{(2)}$$ is a tame $M$-polyfold with charts given by products of charts on the factors, and with degeneracy index satisfying $$d_{\cB_{(1)} \times \cB_{(2)}}(x_1,x_2) = d_{\cB_{(1)}}(x_1) + d_{\cB_{(2)}}(x_2)$$ for all $(x_1,x_2) \in (\cB_{(1)})_{\infty} \times (\cB_{(2)})_{\infty}$.\\

\item Consider tame strong bundles $\rho_i : \cE_{(i)} \rightarrow \cB_{(i)}$ over $\cB_{(i)}$ for $i = 1,2.$ Then, the product map $\rho_1 \times \rho_2 : \cE_{(1)} \times \cE_{(2)} \rightarrow \cB_{(1)} \times \cB_{(2)}$ is a tame strong bundle over $\cB_{(1)} \times \cB_{(2)}$ with bundle charts given by products of bundle charts on the factors.\\

\item Consider tame sc-Fredholm sections $\s_i : \cB_{(i)} \rightarrow \cE_{(i)}$ (Definition~\ref{dfn:tamescfredholmsection}) of $\rho_i$ for $i = 1,2.$ Then, the product map $$\s_1 \times \s_2 : \cB_{(1)} \times \cB_{(2)} \rightarrow \cE_{(1)} \times \cE_{(2)}$$ is a tame sc-Fredholm section of $\rho_1 \times \rho_2$ with tame sc-Fredholm charts given by products of tame sc-Fredholm charts on the factors (after reordering factors in the charts), and with index satisfying $$ind_{(x_1,x_2)}(\s_1 \times \s_2) = ind_{x_1}(\s_1) + ind_{x_2}(\s_2)$$ for all $(x_1,x_2) \in (\cB_{(1)})_{\infty} \times (\cB_{(2)})_{\infty}$. If $\s_i^{-1}(0)$ is compact for $i = 1,2,$ then $(\s_1 \times \s_2)^{-1}(0)$ is compact.
\end{enumilistfiber}
\end{lem}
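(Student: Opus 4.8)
\textbf{Proof proposal for Lemma~\ref{lem:M-CartesianPolyfold}.}

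The plan is to check each of the three statements by working entirely in product charts, reducing everything to the corresponding local statements about sc-retracts, bundle retracts, and local sc-Fredholm germs on Cartesian products of sc-Banach spaces. First, for (I), given tame $M$-polyfold charts $(V_i,\varphi_i,(\cO_i,C_i,\mathbb{E}_i))$ on $\cB_{(i)}$, I would observe that $\cO_1 \times \cO_2$ is the image of the product retraction $r_1 \times r_2$ on $U_1 \times U_2 \subset (C_1 \times \mathbb{E}_1')$-type partial quadrants (using that $C_1 \times C_2$ is a partial quadrant of $\mathbb{E}_1 \times \mathbb{E}_2$), and that this product retraction is sc-smooth by the chain rule together with sc-smoothness of projections. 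Tameness of $r_1 \times r_2$ follows from: property (1) via the additivity of the degeneracy index \eqref{eq:degcartesian}, namely $d_{C_1 \times C_2}((r_1 \times r_2)(x_1,x_2)) = d_{C_1}(r_1(x_1)) + d_{C_2}(r_2(x_2)) = d_{C_1}(x_1) + d_{C_2}(x_2)$; and property (2) by taking the direct sum $A = A_1 \oplus A_2$ of the tame complements at each factor, using that $T_{(x_1,x_2)}(\cO_1 \times \cO_2) = T_{x_1}\cO_1 \times T_{x_2}\cO_2$ and $(C_1 \times C_2)_{(x_1,x_2)} = (C_1)_{x_1} \times (C_2)_{x_2}$ by \eqref{eq:Exstandard}. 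The degeneracy index formula $d_{\cB_{(1)} \times \cB_{(2)}}(x_1,x_2) = d_{\cB_{(1)}}(x_1) + d_{\cB_{(2)}}(x_2)$ then follows from \eqref{eq:degcartesian} applied in charts, since the degeneracy index of a $M$-polyfold point is the minimum over charts and product charts realize the minimum on each factor.

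For (II), I would similarly note that products of tame bundle retracts $K_1 \times K_2$ with the product bundle retraction $R_1 \times R_2$ (which has the form \eqref{eq:bundleretractionwrittentout} with $r = r_1 \times r_2$ and $\Gamma((x_1,x_2),(\xi_1,\xi_2)) = (\Gamma_1(x_1,\xi_1),\Gamma_2(x_2,\xi_2))$, linear in the fiber variable) give tame strong bundle charts, after reordering factors so that the base variables come first and the fiber variables second, i.e.\ identifying $(C_1 \times \mathbb{F}_1) \times (C_2 \times \mathbb{F}_2)$ with $(C_1 \times C_2) \times (\mathbb{F}_1 \times \mathbb{F}_2)$. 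The double filtration on the product is the product of double filtrations, and the two sc-structures $(\cdot)[i]$ are compatible with this identification, so the strong bundle map property is preserved. Tameness of $R_1 \times R_2$ is inherited from tameness of $r_1 \times r_2$ proven in (I). That $\rho_1 \times \rho_2$ is a tame strong bundle then follows since its transition maps are products of sc-smooth transition maps, hence sc-smooth.

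For (III), the key point is that a product of local sc-Fredholm germs is a local sc-Fredholm germ, and a product of tame ones is tame. Given tame sc-Fredholm charts at $x_i$ with fillings $h_i$ on $U_i(C_i,0)$ and change-of-coordinates $\psi_i = \mathrm{id}_{[0,\infty)^{s_i}} \times \overline{\psi}_i$ bringing $h_i$ into basic germ form $b_i$ with contraction germs $B_i$, I would check that $h_1 \times h_2$ is a filling of $(\bm{\s}_1 \times \bm{\s}_2) \circ (r_1 \times r_2)$: properties (i),(ii) are immediate from the product structure, and (iii) holds because $\ker D_{(0,0)}(r_1 \times r_2) = \ker D_0 r_1 \times \ker D_0 r_2$ and $\ker \Gamma((0,0),\cdot) = \ker\Gamma_1(0,\cdot) \times \ker\Gamma_2(0,\cdot)$, so the product of the two linear sc-isomorphisms is a linear sc-isomorphism. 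For condition (c), after reordering the finite-dimensional factors, the product $b_1 \times b_2$ is a basic germ with $\mathbb{W} = \mathbb{W}_1 \times \mathbb{W}_2$ and $B(a_1,a_2,w_1,w_2) = (B_1(a_1,w_1),B_2(a_2,w_2))$; here the main technical point is verifying the contraction property \eqref{eq:basicgerm} for $B$ from those for $B_1,B_2$, which works because with the sum norm convention $\|(w_1,w_2) - (w_1',w_2')\|_m = \|w_1 - w_1'\|_m + \|w_2 - w_2'\|_m$, so given $\epsilon,m$ one takes $\delta_m = \min(\delta_m^{(1)},\delta_m^{(2)})$ for the $\delta$'s supplied by $B_1,B_2$ at $\epsilon$, and the estimate splits into the sum of two estimates each bounded by $\epsilon$ times the corresponding term. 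Tameness is preserved since $\psi_1 \times \psi_2$, reordered, is again of the form $\mathrm{id}_{[0,\infty)^{s_1+s_2}} \times (\overline{\psi}_1 \times \overline{\psi}_2)$ with $\overline{\psi}_1 \times \overline{\psi}_2$ a linear sc-isomorphism. The index formula $ind_{(x_1,x_2)}(\s_1 \times \s_2) = ind_{x_1}(\s_1) + ind_{x_2}(\s_2)$ follows from \eqref{eq:fredholmindexdef}, since the split-off dimensions add: $k = k_1 + k_2$, $k' = k_1' + k_2'$. Also $\s_1 \times \s_2$ is sc-smooth and regularizing because each factor is, and the regularizing property for the product follows from the product filtration structure. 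Finally, $(\s_1 \times \s_2)^{-1}(0) = \s_1^{-1}(0) \times \s_2^{-1}(0)$ as sets, which is compact as a product of compact sets (using Remark~\ref{rmk:levelshiftglobal} so that compactness is unambiguous with respect to levels). The main obstacle I anticipate is bookkeeping: the reordering of finite-dimensional factors $\mathbb{R}^{k_1-s_1} \times \mathbb{R}^{k_2-s_2}$, the partial-quadrant factors $[0,\infty)^{s_1} \times [0,\infty)^{s_2}$, and the Banach factors $\mathbb{W}_1 \times \mathbb{W}_2$ into the standard shape $[0,\infty)^{s} \times \mathbb{R}^{k-s} \times \mathbb{W}$ required by Definitions~\ref{dfn:localscFredgerm} and~\ref{dfn:tamescFredgerm}, and checking that all the maps ($\Psi$, $\mathfrak{s}$, etc.) transform correctly under this reordering; but none of this is conceptually difficult, just notationally heavy, and it is the same kind of reordering already used repeatedly in Lemma~\ref{lem:inducedlocalscFredholmgerm} and Lemma~\ref{lem:slicecoordinates}.
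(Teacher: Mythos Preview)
Your proposal is correct and follows essentially the same approach as the paper's proof: product charts, product retractions, product fillings, and the contraction estimate for the product basic germ via $\delta = \min(\delta_1,\delta_2)$, together with exactly the reordering bookkeeping you anticipate (the paper introduces explicit reordering isomorphisms $f$, $l$, $q$, $Q$, $l'$ for this). The one minor difference is in justifying the degeneracy index formula in (I): the paper uses the intrinsic characterization $d_{\cB}(x) = \dim(T_x\cB / T_x^R\cB)$ at smooth points \eqref{eq:degindexdef} together with the product splitting of reduced tangent spaces, which sidesteps the question of whether some non-product chart in an equivalent atlas might give a smaller value than your ``minimum over product charts'' argument.
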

\begin{proof}
\ul{Proof of (I)}: We first show that products of tame $M$-polyfold charts (Definition~\ref{dfn:tameMpolyfold}) on the factors $\cB_{(i)}, i = 1,2,$ are indeed tame $M$-polyfold charts on the product ${\cB_{(1)} \times \cB_{(2)}}$, as claimed. For $i = 1,2$, consider a tame $M$-polyfold chart
\begin{equation} \label{eq:tamechartcartesian}
(V_i,\varphi_i,(\cO_i,C_i,\mathbb{E}_i))
\end{equation}
on $\cB_{(i)}$, an open subset $U_i \subset C_i$, and a tame sc-retraction $r_i : U_i \rightarrow U_i$ with image $\cO_i = r_i(U_i)$. We claim that the tuple
\begin{equation} \label{eq:cartesianproductchart}
(V_1 \times V_2, \varphi_1 \times \varphi_2,(\cO_1 \times \cO_2, C_1 \times C_2, \mathbb{E}_1 \times \mathbb{E}_2))
\end{equation}
is a tame $M$-polyfold chart on $\cB_{(1)} \times \cB_{(2)}$. We equip $\cB_{(1)} \times \cB_{(2)}$ with the product topology on every level. Recall that the product sc-structure is given by $(\mathbb{E}_1 \times \mathbb{E}_2)_m = (\mathbb{E}_1)_m \times (\mathbb{E}_2)_m$ for all $m \geq 0$. Then the product set $V_1 \times V_2$ is open in $\cB_{(1)} \times \cB_{(2)}$ and the map $\varphi_1 \times \varphi_2 : V_1 \times V_2 \rightarrow \cO_1 \times \cO_2$ is a homeomorphism. It remains to show that $\cO_1 \times \cO_2$ is a tame sc-retract. The product map $r_1 \times r_2 : U_1 \times U_2 \rightarrow U_1 \times U_2$ is a sc-smooth retraction with image $\cO_1 \times \cO_2$. We now show that $r_1 \times r_2$ is tame. To check Definition~\ref{dfn:tameretraction}(1) for $r_1 \times r_2$, let $(x_1 \times x_2) \in U_1 \times U_2$ and then compute using the corresponding property of each $r_i$ together with \eqref{eq:degcartesian} that the degeneracy index satisfies
\begin{align*}
d_{C_1 \times C_2}((r_1 \times r_2)(x_1,x_2)) &= d_{C_1 \times C_2}(r_1(x_1),r_2(x_2))\\
&= d_{C_1}(r_1(x_1)) + d_{C_2}(r_2(x_2))\\
&= d_{C_1}(x_1) + d_{C_2}(x_2)\\
&= d_{C_1 \times C_2}(x_1,x_2),
\end{align*}
as required. To check Definition~\ref{dfn:tameretraction}(2) for $r_1 \times r_2$, let
$$(x_1,x_2) \in (\cO_1 \times \cO_2)_{\infty} = (\cO_1)_{\infty} \times (\cO_2)_{\infty}$$
be a smooth point. Then by the corresponding property of each $r_i$, there exist sc-subspaces $A_i \subset \mathbb{E}_i$ such that $\mathbb{E}_i = T_{x_i}\cO_i \oplus A_i$ and such that $A_i \subset (\mathbb{E}_i)_{x_i}$, where $(\mathbb{E}_i)_{x_i} \subset \mathbb{E}_i$ is the sc-subspace \eqref{eq:Ex}. From the definition \eqref{eq:Ex} we have
\begin{equation}\label{eq:Exsplittingcartesian}
(\mathbb{E}_1)_{x_1} \times (\mathbb{E}_2)_{x_2} = (\mathbb{E}_1 \times \mathbb{E}_2)_{(x_1,x_2)}.
\end{equation}
So we have $A_1 \times A_2 \subset (\mathbb{E}_1 \times \mathbb{E}_2)_{(x_1,x_2)}$. Moreover, we have the sc-splitting
\begin{align*}
\mathbb{E}_1 \times \mathbb{E}_2 &= (T_{x_1}\cO_1 \oplus A_1) \times (T_{x_2}\cO_2 \oplus A_2)\\
&= T_{(x_1,x_2)}(\cO_1 \times \cO_2) \oplus (A_1 \times A_2)
\end{align*}
because, by definition \eqref{eq:tangentspacedef} of tangent space to a sc-retract, the tangent space to a product satisfies
\begin{align}
T_{(x_1,x_2)}(\cO_1 \times \cO_2) &= D_{(x_1,x_2)}(r_1 \times r_2)(T_{(x_1,x_2)}(U_1 \times U_2)) \label{eq:producttangentspacesplit}\\
&= D_{x_1}r_1(T_{x_1}U_1) \times  D_{x_2}r_2(T_{x_2}U_2) \nonumber\\
&= T_{x_1}\cO_1 \times T_{x_2}\cO_2. \nonumber
\end{align}
This completes the proof that $r_1 \times r_2$ is a tame sc-retraction and hence $\cO_1 \times \cO_2$ is a tame sc-retract.

We have shown that the product chart \eqref{eq:cartesianproductchart} is indeed a tame $M$-polyfold chart on $\cB_{(1)} \times \cB_{(2)}$. Since product charts of this form cover $\cB_{(1)} \times \cB_{(2)}$ and have sc-smooth transition maps due to the transitions on each factor being sc-smooth, the collection of these product charts forms a tame atlas on $\cB_{(1)} \times \cB_{(2)}$. Then $\cB_{(1)} \times \cB_{(2)}$ equipped with this atlas is a tame $M$-polyfold.

We now prove the claimed degeneracy index formula. Consider any $(x_1,x_2) \in (\cB_{(1)})_{\infty} \times (\cB_{(2)})_{\infty}$. From the splitting \eqref{eq:producttangentspacesplit} of the tangent spaces in the product retract, we conclude $$T_{(x_1,x_2)}(\cB_{(1)} \times \cB_{(2)}) \cong T_{x_1}\cB_{(1)} \times T_{x_2}\cB_{(2)}.$$ Furthermore, by definition of reduced tangent space in a retract \eqref{eq:reducedtangentspacedef} together with \eqref{eq:Exsplittingcartesian} and \eqref{eq:producttangentspacesplit}, we conclude $$T_{(\varphi_1(x_1),\varphi_2(x_2))}^R(\cO_1 \times \cO_2) =  T_{\varphi_1(x_1)}^R\cO_1 \times T_{\varphi_2(x_2)}^R\cO_2,$$ which implies by the global definition of reduced tangent space \eqref{eq:defglobalredcuedtangent} that $$T_{(x_1,x_2)}^R(\cB_{(1)} \times \cB_{(2)}) \cong T^R_{x_1}\cB_{(1)} \times T^R_{x_2}\cB_{(2)}$$ by examining any product chart. Hence, by the global description \eqref{eq:degindexdef} of degeneracy index at smooth points, we have
\begin{align*}
d_{\cB_{(1)} \times \cB_{(2)}}(x_1,x_2) &= \dim \big (T_{(x_1,x_2)}(\cB_{(1)} \times \cB_{(2)})/T^R_{(x_1,x_2)}(\cB_{(1)} \times \cB_{(2)}) \big )\\
&= \dim(T_{x_1}\cB_{(1)}/T^R_{x_1}\cB_{(1)}) + \dim(T_{x_2}\cB_{(2)}/T^R_{x_2}\cB_{(2)})\\
&= d_{\cB_{(1)}}(x_1) + d_{\cB_{(2)}}(x_2).
\end{align*}
This completes the proof of (I).
\\

\ul{Proof of (II)}: For $i = 1,2,$ consider a tame $M$-polyfold chart \eqref{eq:tamechartcartesian} on $\cB_{(i)}$ covered by a tame strong bundle chart (Definition~\ref{dfn:tamebundle})
\begin{equation} \label{eq:tamebundlechartcartesian}
(\rho_i^{-1}(V_i), \Phi_i, (K_i, C_i \lhd \mathbb{F}_i, \mathbb{E}_i \lhd \mathbb{F}_i))
\end{equation}
for $\rho_i$. We now construct a tame strong bundle chart for $\rho_1 \times \rho_2$ over the open set $V_1 \times V_2$. For $i = 1,2,$ consider a tame strong bundle retraction \eqref{eq:bundleretractionwrittentout}
\begin{align*}
R_i : U_i \lhd \mathbb{F}_i &\rightarrow U_i \lhd \mathbb{F}_i\\
(x,\xi) &\mapsto (r_i(x),\Gamma_i(x,\xi))
\end{align*}
with image $K_i = R_i(U_i \lhd \mathbb{F}_i)$, where
$$\G_i : U_i \lhd \mathbb{F} \rightarrow \mathbb{F}$$
is a linear projection $\G_i(x,\cdot)$ for every $x \in U_i$. Denote the reordering of factors map by
$$l : (\mathbb{E}_1 \times \mathbb{F}_1) \times (\mathbb{E}_2 \times \mathbb{F}_2) \rightarrow (\mathbb{E}_1 \times \mathbb{E}_2) \times (\mathbb{F}_1 \times \mathbb{F}_2),$$
and note that $l$ is a linear sc-isomorphism. We claim that the tuple
\begin{align} \label{eq:cartesianproductbundlechart}
((\rho_1 \times &\rho_2)^{-1}(V_1 \times V_2), l  \circ (\Phi_1 \times \Phi_2),\\
&(l(K_1 \times K_2), C_1 \times C_2 \lhd \mathbb{F}_1 \times \mathbb{F}_2, \mathbb{E}_1 \times \mathbb{E}_2 \lhd \mathbb{F}_1 \times \mathbb{F}_2)) \nonumber
\end{align}
is a tame strong bundle chart for $\rho_1 \times \rho_2$ covering the product $M$-polyfold chart \eqref{eq:cartesianproductchart}. Since for $i = 1,2,$ the map $\Phi_i : \rho_i^{-1}(V_i) \rightarrow K_i$ is a homeomorphism covering $\varphi_i$ that is linear on fibers, it follows that the map $l \circ (\Phi_1 \times \Phi_2)$ is a homeomorphism from $(\rho_1 \times \rho_2)^{-1}(V_1 \times V_2)$ to $l(K_1 \times K_2)$ covering $\varphi_1 \times \varphi_2$ and is linear on fibers. Moreover, the set $l(K_1 \times K_2)$ is a tame strong bundle retract because it is the image of the tame strong bundle retraction
\begin{align*}
l \circ (R_1 \times R_2) \circ l^{-1} : (U_1 \times U_2) &\lhd (\mathbb{F}_1 \times \mathbb{F}_2) \rightarrow (U_1 \times U_2) \lhd (\mathbb{F}_1 \times \mathbb{F}_2)\\
(x_1,x_2,\xi_1,\xi_2) &\mapsto (r_1(x_1),r_2(x_2),\Gamma_1(x_1,\xi_1),\Gamma_2(x_2,\xi_2)).
\end{align*}
We have shown that the chart \eqref{eq:cartesianproductbundlechart} is indeed a tame strong bundle chart for $\rho_1 \times \rho_2$. Sc-smoothness of bundle transitions between charts constructed in this way follows from sc-smoothness in each factor. Hence the collection of these charts forms a tame bundle atlas for $\rho_1 \times \rho_2$. This completes the proof of (II).
\\

\ul{Proof of (III)}: To prove that $\s_1 \times \s_2$ is a tame sc-Fredholm section (Definition~\ref{dfn:tamescfredholmsection}) of $\rho_1 \times \rho_2$, first observe that it is sc-smooth and regularizing because $\s_1$ and $\s_2$ are sc-smooth and regularizing, and the double filtration satisfies $(\cB_{(1)} \times \cB_{(2)})_m = (\cB_{(1)})_m \times (\cB_{(2)})_m$ and $(\cE_{(1)} \times \cE_{(2)})_{m,m+1} = (\cE_{(1)})_{m,m+1} \times (\cE_{(2)})_{m,m+1}$ for all $ m \geq 0$ (see \eqref{eq:doublefiltrationglobal}).

Consider a smooth point $(x_1,x_2) \in (\cB_{(1)})_{\infty} \times (\cB_{(2)})_{\infty}$. Then since each $\s_i$ is tame sc-Fredholm there exist tame bundle charts as in \eqref{eq:tamebundlechartcartesian} that are in addition tame sc-Fredholm charts for $\s_i$ at $x_i$. First of all, this means that the partial quadrants $C_i \subset \mathbb{E}_i$ are in the standard form \eqref{eq:partialquadrantstandardform}, i.e.\ $C_i = [0,\infty)^{s_i} \times \mathbb{E}_i' \subset \mathbb{R}^{s_i} \times \mathbb{E}_i' = \mathbb{E}_i$. Now, we will prove that a chart similar to the tame bundle chart \eqref{eq:cartesianproductbundlechart} is a tame sc-Fredholm chart for $\s_1 \times \s_2$ at $(x_1,x_2)$. The only issue with \eqref{eq:cartesianproductbundlechart} is that the partial quadrant $C_1 \times C_2 \subset \mathbb{E}_1 \times \mathbb{E}_2$ is not in the standard form. This is easily remedied as follows. Consider the partial quadrant
$$C :=  [0,\infty)^{s_1 \times s_2} \times \mathbb{E}_1' \times \mathbb{E}_2'$$
of the sc-Banach space
$$\mathbb{E} := \mathbb{R}^{s_1 \times s_2} \times \mathbb{E}_1' \times \mathbb{E}_2'.$$
Then the reordering of factors map
$$f : \mathbb{R}^{s_1} \times \mathbb{E}'_1 \times \mathbb{R}^{s_2} \times \mathbb{E}'_2 \rightarrow \mathbb{R}^{s_1 + s_2} \times \mathbb{E}'_1 \times \mathbb{E}'_2$$
is a linear sc-isomorphism ${\mathbb{E}_1 \times \mathbb{E}_2 \rightarrow \mathbb{E}}$ that restricts to an isomorphism ${C_1 \times C_2 \rightarrow C}$. Applying $f$ to the sc-retract $\cO_1 \times \cO_2$ produces a tame sc-retract
$$\cO := f(\cO_1 \times \cO_2) \subset C$$
with tame sc-retraction
\begin{align*}
r &:= f \circ (r_1 \times r_2) \circ f^{-1} : U \rightarrow U,\\
U &:= f(U_1 \times U_2) \subset C,
\end{align*}
onto $r(U) = \cO$. Moreover, the map
$$\varphi := f \circ (\varphi_1 \times \varphi_2) : V_1 \times V_2 \rightarrow \cO$$
is a homeomorphism, and so from the tame $M$-polyfold chart \eqref{eq:cartesianproductchart} we obtain another tame $M$-polyfold chart
$$(V_1 \times V_2, \varphi, (\cO, C, \mathbb{E})).$$
Similarly, applying $f \times id_{\mathbb{F}_1 \times \mathbb{F}_2}$ to the tame bundle retract $l(K_1 \times K_2)$ produces another tame bundle retract
$$K := (f \times id_{\mathbb{F}_1 \times \mathbb{F}_2}) \circ l(K_1 \times K_2)$$
with tame bundle retraction
\begin{align*}
R &:= (f \times id_{\mathbb{F}_1 \times \mathbb{F}_2}) \circ l \circ (R_1 \times R_2) \circ l^{-1} \circ (f \times id_{\mathbb{F}_1 \times \mathbb{F}_2})^{-1}\\
&: U \lhd \mathbb{F}_1 \times \mathbb{F}_2 \rightarrow U \lhd \mathbb{F}_1 \times \mathbb{F}_2
\end{align*}
onto $R(U \lhd \mathbb{F}_1 \times \mathbb{F}_2) = K$. Consider the map
\begin{align*}
\G := (\G_1 \times \G_2) \circ l^{-1} \circ (f \times id_{\mathbb{F}_1 \times \mathbb{F}_2})^{-1},
\end{align*}
and observe that, for $(y, \xi_1,\xi_2) \in U \lhd \mathbb{F}_1 \times \mathbb{F}_2$, we have
$$R(y,\xi_1,\xi_2) = (r(y), \G(y,\xi_1,\xi_2)).$$
The map
$$\Phi := (f \times id_{\mathbb{F}_1 \times \mathbb{F}_2}) \circ l  \circ (\Phi_1 \times \Phi_2) : (\rho_1 \times \rho_2)^{-1}(V_1 \times V_2) \rightarrow K$$
is a homeomorphism that is linear on the fibers and covers $\varphi$. Hence from the tame bundle chart \eqref{eq:cartesianproductbundlechart} we obtain another tame bundle chart
\begin{equation} \label{eq:cartesianproductfredholmchart}
((\rho_1 \times \rho_2)^{-1}(V_1 \times V_2), \Phi,(K, C \lhd \mathbb{F}_1 \times \mathbb{F}_2, \mathbb{E} \lhd \mathbb{F}_1 \times \mathbb{F}_2)).
\end{equation}

We claim that the bundle chart \eqref{eq:cartesianproductfredholmchart} is a tame sc-Fredholm chart for $\s_1 \times \s_2$ at $(x_1,x_2)$. 
We must show that the section
\begin{equation} \label{eq:cartesiantamefredholmgerm}
\t := \Phi \circ (\s_1 \times \s_2) \circ \varphi^{-1} : \cO \rightarrow K
\end{equation}
is a tame sc-Fredholm germ. Since for each $i = 1,2,$ the section
$$\t_i := \Phi_i \circ \s_i \circ \varphi_i^{-1} : \cO_i \rightarrow K_i$$
is a tame sc-Fredholm germ, we can assume that there is a filling
$$h_i : U_i \rightarrow U_i \lhd \mathbb{F}_i$$
of $\mathbold{\t}_i \circ r_i : U_i \rightarrow \mathbb{F}_i$, where $\mathbold{\t}_i : \cO_i \rightarrow \mathbb{F}_i$ denotes the principal part of $\t_i$ in the fiber $\mathbb{F}_i$.
We claim that
$$h := (f \times id_{\mathbb{F}_1 \times \mathbb{F}_2}) \circ l \circ (h_1 \times h_2) \circ f^{-1} : U \rightarrow U \lhd (\mathbb{F}_1 \times \mathbb{F}_2)$$
is a filling of $\mathbold{\t} \circ r : U \rightarrow \mathbb{F}_1 \times \mathbb{F}_2.$ The required properties in Definition~\ref{dfn:localscFredgerm}(b).(i)-(iii) of a filling follow from the corresponding properties for each $h_i$, as we now verify. Given $y \in \cO$, we compute
\begin{align*}
\t(y) &= (f \times id_{\mathbb{F}_1 \times \mathbb{F}_2}) \circ l \circ (\Phi_1 \times \Phi_2) \circ (\s_1 \times \s_2) \circ (\varphi_1 \circ \varphi_2)^{-1} \circ f^{-1}(y)\\
&= (f \times id_{\mathbb{F}_1 \times \mathbb{F}_2}) \circ l \circ (\t_1 \times \t_2) \circ f^{-1}(y)\\
&= (f \times id_{\mathbb{F}_1 \times \mathbb{F}_2}) \circ l \circ (h_1 \times h_2) \circ f^{-1}(y)\\
&= h(y),
\end{align*}
which verifies property (i) for $\t$. To see property $(ii)$, let $y \in U$ and assume
$${\bf h}(y) = \G(r(y),{\bf h}(y)).$$ Then since the principal parts satisfy ${\bf h} = ({\bf h}_1 \times {\bf h}_2) \circ f^{-1}$, we compute that
\begin{align*}
({\bf h}_1 &\times {\bf h}_2) \circ f^{-1}(y) = \G(r(y),{\bf h}(y))\\
&= (\G_1 \times \G_2) \circ l^{-1}\\
&\,\,\,\,\,\,\circ (f \times id_{\mathbb{F}_1 \times \mathbb{F}_2})^{-1} \bigg (f \circ (r_1 \times r_2) \circ f^{-1}(y), ({\bf h}_1 \times {\bf h}_2) \circ f^{-1}(y) \bigg )\\
&= (\G_1 \times \G_2) \circ l^{-1}((r_1 \times r_2) \circ f^{-1}(y), ({\bf h}_1 \times {\bf h}_2) \circ f^{-1}(y)),
\end{align*}
which implies $f^{-1}(y) \in \cO_1 \times \cO_2$ by the corresponding property of $\t_1$ and $\t_2$. Hence $y \in \cO$, proving property (ii) for $\t$. We now verify $(iii)$ for $\t$. By the corresponding property for each $\t_i$, the linearization $D_0L_i$ at $0$ of
\begin{align*}
L_i : U_i &\rightarrow \mathbb{F}_i\\
y_i &\mapsto (id_{\mathbb{F}_i} - \Gamma_i(r_i(y_i),\cdot)) \circ {\bf h}_i(y_i)
\end{align*}
restricts to a linear sc-isomorphism from $\ker D_0r_i$ to $\ker \Gamma_i(0,\cdot)$. We must show that the linearization at $0$ of the map
\begin{align*}
L : U &\rightarrow \mathbb{F}_1 \times \mathbb{F}_2\\
y &\mapsto (id_{\mathbb{F}_1 \times \mathbb{F}_2} - \Gamma(r(y),\cdot)) \circ {\bf h}(y)
\end{align*}
restricts to a linear sc-isomorphism from $\ker D_0r$ to $\ker \G(0,\cdot)$. This follows from the observations that $L = (L_1 \times L_2) \circ f^{-1}$ holds, the map $f^{-1}$ is a linear sc-isomorphism that restricts to an isomorphism
$$\ker D_0r \rightarrow \ker D_0(f \circ (r_1 \times r_2)) = \ker(D_0r_1) \times \ker(D_0r_2),$$ and $\ker \G(0,\cdot) = \ker \G_1(0,\cdot) \times \ker \G_2(0,\cdot).$ Thus $h$ is a filling of $\mathbold{\t} \circ r$, as claimed.

We now verify the properties of $\t$ required in Definition~\ref{dfn:localscFredgerm}(c) of a local sc-Fredholm germ. We in addition show that $\t$ satisfies the stronger properties of a tame sc-Fredholm germ (Definition~\ref{dfn:tamescFredgerm}).

Since each $\t_i$ is a tame sc-Fredholm germ, the corresponding properties hold: There exists a sc$^+$-section 
$$\mathfrak{s}_i : U_i \rightarrow U_i \lhd \mathbb{F}_i$$
satisfying $\mathfrak{s}_i(0) = h_i(0)$, a sc-Banach space $\mathbb{W}_i$, a sc-germ of neighborhoods
\begin{align*}
U'_i \text{ around } 0 \text{ in } [0,\infty)^{s_i} \times \mathbb{R}^{k_i - s_i} \times \mathbb{W}_i
\end{align*}
for some $k_i \geq s_i \geq 0$, and a strong bundle isomorphism
$$\Psi_i : U_i \lhd \mathbb{F}_i \rightarrow U'_i \lhd \mathbb{R}^{k'_i} \times \mathbb{W}_i$$
covering a linear sc-isomorphism
$$\psi_i = id_{[0,\infty)^{s_i}} \times \overline{\psi}_i : U_i \rightarrow U'_i$$
satisfying $\psi_i(0) = 0$, where
$$\overline{\psi}_i : \mathbb{E}'_i \rightarrow \mathbb{R}^{k_i - s_i} \times \mathbb{W}_i$$
is a linear sc-isomorphism, and such that the principal part of the section
\begin{equation} \label{eq:productbasicgermfactorproof}
b_i := \Psi_i \circ (h_i - \mathfrak{s}_i) \circ \psi_i^{-1} : U'_i \rightarrow U'_i \lhd \mathbb{R}^{k'_i} \times \mathbb{W}_i
\end{equation}
is a basic germ. This basic germ property means that the principal part
$$\mathbold{b}_i : U'_i \rightarrow \mathbb{R}^{k'_i} \times \mathbb{W}_i$$
is a sc-smooth germ satisfying $\mathbold{b}_i(0) = 0$ and such that, for every
$$a_i \in [0,\infty)^{s_i}, d_i \in \mathbb{R}^{k_i - s_i}, w_i \in \mathbb{W}_i,$$
we have
$$P_i \circ \mathbold{b}_i(a_i,d_i,w_i) = w_i - B_i(a_i,d_i,w_i)$$
where $P_i : \mathbb{R}^{k'_i} \times \mathbb{W}_i \rightarrow \mathbb{W}_i$ is projection onto $\mathbb{W}_i$ and $B_i$ is a sc-smooth germ satisfying $B_i(0) = 0$ and the contraction property \eqref{eq:basicgerm}.

The section
$$\mathfrak{s} := (f \times id_{\mathbb{F}_1 \times \mathbb{F}_2}) \circ l \circ (\mathfrak{s}_1\times \mathfrak{s}_2) \circ f^{-1} : U \rightarrow U \lhd (\mathbb{F}_1 \times \mathbb{F}_2)$$
is sc$^+$ because each $\mathfrak{s}_i$ is sc$^+$ and the other maps in the composition are linear sc-isomorphisms. Denote the linear sc-isomorphisms given by reordering the factors by
\begin{align*}
q &: ([0,\infty)^{s_1} \times \mathbb{R}^{k_1-s_1} \times \mathbb{W}_1) \times ([0,\infty)^{s_2} \times \mathbb{R}^{k_2-s_2} \times \mathbb{W}_2)\\
&\,\, \rightarrow  [0,\infty)^{s_1 + s_2}  \times \mathbb{R}^{k_1-s_1 + k_2 - s_2} \times \mathbb{W}_1 \times \mathbb{W}_2,\\
Q &: (\mathbb{R}^{k'_1} \times \mathbb{W}_1) \times (\mathbb{R}^{k'_2} \times \mathbb{W}_2) \rightarrow \mathbb{R}^{k'_1 + k'_2} \times \mathbb{W}_1 \times \mathbb{W}_2,\\
l' &: (U'_1 \lhd \mathbb{R}^{k'_1} \times \mathbb{W}_1) \times (U_2' \lhd \mathbb{R}^{k'_2} \times \mathbb{W}_2)\\
&\,\, \rightarrow (U_1' \times U_2') \lhd (\mathbb{R}^{k'_1} \times \mathbb{W}_1 \times \mathbb{R}^{k'_2} \times \mathbb{W}_2).
\end{align*}
Set
\begin{align*}
U' &:= q(U_1' \times U_2'),\\
\psi &:= q \circ (\psi_1 \times \psi_2) \circ f^{-1} : U \rightarrow U',\\
\Psi &:= (q \lhd Q) \circ l' \circ (\Psi_1 \times \Psi_2) \circ l^{-1} \circ (f \times id_{\mathbb{F}_1 \times \mathbb{F}_2})^{-1}\\
&\,\,\,\,: U \lhd (\mathbb{F}_1\times \mathbb{F}_2) \rightarrow U' \lhd (\mathbb{R}^{k'_1 + k'_2} \times \mathbb{W}_1 \times \mathbb{W}_2).
\end{align*}

We claim that the principal part ${\bf b}$ of the section
\begin{equation} \label{eq:productbasicgermproof}
b := \Psi \circ (h - \mathfrak{s}) \circ \psi^{-1}  : U' \rightarrow U' \lhd (\mathbb{R}^{k'_1 + k'_2} \times \mathbb{W}_1 \times \mathbb{W}_2)
\end{equation}
is a basic germ. Observe that the principal parts satisfy
\begin{align*}
{\bf b} &= Q \circ \pi_{\mathbb{R}^{k'_1} \times \mathbb{W}_1 \times \mathbb{R}^{k'_2} \times \mathbb{W}_2} \circ (\Psi_1 \times \Psi_2)\\
&\,\,\,\,\,\,\,\,\circ ((h_1 \times h_2) - (\mathfrak{s}_1 \times \mathfrak{s}_2)) \circ (\psi_1 \times \psi_2)^{-1} \circ q^{-1}\\
&= Q \circ ({\bf b}_1 \times {\bf b}_2) \circ q^{-1}.
\end{align*}
Let $P : \mathbb{R}^{k'_1 + k'_2} \times \mathbb{W}_1 \times \mathbb{W}_2 \rightarrow  \mathbb{W}_1 \times \mathbb{W}_2$ denote the projection onto $\mathbb{W}_1 \times \mathbb{W}_2$. Notice that $P = (P_1 \times P_2) \circ Q^{-1}.$
Then, for all
$$(a_1,a_2) \in [0,\infty)^{s_1} \times [0,\infty)^{s_2}, (d_1,d_2) \in \mathbb{R}^{k_1 - s_1} \times \mathbb{R}^{k_2 - s_2}, (w_1,w_2) \in \mathbb{W}_1 \times \mathbb{W}_2,$$
we compute
 \begin{align*}
P \circ {\bf b}(a_1,a_2,d_1,d_2,w_1,w_2) &= P \circ Q \circ ({\bf b}_1 \times {\bf b}_2) \circ q^{-1}(a_1,a_2,d_1,d_2,w_1,w_2)\\ &= (P_1 \times P_2) \circ ({\bf b}_1 \times {\bf b}_2)(a_1,d_1,w_1,a_2,d_2,w_2)\\
&= (w_1 - B_1(a_1,d_1,w_1), w_2 - B_2(a_2,d_2,w_2))\\
&= (w_1,w_2) - (B_1(a_1,d_1,w_1), B_2(a_2,d_2,w_2))\\
&= (w_1,w_2) - (B_1 \times B_2) \circ q^{-1}(a_1,a_2,d_1,d_2,w_1,w_2),
\end{align*}
so to prove that ${\bf b}$ is a basic germ, it remains to verify that the map
\begin{align*}
B :=& (B_1 \times B_2) \circ q^{-1}\\
&: [0,\infty)^{s_1 + s_2}  \times \mathbb{R}^{k_1-s_1 + k_2 - s_2} \times \mathbb{W}_1 \times \mathbb{W}_2 \rightarrow \mathbb{W}_1 \times \mathbb{W}_2
\end{align*}
satisfies the contraction property \eqref{eq:basicgerm}. Recall our convention that Banach norms on Cartesian products are chosen to be the sum norm (which is equivalent to choosing any standard choice of norm on a product). The contraction property \eqref{eq:basicgerm} for $B$ then follows from the contraction property for each $B_i$, as we now verify. Let $\epsilon > 0$ and $ m \geq 0$. Then, for $i = 1,2,$ there exists $\delta_i > 0$ such that \eqref{eq:basicgerm} holds for $B_i$ with the same choice of $\epsilon.$ Set $\delta := \min(\delta_1, \delta_2)$. Then, given $||(a_1,a_2,d_1,d_2,w_1,w_2)||_m, ||(a_1,a_2,d_1,d_2,w'_1,w'_2)||_m < \delta$ we have
$$||(a_i,d_i,w_i)||_m, ||(a_i,d_i,w'_i)||_m < \delta_i$$
for $i = 1,2,$ from which we compute, using property \eqref{eq:basicgerm} for the $B_i$,
\begin{align*}
||&B(a_1,a_2,d_1,d_2,w_1,w_2) - B(a_1,a_2,d_1,d_2,w'_1,w'_2)||_m\\ &= ||(B_1(a_1,d_1,w_1), B_2(a_2,d_2,w_2)) - (B_1(a_1,d_1,w'_1), B_2(a_2,d_2,w'_2))||_m\\
&= ||(B_1(a_1,d_1,w_1) - B_1(a_1,d_1,w'_1), B_2(a_2,d_2,w_2) - B_2(a_2,d_2,w'_2)||_m\\
&= ||(B_1(a_1,d_1,w_1) - B_1(a_1,d_1,w'_1)||_m +  ||B_2(a_2,d_2,w_2) - B_2(a_2,d_2,w'_2)||_m\\
&\stackrel{\eqref{eq:basicgerm}}{\leq} \epsilon ( ||w_1 - w_1'||_m + ||w_2 - w_2'||_m )\\
&= \epsilon \cdot ||(w_1 - w_1', w_2 - w_2')||_m\\
&= \epsilon \cdot ||(w_1,w_2) - (w_1',w_2')||_m,
\end{align*}
as required. This completes the proof that $\t$ is a local sc-Fredholm germ.

We claim that, in addition, $\t$ is a tame sc-Fredholm germ. We must show that $\psi$ is in the form required by Definition~\ref{dfn:tamescFredgerm}. Given elements $(a_i,e_i) \in [0,\infty)^{s_i} \times \mathbb{E}'_i$ for $i = 1,2,$ write $\overline{\psi}_i(e_i) = (d_i,w_i) \in \mathbb{R}^{k_i-s_i} \times \mathbb{W}_i$ and compute
\begin{align*}
\psi(a_1,a_2,e_1,e_2) &= q \circ (\psi_1 \times \psi_2)(a_1,e_1,a_2,e_2)\\
&= q \circ (id_{[0,\infty)^{s_1}} \times \overline{\psi}_1 \times id_{[0,\infty)^{s_2}} \times \overline{\psi}_2)(a_1,e_1,a_2,e_2)\\
&= (a_1,a_2,d_1,d_2,w_1,w_2).
\end{align*}
So indeed $\psi = id_{[0,\infty)^{s_1 + s_2}} \times \overline{\psi}$ is of the required form, where $$\overline{\psi} : \mathbb{E}_1 \times \mathbb{E}_2 \rightarrow \mathbb{R}^{k_1 - s_1 + k_2 - s_2} \times \mathbb{W}_1 \times \mathbb{W}_2$$ is the linear sc-isomorphism given by $\overline{\psi}(e_1,e_2) = (d_1,d_2,w_1,w_2).$

We have verified that $\s_1 \times \s_2$ is a tame sc-Fredholm section of $\rho_1 \times \rho_2$. To verify the claimed index formula, note first that by definition of sc-Fredholm index \eqref{eq:fredholmindexdef} of a local sc-Fredholm germ and the forms of the basic germs \eqref{eq:productbasicgermfactorproof} and \eqref{eq:productbasicgermproof} we have $ind(\t_i) = k_i - k'_i$ and $$ind(\t) = (k_1 + k_2) - (k_1' + k_2') = ind(\t_1) + ind(\t_2).$$ Then by definition Definition~\ref{dfn:scfredholmsection}(3) of index, for $$(x_1,x_2) \in (\cB_{(1)})_{\infty} \times (\cB_{(2)})_{\infty}$$ we have $$ind_{(x_1,x_2)}(\s_1 \times \s_2) = ind(\t) = ind(\t_1) + ind(\t_2) = ind_{x_1}(\s_1) + ind_{x_2}(\s_2),$$
as claimed.

The final statement about compactness holds because the zero set $(\s_1 \times \s_2)^{-1}(0) = \s_1^{-1}(0) \times \s_2^{-1}(0) \subset (\cB_{(1)})_{\infty} \times (\cB_{(2)})_{\infty}$ is equipped with the product topology in every level $(\cB_{(1)} \times \cB_{(2)})_m = (\cB_{(1)})_m \times (\cB_{(2)})_m$, and for $i = 1,2,$ the subspace $\s_i^{-1}(0) \subset (\cB_{(i)})_m$ is compact for all $m \geq 0$ (see Remark~\ref{rmk:levelshiftglobal} for a further discussion about compactness in the different levels).
\end{proof}

We now generalize the above Cartesian product construction to the ep-groupoid setting.

\begin{lem} \label{lem:ep-CartesianFredholm} \text{}
\begin{enumilistfiber}
\item Consider tame ep-groupoids $\cX_{(i)} = (X_{(i)}, {\bf X}_{(i)})$ for $i = 1,2$. Then, the Cartesian product $$\cX_{(1)} \times \cX_{(2)} = (X_{(1)} \times X_{(2)}, {\bf X}_{(1)} \times {\bf X}_{(2)})$$ is a tame ep-groupoid with degeneracy index satisfying $$d_{X_{(1)} \times X_{(2)}}(x_1,x_2) = d_{X_{(1)}}(x_1) + d_{X_{(2)}}(x_2)$$ for all $(x_1,x_2) \in (X_{(1)})_{\infty} \times (X_{(2)})_{\infty}.$\\

\item Consider tame strong bundles $(P_i : E_{(i)} \rightarrow X_{(i)}, \mu_i)$ over $\cX_{(i)}$ for $i = 1,2.$ Denote the reordering of factors map by
$$l : ({\bf X}_{(1)} \times {\bf X}_{(2)}) \tensor[_{s_1 \times s_2}]{\times}{_{P_1 \times P_2}} (E_{(1)} \times E_{(2)}) \rightarrow ({{\bf X}_{(1)}} \tensor[_{s_1}]{\times}{_{P_1}} E_{(1)}) \times ({{\bf X}_{(2)}} \tensor[_{s_2}]{\times}{_{P_2}} E_{(2)})$$
and set
$$\mu := (\mu_1 \times \mu_2) \circ l : ({\bf X}_{(1)} \times {\bf X}_{(2)}) \tensor[_{s_1 \times s_2}]{\times}{_{P_1 \times P_2}} (E_{(1)} \times E_{(2)}) \rightarrow E_{(1)} \times E_{(2)}.$$
Then, the tuple $$(P_1 \times P_2 : E_{(1)} \times E_{(2)} \rightarrow X_{(1)} \times X_{(2)}, \mu)$$ is a tame strong bundle over $\cX_{(1)} \times \cX_{(2)}$.\\

\item Consider tame sc-Fredholm section functors $\s_i : X_{(i)} \rightarrow E_{(i)}$ (Definition~\ref{dfn:tamescfredholmepgroupoid}) of $(P_i, \mu_i)$ for $i = 1,2.$ Then, the product map $$\s_1 \times \s_2 : X_{(1)} \times X_{(2)} \rightarrow E_{(1)} \times E_{(2)}$$ is a tame sc-Fredholm section functor of $(P_1 \times P_2, \mu)$ with index satisfying $$ind_{(x_1,x_2)}(\s_1 \times \s_2) = ind_{x_1}(\s_1) + ind_{x_2}(\s_2)$$ for all $(x_1,x_2) \in (X_{(1)})_{\infty} \times (X_{(2)})_{\infty}$. If $|\s_i^{-1}(0)|$ is compact for $i = 1,2,$ then $|(\s_1 \times \s_2)^{-1}(0)| = |\s_1^{-1}(0)| \times |\s_2^{-1}(0)|$ is compact.
\end{enumilistfiber}
\end{lem}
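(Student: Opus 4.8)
The plan is to deduce Lemma~\ref{lem:ep-CartesianFredholm} from the $M$-polyfold Cartesian product Lemma~\ref{lem:M-CartesianPolyfold} in essentially the same way that Corollary~\ref{cor:slicestructureepgroupoid} and Corollary~\ref{cor:globalimplicitepgroupoid} are deduced from their $M$-polyfold counterparts. The underlying analytic content — that products of tame $M$-polyfolds, tame strong bundles, and tame sc-Fredholm sections are again tame of the same type, with additive degeneracy index and additive Fredholm index — is already supplied by Lemma~\ref{lem:M-CartesianPolyfold}. The only new work in the ep-groupoid case is the bookkeeping of groupoid structure maps and the identification of the orbit space of the product.

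For part (I), first observe that the Cartesian product of the groupoids $\cX_{(1)}$ and $\cX_{(2)}$ is a groupoid with object space $X_{(1)} \times X_{(2)}$ and morphism space ${\bf X}_{(1)} \times {\bf X}_{(2)}$, whose source, target, unit, inverse, and multiplication maps are the products of the corresponding maps on the factors. Apply Lemma~\ref{lem:M-CartesianPolyfold}(I) to equip both $X_{(1)} \times X_{(2)}$ and ${\bf X}_{(1)} \times {\bf X}_{(2)}$ with tame $M$-polyfold structures (using that the fiber product charts behave well under products). The \'etale property (Definition~\ref{dfn:tameepgroupoid}(i)) holds because a product of surjective local sc-diffeomorphisms is again one; sc-smoothness of the unit, inverse, and multiplication maps (Definition~\ref{dfn:tameepgroupoid}(ii),(iv)) follows from sc-smoothness of these maps on the factors, using that products of sc-smooth maps are sc-smooth. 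For properness (Definition~\ref{dfn:tameepgroupoid}(iii)), take $(x_1,x_2) \in X_{(1)} \times X_{(2)}$, choose neighborhoods $V(x_i)$ as in the properness of $\cX_{(i)}$, and note that $t_1 \times t_2$ restricted to $(s_1 \times s_2)^{-1}(\overline{V(x_1)} \times \overline{V(x_2)}) = s_1^{-1}(\overline{V(x_1)}) \times s_2^{-1}(\overline{V(x_2)})$ is proper as a product of two proper maps (properness is preserved under products when the spaces are reasonable, e.g.\ Hausdorff, which holds here). The degeneracy index formula is immediate from Lemma~\ref{lem:M-CartesianPolyfold}(I). Part (II) similarly combines Lemma~\ref{lem:M-CartesianPolyfold}(II) with a direct check that $\mu = (\mu_1 \times \mu_2) \circ l$ covers the target map $t_1 \times t_2$, satisfies $\mu(1_{(x_1,x_2)}, (e_1,e_2)) = (e_1,e_2)$ from $\mu_i(1_{x_i}, e_i) = e_i$, and is associative in the appropriate sense from associativity of each $\mu_i$; the reordering map $l$ is a sc-diffeomorphism so it does not affect any of these properties.

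For part (III), Lemma~\ref{lem:M-CartesianPolyfold}(III) already gives that $\s_1 \times \s_2$ is a tame sc-Fredholm section of $P_1 \times P_2$ with the asserted index formula, so it only remains to check that $\s_1 \times \s_2$ is a section \emph{functor} of $(P_1 \times P_2, \mu)$, i.e.\ that it satisfies the compatibility \eqref{eq:sectionfunctorcompatibility} with $\mu$. This follows pointwise: for $(\phi_1, \phi_2) \in {\bf X}_{(1)} \times {\bf X}_{(2)}$ we have $(\s_1 \times \s_2)(t_1(\phi_1), t_2(\phi_2)) = (\s_1(t_1(\phi_1)), \s_2(t_2(\phi_2))) = (\mu_1(\phi_1, \s_1(s_1(\phi_1))), \mu_2(\phi_2, \s_2(s_2(\phi_2)))) = \mu((\phi_1,\phi_2), (\s_1 \times \s_2)(s_1(\phi_1), s_2(\phi_2)))$, where the last equality unwinds the definition $\mu = (\mu_1 \times \mu_2) \circ l$. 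Finally, for the compactness statement: the orbit space of the product ep-groupoid is the product of the orbit spaces, $|X_{(1)} \times X_{(2)}| = |X_{(1)}| \times |X_{(2)}|$, because two pairs $(x_1,x_2), (y_1,y_2)$ are related by a morphism in $\cX_{(1)} \times \cX_{(2)}$ if and only if $x_i \sim y_i$ in each $\cX_{(i)}$; under this identification $|(\s_1 \times \s_2)^{-1}(0)| = |\s_1^{-1}(0) \times \s_2^{-1}(0)| = |\s_1^{-1}(0)| \times |\s_2^{-1}(0)|$, which is compact as a product of two compact sets.

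I do not expect a serious obstacle: this is a routine transfer from the $M$-polyfold case, parallel to the proofs of Corollaries~\ref{cor:slicestructureepgroupoid} and~\ref{cor:globalimplicitepgroupoid}. The only point requiring a little care is verifying properness of $\cX_{(1)} \times \cX_{(2)}$ — one must confirm that the product of the proper maps $t_i : s_i^{-1}(\overline{V(x_i)}) \to X_{(i)}$ is again proper, which holds since these are continuous maps between (locally compact, Hausdorff, second-countable) spaces and properness of continuous maps is preserved under products in this setting — and the bookkeeping of which reordering isomorphisms ($l$, and the various level-wise product identifications of fiber products) are sc-diffeomorphisms, so that none of the structure-map identities are disturbed. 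Everything else is a one-line consequence of the corresponding statement in Lemma~\ref{lem:M-CartesianPolyfold} together with the elementary fact that $|{-}|$ sends Cartesian products of ep-groupoids to Cartesian products of orbit spaces.
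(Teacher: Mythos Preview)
Your proposal is correct and follows essentially the same approach as the paper's proof: reduce everything to Lemma~\ref{lem:M-CartesianPolyfold}, then verify the groupoid structure maps on the product (\'etale, sc-smoothness, properness via products of proper maps), check that $\mu$ satisfies Definition~\ref{dfn:tamestrongbundleepgroupoid}(i)(ii), verify the section functor compatibility \eqref{eq:sectionfunctorcompatibility} by the same pointwise computation, and conclude compactness from $|X_{(1)} \times X_{(2)}| = |X_{(1)}| \times |X_{(2)}|$. The only cosmetic difference is that the paper is slightly more explicit about the multiplication map, writing out the reordering isomorphism $q$ on the fibered product and citing \cite[Prop.~2.15,~Prop.~2.6(1)]{HWZbook} for its sc-smoothness.
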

\begin{proof}
We prove the statements in (I). The product $\cX_{(1)} \times \cX_{(2)}$ is a groupoid with structure maps (source, target, multiplication, unit, and inverse) as we describe below. First note that Lemma~\ref{lem:M-CartesianPolyfold}(I) provides tame $M$-polyfold structures on the object space $X_{(1)} \times X_{(2)}$ and on the morphism space ${\bf X}_{(1)} \times {\bf X}_{(2)}$ with the claimed degeneracy index. So, to prove (I), it remains to describe the structure maps on $\cX_{(1)} \times \cX_{(2)}$, verify that they are sc-smooth, and verify the \'etale property (Definition~\ref{dfn:tameepgroupoid}(i)) and properness (Definition~\ref{dfn:tameepgroupoid}(iii)).

For $i = 1,2$, let $(s_i,t_i,m_i,u_i,\i_i)$ denote the structure maps on $\cX_{(i)}$. The source $s_1 \times s_2 : {\bf X}_{(1)} \times {\bf X}_{(2)} \rightarrow X_{(1)} \times X_{(2)}$, target $t_1 \times t_2$, unit $u_1 \times u_2$, and inversion $\i_1 \times \i_2$ maps on the product are products of those in each factor, so sc-smoothness follows from sc-smoothness in each factor. Moreover, the \'etale property (Definition~\ref{dfn:tameepgroupoid}(i)) holds because products of surjective local sc-diffeomorphisms are surjective local sc-diffeomorphisms.

To see that the multiplication map on the product is sc-smooth, first note that the reordering of factors map $$({\bf X}_{(1)} \times {\bf X}_{(2)}) \times ({\bf X}_{(1)} \times {\bf X}_{(2)}) \rightarrow ({\bf X}_{(1)} \times {\bf X}_{(1)}) \times ({\bf X}_{(2)} \times {\bf X}_{(2)})$$
is sc-smooth. It restricts to a bijection
$$q : ({\bf X}_{(1)} \times {\bf X}_{(2)}) \tensor[_{s_1 \times s_2}]{\times}{_{t_1 \times t_2}} ({\bf X}_{(1)} \times {\bf X}_{(2)}) \rightarrow ({\bf X}_{(1)} \tensor[_{s_1}]{\times}{_{t_1}} {\bf X}_{(1)}) \times ({\bf X}_{(2)} \tensor[_{s_2}]{\times}{_{t_2}} {\bf X}_{(2)}),$$
which is sc-smooth by \cite[Prop.~2.15]{HWZbook} and \cite[Prop.~2.6(1)]{HWZbook}. The multiplication map on $\cX_{(1)} \times \cX_{(2)}$ is the composition
$$(m_1 \times m_2) \circ q : ({\bf X}_{(1)} \times {\bf X}_{(2)}) \tensor[_{s_1 \times s_2}]{\times}{_{t_1 \times t_2}} ({\bf X}_{(1)} \times {\bf X}_{(2)})  \rightarrow {\bf X}_{(1)} \times {\bf X}_{(2)},$$
hence is sc-smooth.

We now check properness Definition~\ref{dfn:tameepgroupoid}(iii). Let $$(x_1, x_2) \in X_{(1)} \times X_{(2)}$$ and let $V(x_i) \subset X_{(i)}$ be open neighborhoods of $x_i$ such that $$t_i : s_i^{-1}(\overline{V(x_i)}) \rightarrow X_{(i)}$$ are proper maps. Then $V(x_1) \times V(x_2)$ is an open neighborhood of $(x_1,x_2)$ in $X_{(1)} \times X_{(2)}$ and we have
$$(s_1 \times s_2)^{-1}(\overline{V(x_1) \times V(x_2)}) = s_1^{-1}(\overline{V(x_1)}) \times s_2^{-1}(\overline{V(x_2)}).$$
Hence $t_1 \times t_2 : (s_1 \times s_2)^{-1}(\overline{V(x_1) \times V(x_2)}) \rightarrow X_{(1)} \times X_{(2)}$ is proper, as the product of proper maps. This completes the proof that $\cX_{(1)} \times \cX_{(2)}$ is a tame ep-groupoid, and so the statements in (I) are proved.

We now prove the statements in (II). For $i = 1,2,$ the map $$P_i : E_{(i)} \rightarrow X_{(i)}$$ is a tame strong bundle over the $M$-polyfold $X_{(i)},$ so Lemma~\ref{lem:M-CartesianPolyfold}(II) provides a tame strong bundle structure on the product map $$P_1 \times P_2 : E_{(1)} \times E_{(2)} \rightarrow X_{(1)} \times X_{(2)}.$$ The map $\mu$ is a strong bundle map as the composition of the strong bundle maps $\mu_1 \times \mu_2$ and $l$, and the required properties Definition~\ref{dfn:tamestrongbundleepgroupoid}(i)(ii) of $\mu$ follow immediately from those of $\mu_i$ for $i = 1,2$. Hence $(P_1 \times P_2, \mu)$ is a tame strong bundle over $\cX_{(1)} \times \cX_{(2)}$, as claimed.

We now prove the statements in (III). For $i = 1,2,$ the tame sc-Fredholm section functor $\s_i : X_{(i)} \rightarrow E_{(i)}$ is in particular a tame sc-Fredholm section of the bundle $P_i$, in the $M$-polyfold sense. So Lemma~\ref{lem:M-CartesianPolyfold}(III) provides the product map $\s_1 \times \s_2$ with the structure of a tame sc-Fredholm section of the bundle $P_1 \times P_2$ with the claimed sc-Fredholm index. Moreover, $\s_1 \times \s_2$ satisfies the required property \eqref{eq:sectionfunctorcompatibility} of a section functor of $(P_1 \times P_2, \mu)$ by the corresponding property of the sections functors $\s_i$ of $(P_i, \mu_i)$. Indeed, for all morphisms $(\phi_1 \times \phi_2) \in {\bf X}_{(1)} \times {\bf X}_{(2)}$, we compute
\begin{align*}
(\s_1 \times \s_2) \circ (t_1 \times t_2)(\phi_1, \phi_2) &= (\s_1(t_1(\phi_1)), \s_2(t_2(\phi_2)))\\
&= (\mu_1(\phi_1,\s_1(s_1(\phi_1))), \mu_2(\phi_2,\s_2(s_2(\phi_2))))\\
&= (\mu_1 \times \mu_2) \circ l(\phi_1,\phi_2,\s_1(s_1(\phi_1)),\s_2(s_2(\phi_2)))\\
&= \mu(\phi_1,\phi_2,(\s_1 \times \s_2) \circ (s_1 \times s_2)(\phi_1,\phi_2)).
\end{align*}
This completes the proof that $\s_1 \times \s_2$ is a tame sc-Fredholm section functor of $(P_1 \times P_2, \mu)$.

The final statement about compactness holds because the orbit space of $\cX_{(1)} \times \cX_{(2)}$ is equal to the Cartesian product of the orbit spaces of the $\cX_{(i)}$ equipped with the product topology.
\end{proof}

We proceed to construct fiber products of tame sc-Fredholm section functors over ep-groupoids. As usual, the result specializes to the case of $M$-polyfolds by considering an $M$-polyfold $\cB$ as an ep-groupoid with the trivial groupoid structure: the object space is $\cB$ and the morphism space consists of the identity morphisms $\{ 1_x \,\, | \,\, x \in \cB \}$ which is an $M$-polyfold by declaring the bijection $x \mapsto 1_x$ with $\cB$ to be a sc-diffeomorphism.

\begin{cor} \label{cor:ep-fiberFredholm}\text{}
\begin{enumilistfiber}
\item Consider tame ep-groupoids $\cX_{(i)} = (X_{(i)}, {\bf X}_{(i)})$ for $i = 1,2,$ a smooth manifold $Y$, and sc-smooth maps $f_i : X_{(i)} \rightarrow Y$ for $i = 1,2,$ that satisfy the compatibility with morphisms
$$f_i(s_i(\phi)) = f_i(t_i(\phi)) \text{ for all } \phi \in {\bf X}_{(i)}.$$
Assume that the product map $f_1 \times f_2 : X_{(1)} \times X_{(2)} \rightarrow Y \times Y$ is transverse (Definition~\ref{dfn:transverse}) to the diagonal $$\Delta = \{ (y,y) \,\, | \,\, y \in Y \} \subset Y \times Y,$$ and denote the fiber product of object spaces by
$$X_{(1)}\tensor[_{f_1}]{\times}{_{f_2}} X_{(2)} := \{ (x_1,x_2) \in X_{(1)} \times X_{(2)} \,\, | \,\, f_1(x_1) = f_2(x_2) \} \subset X_{(1)} \times X_{(2)}.$$

Then, there exists an open neighborhood
$$\tilde{X} \subset ({X_{(1)}} \tensor[_{f_1}]{\times}{_{f_2}} {X_{(2)}}) \cap ((X_{(1)})_{1} \times (X_{(2)})_{1})$$
of $({X_{(1)}} \tensor[_{f_1}]{\times}{_{f_2}} {X_{(2)}}) \cap ((X_{(1)})_{\infty} \times (X_{(2)})_{\infty})$ such that $\tilde{X} \subset X_{(1)}^1 \times X_{(2)}^1$
is a slice in the $M$-polyfold sense (Definition~\ref{dfn:globalslice}) satisfying $$codim_x(\tilde{X} \subset X_{(1)}^1 \times X_{(2)}^1) = \dim Y$$ for every $x \in \tilde{X}_1 = \tilde{X} \cap ((X_{(1)})_2 \times (X_{(2)})_2)$. In particular, the full subcategory $$\tilde{\cX} := (\tilde{X}, \tilde{\bf X})$$ of $\cX_{(1)}^1 \times \cX_{(2)}^1$ with object space $\tilde{X}$ is a tame ep-groupoid  with degeneracy index satisfying $$d_{\tilde{X}}(x_1,x_2) = d_{X_{(1)}}(x_1) + d_{X_{(2)}}(x_2)$$ for all $(x_1,x_2) \in \tilde{X}_{\infty}.$\\

\item Consider, in addition, tame strong bundles $(P_i : E_{(i)} \rightarrow X_{(i)}, \mu_i)$ over $\cX_{(i)}$ for $i = 1,2.$ Then, there exists a possibly smaller neighborhood $\tilde{X}$ in (I) that is a slice of the bundle $$P_1|_{E_{(1)}^1} \times P_2|_{E_{(2)}^1} : E_{(1)}^1 \times E_{(2)}^1 \rightarrow X_{(1)}^1 \times X_{(2)}^1$$ in the $M$-polyfold sense (Definition~\ref{dfn:globalslice}). In particular, the tuple $(\tilde{P}, \tilde{\mu})$ consisting of the restrictions
\begin{align*}
\tilde{E} &:= (P_1|_{E_{(1)}^1} \times P_2|_{E_{(2)}^1})^{-1}(\tilde{X})\\
\tilde{P} &:= (P_1 \times P_2)|_{\tilde{E}} : \tilde{E} \rightarrow \tilde{X},\\
\tilde{\mu} &:= (\mu_1 \times \mu_2) \circ l |_{\tilde{{\bf X}} \tensor[_{s_1 \times s_2}]{\times}{_{\tilde{P}}} \tilde{E}} : \tilde{{\bf X}} \tensor[_{s_1 \times s_2}]{\times}{_{\tilde{P}}} \tilde{E} \rightarrow \tilde{E},
\end{align*}
is a tame strong bundle over $\tilde{\cX}$, where $$l : ({\bf X}_{(1)} \times {\bf X}_{(2)}) \times (E_{(1)} \times E_{(2)}) \rightarrow ({{\bf X}_{(1)}} \times E_{(1)}) \times ({{\bf X}_{(2)}} \times E_{(2)})$$ is the reordering of factors map.\\

\item Consider, in addition, tame sc-Fredholm section functors (Definition~\ref{dfn:tamescfredholmepgroupoid}) $\s_i : X_{(i)} \rightarrow E_{(i)}$ of $(P_i, \mu_i)$ for $i = 1,2.$ Assume that $f_1 \times f_2$ is $(\s_1 \times \s_2)$-compatibly transverse to $\Delta$ (Definition~\ref{dfn:transverse}).

Then, there exists a possibly smaller neighborhood $\tilde{X}$ in (II) that is a slice of the tame sc-Fredholm section $$(\s_1 \times \s_2)|_{X_{(1)}^1 \times X_{(2)}^1} : X_{(1)}^1 \times X_{(2)}^1 \rightarrow E_{(1)}^1 \times E_{(2)}^1$$ in the $M$-polyfold sense (Definition~\ref{dfn:globalslice}). In particular, the restriction
$$\tilde{\s} := (\s_1 \times \s_2)|_{\tilde{X}} : \tilde{X} \rightarrow \tilde{E}$$
is a tame sc-Fredholm section functor of the bundle $(\tilde{P}, \tilde{\mu})$ with index satisfying 
$$ind_{(x_1,x_2)}(\tilde{\s}) = ind_{x_1}(\s_1) + ind_{x_2}(\s_2) - \dim Y$$
for all $(x_1,x_2) \in \tilde{X}_{\infty}$. If $|\s_i^{-1}(0)|$ is compact for $i = 1,2,$ then $|\tilde{\s}^{-1}(0)|$ is compact.
\end{enumilistfiber}
\end{cor}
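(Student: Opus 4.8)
The proof will be a reduction to the two main ingredients already in place: the Cartesian product lemma (Lemma~\ref{lem:ep-CartesianFredholm}) and the ep-groupoid global implicit function theorem (Corollary~\ref{cor:globalimplicitepgroupoid}), composed with the classical identification of a fiber product with a transverse preimage of the diagonal. The plan is as follows. First I would apply Lemma~\ref{lem:ep-CartesianFredholm}(I),(II),(III) to the data $\cX_{(i)}$, $(P_i,\mu_i)$, $\s_i$, producing the tame ep-groupoid $\cX_{(1)} \times \cX_{(2)}$, the tame strong bundle $(P_1 \times P_2,\mu)$, and the tame sc-Fredholm section functor $\s_1 \times \s_2$, with index $ind_{(x_1,x_2)}(\s_1 \times \s_2) = ind_{x_1}(\s_1) + ind_{x_2}(\s_2)$ and with $|(\s_1 \times \s_2)^{-1}(0)| = |\s_1^{-1}(0)| \times |\s_2^{-1}(0)|$ compact whenever both factors are. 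Then I would invoke the diagonal $\Delta \subset Y \times Y$, which is a closed codimension-$\dim Y$ submanifold, and the map $f := f_1 \times f_2 : X_{(1)} \times X_{(2)} \rightarrow Y \times Y$, which satisfies the morphism compatibility \eqref{eq:mapcompatibilitymorphisms} on $\cX_{(1)} \times \cX_{(2)}$ because each $f_i$ does on $\cX_{(i)}$. The hypotheses of Corollary~\ref{cor:globalimplicitepgroupoid} are exactly: $f$ transverse to $\Delta$ in (I), plus a tame strong bundle in (II), plus $(\s_1 \times \s_2)$-compatible transversality of $f$ to $\Delta$ in (III) — all of which are assumed.

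With this setup, Corollary~\ref{cor:globalimplicitepgroupoid} directly yields each of the three parts. For (I): it gives an open neighborhood $\tilde{X} \subset f^{-1}(\Delta) \cap (X_{(1)} \times X_{(2)})_1$ of $f^{-1}(\Delta) \cap (X_{(1)} \times X_{(2)})_{\infty}$ that is a slice of $(X_{(1)} \times X_{(2)})^1$ with codimension $\dim Y$ at every point of $\tilde{X}_1$, and $\tilde{\cX}$ is a tame ep-groupoid. Here I would use the set-theoretic identity $f^{-1}(\Delta) = X_{(1)} \tensor[_{f_1}]{\times}{_{f_2}} X_{(2)}$ and the level-wise product structure $(X_{(1)} \times X_{(2)})_m = (X_{(1)})_m \times (X_{(2)})_m$ to rewrite the statement in the form claimed in the corollary, and the degeneracy index formula $d_{\tilde{X}}(x_1,x_2) = d_{X_{(1)}}(x_1) + d_{X_{(2)}}(x_2)$ follows by combining $d_{\tilde{X}} = d_{X_{(1)} \times X_{(2)}}$ from Corollary~\ref{cor:globalimplicitepgroupoid}(I) with the product formula from Lemma~\ref{lem:ep-CartesianFredholm}(I). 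For (II): Corollary~\ref{cor:globalimplicitepgroupoid}(II) applied to $(P_1 \times P_2, \mu)$ shrinks $\tilde{X}$ to a slice of the bundle; one checks that the restricted structure map $\tilde{\mu}$ in the statement matches the restriction of $\mu = (\mu_1 \times \mu_2) \circ l$ appearing in the corollary, which is immediate from the definition of $\mu$. For (III): Corollary~\ref{cor:globalimplicitepgroupoid}(III) applied to $\s_1 \times \s_2$ gives the slice of the tame sc-Fredholm section and the restriction $\tilde{\s}$ as a tame sc-Fredholm section functor with $ind_{(x_1,x_2)}(\tilde{\s}) = ind_{(x_1,x_2)}(\s_1 \times \s_2) - \dim Y$, and substituting the index formula from Lemma~\ref{lem:ep-CartesianFredholm}(III) gives $ind_{(x_1,x_2)}(\tilde{\s}) = ind_{x_1}(\s_1) + ind_{x_2}(\s_2) - \dim Y$.

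The compactness statement in (III) requires $|\tilde{\s}^{-1}(0)|$ to be compact when both $|\s_i^{-1}(0)|$ are. Since $\Delta$ is closed in $Y \times Y$, the final sentence of Corollary~\ref{cor:globalimplicitepgroupoid}(III) applies as soon as $|(\s_1 \times \s_2)^{-1}(0)|$ is compact, and the latter equals $|\s_1^{-1}(0)| \times |\s_2^{-1}(0)|$ by Lemma~\ref{lem:ep-CartesianFredholm}(III), hence is compact as a product of compacta. The one genuinely non-formal point — and the step I expect to be the main obstacle — is verifying that $f_1 \times f_2$ being transverse (resp. $(\s_1 \times \s_2)$-compatibly transverse) \emph{to the diagonal} is the right hypothesis and that it feeds correctly into Corollary~\ref{cor:globalimplicitepgroupoid}: this is assumed in the statement, so for the proof itself there is nothing to check, but I would add a sentence pointing out that these are precisely the classical transversality conditions defining a transverse fiber product and that $(D_{(x_1,x_2)}(f_1\times f_2))^{-1}(T_{(f_1(x_1),f_2(x_2))}\Delta)$ decodes to $\{(\xi_1,\xi_2) : D_{x_1}f_1(\xi_1) = D_{x_2}f_2(\xi_2)\}$, so that the constructed slice $\tilde{X}$ is the expected neighborhood of the fiber product. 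Everything else is bookkeeping: unwinding the product topology on levels, matching the bundle structure maps, and substituting the two index formulas. I would therefore present the proof as three short paragraphs, one per part, each citing the relevant clause of Lemma~\ref{lem:ep-CartesianFredholm} and of Corollary~\ref{cor:globalimplicitepgroupoid} and performing the indicated substitution.
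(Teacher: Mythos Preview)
Your proposal is correct and follows essentially the same approach as the paper: apply Lemma~\ref{lem:ep-CartesianFredholm} to build the Cartesian product ep-groupoid, bundle, and tame sc-Fredholm section functor, then feed the product map $f_1\times f_2$ and the closed codimension-$\dim Y$ diagonal $\Delta$ into Corollary~\ref{cor:globalimplicitepgroupoid}, using $f^{-1}(\Delta)=X_{(1)}\tensor[_{f_1}]{\times}{_{f_2}}X_{(2)}$ and combining the two index and degeneracy formulas. The paper's proof is in fact slightly terser than yours and omits your remark about decoding $(D_{(x_1,x_2)}(f_1\times f_2))^{-1}(T\Delta)$, but the logical content is the same.
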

\begin{proof}
We prove the statements in (I). Lemma~\ref{lem:ep-CartesianFredholm}(I) provides a tame ep-groupoid structure on ${\cX}_{(1)} \times \cX_{(2)}$ with degeneracy index satisfying $d_{X_{(1)} \times X_{(2)}}(x_1,x_2) = d_{X_{(1)}}(x_1) + d_{X_{(2)}}(x_2)$ for $(x_1,x_2) \in (X_{(1)})_{\infty} \times (X_{(2)})_{\infty}$. We claim that Corollary~\ref{cor:globalimplicitepgroupoid}(I) applies to the product map $$f_1 \times f_2 : X_{(1)} \times X_{(2)} \rightarrow Y \times Y$$ and the codimension-$\dim Y$ submanifold $\Delta \subset Y \times Y$. Indeed, $f_1 \times f_2$ is transverse to $\Delta$ by hypothesis and the required morphism compatibility $(f_1 \times f_2) \circ (s_1 \times s_2)(\phi_1, \phi_2) = (f_1 \times f_2) \circ (t_1 \times t_2)(\phi_1, \phi_2)$ holds by the hypothesis $f_i(s_i(\phi_i)) = f_i(t_i(\phi_i))$. Since the fiber product is the preimage of the diagonal
$$X_{(1)}\tensor[_{f_1}]{\times}{_{f_2}} X_{(2)} = (f_1 \times f_2)^{-1}(\Delta),$$
the result of Corollary~\ref{cor:globalimplicitepgroupoid}(I) is exactly the assertions in (I).

Similarly, to prove (II), we note that Lemma~\ref{lem:ep-CartesianFredholm}(II) provides a tame strong bundle structure on $(P_1 \times P_2, (\mu_1 \times \mu_2) \circ l)$, and then Corollary~\ref{cor:globalimplicitepgroupoid}(II) provides the desired result.

We now prove (III). Lemma~\ref{lem:ep-CartesianFredholm}(III) shows that $$\s_1 \times \s_2 : X_{(1)} \times X_{(2)} \rightarrow E_{(1)} \times E_{(2)}$$ is a tame sc-Fredholm section functor of $(P_1 \times P_2, (\mu_1 \times \mu_2) \circ l)$ with index satisfying $ind_{(x_1,x_2)}(\s_1 \times \s_2) = ind_{x_1}(\s_1) + ind_{x_2}(\s_2)$ and with $|\s_1^{-1}(0)| \times |\s_2^{-1}(0)|$ compact. Then by Corollary~\ref{cor:globalimplicitepgroupoid}(III), we conclude that $\tilde{\s}$ is a tame sc-Fredholm section functor with index satisfying $ind_{(x_1,x_2)}(\tilde{\s})  = ind_{x_1}(\s_1) + ind_{x_2}(\s_2) - \dim Y$ and such that $|\tilde{\s}^{-1}(0)|$ is compact.
\end{proof}

\bibliographystyle{amsplain}
\bibliography{../../../references}
\end{document}